\documentclass[a4paper,reqno, oneside]{amsart}
\usepackage[english]{babel}
\usepackage{amsmath, amssymb, amsthm, amscd}
\usepackage{enumerate}
\usepackage{palatino}
\usepackage{mathpazo}
\usepackage{paralist}
\usepackage[a4paper]{geometry}

\usepackage{tensor}

\usepackage[all]{xy}

\usepackage{subcaption} 
\usepackage{caption} 
\usepackage{floatrow}

\usepackage{appendix}

\usepackage{tikz} 
\usepackage{tikz-cd}

\usepackage{dashbox}

\usepackage{graphicx}

\makeatletter
\newcommand{\ostar}{\mathbin{\mathpalette\make@circled\ast}}
\newcommand{\make@circled}[2]{%
  \ooalign{$\m@th#1\smallbigcirc{#1}$\cr\hidewidth$\m@th#1#2$\hidewidth\cr}%
}
\newcommand{\smallbigcirc}[1]{%
  \vcenter{\hbox{\scalebox{0.77778}{$\m@th#1\bigcirc$}}}%
}
\makeatother

\renewcommand{\setminus}{\smallsetminus}

\let\oldtocsection=\tocsection

\let\oldtocsubsection=\tocsubsection

\let\oldtocsubsubsection=\tocsubsubsection

\renewcommand{\tocsection}[2]{\hspace{0em}\oldtocsection{#1}{#2}}
\renewcommand{\tocsubsection}[2]{\hspace{1em}\oldtocsubsection{#1}{#2}}
\renewcommand{\tocsubsubsection}[2]{\hspace{2em}\oldtocsubsubsection{#1}{#2}}

\theoremstyle{plain}
\newtheorem{theorem}{Theorem}[section]
\newtheorem{prop}[theorem]{Proposition}
\newtheorem{lemma}[theorem]{Lemma}
\newtheorem{cor}[theorem]{Corollary}  
\newtheorem*{claim}{Claim}

\newtheorem*{TheoremA}{Theorem A}
\newtheorem*{TheoremB}{Theorem B}
\newtheorem*{TheoremC}{Theorem C}
\newtheorem*{TheoremD}{Theorem D}

\theoremstyle{definition}
\newtheorem{definition}[theorem]{Definition}

\theoremstyle{remark}
\newtheorem{remark}[theorem]{Remark}
\newtheorem{example}[theorem]{Example}

\numberwithin{equation}{section}

\begin{document}
\setlength{\parindent}{0.cm}

\title[Transport functions and DG Morse homology]{Transport functions for principal bundles and Morse homology with differential graded coefficients}

\author{Maximilian Stegemeyer}
\address{Mathematisches Institut, Universit\"at Freiburg, Ernst-Zermelo-Straße 1, 79104 Freiburg, Germany}
\email{maximilian.stegemeyer@math.uni-freiburg.de}
\date{\today}

\keywords{Morse theory, Morse homology, principal bundles}
\subjclass{37D15, 57R70, 57R58}

\begin{abstract}
   We study transport functions as a Morse-theoretical way of describing principal bundles. Transport functions are maps from the spaces of broken gradient flow lines to a topological group and they encode the transition functions of the principal bundle.
   We describe and extend a construction by Voigt that yields such transport functions and show that one can recover the principal bundle from the transport function.
   Using transport functions with values in a topological group $G$ and a differential graded module over the chains of $G$ we define a chain complex in the style of Barraud-Damian-Humilière-Oancea's Morse homology with differential graded coefficients.
   We prove that in many cases the homology of this complex is the homology of an associated bundle.
   In the case of smooth bundles transport functions arise also from parallel transport with respect to a connection and the corresponding DG Morse complex turns out to be isomorphic to a complex defined in the style of Barraud-Damian-Humilière-Oancea. We eventually consider certain aspects of the functoriality of our constructions.
\end{abstract}
\maketitle

\setcounter{tocdepth}{2}
\tableofcontents

\section{Introduction}

Morse theory plays a central role in geometry and topology when relating geometric and topological properties of smooth manifolds. 
Historically speaking, among the key features are the Morse inequalities relating the Betti numbers of the manifold to the numbers of critical points and the handle decomposition induced by a Morse function.
Furthermore, a Morse function $f\colon M\to \mathbb{R}$ induces a chain complex whose homology - the \emph{Morse homology} - is isomorphic to the singular homology of $M$.
The differential of the Morse complex is defined by a count of the number of gradient flow lines between critical points of index difference $1$.

In their seminal paper \cite{cohen1995morse} Cohen, Jones and Segal argue that in many situations, the collection of the moduli spaces of gradient flow lines of arbitrary index difference captures captures not only the homology but the homotopy type of the manifold $M$ and in certain situations even the homeomorphism type.
The main tool in their description is the \emph{flow category} induced by a Morse function.
The flow category is a topological category $\mathcal{M}_f$ with objects being the critical points of the Morse function $f\colon M\to \mathbb{R}$ and with morphism spaces the compactifications of the moduli spaces $\overline{\mathcal{L}}(x,y)$ for critical points $x,y\in \mathrm{Crit}(f)$.
Cohen, Jones and Segal argue in \cite{cohen1995morse} that the \emph{classifying space} of this category is homotopy equivalent or even homeomorphic to $M$. 
A crucial aspect in the proof of this theorem is the structure of the compactified moduli spaces and the associativity of gluing, see e.g. \cite{wehrheim2012smooth} and \cite{qin2018associativity}. 
We also refer to \cite{calle2026classifying} and references therein.
It is therefore an important question how the topological features of the manifold are mirrored in Morse-theoretical constructions using the flow category and how information about invariants of $M$ can be extracted from the higher-dimensional moduli spaces of gradient flow lines. 

Furthermore, constructions in Morse homology often serve as a blueprint for developments in Floer theory.
In Floer homotopy theory flow categories in the spirit of Cohen, Jones and Segal and related constructions are the central objects of study and it is a fundamental question what information these flow categories carry.
We refer to  \cite{abouzaid2021arnold}, \cite{abouzaid2024foundation}, \cite{asplund2024spectral}, \cite{fourel2026morse} and \cite{lipshitz2014khovanov}, just to name a few references of many.

Considering Morse homology on closed manifolds one new tool is the \emph{Morse homology with differential graded coefficients} by Barraud, Damian, Humilière and Oancea in \cite{barraud2025morse}.
We briefly describe the setup of \cite{barraud2025morse} which uses a Morse function $f\colon M\to \mathbb{R}$ together with some additional data. 
Consider the Moore loop space $\Omega M$ with respect to a chosen basepoint $x_0\in M$.
The strictly associative concatenation of loops makes the cubical chains $\mathrm{C}_{\bullet}(\Omega M)$ into a differential graded algebra.
For a differential graded right-module $\mathcal{F}_{\bullet}$ over $\mathrm{C}_{\bullet}(\Omega M)$ define the DG Morse complex as
$$   \mathrm{C}_{\bullet}(M;\mathcal{F}_{\bullet}) :=  \mathcal{F}_{\bullet}\otimes \mathbb{Z}\langle \mathrm{Crit}(f)\rangle .     $$
The differential of this complex uses a collection of chains $\{m_{x,y}\in \mathrm{C}_{\bullet}(\Omega M)\}_{x,y\in\mathrm{Crit}(f)}$ which yield a \emph{twisting cocycle}.
Notice that the differential therefore incorporates information about pairs of critical points with arbitrary index difference and thus makes use of the higher-dimen-sional moduli spaces.
The typical source of DG left-modules $\mathcal{F}_{\bullet}$ comes from fibrations $E\to M$ over $M$.
Indeed, a transitive lifting function induces an $\Omega M$-action on the typical fiber $F = E_{x_0}$ which in turn induces the module structure.
Barraud, Damian, Humilière and Oancea show that the homology of the DG Morse complex $\mathrm{C}_{\bullet}(M;\mathrm{C}_{\bullet}(F))$ is isomorphic to the singular homology of the total space $E$, see \cite[Theorem 7.2]{barraud2025morse}.
This construction has found applications in string topology, see \cite{clivio2025goresky}, \cite{riegel2024chain} and \cite{riegel2025path}.
Using similar ideas and constructions, Barraud, Damian, Humilière and Oancea also study Floer homology with DG coefficients and provide applications to symplectic geometry, see \cite{barraud2024floer}.
The construction of DG Morse homology and DG Floer homology is based on a construction by Barraud and Cornea in \cite{barraud2007lagrangian}.

In this article we consider Morse homology with coefficients in a differential graded module $\mathcal{F}_{\bullet}$ over the differential graded algebra $\mathrm{C}_{\bullet}(G)$ where $G$ is a topological group.
The group multiplication turns the cubical chains $\mathrm{C}_{\bullet}(G)$ into a differential graded algebra.
In order to obtain a twisting cocycle $\{m_{x,y}\}_{x,y\in\mathrm{Crit}(f)}$  in $\mathrm{C}_{\bullet}(G)$ we consider \emph{transport functions}.
A $G$-valued transport function is a continuous functor from the flow category $\mathcal{M}_f$ induced by the Morse function $f$ to the topological category with one object and with morphism space $G$.
Thus a transport function is a collection of maps $\Phi_{x,y}\colon \overline{\mathcal{L}}(x,y)\to G$ with $\overline{\mathcal{L}}(x,y)$ the compactification of the moduli space of flow lines between the critical points $x,y\in\mathrm{Crit}(f)$ such that
\begin{equation}\label{eq_transport_conditionA}
    \Phi_{x,y}(u_1\circ u_2) = \Phi_{x,z}(u_1) \cdot \Phi_{z,y}(u_2)    
\end{equation}
for $u_1\in \overline{\mathcal{L}}(x,z)$ and $u_2\in\overline{\mathcal{L}}(z,y)$. 
As we shall see, transport functions with values in $G$ arise from $G$-principal bundles $q\colon E\to M$.
By certain choices of local trivializations of $E$ we construct a transport function $\Phi_q\colon \mathrm{mor}(\mathcal{M}_f) \to G$.
Conversely, a transport function $\Phi$ induces a $G$-valued cocycle which gives rise to a $G$-principal bundle $q_{\Phi}\colon \mathcal{E}\to M$.

\begin{TheoremA}[Theorem \ref{theorem_main_result_transport-functions} and Theorem \ref{theorem_isotopic_tr_functions_yield_isomorphic_bundles}]
	Let $(M,f,X)$ be Morse data and let $G$ be a metrizable topological group.
    \begin{enumerate}
        \item     A transport function $\{\Phi_{x,y}\colon \overline{\mathcal{L}}(x,y)\to G\}$ induces a $G$-principal bundle $q_{\Phi}\colon \mathcal{E}\to M$.
    Conversely, a $G$-principal bundle $\pi\colon E'\to M$ induces a transport function $\{\Phi_{\pi}\colon \overline{\mathcal{L}}(x,y)\to G\}$ such that the following holds:
    if $q\colon E\to M$ is a $G$-principal bundle, then the induced bundle $q_{\Phi_q}\colon \mathcal{E}\to M$ is isomorphic to $q$.
    \item If $G$ is path-connected, then two transport functions $\{\Phi_{x,y}\},\{\Psi_{x,y}\}$ induce isomorphic $G$-principal bundles $q_{\Phi}$ and $ q_{\Psi}$ if and only if $\{\Phi_{x,y}\}$ and $\{\Psi_{x,y}\}$ are isotopic transport functions.  
    Hence, there is a bijection between isotopy classes of $G$-valued transport functions on $\mathcal{M}_f$ and isomorphism classes of $G$-principal bundles over $M$.
    \end{enumerate}
\end{TheoremA}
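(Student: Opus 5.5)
The plan uses the cover of $M$ by unstable manifolds, or rather by slightly thickened neighbourhoods of the descending cells, indexed by $\mathrm{Crit}(f)$. The transition data is read off from broken flow lines.

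\textbf{From a transport function to a bundle.} For each $x\in\mathrm{Crit}(f)$ let $U_x$ be an open neighbourhood of the unstable manifold $W^u(x)$. It deformation retracts onto $W^u(x)$ and is chosen, via the Morse--Smale condition and the compactification of the flow category, so that every point $p\in U_x\cap U_y$ determines, in a continuous way, a broken flow line in $\overline{\mathcal{L}}(x,y)$ (or in $\overline{\mathcal{L}}(y,x)$). The point $p$ lies on a trajectory emanating from $x$, which we flow down and project. We then define $g_{xy}(p)=\Phi_{x,y}(u(p))$, or the inverse of $\Phi_{y,x}$ in the other orientation, with $g_{xx}=e$. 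The multiplicativity condition \eqref{eq_transport_conditionA} is exactly what turns this into a \v{C}ech cocycle $g_{xy}g_{yz}=g_{xz}$ on triple overlaps, because the broken line assigned to $p$ for $(x,z)$ is the concatenation of those for $(x,y)$ and $(y,z)$. Gluing $U_x\times G$ along $g$ gives $q_\Phi\colon\mathcal{E}\to M$. Metrizability of $G$ is used to extend maps continuously from the compact moduli spaces to these neighbourhoods, via Dugundji/Tietze-type extension.

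\textbf{From a bundle to a transport function.} Each $W^u(x)$ is an open cell, hence contractible, so $E|_{\overline{W^u(x)}}$ is trivial over the open cell. We want trivializations $s_x$ of $E$ over $U_x$ whose comparison along broken trajectories is compatible. We construct them inductively over the index, following Voigt. Suppose $s_y$ has been chosen for all $y$ of lower index. For a flow line $u$ from $x$ to $y$, set $\Phi_{x,y}(u)$ to be the transition $s_x(\gamma)^{-1}s_y(\gamma)$ evaluated at a point of $u$ near $y$. We must choose $s_x$ near the boundary of the compactified unstable manifold so that it agrees, on the ends, with the already chosen $s_y$ transported along the flow. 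This is what makes $\Phi$ independent of the evaluation point and multiplicative under breaking.

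Then $q_{\Phi_q}\cong q$ holds because the cocycle produced by the first construction from $\Phi_q$ is, by construction, the transition cocycle of the trivializations $\{s_x\}$ of $E$. Two bundles with the same cocycle on the same cover are isomorphic.

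\textbf{Part (2).} An isotopy of transport functions, meaning a family $\Phi^t$ of transport functions, yields a transport function on $M\times[0,1]$, or directly a bundle over $M\times[0,1]$. Homotopy invariance of principal bundles over the compact base then gives $q_{\Phi^0}\cong q_{\Phi^1}$. Conversely, suppose $q_\Phi\cong q_\Psi$. The two cocycles are then cohomologous via maps $h_x\colon U_x\to G$. Restricting the $h_x$ to the critical points gives constants $h_x(x)\in G$, and these yield a gauge change $\Phi'_{x,y}=h_x\Phi_{x,y}h_y^{-1}$. Path-connectedness of $G$ lets us connect each $h_x(x)$ to $e$, giving an isotopy from $\Phi'$ to $\Phi$. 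It then remains to show that $\Phi'$ and $\Psi$ are isotopic, by deforming $h_x$ along the contraction of $U_x$ to $x$. The bijection statement follows by combining parts (1) and (2).

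\textbf{Main obstacle.} The hardest step is the inductive construction of compatible trivializations in the second construction. The compactified unstable manifolds are manifolds with corners, and compatibility must hold on all corner strata simultaneously. This requires the associativity of gluing and a careful extension argument at each index, which is where I would invoke the structure results on $\overline{\mathcal{L}}(x,y)$ cited earlier. The converse in (2) is the next delicate point, since the gauge maps must respect the corner structure.
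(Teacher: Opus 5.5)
Your overall architecture matches the paper's: a cover indexed by $\mathrm{Crit}(f)$, a cocycle read off from $\Phi$, a transport function read off from trivializations, homotopy invariance, and contraction plus path-connectedness for the converse in (2). But the step that carries all the content fails as stated.

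\textbf{The cocycle identity does not follow from multiplicativity.} You claim it holds on $U_x\cap U_y\cap U_z$ because the line assigned to $p$ for $(x,z)$ is the concatenation of those for $(x,y)$ and $(y,z)$. With your assignment (the trajectory from $x$ through $p$) this is false, and such triple overlaps cannot be avoided. Let $v\circ v'$ be a broken trajectory $x\to y\to z$ and let $q\in v'$ be close to $z$. Then $q\in U_y\cap U_z$, and $q$ lies in the closure of $W^u(x)$. So this open set contains points $p\in W^u(x)\cap W^s(z)\subseteq U_x$ lying on an \emph{unbroken} $w\in\mathcal{L}(x,z)$ near $v\circ v'$. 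At such $p$ you compare $\Phi_{x,z}(w)$ with $\Phi_{x,y}(v_p)\,\Phi_{y,z}(v'_p)$, where $v_p,v'_p$ are the lines you assign for $(x,y)$ and $(y,z)$. Since $\Phi_{x,z}$ is unconstrained on the interior of $\overline{\mathcal{L}}(x,z)$, these differ in general.

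Appealing to a ``projection'' only helps if you construct one that lands exactly on the stratum broken at $y$ over the triple overlap, varies continuously, and is compatible with further breaking at every corner. That is precisely the missing idea. The paper supplies it as follows.
\begin{itemize}
\item It uses the system of separation functions $\rho^j_{x,y}$ on collars of $\partial\overline{\mathcal{L}}(x,y)$, with the restriction property (iv). These pick points $\pi(u,\rho^{i+1}_{x,y}(u))$ on each flow line that coincide with the points picked on the pieces when $u$ breaks.
\item The cover $V_z$ is built from slabs between consecutive separation functions, not from neighbourhoods of unstable manifolds.
\item The cocycle from $\Phi$ comes from an extended pre-transport function. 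Every factor except the lowest is a boundary value pulled back along the collar projection. The lowest factor $\mathring{\eta}^{i_0,i_1}_{x,y}$ is a correction that forces the ordered product to equal $\Phi(u)$.
\end{itemize}
Only then does the cocycle identity hold by construction; multiplicativity of $\Phi$ is used for breaking-compatibility and continuity.

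\textbf{The same gap appears in your second construction.} ``The already chosen $s_y$ transported along the flow'' has no meaning for a topological principal bundle, since no connection or lift of the flow is given. Moreover, $s_x^{-1}s_y$ evaluated at a point of $u$ near $y$ is neither independent of that point nor multiplicative under breaking. That would require the transitions to be constant along trajectories, which you never arrange. The paper instead takes arbitrary trivializations over the $V_z$ and defines $\Phi_q(u)$ as the ordered product of transition values at the separation-function points.

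Consequently $q_{\Phi_q}\cong q$ is not ``by construction''. The cocycle $\{\eta_{a,b}\}$ built from $\Phi_q$ in general differs from the transition cocycle $\{\psi_{a,b}\}$. The paper needs two explicit isotopies of cocycles plus homotopy invariance over $M\times[0,1]$:
\begin{itemize}
\item $\psi\simeq\omega$, by sliding the evaluation points;
\item $\omega\simeq\eta$, by an induction over the index difference that collapses the collar.
\end{itemize}

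\textbf{Part (2).} Deforming $h_x$ along the contraction yields a family of \emph{transport functions} only once $\Phi(u)$ is written as a product of cocycle values at breaking-compatible points along $u$. This again requires the separation functions.

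\textbf{Metrizability.} It does not give you Dugundji or Tietze extension into $G$; extension already fails for discrete $G$. No extension is needed in any case, because the cocycle is defined by pulling back along $\pi(u,s)\mapsto u$.
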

The construction of the first part was sketched by Voigt in \cite{voigt2014diss}.
We expand Voigt's construction and fill in many details in this article.
By an \emph{isotopy of transport function} between $\{\Phi_{x,y}\}$ and $\{\Psi_{x,y}\}$ we mean a collection of maps $\mathcal{H}_{x,y}\colon \overline{\mathcal{L}}(x,y)\times [0,1]\to G$ with $\mathcal{H}_{x,y}(\cdot ,0) = \Phi_{x,y}$, $\mathcal{H}_{x,y}(\cdot, 1) = \Psi_{x,y}$ and such that $\mathcal{H}_{x,y}(\cdot,\tau)$ satisfies condition \eqref{eq_transport_conditionA} for each $\tau\in [0,1]$.

Using a $G$-valued transport function $\Phi$ induced by a principal bundle $q\colon E\to M$ and a DG-module $\mathcal{F}_{\bullet}$ over $\mathrm{C}_{\bullet}(G)$ we define a DG Morse complex $\mathrm{C}_{\bullet}(M;(\Phi,\mathcal{F}_{\bullet}))$.
The twisting cocycle $\{m_{x,y}\in \mathrm{C}_{\bullet}(G)\}$ is obtained by applying the induced map of $\Phi_{x,y}\colon \overline{\mathcal{L}}(x,y)\to G$ to a chain $s_{x,y}\in \mathrm{C}_{\bullet}(\overline{\mathcal{L}}(x,y))$ representing the fundamental class of $\overline{\mathcal{L}}(x,y)$.
We show that isotopic transport functions $\Phi$ and $\Psi$ induce quasi-isomorphic chain complexes in Theorem \ref{theorem_isotopic_tr_induces_quasi-iso}.

The main source of $\mathrm{C}_{\bullet}(G)$-modules comes from group actions $\psi\colon F\times G\to F$.
In this situation one can construct the \emph{associated bundle} $F\times_G E\to M$ which is a fiber bundle over $M$ with typical fiber $F$.
Note that the unstable manifolds $W^u(x)$, $x\in\mathrm{Crit}(f)$ admit compactifications $\overline{W}^u(x) \cong \mathbb{D}^{|x|}$ and a map $g_x\colon \overline{W}^u(x)\to M$.
We consider sections of the pullback bundle $g_x^* E\to \overline{W}^u(x)$ which satisfy a boundary condition on $\partial \overline{W}^u(x)$ and which we call the \emph{compatible sections}.
In case that the compatible sections exist for all $x\in\mathrm{Crit}(f)$ we show the following.

\begin{TheoremB}[Theorem \ref{theorem_assoc_bundle}]
    Let $(M,f,X,o,\{s_{x,y}\})$ be Morse data and let $q\colon E\to M$ be a $G$-principal bundle.
    Let $\Phi\colon \mathrm{mor}(\mathcal{M}_f)\to G$ be a transport function for $q\colon E\to M$ and assume that compatible sections exist for all critical points $x\in \mathrm{Crit}(f)$.    
    Furthermore, let $F$ be a right $G$-space.
    Then there is a quasi-isomorphism $t\colon \mathrm{C}_{\bullet}(M;(\Phi,\mathrm{C}_{\bullet}(F)))\to \mathrm{C}_{\bullet}(F\times_G E)$.
\end{TheoremB}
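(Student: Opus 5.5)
We construct $t$ cell by cell from the compatible sections, and then prove it is a quasi-isomorphism by a filtration argument. Fix, for each $x\in\mathrm{Crit}(f)$, a compatible section $\sigma_x\colon \overline{W}^u(x)\to g_x^*E$. Using $\sigma_x$, the pullback bundle $g_x^*(F\times_G E)$ is trivialized as $\overline{W}^u(x)\times F$, via $(p,\xi)\mapsto [\xi,\sigma_x(p)]$. Composing with the canonical map to $F\times_G E$ gives maps
$$\Gamma_x\colon \overline{W}^u(x)\times F\to F\times_G E .$$
Fix also chains $s_x\in \mathrm{C}_{|x|}(\overline{W}^u(x))$ representing the fundamental class rel boundary, chosen compatibly with the $s_{x,y}$. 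Concretely, $\partial s_x=\sum_y \iota_{*}(s_{x,y}\times s_y)$ up to the sign conventions of the Morse data, where $\iota\colon \overline{\mathcal{L}}(x,y)\times \overline{W}^u(y)\to \partial\overline{W}^u(x)$ is the boundary inclusion. This is the same recursive choice used in \cite{barraud2025morse}.

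Define $t(\alpha\otimes x):=(\Gamma_x)_*(s_x\times \alpha)$ for $\alpha\in \mathrm{C}_{\bullet}(F)$, using the cubical cross product. The boundary condition defining compatible sections should state that on $\overline{\mathcal{L}}(x,y)\times\overline{W}^u(y)$ we have $\sigma_x(u,p)=\sigma_y(p)\cdot\Phi_{x,y}(u)^{\pm1}$, with the convention fixed earlier. Consequently $\Gamma_x\circ(\iota\times \mathrm{id}_F)$ factors as
$$(u,p,\xi)\mapsto \Gamma_y\bigl(p,\psi(\xi,\Phi_{x,y}(u))\bigr).$$
Computing $\partial t(\alpha\otimes x)=t(\partial\alpha\otimes x)\pm(\Gamma_x)_*(\partial s_x\times\alpha)$, and using that the cross product is compatible with the action map $\psi_*\colon \mathrm{C}_\bullet(F)\otimes \mathrm{C}_\bullet(G)\to \mathrm{C}_\bullet(F)$, the second term becomes $\sum_y t(\alpha\cdot m_{x,y}\otimes y)$ with $m_{x,y}=(\Phi_{x,y})_*s_{x,y}$. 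This is exactly the DG Morse differential, so $t$ is a chain map. The bookkeeping here is sign-checking only and I treat it as routine.

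To prove that $t$ is a quasi-isomorphism, filter both sides by the Morse index.
\begin{itemize}
\item On the DG side, let $F^p=\bigoplus_{|x|\le p}\mathrm{C}_\bullet(F)\otimes x$. This is a subcomplex, because $m_{x,y}$ is nonzero only when $|y|<|x|$ (or $y=x$ with $m_{x,x}=1$).
\item On the target side, let $F^p$ be the singular chains of $E_p:=$ the restriction of $F\times_G E$ to the union $M_p$ of $g_x(\overline{W}^u(x))$ with $|x|\le p$. This is a CW filtration of $M$ by the unstable-manifold cells.
\end{itemize}
The map $t$ respects these filtrations, so it induces a map of spectral sequences. On $E^1$ the DG side gives $\bigoplus_{|x|=p}H_\bullet(F)$. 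The target side gives $H_\bullet(E_p,E_{p-1})$. By excision and the trivializations $\Gamma_x$ this is $\bigoplus_{|x|=p}H_\bullet(\mathbb{D}^p\times F,\partial\mathbb{D}^p\times F)\cong\bigoplus_{|x|=p} H_{\bullet-p}(F)$, and $t$ induces the cross product with the fundamental class, which is an isomorphism. Both filtrations are bounded, since they have finitely many steps indexed by $0\le p\le\dim M$. The comparison theorem therefore yields that $t$ is a quasi-isomorphism. The singular and cubical chain models are identified by the standard natural quasi-isomorphism.

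The main obstacle is the excision step. One needs that $(M_p,M_{p-1})$ is a good pair and that $g_x$ restricted to the open cells is a homeomorphism onto its image; this is the Cohen--Jones--Segal/Qin-type result that the closures of unstable manifolds form a CW structure, which I would assume from the Morse data, where the gradient $X$ is Morse--Smale. Given that, a relative homeomorphism $(\mathbb{D}^p,\partial\mathbb{D}^p)\to(M_p,M_{p-1})$ on each cell lifts to the bundle via $\sigma_x$, and the relative homology comparison follows. If only the homotopy-theoretic statement is available, I would first replace $M_p$ by a neighbourhood deformation retracting onto it and use the bundle homotopy lifting property to transport the identification.
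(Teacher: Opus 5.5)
Your strategy is the paper's. There one sets $m_x=(\phi_x)_*(s_x)$ for a compatible representing chain system, defines $t(\alpha\otimes x)=p_*(\alpha\times m_x)$ with $p\colon F\times E\to F\times_G E$, and compares the index filtration with the filtration by preimages of skeleta on the $E^1$-page via $(\xi,a)\mapsto[\xi,\phi_x(a)]$.

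One step does not go through as you wrote it, and it is more than sign bookkeeping: the order of the factors in $t(\alpha\otimes x)=(\Gamma_x)_*(s_x\times\alpha)$.
\begin{itemize}
\item The Leibniz rule gives $\partial t(\alpha\otimes x)=(\Gamma_x)_*(\partial s_x\times\alpha)+(-1)^{|x|}\,t(\partial\alpha\otimes x)$. The DG differential, however, carries the coefficient $+1$ on $\partial\alpha\otimes x$.
\item More seriously, $(\Gamma_x)_*(\partial s_x\times\alpha)$ is pushed forward from $\overline{\mathcal{L}}(x,y)\times\overline{W}^u(y)\times F$. By contrast, $t(\alpha\cdot m_{x,y}\otimes y)=(\Gamma_y)_*\bigl(s_y\times\psi_*(\alpha\times m_{x,y})\bigr)$ is pushed forward from $\overline{W}^u(y)\times F\times\overline{\mathcal{L}}(x,y)$.
\end{itemize}
Identifying these two terms requires permuting the coordinates of cubes. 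In normalized cubical chains a cube with permuted coordinates is a different generator. So swapping cross-product factors is only a natural chain homotopy, not an identity up to sign. As written, your $t$ is a chain map only up to homotopy.

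The repair is to put the fiber chain first: $t(\alpha\otimes x)=p_*\bigl(\alpha\times(\sigma_x)_*s_x\bigr)$, viewing $\sigma_x$ as a map $\overline{W}^u(x)\to E$ covering $g_x$. Then:
\begin{itemize}
\item The Leibniz sign is $(-1)^{|\alpha|}$, as in the DG differential.
\item The boundary term comes from $F\times\overline{\mathcal{L}}(x,y)\times\overline{W}^u(y)$, which maps to $F\times G\times E$ in the right order.
\item Three facts then give $t(\alpha\cdot m_{x,y}\otimes y)$ on the nose: strict associativity of the cubical cross product, the boundary condition $\sigma_x(u,a)=\Phi_{x,y}(u)\cdot\sigma_y(a)$ (the paper uses a left action, so your exponent is $+1$), and $[\xi,g\cdot e]=[\xi\cdot g,e]$.
\end{itemize}
With this change the rest of your argument is exactly the paper's proof: the filtration by index, the identification of $H_\bullet(E_p,E_{p-1})$ through the trivializations and the cross product with $[s_x]$, and the comparison theorem for bounded filtrations. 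Your remarks on excision and good pairs address points the paper takes for granted from the CW structure given by the closed unstable manifolds.
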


The compatible sections exist in many situations.
In particular if $\Phi\colon \mathrm{mor}(\mathcal{M}_f)\to G$ is a transport function for a smooth $G$-principal bundle with $G$ a Lie group, then compatible sections exist, see Corollary \ref{cor_lie_group_all_comp_sections_exist}.

In the case of a smooth $G$-principal bundle $q\colon E\to M$ with a given principal connection $\omega$ on $E$ we can use the holonomy with respect to $\omega$ to define a transport function $\Phi^{\omega}\colon \mathrm{mor}(\mathcal{M}_f)\to G$.
This map is essentially given by actual parallel transport along the flow lines.
It is a natural question how this geometrically defined transport function can be compared to the transport functions from Section \ref{sec_transport_function}.
We show the following.

\begin{TheoremC}[Theorem \ref{theorem_parallel_transport_induces_og_bundle} and Corollary \ref{cor_equivalence_to_parallel_transport_function}]
    Let $(M,f,X)$ be Morse data and let $G$ be a Lie group.
    Assume that $q\colon E\to M$ is a smooth $G$-principal bundle with connection $\omega$.
    Then the principal $G$-bundle induced by the transport function $\Phi^{\omega}\colon \mathrm{mor}(\mathcal{M}_f)\to G$ defined through parallel transport is isomorphic to the bundle $q\colon E\to M$.
    Hence, any transport function $\Phi\colon \mathrm{mor}(\mathcal{M}_f)\to G$ which induces the isomorphism class of $q$ is equivalent to $\Phi^{\omega}$ as a transport function.
\end{TheoremC}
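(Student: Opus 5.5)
The plan is to prove Theorem C by realising the cocycle built from $\Phi^{\omega}$ as the transition cocycle of $q$ itself, with respect to trivialisations obtained by parallel transport along the flow. The second sentence then follows from Theorem A(2).

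\textbf{Step 1: the recipe behind $q_\Phi$.} I first recall how Theorem A(1) constructs $q_\Phi\colon \mathcal{E}\to M$ from a transport function $\Phi$. I expect it to work as follows. Each point $p\in M$ lies on a unique flow line. Flowing forward, $p$ converges to a critical point $y$, and $p$ lies in $W^u(x)\cap W^s(y)$. Near a critical point $y$, a stable-manifold-type open set $U_y$ is trivialised over the fibre at $y$. The transition function $g_{xy}$ on overlaps is given by evaluating $\Phi_{x,y}$ on the (broken) flow line through the point. The cocycle condition is exactly the composition rule \eqref{eq_transport_conditionA}.

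\textbf{Step 2: the parallel-transport trivialisations.} For $\Phi^{\omega}$, I fix frames $e_x\in E_x$ at each critical point. Over the cover used in Step 1, I define local sections $\sigma_y$ by parallel transporting $e_y$ backwards along the flow line from $y$ to $p$. Then I check two things.
\begin{enumerate}
\item These sections are continuous, including on the compactified pieces where flow lines break. This should reduce to continuity of $\Phi^{\omega}$ on $\overline{\mathcal L}(x,y)$, which is presumably already established in the paper.
\item Their ratio on overlaps equals $\Phi^{\omega}_{x,y}$ evaluated on the relevant flow line. This holds because parallel transport is a groupoid homomorphism: the transport along a concatenated path is the product of the transports along its pieces.
\end{enumerate}
Hence $q$ admits trivialisations whose transition cocycle is precisely the cocycle defining $q_{\Phi^\omega}$. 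Two principal bundles built from the same cocycle are isomorphic, which gives $q_{\Phi^\omega}\cong q$.

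\textbf{Step 3: the second statement.} Let $\Phi$ be any transport function inducing the isomorphism class of $q$. Then $q_\Phi\cong q\cong q_{\Phi^\omega}$. By Theorem A(2), $\Phi$ and $\Phi^{\omega}$ are isotopic, i.e.\ equivalent as transport functions. This uses path-connectedness of $G$, which is the hypothesis of Theorem A(2). If "equivalent" in the statement is meant more broadly, for instance allowing conjugation by elements $g_x$ at each critical point to handle non-connected $G$, then I would instead show directly that the cocycles differ by a coboundary. That argument follows the bijection proof in Theorem A(2).

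\textbf{Main obstacle.} The hardest part will be the behaviour near critical points in Step 2. Parallel transport along flow lines that converge to a critical point is only well defined as a limit over infinite time. Its convergence, and its continuity as the flow line approaches a broken one, require the Morse–Smale and local-linearity assumptions on $X$ near critical points.

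I would handle this in three moves:
\begin{enumerate}
\item Reparametrise the flow lines by the value of $f$, so that every flow line has finite length.
\item Use the exponential decay of $|X|$ near critical points to get uniform convergence of the parallel-transport ODE.
\item Deduce continuity of the sections $\sigma_y$ from continuity of solutions of this ODE with respect to the path, in the $C^0$ topology on rectifiable paths of bounded length.
\end{enumerate}
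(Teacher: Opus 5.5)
Your Step~2 does not produce local trivializations, and the argument breaks there. Parallel transport ``backwards along the flow line from $y$ to $p$'' is only defined for $p\in W^s(y)$. The stable manifolds are pairwise disjoint and are not open unless $y$ is a minimum. So the domains of your sections $\sigma_y$ neither form an open cover nor have any overlaps.

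All overlaps must come from thickening. For $p$ in a thickened neighborhood of $W^s(y)$, the flow line through $p$ does not pass through $y$: it lies in some $\mathcal{L}(x',y')$ and only passes near $y$ and near the other critical point indexing the overlap. There is then no path to transport along, and ``$\Phi$ evaluated on the flow line through the point'' is not a function attached to that overlap. Cutting a flow line into pieces attached to the individual critical points it passes near is exactly what the separation functions of Section~\ref{subsec_separation_functions} do.

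For the same reason your guessed recipe in Step~1 is not well defined, and it is not how the paper builds $q_\Phi$. The paper's cover is $\{V_z\}$ from Lemma~\ref{lemma_contractibility_of_Rz}. The cocycle $\eta_{a,b}$ of Lemma~\ref{lemma_transport_induces_ept} equals $\Phi(u)$ only away from the collars. Near broken flow lines it is defined recursively, via the collar projection and a remainder factor $(\cdots)^{-1}\Phi(u)$. You give no construction of trivializations of $q$ whose transition functions reproduce this cocycle exactly. So the claim that the two cocycles coincide is the unsupported step.

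The missing idea is to compare transport functions up to isotopy instead of matching cocycles exactly. The paper proceeds as follows.
\begin{enumerate}
\item It trivializes $E|_{V_z}$ by parallel transporting $e_0$ along $\gamma_{x_0\to z}\circ\Gamma_z(p)$. Here $\Gamma_z(p)$ comes from a smoothed deformation retraction of the contractible set $V_z$ onto $z$.
\item These are genuine trivializations. So the round-trip part of Theorem~\ref{theorem_main_result_transport-functions} gives $q_{\Phi_\psi}\cong q$ for the induced transport function $\Phi_\psi$.
\item Unwinding the construction, $\Phi_\psi(u)$ is the holonomy along $u$, interrupted at each intermediate critical point $c$ by a detour $\overline{\Gamma}_c(p_1)\circ\Gamma_c(p_0)$.
\item Sliding $p_0$ and $p_1$ together along $u$ cancels the detours. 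This yields an isotopy $\Phi_\psi\simeq\Phi^\omega$.
\item Theorem~\ref{theorem_isotopic_tr_functions_yield_isomorphic_bundles}(1), which needs no connectedness, then gives $q_{\Phi^\omega}\cong q$.
\end{enumerate}

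Your Step~3 is fine in its fallback form. For a possibly disconnected Lie group, ``equivalent'' means $\sim_{\mathrm{tr}}$, and Corollary~\ref{cor_same_bundle_equialent_tr_func} supplies exactly that. Your concern about the regularity of $\gamma_u$ at critical endpoints is legitimate but secondary to the gap above.
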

The notion of equivalence of transport functions which we use here is explained in Section \ref{subsec_equivalence}. In case that $G$ is connected this equivalence relation is the same as the isotopy relation which was introduced above.

Next, we want to compare the DG Morse complex with coefficients in a $\mathrm{C}_{\bullet}(G)$-module to the DG Morse complexes studied in \cite{barraud2025morse}.
In \cite{barraud2025morse} the twisting cocycle $m_{x,y}\in \mathrm{C}_{\bullet}(\Omega M)$ is defined by collapsing a tree $\mathcal{Y}$ in $M$ and therefore certain paths occurring in the construction are not guaranteed to be smooth.
Instead of a twisting cocycle in the DGA $\mathrm{C}_{\bullet}(\Omega M)$ we consider the closely related DG category with objects $\mathrm{Crit}(f)$ and morphism spaces the chain groups $\mathrm{C}_{\bullet}(P'_{x\to y} M)$ for $x,y\in\mathrm{Crit}(f)$ where $P'_{x\to y}M$ is the space of piecewise smooth paths between $x$ and $y$ in $M$.
Let $q\colon E\to M$ be a fibration with transitive lifting function $\Xi$.
We consider a chain complex $       \mathrm{C}_{\bullet}'(M; E) := \bigoplus_{x\in\mathrm{Crit}(f)} \mathrm{C}_{\bullet}(E_x)\otimes R\langle x\rangle .  $
The differential is defined as in \cite{barraud2025morse} but the twisting cocycle lives in the morphism spaces $\mathrm{C}_{\bullet}(P'_{x\to y}M)$ instead of $\mathrm{C}_{\bullet}(\Omega M)$.
If $q\colon E\to M$ is a smooth $G$-principal bundle and $F$ is a right $G$-space then the fibration $F\times_G E\to M$ admits a transitive lifting function induced by parallel transport in $E$.
We can therefore compare the chain complex $\mathrm{C}_{\bullet}'(M;F\times_G E)$ with the complex $\mathrm{C}_{\bullet}(M;(\Phi^{\omega},\mathrm{C}_{\bullet}(F)))$ and show the following.
\begin{TheoremD}[Theorem \ref{theorem_isomorphic_complexes_connection}]
    Let $(M,f,X,o,\{s_{x,y}\})$ be Morse data and let $q\colon E\to M$ be a smooth $G$-principal bundle with principal connection $\omega$.
    Let $F$ be a right $G$-space and let $\mathcal{E} = F\times_G E\to M$ be the associated bundle.
    The chain complexes $\mathrm{C}_{\bullet}(M;(\Phi^{\omega},\mathrm{C}_{\bullet}(F)))$ and $\mathrm{C}_{\bullet}'(M;\mathcal{E})$ are isomorphic.
\end{TheoremD}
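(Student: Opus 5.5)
The plan is to write down an explicit isomorphism degree by degree and then check that it intertwines the two differentials, both of which are built from the same Morse data $(M,f,X,o,\{s_{x,y}\})$.

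\textbf{Step 1: identify the underlying modules.} For each critical point $x$ the parallel transport of $\omega$ gives no preferred trivialization, so we fix a point $e_x\in E_x$. I expect these to be the same basepoints used in defining $\Phi^{\omega}$. The map $F\to \mathcal{E}_x=F\times_G E$, $p\mapsto [p,e_x]$, is then a homeomorphism. Its induced map on cubical chains gives
$$\iota_x\colon \mathrm{C}_{\bullet}(F)\otimes R\langle x\rangle\to \mathrm{C}_{\bullet}(\mathcal{E}_x)\otimes R\langle x\rangle .$$
Summing over $x$ yields an isomorphism of graded modules $\iota\colon \mathrm{C}_{\bullet}(M;(\Phi^{\omega},\mathrm{C}_{\bullet}(F)))\to \mathrm{C}'_{\bullet}(M;\mathcal{E})$. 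The signs coming from the orientation data $o$ are identical on both sides, since both complexes use the same $s_{x,y}$ and the same sign conventions.

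\textbf{Step 2: match the twisting terms.} In $\mathrm{C}'_{\bullet}(M;\mathcal{E})$ the differential on $\alpha\otimes x$ is $\partial\alpha\otimes x$ plus terms $\pm\Xi_*(\alpha\otimes \Gamma_*(s_{x,y}))\otimes y$. Here $\Gamma\colon\overline{\mathcal{L}}(x,y)\to P'_{x\to y}M$ parametrizes broken flow lines as piecewise smooth paths, and $\Xi\colon \mathcal{E}_x\times P'_{x\to y}M\to\mathcal{E}_y$ is the lifting function induced by parallel transport. Up to the conventions of the paper, the key identity to establish is the commutativity of
$$\Xi([p,e_x],\Gamma(u)) = [p\cdot \Phi^{\omega}_{x,y}(u), e_y],\qquad u\in\overline{\mathcal{L}}(x,y).$$
Granting this identity, the DG-module structure on $\mathrm{C}_{\bullet}(F)$ (the Eilenberg--Zilber/cross product followed by $\psi_*$) is natural with respect to cubical chains. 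It follows that $\iota_y(\alpha\cdot \Phi^{\omega}_{x,y,*}(s_{x,y})) = \Xi_*(\iota_x(\alpha)\otimes\Gamma_*(s_{x,y}))$ at the chain level, and hence $\iota$ is a chain map. Since $\iota$ is bijective, it is an isomorphism.

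\textbf{Step 3, the main obstacle: prove the identity at the level of spaces.} On the principal bundle, parallel transport along $\Gamma(u)$ sends $e_x$ to $e_y\cdot g$ for a unique $g\in G$, and the definition of $\Phi^{\omega}$ should be exactly this $g$. The associated bundle $F\times_G E$ carries the induced transport $[p,e]\mapsto[p,P_\gamma e]$, which gives $[p,e_y g]=[pg,e_y]$. The delicate points are the following.
\begin{enumerate}
\item[(i)] The identity must hold on all of $\overline{\mathcal{L}}(x,y)$, including broken trajectories. Here we use that parallel transport is multiplicative under concatenation, and that the paper's definition of $\Phi^{\omega}$ on broken lines is via the gluing relation \eqref{eq_transport_conditionA}.
\item[(ii)] Parametrization conventions must agree. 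If $\Gamma$ uses a reparametrization by $f$-values, then parallel transport is invariant under reparametrization, so this is harmless.
\item[(iii)] Left versus right action conventions must agree, possibly requiring $g$ to be replaced by $g^{-1}$ according to how the paper sets up $\Phi^{\omega}$.
\end{enumerate}
I would check (iii) first against the definition of $\Phi^{\omega}$, and then reduce everything to this pointwise identity.
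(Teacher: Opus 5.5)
Your proposal is correct and is essentially the paper's proof. The paper's isomorphism $\eta$ is built from $\rho_x([f,e])=\tau^{-1}(\mathring{\Xi}([f,e],\gamma_{x\to x_0}))$, whose inverse is exactly your map $p\mapsto[p,e_x]$ with $e_x:=\mathcal{P}^{\omega}_{\gamma_{x_0\to x}}(e_0)$; it then verifies the same pointwise identity of maps $\mathcal{E}_x\times\overline{\mathcal{L}}(x,y)\to F$. You must make this choice of $e_x$ explicitly, since $\Phi^{\omega}$ is built from the single point $e_0$ and the tree paths: an arbitrary choice $e_x=h_x\cdot\mathcal{P}^{\omega}_{\gamma_{x_0\to x}}(e_0)$ would give your identity only for the c-equivalent function $u\mapsto h_x\,\Phi^{\omega}_{x,y}(u)\,h_y^{-1}$.

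With that choice your concerns (i)--(iii) disappear. First, $\Phi^{\omega}_{x,y}(u)=\mathrm{hol}^{\omega}_{e_0}(Q_{x,y}(u))$ is defined directly on broken lines, so no gluing argument is needed. Second, $\zeta_{x,y}(u)=\sigma_u$ is literally the path $\overline{\gamma_u}$ occurring in $Q_{x,y}(u)$. Third, with the left principal action one gets $\mathcal{P}^{\omega}_{\sigma_u}(e_x)=\Phi^{\omega}_{x,y}(u)\cdot e_y$ and $[p,g\cdot e_y]=[p\cdot g,e_y]$, with no inversion. Writing every point of $\mathcal{E}_x$ as $[p,e_x]$ also spares you the conjugation formula \eqref{equation_transformation_formula_holonomy} that the paper's computation uses.
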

Eventually, we turn to functoriality aspects of our constructions with a particular eye on shriek maps in the fiber of the DG Morse complexes $\mathrm{C}_{\bullet}(M;(\Phi,\mathrm{C}_{\bullet}(F)))$.
We show in Theorem \ref{theorem_gysin_map_chain_map} and Proposition \ref{prop_invariant_thom_cc_exists_for_Gamma} that in certain situations one can find a DG Morse chain model for shriek maps which enables the computation of the shriek map in homology via a spectral sequence, see Corollary \ref{cor_spectral_sequence}.
We conclude by discussing the delicate topic of pullbacks of transport functions and their relevance for direct maps in the base in the DG Morse complexes.

\medskip

This article is organized as follows.
In Section \ref{sec_preliminaries} we collect some facts about Morse theory as well as about principal and associated bundles. We also review the construction of Morse homology with differential graded coefficients by \cite{barraud2025morse}.
Section \ref{sec_transport_function} is devoted to the proof of Theorem A, i.e. the Morse-theoretic description of principal bundles through transport functions.
In Section \ref{sec_4} we introduce the DG Morse complex using transport functions and consider some examples and particular cases.
Theorem B is proved in Section \ref{sec_assoc_bundles} where we also discuss the existence of the compatible sections.
In Section \ref{sec_comparison} we consider smooth principal bundles as well as the transport functions induced by parallel transport. We prove Theorem C and compare the DG Morse complex which uses the transport function to a DG Morse complex in the style of \cite{barraud2025morse} which leads to the proof of Theorem D.
Finally, in Section \ref{sec_functoriality} we consider some aspects of the functoriality of our constructions.

\medskip

\noindent \textbf{Acknowledgments:} 
The author wants to thank Jonathan Clivio, Robin Riegel, Misha Temkin and Roland Voigt for helpful discussions and suggestions regarding this manuscript.

\medskip

\section{Preliminaries}\label{sec_preliminaries}

\subsection{Morse theory and flow categories}\label{subsec_morse_flow}

We recall the Morse theoretical concepts that we shall need in this paper.
We refer to \cite{audin2014morse} for a thorough introduction to Morse theory.
Let $M$ be a closed manifold and let $f\colon M\to \mathbb{R}$ be a Morse function with negative pseudo-gradient vector field $X$.
This means that the stable and unstable manifolds intersect transversely.
We call the triple $(M,f,X)$ \emph{Morse data} on $M$.
We shall write $\mathrm{Crit}(f)$ for the set of all critical points which we consider as a graded set and we shall sometimes write $\mathrm{Crit}_j(f)$ for the critical points of index $j\in \mathbb{N}_0$.
We shall write $|x| \in \mathbb{N}_0$ for the Morse index of a critical point.
For critical points $a,b\in \mathrm{Crit}(f)$ with $|b|<|a|$ we denote the moduli space of gradient flow lines by 
$$     \mathcal{L} (a,b) = W^u (a)\cap W^s(b) / \sim    $$
where $W^u(a)$ is the \emph{unstable manifold} of $a$, $W^s(b)$ is the \emph{stable manifold} of $b$ and the equivalence relation is induced by the free $\mathbb{R}$-action on the parametrized space of unbroken flow lines $W^u (a)\cap W^s(b)$.
We further note that $\mathcal{L}(a,b)$ admits a compactification 
$$    \overline{\mathcal{L}}(a,b) = \mathcal{L}(a,b) \,\cup \,   \bigcup_{\substack{c_1,\ldots, c_{\ell} \in \mathrm{Crit}(f)\\ |b| < |c_{\ell}|<\ldots < |c_1|  <|a|}}   \mathcal{L}(a,c_1)\times \mathcal{L}(c_1,c_2)\times \ldots \times \mathcal{L}(c_{\ell},b) ,   $$
see e.g. \cite{latour1994existence} and \cite{qin2010moduli}.
The space $\overline{\mathcal{L}}(a,b)$ can be endowed with the structure of a manifold with corners.
In this manuscript we will refer to elements of the compactification as \emph{flow lines} and if we want to stress that a flow line is unbroken we shall call it \emph{unbroken flow line}.

Note that Morse data $(M,f,X)$ can be used to define a category $\mathcal{M}_f$ with objects $\mathrm{ob}(\mathcal{M}_f) = \mathrm{Crit}(f)$ the set of critical points and the morphism spaces being $\mathrm{mor}_{\mathcal{M}_f} (a,b) = \overline{\mathcal{L}}(a,b)$ for $a,b\in\mathrm{Crit}(f)$ with $|b|<|a|$.
This category is a topological category.
We say that a (broken) flow line $u = u_1\circ u_2\circ \ldots \circ u_{\ell}\in \overline{\mathcal{L}}(x,y)$ is \emph{maximally decomposed} if none of the flow lines $u_i$, $i\in\{1,\ldots,\ell\}$ are the constant flow line and if every flow line $u_i$ is unbroken for $i\in\{1,\ldots, \ell\}$.
Hence, without adding identity morphisms a maximally decomposed flow line $u$ cannot be decomposed further.
 
We shall now see that the homeomorphism type of the manifold $M$ can be recovered by gluing together the compactified moduli spaces.
Let $(M,f,X)$ be Morse data.
We reparametrize the flow lines as follows.
Let $u\in \overline{\mathcal{L}}(x,y)$ with $|x| \geq |y|$.
We define $\gamma_u\colon [f(y),f(x)]\to M$ by
$$    \gamma_u(t) = \overline{u} \cap f^{-1}( \{  t \})\quad \text{for}\,\,\, t\in [f(y),f(x)]    $$
where $\overline{u}$ denotes the closure of $\mathrm{im}(u)$.
Note that we have $f\circ \gamma_u(t) = t$ and thus the values of $f$ increase along $\gamma_u$, i.e. we run in opposite direction to the negative gradient flow.
Also note that for $x = y$ and the trivial flow line $u_x\in \mathcal{L}(x,x)$ we have $\gamma_{u_x}\colon \{f(x)\}\to M, f(x)\mapsto x$.
The advantage of this parametrization is that for $u_1\in \overline{\mathcal{L}}(x,y)$ and $u_2\in \overline{\mathcal{L}}(y,z)$ with $|x| > |y| > |z|$ we have $\gamma_{u_1}\colon [f(y),f(x)]\to M$ and $\gamma_{u_2}\colon [f(z),f(y)]\to M$ and therefore $\gamma_{u_1\circ u_2}$ satisfies
$$   \gamma_{u_1\circ u_2}\colon [f(z),f(x)]\to M,\quad \gamma_{u_1\circ u_2}(t) = \begin{cases}
    \gamma_{u_2}(t) , \quad f(z) \leq t \leq f(y) \\
    \gamma_{u_1}(t),  \quad f(y)\leq t \leq f(x) .
\end{cases}     $$

Define $N_0$ to be the discrete space
$$        N_0 =   \bigsqcup_{x\in\mathrm{Crit}(f)}  \{ (x, f(x)) \in \mathcal{L}(x,x) \times \mathbb{R}\}   .  $$
For $k\in\mathbb{N}$, $k\geq 1$ let
$$     N_k =  \bigsqcup_{\substack{x,y \in \mathrm{Crit}(f)\\ |x|-|y| = k }}  \overline{\mathcal{L}}(x,y) \times [f(y),f(x)]       $$
where $\overline{\mathcal{L}}(x,y)$ is equipped with the topology which makes it into a manifold with corners as discussed above.
We define an equivalence relation on the disjoint union 
$     \widetilde{K} = N_0\sqcup N_1 \sqcup \ldots \sqcup N_n   $
where $n = \mathrm{dim}(M)$.
Let $\sim$ be the equivalence relation generated by
$$     (u,f(x) )  \sim (x,f(x)) \quad \text{and}\quad (u,f(y)) \sim (y,f(y)) $$ for $u\in\overline{\mathcal{L}}(x,y)       $
as well as 
$   (u,t) \sim (u_i,t)$ for maximally decomposed $$u = u_1\circ u_2\circ \ldots \circ u_{\ell} \in \mathcal{L}(x,c_1)\times \mathcal{L}(c_1,c_2) \times \ldots \times \mathcal{L}(c_{{\ell-1}},y) \subseteq \overline{\mathcal{L}}(x,y)  $$ with $i\in \{1,\ldots, \ell\}$ and $ t\in [f(c_{i+1},f(c_i)] $.
Consider the continuous surjective map 
$$   \pi\colon \widetilde{K} \to M ,\quad (u,t) \mapsto  \gamma_u(t)  .    $$
One checks that $\pi(u,t) = \pi(v,s)$ if and only if $(u,t)\sim (v,s)$ for $(u,t),(v,s)\in\widetilde{K}$.
Therefore, $\pi$ induces a homeomorphism $K:=\widetilde{K} \,/\sim \xrightarrow[]{\cong} M$.
Moreover, for $i= 0,\ldots, n$ define spaces $\widetilde{K}_i = N_0\sqcup N_1\sqcup \ldots \sqcup N_i$.
We define $K_i = \pi(\widetilde{K}_i)$ and obtain a filtration $ \mathrm{Crit}(f) \cong K_0\subseteq K_1 \subseteq K_2\subseteq \ldots \subseteq K_n \cong M$.

\subsection{Morse homology with differential graded coefficients}\label{subsec_prelim_morse_DG}

In this subsection we summarize the construction of DG Morse homology in \cite{barraud2025morse}.
Let $R$ be a commutative unital ring.
We shall take cubical chains with coefficients in $R$ and we shall write $\mathrm{C}_{\bullet}(X)$ for the cubical chain complex $\mathrm{C}_{\bullet}^{\mathrm{cubical}}(X;R)$.
Recall that cubical chain complex is defined by quotienting out the degenerate cubes.

Let $(M,f,X)$ be Morse data as in the previous subsection.
As in \cite{barraud2025morse} we choose orientations $o_x$ of the unstable manifolds $W^u(x)$ and denote this collection of orientations by $o = \{o_x\}_{x\in\mathrm{Crit}(f)}$.  
We further choose a \emph{representing chain system} which is a collection of cubical chains $s_{x,y}\in \mathrm{C}_{|x|-|y|-1}(\overline{\mathcal{L}}(x,y))$ for $|y| < |x|$ such that $s_{x,y}$ represents the fundamental class of $(\overline{\mathcal{L}}(x,y),\partial\overline{\mathcal{L}}(x,y))$ and such that
$$      \partial s_{x,y}  =  \sum_{\substack{z \in \mathrm{Crit}(f)\\ |y| < |z|<|x|}}  (-1)^{|x|-|z|} s_{x,z}\times s_{z,y} .  $$
Barraud, Damian, Humilière and Oancea show that a representing chain system exists, see \cite[Proposition 5.6]{barraud2025morse} and that it is unique up to certain data, see \cite[Proposition 5.8]{barraud2025morse}.
The collection $(M,f,X,o,\{s_{x,y}\})$ will be called \emph{enhanced Morse data}.

\begin{definition}
    Let $\mathbf{R}_{\bullet}$ be a differential graded algebra over $R$ and let $(M,f,X)$ be Morse data.
     A collection of chains $\{m_{x,y}\in \mathbf{R}_{\bullet}\}_{x,y\in\mathrm{Crit}(f),|y|<|x|}$ is called a \emph{twisting cocycle} if for all $x,y\in\mathrm{Crit}(f)$ with $|y|<|x|$ we have
    \begin{equation}
       \label{eq_twisting_cocycle}     \partial m_{x,y} =     \sum_{\substack{z \in \mathrm{Crit}(f)\\ |y| < |z|<|x|}} (-1)^{|x|-|z|}  m_{x,z} \cdot m_{z,y}  .  
    \end{equation}
\end{definition}
Below we shall see that the representing chain system $\{s_{x,y}\}$ and certain maps out of the moduli spaces $\overline{\mathcal{L}}(x,y)$ will give rise to twisting cocycles.

Assume that we are given a twisting cocycle $\{m_{x,y}\}$ and that $\mathcal{F}_{\bullet}$ is a differential graded $\mathbf{R}_{\bullet}$ right-module via a map
$   \Psi\colon\mathcal{F}_{\bullet} \otimes \mathbf{R}_{\bullet} \to \mathcal{F}_{\bullet}     $.
We define the Morse complex with coefficients in $\mathcal{F}$ to be the family of $R$-modules
$$      \mathrm{C}_{\bullet}(M; \mathcal{F}_{\bullet}) :=    \mathcal{F}_{\bullet}  \otimes  R\langle \mathrm{Crit}_{\bullet}(f) \rangle      $$
together with the differential
$$    \partial ( \alpha \otimes x) :=   (\partial \alpha)\otimes x  + (-1)^{|\alpha|}  \sum_{\substack{y \in \mathrm{Crit}(f)\\ |y| < |x|}}    \Psi(\alpha\otimes m_{x,y}) \otimes y  $$
for $x\in\mathrm{Crit}(f)$ and $\alpha\in\mathcal{F}_{\bullet}$.
Using equation \eqref{eq_twisting_cocycle} one checks that this is indeed a differential.
We mention the \emph{canonical filtration} of the twisted complex, see \cite[Section 4.2]{barraud2025morse}.
The complex $\mathrm{C}_{\bullet} := \mathrm{C}_{\bullet}(M;\mathcal{F}_{\bullet})$ is filtered by
$$   F_p (\mathrm{C}_{\bullet}) := \bigoplus_{i\leq p} \mathcal{F}_{\bullet}\otimes R\langle \mathrm{Crit}_i(f)\rangle .    $$
The first page of the associated \emph{canonical spectral sequence} $E_{p,q}$ is given by
$$  E^1_{p,q} = \mathrm{H}_q(\mathcal{F}_{\bullet}) \otimes R\langle \mathrm{Crit}_p(f)\rangle  .  $$
Furthermore one introduces the \emph{lifted complex} given by
$    \widetilde{C}_i = \mathrm{H}_0(\mathbf{R}_{\bullet}) \otimes_R \mathrm{R}\langle \mathrm{Crit}_i(f) \rangle     $
which is an $\mathrm{H}_0(\mathbf{R}_{\bullet})$-left module equipped with the differential $\delta( x) = \sum_{|y| =|x|-1}   [m_{x,y}] x $ with $[m_{x,y}]\in \mathrm{H}_0(\mathbf{R}_{\bullet})$.
Barraud, Damian, Humilière and Oancea show in \cite[Lemma 4.3]{barraud2025morse} that the second page of the canonical spectral sequence of the DG Morse complex $\mathrm{C}_{\bullet}(M;\mathcal{F}_{\bullet})$ can be described as follows.
Consider $\mathrm{H}_{\bullet}(\mathcal{F}_{\bullet})$ as an $\mathrm{H}_0(\mathbf{R}_{\bullet})$-module and fix $q\in \mathbb{N}_0$. 
The complex
$     \mathrm{H}_{q}(\mathcal{F}_{\bullet})\otimes_{\mathrm{H}_0(\mathbf{R}_{\bullet})} \widetilde{C}_{\bullet}      $ 
has the differential
$  d([\alpha]\otimes x) = \sum_{|y| = |x|-1} [\alpha] \cdot [m_{x,y}]\otimes x    $ and there is an isomorphism $$E^2_{p,q} \cong \mathrm{H}_p(\widetilde{C}_{\bullet}; \mathrm{H}_q(\mathcal{F}_{\bullet})) .  $$

In \cite{barraud2025morse} the authors mainly consider the DGA $\mathbf{R}_{\bullet} = \mathrm{C}_{\bullet}(\Omega M)$ with $\Omega M$ the based Moore loop space.
Moreover, the typical example of DG modules $\mathcal{F}_{\bullet}$ is obtained from fibrations.
We shall first recall how the twisting cocycle in \cite{barraud2025morse} is constructed and then summarize how the DG modules over $\mathrm{C}_{\bullet}(\Omega M)$ arise from fibrations.

We consider the \emph{evaluation maps} $q_{x,y}\colon \overline{\mathcal{L}}(x,y)\to \Omega M $ as in \cite{barraud2025morse}.
Fix a basepoint $x_0\in M$ and let $\mathcal{Y}$ be a tree in $M$ with root the basepoint $x_0$ and with vertices the critical points $\mathrm{Crit}(f)$.
Let $p\colon M\to M/\mathcal{Y}$ be the canonical map to the quotient $M/\mathcal{Y}$ which collapses the tree $\mathcal{Y}$.
This is a homotopy equivalence and hence there is a homotopy inverse $\theta\colon M/\mathcal{Y}\to M$.
Let $\Gamma_{x,y}\colon \overline{\mathcal{L}}(x,y)\to C^0([0,f(x)-f(y)], M)$ be the map given by associating to $u\in \overline{\mathcal{L}}(x,y)$ the corresponding flow line parametrized as $\Gamma_{x,y}(u)(t) = \overline{u}\cap f^{-1}(\{f(x)-t\})$.
This induces a map $q_{x,y}\colon \overline{\mathcal{L}}(x,y)\to \Omega M$ by setting
$    q_{x,y}(u) =  \theta \circ p \circ (\Gamma_{x,y}(u)) .    $
For a broken flow line $u = u_1\circ u_2$ with $u_1\in \overline{\mathcal{L}}(x,z)$, $u_2\in \overline{\mathcal{L}}(z,y)$ we have
\begin{equation}\label{eq_transport_condition_evaluation_maps}
       q_{x,y}(u_1 \circ u_2 ) =q_{x,z}(u_1)\circ q_{z,y}(u_2) .   
\end{equation}
The chains
$$    m_{x,y}^{\Omega} := (q_{x,y})_* (s_{x,y})\in \mathrm{C}_{|x|-|y|-1}(\Omega M)        $$
yield a twisting cocycle where $s_{x,y}\in \mathrm{C}_{\bullet}(\overline{\mathcal{L}}(x,y))$ is the representing chain system as above.
The DG Morse homology with coefficients in a DG $\mathrm{C}_{\bullet}(\Omega M)$-module is then defined with the twisting cocycle $m^{\Omega}_{x,y}$.

Next, we recall from \cite{barraud2025morse} how DG modules over the DGA $\mathrm{C}_{\bullet}(\Omega M)$ arise from fibrations.
Let $q\colon \mathcal{E}\to M$ be a Hurewicz fibration.
Denote by $\mathcal{P}M$ and $\mathcal{P}E$ be the Moore path spaces of $M$ and $\mathcal{E}$, respectively. 
A \emph{lifting function} is a map
$$   \xi\colon \mathcal{E} \tensor[{_{q}}]{{\times}}{_{\mathrm{ev}_0}}  \mathcal{P}M \to \mathcal{P}\mathcal{E}  \quad \text{such that}\quad q\circ \xi = \mathrm{pr}_2 \quad\text{and}\quad \mathrm{ev}_0\circ \xi = \mathrm{pr}_1 .  $$
A \emph{transitive lifting function} is a map $$\Xi\colon \mathcal{E}\tensor[{_{q}}]{{\times}}{_{\mathrm{ev}_0}}  \mathcal{P}M\to \mathcal{E}  \quad \text{such that}\,\, q\circ \Xi = \mathrm{ev}_1\circ \mathrm{pr}_2  $$ 
and such that the following holds. For each $e\in \mathcal{E}$ we have $\Xi(e, c_{q(e)}) = e$ for $c_{q(e)}\in \mathcal{P}M$ the constant path at $q(e)\in M$ and for concatenable paths $\gamma,\sigma\in \mathcal{P}M$ we have
$   \Xi(\Xi(e,\gamma),\sigma) = \Xi(e, \gamma\circ \sigma) .    $
It is shown in \cite{dyer1969some} that any fibration is fiber homotopy equivalent to a fibration that admits a transitive lifting function.
Note further that for the fiber $F = \mathcal{E}_{x_0}$ over the basepoint $x_0\in M$ a transitive lifting function restricts to a map 
$$    \Xi|_{F\times \Omega M}\colon F\times \Omega M\to F    $$
which is an $\Omega M$ action.
This induces a $\mathrm{C}_{\bullet}(\Omega M)$ module structure on $\mathrm{C}_{\bullet}(F)$.
In this setup Barraud, Damian, Humilière and Oancea show the following.
\begin{theorem}[Theorem 7.2 \cite{barraud2025morse}]
    Let $(M,f,X,o,\{s_{x,y}\})$ be enhanced Morse data and let $\mathcal{Y}$ be a rooted tree in $M$ with vertices $\mathrm{Crit}(f)$.
    Let $E\to M$ be a fibration with typical fiber $F$ and consider the $\mathrm{C}_{\bullet}(\Omega M)$-module $\mathrm{C}_{\bullet}(F)$ where the module structure is induced by a transitive lifting function.
    Then the homology of the DG Morse complex $\mathrm{C}_{\bullet}(M;\mathrm{C}_{\bullet}(F))$ is isomorphic to the singular homology of the total space $E$.
\end{theorem}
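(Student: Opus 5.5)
We want to show that the homology of $\mathrm{C}_{\bullet}(M;\mathrm{C}_{\bullet}(F))$ is isomorphic to $\mathrm{H}_{\bullet}(E)$. The plan is to build an explicit chain map from the DG Morse complex to the singular (cubical) chains of the total space, and to prove it is a quasi-isomorphism by comparing spectral sequences.

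\textbf{Reduction.} By \cite{dyer1969some} we may replace $E$ by a fiber homotopy equivalent fibration carrying a transitive lifting function $\Xi$. This does not change $\mathrm{H}_\bullet(E)$. It also does not change the module $\mathrm{C}_\bullet(F)$ up to quasi-isomorphism of $\mathrm{C}_\bullet(\Omega M)$-modules, and quasi-isomorphic modules give quasi-isomorphic DG Morse complexes by a filtration argument. We may therefore assume $\Xi$ exists.

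\textbf{Construction of the comparison map.} For each $x\in\mathrm{Crit}(f)$ consider the compactified unstable manifold $\overline W^u(x)\cong\mathbb D^{|x|}$ with its evaluation $g_x\colon \overline W^u(x)\to M$. A point of $\overline W^u(x)$ is a (possibly broken) flow line $\gamma$ from $x$ to that point. Concatenating with the image under $\theta\circ p$ gives, for each point, a path from $x_0$ along the collapsed tree to $x$ and then along $\gamma$. Transporting by $\Xi$ yields a map
$$\Theta_x\colon F\times \overline W^u(x)\to E,\qquad \Theta_x(e,w)=\Xi(e,\text{path}_w).$$
Choose a representing chain $s_x\in\mathrm C_{|x|}(\overline W^u(x))$ of the fundamental class whose boundary is $\partial s_x=\sum_y \pm\, s_{x,y}\times s_y$, compatible with the representing chain system. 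Such chains exist by the same inductive argument as \cite[Proposition 5.6]{barraud2025morse}. Define
$$t(\alpha\otimes x)=(\Theta_x)_*(\alpha\times s_x).$$
On the boundary face $\overline{\mathcal L}(x,y)\times\overline W^u(y)$ we have $\Theta_x(e,(u,w))=\Theta_y(\Xi(e,q_{x,y}(u)),w)$, by transitivity of $\Xi$ and equation \eqref{eq_transport_condition_evaluation_maps}. This identity makes $t$ a chain map; the signs are matched using the orientation conventions $o$.

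\textbf{Quasi-isomorphism.} Filter $\mathrm C_\bullet(E)$ by the preimages $E_p=q^{-1}(K_p)$, using the filtration of $M$ by the $K_p$ (equivalently, by sublevel sets containing the unstable manifolds of index at most $p$). Then $t$ is filtration preserving. On $E^1$, the pair $(E_p,E_{p-1})$ is, by excision and the local triviality of $q$ over each cell $W^u(x)\cong\mathbb R^{p}$, equivalent to $\bigsqcup_{|x|=p}F\times(\mathbb D^p,\partial\mathbb D^p)$. Hence
$$\mathrm H_\bullet(E_p,E_{p-1})\cong \bigoplus_{|x|=p}\mathrm H_{\bullet-p}(F),$$
and $t$ induces exactly this isomorphism via the cross product with $[s_x]$. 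By the comparison theorem for spectral sequences, bounded since $M$ is finite-dimensional, $t$ is a quasi-isomorphism.

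\textbf{Main obstacle.} I expect the main obstacle to be constructing the chains $s_x$ compatibly with $\{s_{x,y}\}$, and checking that $\Theta_x$ is continuous up to the boundary of $\overline W^u(x)$, including across the tree-collapse homotopy inverse $\theta$. I would handle this by defining $\Theta_x$ on the quotient model $K$ of Subsection~\ref{subsec_morse_flow}, where continuity across broken strata is built in.
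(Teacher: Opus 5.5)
Your overall plan is the one of \cite[Theorem 7.2]{barraud2025morse}, which the paper transfers in the proof of Theorem \ref{theorem_assoc_bundle}. It has three parts: a chain map $\alpha\otimes x\mapsto(\cdot)_*(\alpha\times s_x)$ built from a compatible representing chain system, a filtration of the total space by preimages of the Morse CW skeleta, and an isomorphism on $E^1$ with the Leray--Serre spectral sequence. The gap is in your $\Theta_x$.

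In the paper the analogue of $\Theta_x$ is $\chi_x(f,a)=p(f,\phi_x(a))$, and the argument uses two properties of $\phi_x$ at once: it covers $g_x$, and it satisfies the boundary relation \eqref{eq_compatible_section}. Your $\mathrm{path}_w$ delivers only one of them. Write $\lambda_w$ for the (possibly broken) trajectory from $x$ to $w$ (your $\gamma$) and $\#$ for Moore concatenation.

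\begin{itemize}
\item To get $\Theta_x(e,(u,w))=\Theta_y(\Xi(e,q_{x,y}(u)),w)$ from \eqref{eq_transport_condition_evaluation_maps}, you must take $\mathrm{path}_w=\theta p(\lambda_w)$. But this path ends at $\theta p(g_x(w))$, so $q\circ\Theta_x=\theta\circ p\circ g_x$. Since $\theta$ is just some homotopy inverse, nothing forces $\theta p(g_x(\overline W^u(x)))$ into the $|x|$-skeleton, or $\theta p(g_x(\partial\overline W^u(x)))$ into the $(|x|-1)$-skeleton. So the claims that $t$ is filtration preserving and induces the cross-product isomorphism on $E^1$ both fail.
\item If instead you use the honest tree path $\gamma_{x_0\to x}$ followed by $\lambda_w$, the endpoint is right but the boundary identity fails. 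The paths $\gamma_{x_0\to x}\#\overline{\gamma_u}$ and $q_{x,y}(u)\#\gamma_{x_0\to y}$ are different, and a transitive lifting function has no reason to identify their transports; it does not even cancel $\gamma\#\overline{\gamma}$.
\end{itemize}

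The missing step is a correction tail. Choose $\theta$ with $\theta(p(\mathcal Y))=x_0$ and a homotopy $H\colon\theta\circ p\simeq\mathrm{id}_M$, and set $\mathrm{path}_w=\theta p(\lambda_w)\# H(g_x(w),\cdot)$. The tail depends only on the endpoint, and $g_x(u,w)=g_y(w)$ on a boundary face, so $\mathrm{path}_{(u,w)}=q_{x,y}(u)\#\mathrm{path}_w$ still holds. Now $q\circ\Theta_x=g_x$, and $\Theta_x(\cdot,w)\colon F\to E_{g_x(w)}$ is transport along a path, hence a homotopy equivalence. That is exactly what the $E^1$ comparison needs.

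Three further points need correcting.

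\begin{itemize}
\item The sets $K_p$ of Subsection \ref{subsec_morse_flow} filter $M$ by the index difference of flow lines, not by skeleta. For the height function on $\mathbb{S}^n$ with $n\geq 2$ one has $K_0=\cdots=K_{n-1}=\{N,S\}$. So $\mathrm{H}_\bullet(q^{-1}(K_0))$ is two copies of $\mathrm{H}_\bullet(F)$, while $E^1_{0,\bullet}$ of the Morse complex has only one. Your \emph{equivalently} is false. You need $\mathrm{Sk}_p(M)=\bigcup_{|x|\leq p}g_x(\overline W^u(x))$, as in the proof of Theorem \ref{theorem_assoc_bundle}.
\item Continuity of $w\mapsto\mathrm{path}_w$ cannot be handled on $K\cong M$. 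The path depends on the broken trajectory through $w$, not only on the point $g_x(w)$, so it must be checked on $\overline W^u(x)$ with its compactified topology.
\item Drop the reduction step. The statement already supplies the transitive lifting function. Moreover, a fiber homotopy equivalence intertwines the $\Omega M$-actions only up to coherent homotopy, so it would not directly give a quasi-isomorphism of $\mathrm{C}_\bullet(\Omega M)$-modules.
\end{itemize}
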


\subsection{Principal bundles and their associated bundles}\label{subsec_principal_bundles}

In this subsection we review principal bundles and associated bundles.
While the contents of this subsection are standard, we include this subsection for completeness.
Moreover - in contrast to most of the literature - we consider principal bundles with left actions instead of right actions, see Remark \ref{remark_left-action} below, and we therefore want to spell out the basic definitions and constructions in order to avoid confusion.

\begin{definition}
    Let $B$ be a paracompact Hausdorff space and let $G$ be a topological group.
    A $G$-\emph{principal bundle} $q\colon E\to B$ is a fiber bundle such that there is a fiberwise left-action of $G$ on $E$ which is free and such that there are local trivializations $\tau_U\colon E|_{U}\to G\times U$ with the $G$-action being the obvious $G$-left action under the identification $\tau_U$.  
    Similarly, one defines a \emph{smooth} $G$\emph{-principal bundle} $q\colon E\to M$ for smooth manifolds $E$ and $M$, a Lie group $G$ and where the map $q$ as well as the action and the local trivializations are assumed to be smooth.
\end{definition}

\begin{remark}\label{remark_left-action}
    It is more common to define principal bundle via \emph{right actions}.
    For the associated bundles which we shall encounter below one then takes \emph{left actions}.
    We do not stick to this convention in this article for the following reason.
    Barraud, Damian, Humilière and Oancea \cite{barraud2025morse} consider right actions of $\Omega M$ on the fiber of a fibration.
    Since we are adapting some construction of \cite{barraud2025morse} to our setup and since we eventually want to compare the different DG Morse complexes in Section \ref{sec_comparison} we shall stick to considering principal bundles with left actions in this manuscript.
\end{remark}

Let $q\colon E\to B$ be a $G$-principal bundle and let $\varphi\colon F\times G\to F$ be a $G$-right action.
There is a diagonal left action $G\times (F\times E)\to F\times E$ given by
$$    (g, (f,u)) \mapsto  (f\cdot g^{-1}, g\cdot u) .   $$
This action is again free and the quotient $(F\times E)/G \to B, [f,u]\mapsto q(u)$ is a bundle over $B$ with typical fiber $F$ which we call the \emph{associated bundle}.
We shall write $F\times_G E := (F\times E)/G$ for the total space.
If a fiber bundle $X\to B$ arises as an associated bundle we call $G$ the \emph{structure group}.

\begin{example}
	Let $\pi\colon E\to B$ be a complex vector bundle of rank $k$ with a Hermitian fiber metric.
	It is well-known that the unitary frame bundle $U(E)\to B$ is a $U(k)$-principal fiber bundle.
	Consider the standard action $ \mathbb{C}^k \times U(k)\to \mathbb{C}^k$ given by $(z,A)\mapsto A^{-1}\cdot z$.
	The associated fiber bundle $ \mathbb{C}^k \times_{U(k)} U(E) \to B$ is naturally isomorphic to the original vector bundle $E\to B$.
	Moreover, one can view the projectivization of $E$ as the associated bundle $ \mathbb{C}P^{k-1}  \times_{U(k)} U(E)\to B$ where $ \mathbb{C}P^{k-1}\times U(k) \to \mathbb{C}P^{k-1}$ is the canonical action induced by the standard action $\mathbb{C}^k\times U(k)\to \mathbb{C}^k$.
\end{example}
\begin{example}\label{example_free_loop_space} 
	If $G$ is a Lie group and $H\subseteq G$ is a closed subgroup, then the canonical map $p\colon G\to H \backslash G =M $ to the homogeneous space $H\backslash G$ is a smooth $H$-principal fiber bundle.
	We want to see that the free loop space $L M = \mathrm{Map}(\mathbb{S}^1, M)$ can be understood as an associated bundle to this $H$-principal bundle.
	Note that there is a $G$-action $\psi\colon M \times G\to M$ given by $\psi([k],g) = [kg]$.
	This restricts to an $H$-action $\psi'\colon M\times H\to M$ which keeps the basepoint $p_0  =[e]\in M$ fixed.
	Consider the based loop space
	$    \Omega_{p_0} M = \{\gamma\in LM \,|\, \gamma(0) = p_0 \} .    $	
    There is an induced continuous action $\Psi\colon  \Omega_{p_0}M\times H\to \Omega_{p_0}M$ given by $\Psi(\gamma,h)(t) = \psi'(\gamma(t),h)$.
	We claim that the associated bundle $q\colon E =  \Omega_{p_0}M  \times_H G\to M$ is fiberwise isomorphic to the free loop fibration $ \mathrm{ev}\colon LM \to M, \,\mathrm{ev}(\gamma) = \gamma(0)$.
	Indeed, define maps $f\colon E\to LM$ and $h\colon LM\to E$ by 
	$$     f([\gamma,g])(t) =  \psi(\gamma(t),g)    , \qquad  
	    h(\sigma) =   [  \sigma g_{\sigma(0)}^{-1} , g_{\sigma(0)}]      $$
	for $g\in G,\gamma\in \Omega_{p_0}M $ and $\sigma\in LM$ where $g_{\sigma(0)}\in G$ is such that $[g_{\sigma(0)}] = \sigma(0)\in M$.
	One checks that $f$ and $h$ are well-defined fiberwise maps and both $f$ and $h$ are continuous and inverse to each other.
\end{example}
\begin{example}
    More generally, if $M$ is an arbitrary closed manifold, then the free loop fibration $\mathrm{ev} \colon LM\to M$ is a fiber bundle with structure group the basepoint-preserving diffeormorphisms $\mathrm{Diff}_{p_0}(M)$ and typical fiber $\Omega_{p_0}M$, see \cite[Theorem 1.1]{chataur2015basics}. 
\end{example}

We now recall the well-known local description of $G$-principal bundles via cocycles since this concept is crucial for the link between transport functions and principal bundles in the next section.
Let $q\colon E\to M$ be a $G$-principal bundle and let $\tau_U\colon E|_U \to G\times U$ and $\tau_V \colon E|_V\to G\times V$ be a local trivializations with $U\cap V\neq \emptyset$.
Since the trivializations are $G$-equivariant one checks that there is a map $\omega_{UV}\colon U\cap V\to G$ such that
$$   \tau_U \cap \tau_V^{-1} (g,p) = (g\cdot \omega_{UV}(p),p) \quad \text{for}\,\,\, g\in G,\,\,p\in U\cap V.     $$
Explicitly, we have $\omega_{UV}(p)  = \mathrm{pr}_1( \tau_U\circ \tau_V^{-1}(e,p))$ for $p\in U\cap V$.
Further one can see that on triple intersections $U\cap V\cap W\neq \emptyset$ the identity 
\begin{equation}\label{eq_cocycle_identity}
         \omega_{UV}(p ) = \omega_{WV}(p) \cdot \omega_{UW}(p), \quad p\in U\cap V\cap W 
\end{equation}
holds.
As mentioned above one usually considers principal bundles endowed with right-actions.
In this case the factors on the right hand side of equation \eqref{eq_cocycle_identity} appear in the opposite order.
Equation \eqref{eq_cocycle_identity} is often referred to as \emph{cocycle identity} the and it leads to the well-known definition of $G$-valued cocycles.

\begin{definition}
Let $\{U\}_{U\in\mathcal{U}}$ be an open cover of a topological space $B$ and let $G$ be a topological group.
\begin{enumerate}
    \item A system of maps $\{\omega_{UV}\colon U\cap V\to G\}_{U,V\in\mathcal{U}}$ satisfying equation \eqref{eq_cocycle_identity} is called a $G$\emph{-valued cocycle}.
    \item Two $G$-valued cocycles $\{\omega_{UV}\},\{\omega'_{UV}\}$ are called \emph{equivalent} if there is a family of maps $\{r_U\colon U\to G\}_{U\in\mathcal{U}}$ such that 
    $$     \omega_{UV}(p) =r_V(p) \cdot \omega'_{UV}(p) \cdot r_U(p)^{-1} \quad \text{for}\,\,\, p\in U\cap V.    $$
\end{enumerate}
\end{definition}
Again note that the order of the terms $r_V(p)$ and $r_U(p)$ is opposite to the usual convention.
This is due to our use of $G$-principal bundles with left actions.
The following result is the well-known characterization of isomorphism classes of principal bundles through $G$-valued cocycles.
\begin{theorem}
    Let $G$ be a topological group and let $B$ be a topological space.
    There is a bijection between equivalence classes of $G$-valued cocycles and isomorphism classes of $G$-principal bundles over $B$.
\end{theorem}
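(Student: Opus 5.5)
The statement is the classical classification of principal bundles by cocycles. Adapted to our left-action convention, the plan is to write down maps in both directions and check that they are mutually inverse on equivalence classes. One point must be fixed first: two cocycles on different open covers are compared by passing to a common refinement and restricting. So "equivalence classes of $G$-valued cocycles" means the direct limit over refinements of covers.

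\emph{From bundles to cocycles.} Given $q\colon E\to B$ with trivializations $\{\tau_U\}$, take $\omega_{UV}(p)=\mathrm{pr}_1(\tau_U\circ\tau_V^{-1}(e,p))$ as above. Equivariance of the trivializations gives $\tau_U\circ\tau_V^{-1}(g,p)=(g\,\omega_{UV}(p),p)$. The identity \eqref{eq_cocycle_identity} then follows by composing $\tau_U\tau_W^{-1}\tau_W\tau_V^{-1}$ and reading off the right multiplications. Now change the trivializations to $\tau'_U=(\cdot\, r_U)\circ\tau_U$, with $r_U\colon U\to G$ acting by right multiplication; this is still left-equivariant. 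The cocycle transforms by the stated conjugation formula, possibly after replacing $r_U$ by $r_U^{-1}$. An isomorphism of bundles $\phi\colon E\to E'$ carries trivializations of $E'$ to trivializations of $E$ with the same cocycle. Hence the equivalence class depends only on the isomorphism class.

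\emph{From cocycles to bundles.} Given $\{\omega_{UV}\}$, form
$$\mathcal{E}=\Bigl(\bigsqcup_{U} G\times U\Bigr)\Big/\sim,\qquad (g,p)_V\sim (g\,\omega_{UV}(p),p)_U .$$
Reflexivity, symmetry and transitivity follow from \eqref{eq_cocycle_identity}: the triple $U=V=W$ gives $\omega_{UU}=e$, and then $\omega_{VU}=\omega_{UV}^{-1}$. Left multiplication on the $G$-factor commutes with right multiplication by $\omega_{UV}$, so it descends to a free fiberwise action. The maps $G\times U\to\mathcal{E}|_U$ are homeomorphisms because the quotient map is open on each chart. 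This gives local trivializations whose transition functions are exactly $\omega_{UV}$. If $\omega'$ is equivalent to $\omega$ via $\{r_U\}$, then $(g,p)_U\mapsto (g\,r_U(p)^{\pm1},p)_U$ is compatible with the gluing and defines an isomorphism of the glued bundles.

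Finally, check that the two constructions are inverse. A bundle $E$ maps to the glued bundle built from its own cocycle by $x\mapsto(\tau_U(x))_U$, which is well defined by the definition of $\omega_{UV}$. Conversely, the cocycle of a glued bundle with its canonical charts is the original $\omega$.

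The only genuinely delicate point is the topological one: showing that the glued space is Hausdorff and locally homeomorphic to $G\times U$. This reduces to openness of the quotient map restricted to charts and to continuity of the group operations. The remaining work is bookkeeping of the left/right ordering conventions noted in Remark \ref{remark_left-action}.
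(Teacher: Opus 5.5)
Your proposal is correct and is the standard argument: the transition functions $\omega_{UV}(p)=\mathrm{pr}_1(\tau_U\circ\tau_V^{-1}(e,p))$ in one direction, and gluing $\bigsqcup_U G\times U$ along the cocycle in the other, which is exactly what the paper relies on, since it gives no proof of its own and cites \cite[Theorems 5.2.7 and 5.3.2]{husemoller1966fibre}. Both of your sign hedges resolve without inversion: $\tau'_U=(\cdot\,r_U)\circ\tau_U$ gives precisely $\omega_{UV}=r_V\,\omega'_{UV}\,r_U^{-1}$, and $(g,p)_U\mapsto(g\,r_U(p),p)_U$ is the compatible gluing map. Your reading of ``equivalence classes'' via common refinements is the right one, because for a single fixed cover the statement only classifies bundles that are trivial over each member of the cover; that is how the paper uses it, with the contractible cover $\{V_z\}$.
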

We refer to \cite[Theorem 5.2.7 and Theorem 5.3.2]{husemoller1966fibre} for a proof.
In order to go from a principal bundle to a cocyle one uses the transition functions $p\mapsto \tau_U\cap \tau_V^{-1}(e,p)$ as above.

\medskip
Note that in the topological category every fiber bundle over a paracompact space appears as an associated bundle.
In fact, if $X\to B$ is a fiber bundle with typical fiber $F$, then the local trivializations can be used to define a $\mathrm{Homeo}(F)$-valued cocycle and thus a $\mathrm{Homeo(F)}$-principal bundle $q\colon E\to B$.
The corresponding associated bundle $F\times_{\mathrm{Homeo(F)}} E\to B$ is isomorphic as a fiber bundle to $X\to B$, see \cite{steenrod1999topology}.
If $X\to M$ is a smooth fiber bundle, then the transition functions induce a $\mathrm{Diffeo(F)}$-principal bundle over $M$. 
In this case the corresponding associated bundle agrees with the original one only as a fiber bundle in the topological sense, since the $\mathrm{Diffeo}(F)$-principal bundle over $M$ does not carry a differentiable structure in general.
In many cases the \emph{structure group} can be chosen to be much smaller than the homeomorphism or the diffeomorphism group.
If $X\to M$ is a smooth bundle and the structure group can be chosen to be a Lie group, then going through the process sketched above, one obtains an associated bundle which is isomorphic to $X\to M$ as a smooth bundle.

\section{Transport functions and principal bundles} \label{sec_transport_function}

In this section we introduce transport functions and study their relationship to principal bundles.
We will show that every transport function induces a principal bundle and vice versa and that these two constructions are inverse to each other up to equivalence.
We note that this section builds upon Voigt's PhD thesis \cite[Section 4.2]{voigt2014diss}.
Since many details in Voigt's thesis are only sketched and since we extend Voigt's construction we give a self-contained and complete account of the connection between transport functions and principal bundles.


\begin{definition}
    Let $G$ be a topological group.
    We say that a collection of maps $\Phi_{x,y}\colon \overline{\mathcal{L}}(x,y)\to G$ for $x,y\in\mathrm{Crit}(f)$ with $|x|\geq |y|$ is a \emph{transport function} if $\Phi_{x,y}$ is continuous for all $x,y\in\mathrm{Crit}(f)$ and if 
    \begin{equation}\label{eq_transport_function}
             \Phi_{x,y}( u_1 \circ u_2) = \Phi_{x,z}( u_1) \cdot \Phi_{z,y}(u_2)        
    \end{equation}
    for each pair of broken flow lines $u_1\in \overline{\mathcal{L}}(x,z)$, $u_2\in\overline{\mathcal{L}}(z,y)$.
\end{definition}

As we shall see in Section \ref{sec_comparison} one can obtain transport functions from parallel transport in a smooth principal bundle with connection which justifies the name.

\begin{remark}
    Recall that Morse data $(M,f,X)$ determines the flow category $\mathcal{M}_f$ which is a topological category.
    If we let $\mathcal{G}$ be the topological category with one object and with morphism space being the group $G$, then a transport function is simply a continuous functor $\mathcal{M}_f\to \mathcal{G}$ between the topological categories $\mathcal{M}_f$ and $\mathcal{G}$. 
    For this reason we shall usually denote transport functions as maps $\Phi\colon \mathrm{mor}(\mathcal{M}_f)\to G$ where $\mathrm{mor}(\mathcal{M}_f) = \bigsqcup_{x,y\in\mathrm{Crit}(f)} \overline{\mathcal{L}}(x,y)$.
\end{remark}

\begin{example}\label{example_clutching}
Consider the height function $f\colon \mathbb{S}^n\to \mathbb{R}$ on the sphere, i.e. $f(x_1,\ldots, x_{n+1}) = x_{n+1}$.
Denote the critical points by $N = (0,\ldots, 0,1)$, $S = (0,\ldots, 0,-1)$.
The manifold of gradient flow lines $\mathcal{L}(N,S)$ is diffeomorphic to the equator $\mathbb{S}^{n-1} = \{(x_1,\ldots, x_n,0)\in\mathbb{S}^n\}$.
Let $G$ be a topological group.
A transport function $\Phi\colon \mathcal{L}(N,S)\to G$ is thus the same as a map $\Phi\colon \mathbb{S}^{n-1}\to G$ and this is nothing else but the classical \emph{clutching function}.
It is well-known that homotopy classes of clutching functions $[\mathbb{S}^{n-1},G]$ are in bijection to isomorphism classes of $G$-principal bundles over $\mathbb{S}^n$.
We shall see that transport functions for arbitrary Morse data $(M,f,X)$ generalize this phenomenon and yield all the isomorphism classes of $G$-principal bundles over $M$.
\end{example}

The goal for the rest of this section is to show that transport functions with values in $G$ describe $G$-principal bundles over $M$.
A Morse function $f\colon M\to \mathbb{R}$ is called \emph{self-indexing} if $f(p) = |p|$ for all $p\in\mathrm{Crit}(f)$.
If $(M,f,X)$ is Morse data on $M$, then there exists a self-indexing Morse function $g\colon M\to \mathbb{R}$ such that $\mathrm{Crit}(g) = \mathrm{Crit}(f)$ and such that the index of $x\in\mathrm{Crit}(f)$ is the same with respect to $f$ and with respect to $g$.
Moreover, $(M,g,X)$ is Morse data, i.e. the pseudo-gradient vector field $X$ remains unchanged.
See \cite[Corollary 2.36]{nicolaescu2007invitation} for a proof of this statement.
In particular we note that since the pseudo-gradient $X$ remains unchanged, the moduli spaces $\overline{\mathcal{L}}(x,y)$ are the same for $(M,f,X)$ as for $(M,g,X)$.
It is therefore no loss of generality to work with self-indexing Morse functions for the purpose of this section.

\medskip
\emph{For the rest of this section we fix Morse data $(M,f,X)$ and we assume that $f$ is self-indexing. Furthermore, we only consider metrizable topological groups $G$.}

\subsection{Contractible neighborhoods of the critical points}\label{subsec_separation_functions}

We begin by constructing an open cover $\{V_z\}_{z\in\mathrm{Crit}(f)}$ of the manifold $M$ such that the open set $V_z$ can be contracted to the critical point $z$ for each $z\in\mathrm{Crit}(f)$.
We shall further define certain subsets $R_z\subseteq V_z$ which will be crucial for the construction of the transport functions.

Recall that for each pair of critical points $x,y\in\mathrm{Crit}(f)$ the compactified moduli space $\overline{\mathcal{L}}(x,y)$ can be given the structure of a compact manifold with corners. 
Topologically, a manifold with corners is the same as a topological manifold with boundary.
We can therefore choose collar neighborhoods $U_{x,y}\subseteq \overline{\mathcal{L}}(x,y)$ of the boundary $\partial \overline{\mathcal{L}}(x,y)$, i.e. there are homeomorphisms $U_{x,y}\cong \partial\overline{\mathcal{L}}(x,y) \times [0,2 \epsilon)$ for some $\epsilon >0$.
We fix an $\epsilon>0$ as well as the collar neighborhoods $U_{x,y}$.
We further define the closed set $V_{x,y} \cong \partial\overline{\mathcal{L}}(x,y) \times [0,\epsilon] \subseteq U_{x,y}$. 

Next, we would like to choose a family of functions $\rho^j_{x,y}\colon \overline{\mathcal{L}}(x,y)\to \mathbb{R}$ for each $x,y\in\mathrm{Crit}(f)$ with $|y|<|x|$ and $j\in\mathbb{N}$ with $|y|+1\leq j \leq |x|$ with the following properties
\begin{enumerate}[(i)]
    \item The function $\rho^j_{x,y}$ is continuous and takes values in $(|y|,|x|)$.
    \item If $|y|< i < j \leq |x|$, then $\rho^i_{x,y}(u) \leq \rho^j_{x,y}(u)$ for all $u\in\overline{\mathcal{L}}(x,y)$.
    \item If $|y|< i < j \leq |x|$ and $u \in \overline{\mathcal{L}}(x,y)\setminus V_{x,y}$, then $\rho^i_{x,y}(u) = \rho^j_{x,y}(u) = \tfrac{|x|+|y|}{2}$. 
    \item If $u = u_0 \circ u_1\circ u_2\in\partial \overline{\mathcal{L}}(x,y)$, i.e. $u_0\in\overline{\mathcal{L}}(x,z_0)$, $u_1\in \overline{\mathcal{L}}(z_0,z_1), u_2\in\overline{\mathcal{L}}(z_1,y)$ with $|x|\geq |z_0|> |z_1| \geq |y|$ and if $j \in \{|z_1|+1, \ldots, |z_0|\}$ then $\rho^j_{x,y}(u) = \rho^j_{z_0,z_1}(u_1)$.
    \item 
    Let $\widehat{u}\in \partial\overline{\mathcal{L}}(x,y)$ and write $u = (\widehat{u},t)\in V_{x,y}$. The map $t\mapsto \rho_{x,y}^i(\widehat{u},t)$ is an affine linear map in $t$.
\end{enumerate}
We call such a family of functions a \emph{system of separation functions}.
The next lemma shows that there is a unique system of separation functions.
We shall use this system to divide the manifold $M$ into smaller chunks later.

\begin{lemma}\label{lemma_separation_functions_exist}
    There exists a unique system of separation functions $\{\rho_{x,y}^j\}$.
\end{lemma}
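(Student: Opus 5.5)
The proof goes by induction on the index difference $k = |x|-|y|$. Properties (iii), (iv) and (v), together with continuity, leave no freedom: they fix $\rho^j_{x,y}$ outside the collar, on the boundary, and along each collar ray. So I would write down the only possible formula, which gives uniqueness, and then check that it satisfies (i)--(v), which gives existence. I read (iii) as also fixing $\rho^j_{x,y} \equiv \tfrac{|x|+|y|}{2}$ off $V_{x,y}$ when $|x|-|y|=1$ and only one $j$ occurs; otherwise uniqueness fails in that case.

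\emph{Base case $k=1$.} Here $\overline{\mathcal{L}}(x,y) = \mathcal{L}(x,y)$ has empty boundary, so $V_{x,y} = \emptyset$. The only index is $j = |x|$, and $\rho^{|x|}_{x,y} \equiv \tfrac{|x|+|y|}{2}$ is forced. Properties (i)--(v) hold trivially.

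\emph{Inductive step.} Assume the system is constructed and unique for all pairs with index difference $<k$, and let $|x|-|y| = k$.

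First define $\rho^j_{x,y}$ on $\partial\overline{\mathcal{L}}(x,y)$. Write a boundary point in its maximal decomposition $\widehat u = u_0\circ\cdots\circ u_{\ell-1}$ through $x = c_0, c_1, \ldots, c_\ell = y$. For $j$ choose the unique $i$ with $|c_{i+1}| < j \le |c_i|$. Property (iv), applied with $z_0 = c_i$ and $z_1 = c_{i+1}$, forces $\rho^j_{x,y}(\widehat u) = \rho^j_{c_i,c_{i+1}}(u_i)$, and the right-hand side is already determined by induction. Equivalently, on each codimension-one face $\overline{\mathcal{L}}(x,z)\times\overline{\mathcal{L}}(z,y)$ we have $\rho^j_{x,y} = \rho^j_{x,z}\circ \mathrm{pr}_1$ if $j>|z|$ and $\rho^j_{x,y} = \rho^j_{z,y}\circ\mathrm{pr}_2$ if $j\le |z|$.
\begin{itemize}
\item These face-wise definitions are continuous by induction.
\item They agree on intersections of faces because the lower-index systems satisfy (iv).
\item The faces are closed and cover $\partial\overline{\mathcal{L}}(x,y)$, so the pasting lemma gives a continuous function on the whole boundary.
\end{itemize}
This well-definedness and continuity across corners is the step I expect to need the most care. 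It reduces to checking that (iv) for smaller pairs matches both face descriptions at a doubly broken point.

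Next, off $V_{x,y}$ property (iii) forces the value $m := \tfrac{|x|+|y|}{2}$. Continuity at $t = \epsilon$ and the affine property (v) then force, in collar coordinates $(\widehat u, t)\in\partial\overline{\mathcal{L}}(x,y)\times[0,\epsilon]$,
$$\rho^j_{x,y}(\widehat u,t) = \bigl(1-\tfrac{t}{\epsilon}\bigr)\,\rho^j_{x,y}(\widehat u) + \tfrac{t}{\epsilon}\, m .$$
This proves uniqueness. For existence I take this formula as the definition. It is continuous, since it is continuous on the closed pieces $V_{x,y}$ and $\overline{\mathcal{L}}(x,y)\setminus \mathrm{int} V_{x,y}$ and these agree at $t = \epsilon$. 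Properties (iii) and (v) hold by construction.

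It remains to check (i), (ii) and (iv).
\begin{itemize}
\item \emph{(i).} The boundary values lie in $(|c_{i+1}|, |c_i|) \subseteq (|y|,|x|)$ by induction, and $m$ also lies in this interval. A convex combination of points of an interval stays in it.
\item \emph{(ii).} On the boundary, take $i<j$. If both indices fall into the same piece $(|c_{s+1}|, |c_s|]$, induction gives the inequality. If they fall into different pieces, then $\rho^i < |c_s| \le$ (lower end of the piece containing $j$) $< \rho^j$ for a suitable separating critical point $c_s$. Along the collar both functions are interpolated towards the same $m$ with the same weights, so the inequality persists.
\item \emph{(iv).} This holds on the boundary by definition.
\end{itemize}
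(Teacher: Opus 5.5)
Your proof is correct and follows the paper's argument. Both proceed by induction on $|x|-|y|$: property (iv) forces the boundary values through the unbroken piece of the maximal decomposition, (iii) forces the constant $\tfrac{|x|+|y|}{2}$ off $V_{x,y}$, and (v) together with continuity forces the linear interpolation on the collar, so the forced formula gives existence and uniqueness at once. You supply the checks the paper leaves as ``one checks'' (pasting across faces, and properties (i), (ii), (iv)), and your reading of (iii) in the index-difference-one case is exactly the one the paper implicitly uses.
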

\begin{proof}
    In order to show existence we define the functions $\rho^j_{x,y}$ by induction over the relative index $|x|-|y|$.
    If $|x|-|y| = 1$, we only need to define $\rho^{|x|}_{x,y}\colon {\mathcal{L}}(x,y)\to \mathbb{R}$ and by property (iii) we need to define $       \rho_{x,y}^{|x|} (u) = \frac{|x| + |y| }{2}  $.

    Now let $k\geq 2$ and assume that for all pairs of critical points $(x,y)$ with $|x|-|y| < k$ the functions $\rho_{x,y}^j$ are defined for $j\in \{ |y|+1,\ldots, |x|\}$.
    Let $a,b\in \mathrm{Crit}(f)$ with $|a|-|b| = k$ and let $j\in \{|b| + 1,\ldots, |a|\}$.
    If the boundary of $\overline{\mathcal{L}}(a,b)$ is empty, we have to define $\rho_{a,b}^j$ to be the constant function with value $\tfrac{|a|+|b|}{2}$ by property (iii).
    
    Assume that the boundary $\partial\overline{\mathcal{L}}(a,b)$ is non-empty.
    Let $j\in \{|a|+1,\ldots, |b|\}$.
    Since the boundary consists of broken gradient flow lines, property (iv) already fixes the value of $$\widehat{\rho}_{a,b}^j := \rho_{a,b}^j|_{\partial \overline{\mathcal{L}}(a,b)} \colon \partial \overline{\mathcal{L}}(a,b)\to \mathbb{R}$$ on the boundary. 
    Indeed, assume that $u = u_1\circ \ldots \circ u_{\ell}\in \partial\overline{\mathcal{L}}(a,b)$ is maximally decomposed, then there is an $m\in \{1,\ldots, \ell\}$ such that $u_m\in \mathcal{L}(c,d)$ for critical points $c,d$ and such that $j\in \{|d|+1,\ldots, |c|\}$.
    By property (iv) we have to choose $\widehat{\rho}^j_{a,b}(u)  = \rho^j_{c,d}(u_m)$ and since the index difference $|c|-|d|$ is smaller than $|a|-|b|$, $\rho^j_{c,d}$ has already been defined.
    Let $(\widehat{u},t)\in U_{a,b}$ with $0\leq t\leq \epsilon$ then by property (v) we need to set
    $$     \rho_{a,b}^j (\widehat{u},t) = 
        \Big(1-\frac{t}{\epsilon}\Big) \cdot \widehat{\rho}_{{a,b}}^j (\widehat{u}) + \frac{t}{\epsilon}\cdot \frac{|a|+|b|}{2} .
     $$
    For $u\in \overline{\mathcal{L}}(a,b)\setminus V_{{a,b}}$ we have to set $      \rho_{a,b}^j ( u ) =\frac{|a|+|b|}{2}         $ by property (iii).
    The corresponding function is clearly continuous and one checks that properties (i)-(v) are satisfied.
    As we have seen we could not make any choices, therefore the system of separation functions is unique.
\end{proof}
Note that the system of separation functions does of course depend on the choice of the collar neighborhoods, but we have fixed this choice for this section.

Let $x,y\in\mathrm{Crit}(f)$ with $|y|< |x|$.
Using the system of separation functions we define the following subsets of $\overline{\mathcal{L}}(x,y)\times [|y|,|x|]$.
Choose an $\epsilon_0 >0$ with $\epsilon_0 < \tfrac{1}{4}$ and set
\begin{eqnarray*}
    \widetilde{U}_{x,y}^{|x|} &=& \{ (u,s)\in\overline{\mathcal{L}}(x,y)\times [|y|,|x|] \,|\,  \rho_{x,y}^{|x|} - \epsilon_0 < s  \} \\
    \widetilde{U}_{x,y}^{|y|}  &=&  \{(u,s) \in \overline{\mathcal{L}}(x,y) \times [|y|,|x|] \,|\,   s < \rho_{x,y}^{|y|+1}(u) + \epsilon_0 \} \quad \text{and} \\
    \widetilde{U}_{x,y}^{j} &=& \Big\{ ((\widehat{u},a),s) \in U_{x,y}\times [|y|,|x|] \,|\, \rho_{x,y}^j((\widehat{u},a)) - \epsilon_0 < s < \rho_{x,y}^{j+1}((\widehat{u},a)) + \epsilon_0,  \\   &  & \hphantom{errgjer} \rho_{x,y}^{j}(\widehat{u}) < \rho_{x,y}^{j+1}(\widehat{u}), \,\, a < \tfrac{4}{3}\epsilon \Big\} 
\end{eqnarray*}
for $j \in \{|y| + 1 ,|y|+2,\ldots, |x|-1\} $.
Note that all sets $\widetilde{U}_{x,y}^{j}$ are open in $\overline{\mathcal{L}}(x,y)\times [|y|,|x|]$.
We further set
\begin{eqnarray*}
\widetilde{P}_{x,y}^{|x|} &=& \{(u,s) \in \overline{\mathcal{L}}(x,y) \times [|y|,|x|] \,|\, \rho_{x,y}^{|x|}(u) \leq s  \} \\
\widetilde{P}_{x,y}^{|y|}  &=& \{ (u,s)\in \overline{\mathcal{L}}(x,y) \times [|y|,|x|]   \,|\, s \leq \rho_{x,y}^{|y|+1}(u)\} \quad \text{and}\quad \\
\widetilde{P}_{x,y}^{j} &=& \{ ((\widehat{u},a),s) \in U_{x,y} \times [|y|,|x|]\,|\,    \rho_{x,y}^j((\widehat{u},a))\leq s \leq \rho_{x,y}^{j+1}((\widehat{u},a))  \\ &  & \hphantom{errgjer}  \rho_{x,y}^{j}(\widehat{u}) < \rho_{x,y}^{j+1}(\widehat{u}), \,\, a\leq \epsilon    \}
\end{eqnarray*}
for $j \in \{|y| + 1 ,|y|+2,\ldots, |x|-1\} $.
We refer to Figure \ref{fig_sets_pxyi} for a sketch of these sets.
It is immediate that $\widetilde{P}_{x,y}^{j} \subseteq \widetilde{U}_{x,y}^j$ for $j\in \{|y|,\ldots ,|x|\}$.
We further define $U_{x,y}^j = \pi(\widetilde{U}_{x,y}^j)\subseteq M$ as well as $P_{x,y}^j = \pi(\widetilde{P}_{x,y}^j) \subseteq M$ for $j\in\{|y|,\ldots, |x|\}$ where $\pi$ is the quotient map $\pi\colon \widetilde{K}\to M$ as in Section \ref{subsec_morse_flow}.
The next lemma describes the most important properties of the sets $U_{x,y}^j$ and $P_{x,y}^j$.

\begin{figure}[t]
\centering
\includegraphics[scale=0.43]{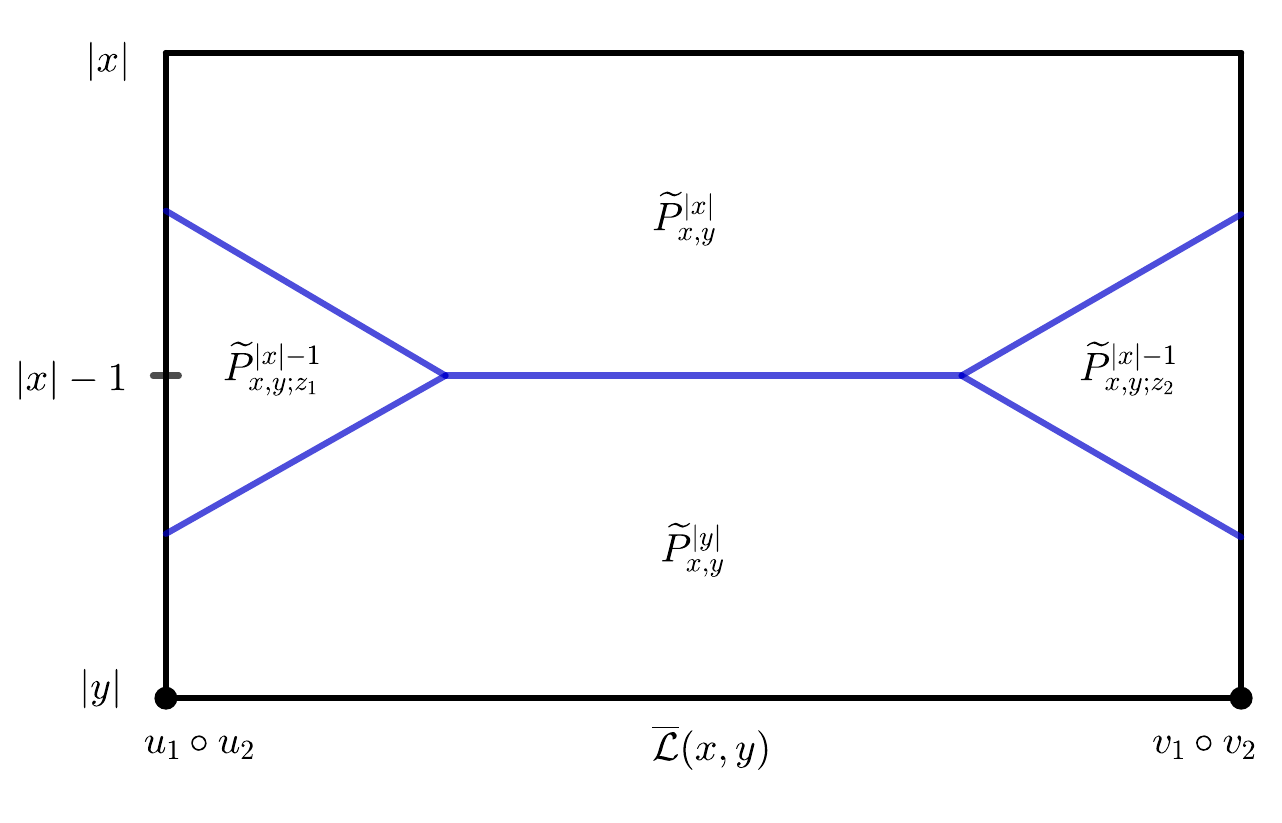}
\caption{The sets $\widetilde{P}_{x,y}^i$ for critical points $x,y\in\mathrm{Crit}(f)$ with $|x|-|y| = 2$. The boundary of the moduli space consists of two broken flow lines $\partial\overline{\mathcal{L}}(x,y) = \{ u_1\circ u_2,v_1\circ v_2\}$ with $u_1\in \mathcal{L}(x,z_1)$ and $v_1\in \mathcal{L}(x,z_2)$. The image of the components of $\widetilde{P}_{x,y}^{|x|-1}$ under $\pi$ can be contracted to $z_1$ and $z_2$, respectively.}
\label{fig_sets_pxyi}
\end{figure}

\begin{lemma}\label{prop_contractibility_pxyj}
    Let $(M,f,X)$ be self-indexing Morse data.
    \begin{enumerate}
        \item For each $j\in \{|y|,\ldots, |x|\}$ the set $U_{x,y}^j$ strongly deformation retracts onto $P_{x,y}^j$.
        \item The sets $P_{x,y}^{|y|}$ and $P_{x,y}^{|x|}$ can be contracted to $y$ and $x$, respectively.
        \item For $j\in \{|y|+1,\ldots, |x|-1\}$ the set $P_{x,y}^j$ consists of the following components.
        There are critical points $z_1,\ldots, z_k\in \mathrm{Crit}(f)$ of index $j$ and non-empty components $P_{x,y;z_i}^j\subseteq P_{x,y}^j$ for $i = 1,\ldots, k$ such that $P_{x,y;z_i}^j$ is contractible to $z_i$.
        Moreover, each component of $P_{x,y}^j$ is of this form, i.e.  we have
        $$   P_{x,y}^j = P_{x,y;z_1}^j \sqcup P_{x,y;z_2}^j \sqcup \ldots \sqcup P_{x,y;z_k}^j   .   $$
    \end{enumerate}
\end{lemma}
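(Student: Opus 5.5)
Most of the work takes place upstairs in $\overline{\mathcal{L}}(x,y)\times[|y|,|x|]$, where everything is explicit, and is then pushed down to $M$ with the quotient map $\pi$.

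\emph{Part (1).} I would build the deformation retraction fiberwise in the $s$-coordinate. For $\widetilde{U}^{|x|}_{x,y}$ I use the straight-line homotopy
$$H_\tau(u,s) = \bigl(u,\ (1-\tau)s+\tau\max\{s,\rho^{|x|}_{x,y}(u)\}\bigr),$$
which pushes the strip $\rho^{|x|}_{x,y}(u)-\epsilon_0<s<\rho^{|x|}_{x,y}(u)$ up onto the graph of $\rho^{|x|}_{x,y}$. For $\widetilde{U}^{|y|}_{x,y}$ I use the analogous homotopy with $\min$. For the middle sets $\widetilde{U}^j_{x,y}$ there are two things to do. First I clamp $s$ into $[\rho^j,\rho^{j+1}]$. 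Then I retract the collar coordinate $a\in[0,\tfrac43\epsilon)$ onto $[0,\epsilon]$ by $a\mapsto\min\{a,\epsilon\}$. Property (v) makes the $\rho$'s affine in $a$, so the clamping intervals vary continuously along this second step.

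The key point is that these homotopies descend to $M$. They must respect $\sim$: if $(u,s)\sim(v,s)$, the images must again be equivalent. This follows from the compatibility property (iv). On a broken flow line $u=u_1\circ\cdots\circ u_\ell$, with $s$ lying in the range of $u_i$, the value $\rho^j_{x,y}(u)$ equals $\rho^j$ of the corresponding piece. So the clamping in $\overline{\mathcal{L}}(x,y)$ agrees with the clamping performed in the factor moduli space. Since $\widetilde{U}$ is open and saturated enough, continuity on the quotient follows from continuity upstairs together with $K\cong M$.

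\emph{Part (2).} For $P^{|x|}_{x,y}$ I contract by flowing $s$ upward: $(u,s)\mapsto(u,(1-\tau)s+\tau|x|)$. At $\tau=1$ every point is sent to $(u,|x|)\sim(x,|x|)$. This is compatible with $\sim$ by the same piecewise argument, and by (iv) and (ii) the upper pieces of broken lines have $\rho^{|x|}$ in the range of the first factor $u_1\in\overline{\mathcal{L}}(x,c_1)$. The set $P^{|y|}_{x,y}$ is handled symmetrically, moving $s$ down to $|y|$.

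\emph{Part (3).} Here I use that $\widetilde{P}^j_{x,y}$ only lives over the collar $a\le\epsilon$ and over those $\widehat u$ with $\rho^j(\widehat u)<\rho^{j+1}(\widehat u)$. By (iv), a maximally decomposed boundary flow line $\widehat u$ has $\rho^j(\widehat u)<\rho^{j+1}(\widehat u)$ only if some breaking point $z$ has $|z|=j$. Indeed, otherwise the levels $j$ and $j+1$ fall into the same unbroken piece $u_m\in\mathcal{L}(c,d)$, where property (iii) (or (iv) inductively) forces both values to equal $\tfrac{|c|+|d|}{2}$. The breaking point $z$ of index $j$ is unique, because $f$ is self-indexing. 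This gives a locally constant labelling $\widehat u\mapsto z$, and I set $P^j_{x,y;z}$ to be the image of the part labelled $z$. The collar homotopy $a\mapsto(1-\tau)a$ first moves this set into the boundary. There the piece containing level $j$ is $u_1\circ u_2$ with $u_1\in\overline{\mathcal{L}}(c,z)$ and $u_2\in\overline{\mathcal{L}}(z,d)$. The point is then contained in $P^{|z|}_{c,z}\cup P^{|z|}_{z,d}$ (suitably interpreted), and I apply the contraction from (2) toward $z$ from both sides. Distinct labels give disjoint images, since their points lie on pieces near different critical points at level $j$.

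\emph{Main obstacle.} I expect the hardest step to be continuity and well-definedness of the combined homotopy in (3), and of the quotient homotopies in general. The same point of $M$ may come from different moduli spaces $\overline{\mathcal{L}}(x',y')$, and the strata of $\partial\overline{\mathcal{L}}$ meet at corners. I would first prove a lemma: for $(u,s)\sim(v,s)$ the clamped values coincide. This should follow from (iv) and induction on $|x|-|y|$, and I would derive everything else from it.
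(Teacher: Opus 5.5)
Your parts (1) and (2) match the paper's argument: the same straight-line homotopies in $s$, the same retraction of the collar onto $a\le\epsilon$, and the same use of property (iv) to see that everything descends to $M$. Part (3), however, has a genuine gap.

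\textbf{The labelling fails.} Your claim that a boundary flow line $\widehat u$ with $\rho^j_{x,y}(\widehat u)<\rho^{j+1}_{x,y}(\widehat u)$ must break at a critical point of index $j$ is false. Property (iv) only reduces both values to $\rho^j_{c,d}(u_m)$ and $\rho^{j+1}_{c,d}(u_m)$ for the unbroken piece $u_m\in\mathcal{L}(c,d)$ covering both levels. Property (iii) only forces these to agree when $u_m\notin V_{c,d}$. If instead $u_m=(\widehat v,t)$ lies in the collar with $t<\epsilon$, the explicit construction gives $\rho^{j+1}_{c,d}(u_m)-\rho^j_{c,d}(u_m)=(1-\tfrac t\epsilon)\big(\rho^{j+1}_{c,d}(\widehat v)-\rho^{j}_{c,d}(\widehat v)\big)$, which is positive as soon as $\widehat v$ breaks at index $j$.

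\textbf{A concrete counterexample.} Take $|x|=3$, $|z|=2$, $|c|=1$, $|y|=0$ and $j=2$. Let $w_1\circ w_2\in\mathcal{L}(x,z)\times\mathcal{L}(z,c)$, let $u_1=(w_1\circ w_2,t)\in\mathcal{L}(x,c)$ be unbroken with $0<t<\epsilon$, and let $u_2\in\mathcal{L}(c,y)$ be arbitrary. Then $\widehat u=u_1\circ u_2$ breaks only at $c$, yet $\rho^2_{x,y}(\widehat u)=(1-\tfrac t\epsilon)\tfrac32+\tfrac{2t}{\epsilon}<(1-\tfrac t\epsilon)\tfrac52+\tfrac{2t}{\epsilon}=\rho^3_{x,y}(\widehat u)$. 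So points over $(\widehat u,a)$ lie in $\widetilde P^2_{x,y}$ but get no label from your rule. After one collar push they sit on $u_1$, i.e.\ in $P^2_{x,c}$ over the collar of $\overline{\mathcal{L}}(x,c)$. Part (2) does not apply there, and a second push along that collar is needed before you reach the break at $z$.

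\textbf{How to repair it.} The nesting can be up to $|x|-|y|-1$ levels deep, so the correct label is found by repeatedly projecting the covering piece to the boundary of its collar until it breaks at index $j$. This is why the paper argues recursively in the index difference. It uses $k^j_{x,y}\colon P^j_{x,y}\to K_{|x|-|y|-1}$, defines $P^j_{x,y;z}=(k^j_{x,y})^{-1}(K_{m,z})$, contracts via the composite $\xi_{1,z}\circ\cdots\circ\xi_{m+1,z}$, and proves disjointness by a downward induction. Your labelling, your contraction, and your disjointness argument (which relies on the labelling) all need to be replaced by such a recursion.

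\textbf{A smaller point.} The collar homotopy $a\mapsto(1-\tau)a$ with $s$ held fixed does not stay inside $\widetilde P^j_{x,y}$. Write $\lambda=a/\epsilon$. At collar parameter $a$ the admissible $s$-interval is $[(1-\lambda)\rho^j_{x,y}(\widehat u)+\lambda\tfrac{|x|+|y|}{2},\,(1-\lambda)\rho^{j+1}_{x,y}(\widehat u)+\lambda\tfrac{|x|+|y|}{2}]$. Its center moves as $a$ decreases. Hence points near an endpoint leave the set whenever $\rho^j_{x,y}(\widehat u)>\tfrac{|x|+|y|}{2}$ or $\rho^{j+1}_{x,y}(\widehat u)<\tfrac{|x|+|y|}{2}$, and both can occur. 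Since the contraction has to take place inside $P^j_{x,y;z}$, you must translate $s$ along with the center. The paper's $\widetilde k^j_{x,y}$ does this by adding $\tfrac{\rho^j_{x,y}(\widehat u)+\rho^{j+1}_{x,y}(\widehat u)-(|x|+|y|)}{2\epsilon}\,a$.
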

\begin{proof}
    For the first part, we begin by considering $j = |x|$ and define $H\colon \widetilde{U}_{x,y}^{|x|}\times [0,1]\to \widetilde{U}_{x,y}^{|x|}$ by
    $$   H((u,s),\mu ) = \begin{cases}
        (u,s) , & s\geq \rho_{x,y}^{|x|}(u) \\
        (u,  (1-\mu)\cdot s + \mu\cdot \rho_{x,y}^{|x|}(u)), & s \leq \rho_{x,y}^{|x|}(u) .
    \end{cases}        $$
    One checks that this induces a well-defined map $H'\colon U_{x,y}^{|x|}\times [0,1]\to U_{x,y}^{|x|}$.
    The map $H'(\cdot ,1)$ is a strong deformation retraction $U_{x,y}^{|x|}\to P_{x,y}^{|x|}$. 
    Analogously one obtains a strong deformation retraction $U_{x,y}^{|y|}\to P_{x,y}^{|y|}$.
    For $j\in \{|y|+1,\ldots, |x|-1\}$ we define a map
    $H\colon \widetilde{U}_{x,y}^{j}\times [0,1]\to \widetilde{U}_{x,y}^j$ by
    $$   H((\widehat{u},a),s,\mu) = \begin{cases}
        ((\widehat{u},a),s) , & ((\widehat{u},a),s)\in \widetilde{P}_{x,y}^j \\
        ((\widehat{u},a), (1-\mu)\cdot s + \mu \cdot  \rho_{x,y}^{j}((\widehat{u},a)) , &    s \leq \rho_{x,y}^j((\widehat{u},a)), \,\ a \leq \epsilon \\
        ((\widehat{u},a), (1-\mu)\cdot s + \mu \cdot  \rho_{x,y}^{j+1}((\widehat{u},a)) , &    s \geq \rho_{x,y}^{j+1}((\widehat{u},a)), \,\ a \leq \epsilon \\
        ((\widehat{u},(1-\mu)a + \mu \epsilon ), (1-\mu)s + \mu \tfrac{|x|+|y|}{2} ) , &    a \geq \epsilon  . 
    \end{cases}    $$
    One checks that this yields a strong deformation retraction $U_{x,y}^j\to P_{x,y}^j$.

    For the second part we want to show the contractibility of $P_{x,y}^{|x|}$.
    Define a map $H\colon {P}_{x,y}^{|x|}\times [0,1]\to  M$ by setting
    $$  H( \pi(u,s), t) = \pi(u, t|x| + (1-t)s) \quad \text{for}\,\,\, (u,s)\in \widetilde{P}_{x,y}^{|x|}, \,\, t\in [0,1] .    $$
    We need to show that $H$ is well-defined.
    Let $(u,s),(u',s') \in \widetilde{P}_{x,y}^{|x|}$ such that $\pi(u,s) = \pi(u',s')$ and $(u,s)\neq (u',s')$.
    By construction we have $s = s' \geq \rho_{x,y}^{|x|}(u)$.
    In case that $ s = s' = |x|$ then $t\mapsto H((u,s),t)$ is the constant path at the point $x$.
    If $s< |x|$ then we must have that $u$ and $u'$ are broken flow lines.
   Take the maximal decompositions $u = u_1\circ \ldots \circ u_{\ell}$ and $u' = u'_1 \circ \ldots \circ u'_k$.
   We have $u_1\in \mathcal{L}(x,z)$ with $|z|<|x|$ and $u_1'\in \mathcal{L}(x,z')$ with $|z'|<|x|$.    
    By property (v) of the separation functions and our specific construction in Lemma \ref{lemma_separation_functions_exist} it holds that $s \geq \rho_{x,y}^{|x|}(u) = \rho_{x,z}^{|x|} (u_1) = \tfrac{|x|+|z|}{2}$ since $u_1$ is unbroken.
    Similarly we have $s \geq \rho_{x,y}^{|x|}(u)  = \rho_{x,z'}^{|x|}(u_1) = \tfrac{|x| + |z'|}{2}$.
    Since $u_1(s) = \pi(u,s) = \pi(u',s) = u_1'(s)$ we must have $u_1 = u_1'$.
    Hence, we see that $H(\pi(u,s),t) = u_1( t|x| + (1-t)s) = H(\pi(u',s,t))$  and thus $H$ is well-defined.
    By the quotient construction we furthermore obtain that $H$ is continuous.
    Finally, we have $H(\pi(u,s),1) = x$ for all $\pi(u,s)\in P_{x,y}^{|x|}$.
    The statement for $j = |y|$ can be shown completely analogously.

    For the third part, let $x,y\in\mathrm{Crit}(f)$ and let $j\in \{|y|+1,\ldots, |x|-1\}$.
    We shall define a map $\widetilde{k}_{x,y}^j\colon \widetilde{P}_{x,y}^j \to \partial\overline{\mathcal{L}}(x,y)$ such that $\widetilde{P}_{x,y}^j$ is sent to $\widetilde{P}_{x,y}^j\cap \partial\overline{\mathcal{L}}(x,y)$ under $\widetilde{k}_{x,y}^j$ and such that $\widetilde{k}_{x,y}^j$ restricts to the identity on $\widetilde{P}_{x,y}^j \cap \partial\overline{\mathcal{L}}(x,y)$.
    We set
    $$    \widetilde{k}_{x,y}^j ((\widehat{u},a),s) = \big(  (\widehat{u},0),   s + \tfrac{\rho_{x,y}^j(\widehat{u}) + \rho_{x,y}^{j+1}(\widehat{u}) - (|x|+|y|)}{2\epsilon}\cdot a   \big)  .    $$
    Using the explicit form of the separation functions $\rho_{x,y}^j$ one checks that given a point $((\widehat{u},a),s)\in \widetilde{P}_{x,y}^j$ its image $k_{x,y}^j((\widehat{u},a),s)$ lies indeed in $\widetilde{P}_{x,y}^j\cap \partial\overline{\mathcal{L}}(x,y)$.
    The map $\widetilde{k}_{x,y}^j$ is continuous and its restriction to $\widetilde{P}_{x,y}^j \cap \partial\overline{\mathcal{L}}(x,y)$ is the identity.
    Let $\{K_j\}_{j=0}^n$ be the filtration of $M$ as in Section \ref{subsec_morse_flow}.
    We obtain an induced map $k_{x,y}^j\colon P_{x,y}^j\to K_{|x|-|y|-1}$.
    Moreover, let $k_{x,y}^{|x|}\colon {P}_{x,y}^{|x|}\to M$ be the map $p\mapsto x$ and similarly, let $k_{x,y}^{|y|}\colon P_{x,y}^{|y|}\to M$ be the map $p\mapsto y$.
    We now recursively define subsets $P_{x,y;z}^j\subseteq P_{x,y}^j$ for $x,y\in\mathrm{Crit}(f)$ and $z\in\mathrm{Crit}(f)$.

    First, let $a,b\in \mathrm{Crit}(f)$ with index difference $|b|-|a| = 1$. We set $P_{a,b;a}^{|a|} = P_{a,b}^{|a|}$ and $P_{a,b;b}^{|b|}  = P_{a,b}^{|b|}$.
    Moreover, for $c\in \mathrm{Crit}(f)$ with $c\not\in \{a,b\}$ set $P_{a,b;c}^j = \emptyset$ for $j\in \{|a|,|b|\}$.
    For $z\in\mathrm{Crit}(f)$ we define the sets $K_{0,z} = \{z\}\subseteq K_0$ and $$  K_{1,z} =  \bigcup_{|x|-|y| = 1} P_{x,y;z}^{|z|} \subseteq K_1.   $$
    Furthermore, let $\xi_{1,z} \colon K_{1,z}\to K_{0,z}$ be the map $\xi_z(p) = z$ and note that this is a strong deformation retraction.

    Now, assume that for all pairs of critical points with index difference $r = |x|-|y| \leq m$ we have defined sets $P_{x,y;z}^j$ for all $z\in\mathrm{Crit}(f)$ as well as sets $K_{r,z}$ together with maps $\xi_{r,z}\colon K_{r,z}\to K_{r-1,z}$ which are strong deformation retractions.
    Let $x,y\in\mathrm{Crit}(f)$ with $|x|-|y|  = m+1$.
    Define $P_{x,y;x}^{|x|} = P_{x,y}^{|x|}$ and $P_{x,y;y}^{|y|} = P_{x,y}^{|y|}$.
    Let $j \in \{|y|+1,\ldots, |x|-1\}$ and consider the map $k_{x,y}^j\colon P_{x,y}^j\to K_m$.
    For $z\not\in \{x,y\}$ we define $P_{x,y;z}^j = (k_{x,y}^j)^{-1}(K_{m,z}) $.
    We set
    $$ K_{m+1,z} =   \bigcup_{|x|-|y| = m+1}P_{x,y;z}^{|z|}    $$
    and we define a map $\xi_{m+1,z}\colon K_{m+1,z}\to K_{m,z}$ by
    $   \xi_{m+1,z}|_{P_{x,y;z}^j} := k_{x,y}^j|_{P_{x,y;z}^j}      $.
    This yields a well-defined continuous map which is a strong deformation retraction. The composition
    $$   \xi_{1,z}\circ \xi_{2,z}\circ \ldots \circ \xi_{m+1,z}\colon K_{m+1,z} \to M   $$
    is thus a strong deformation retraction as well by the inductive hypothesis.
    Of course, this map is just the constant map to $z$.
    It remains to show that the set $P_{x,y}^j$ decomposes into sets
    $   P_{x,y}^j =  P_{x,y;z_1 } \sqcup \ldots \sqcup P_{x,y;z_k}     $
    for critical points $z_1,\ldots, z_k\in \mathrm{Crit}_j(f)$.
    This statement is true for $|x|-|y| = 1$.
    Therefore, let $|x|-|y| \geq 2$ and assume that we have shown the statement for all pairs of critical points with smaller index difference.
    
    By construction we have $P_{x,y}^j = \cup_{z\in \mathrm{Crit}_j(f)} P_{x,y;z}^j$.
    Let $P_{x,y;z}^j$ and $P_{x,y;a}^j$ be non-empty with $z\neq a$ and assume that there is a point $p = \pi((\widehat{u},a),s)\in P_{x,y;z}^j\cap P_{x,y;a}^j$.
    This means that $q = k_{x,y}^j(p)  \in K_{r-1,z}\cap K_{r-1,a}$.
    Now, we have
    $$   K_{r-1,z} = \bigcup_{|x'|-|y'| = r-1}   P_{x',y';z}^{j} \quad \text{and}\quad     K_{r-1,a} = \bigcup_{|x'|-|y'| = r-1}   P_{x',y';a}^{j} .  $$
    Hence, we have $q\in P_{\widetilde{x},\widetilde{y};z}^j \cap P_{\overline{x},\overline{y};a}^j$ for pairs of critical points $(\widetilde{x},\widetilde{y})$ and  $(\overline{x},\overline{y})$ with index difference $r-1$.
    If $(\widetilde{x},\widetilde{y}) = (\overline{x},\overline{y})$, then we get a contradiction to the inductive hypothesis.
    Hence, assume that $(\widetilde{x},\widetilde{y})\neq (\overline{x},\overline{y})$.
    One checks that we have $P_{\widetilde{x},\widetilde{y}}^j\cap P_{\overline{x},\overline{y}}^j \subseteq K_{r-2}$ and therefore $q\in P_{x',y';z}^j \cap P_{x'',y'';a}^j$ for critical points $x',y',x'',y''\in\mathrm{Crit}(f)$ with $|x'|-|y'| =  |x''|- |y''| = r-2$.
    A downward induction then proves the claim.
\end{proof}

For $z\in \mathrm{Crit}(f)$ we define $R_z = K_{n,z}\subseteq M$ which is contractible to $z$ by the previous proposition.
Moreover, note that by construction we have $P_{x,y;z}^j\subseteq R_z$ for critical points $x,y\in\mathrm{Crit}(f)$.
See Figure \ref{fig_theothersphere1} for a sketch of the sets $P_{x,y}^j$ and $R_z$.

\begin{figure}[t]
\centering
\includegraphics[scale=1.]{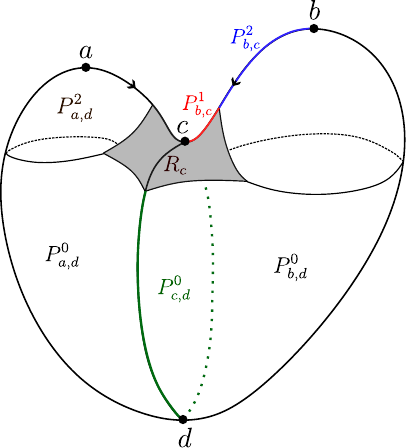}
\caption{A Morse function on the $2$-sphere with four critical points.
Some of the sets $P_{x,y}^j$ are shown as well as the neighborhood $R_c$ of the critical point $c$.}
\label{fig_theothersphere1}
\end{figure}

\begin{lemma}\label{lemma_contractibility_of_Rz}
    Let $z\in\mathrm{Crit}(f)$. The set $R_z\subseteq M$ admits an open neighborhood $V_z\subseteq M$ which strongly deformation retracts onto $R_z$ and is thus contractible as well.
    The family of open sets $\{V_z\}_{z\in\mathrm{Crit}(f)}$ is an open cover of $M$.
\end{lemma}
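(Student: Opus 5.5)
The plan is to build $V_z$ from the open sets $U_{x,y;z}^j$ in the same way that $R_z$ is assembled from the pieces $P_{x,y;z}^j$, and then to glue the deformation retractions of Lemma \ref{prop_contractibility_pxyj}(1).

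\textbf{Step 1: the open pieces.} By Lemma \ref{prop_contractibility_pxyj}(3), $P_{x,y}^j$ is a disjoint union of the compact sets $P_{x,y;z_i}^j$. Since $\widetilde{U}_{x,y}^j$ retracts onto $\widetilde{P}_{x,y}^j$ along the $s$- and $a$-directions, each component of $U_{x,y}^j$ retracts onto a component of $P_{x,y}^j$. This gives a decomposition $U_{x,y}^j=\bigsqcup_i U_{x,y;z_i}^j$, and the strong deformation retraction $H'$ from Lemma \ref{prop_contractibility_pxyj}(1) restricts to a strong deformation retraction of $U_{x,y;z}^j$ onto $P_{x,y;z}^j$. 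Shrinking $\epsilon_0$ if necessary keeps pieces belonging to different critical points of the same index disjoint.

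\textbf{Step 2: definition of $V_z$.} Set
$$V_z=\bigcup_{x,y} U_{x,y;z}^{|z|}\cup W_z,$$
where $W_z$ is a small open ball around $z$ that takes care of the case where $z$ is an isolated point of $K_0$ inside $R_z$. We check openness in $M$ through the quotient map $\pi$. The preimage $\pi^{-1}(V_z)$ must be open in every $N_k$. The compatibility conditions (iv) and (v) of the separation functions ensure that the inequalities defining $\widetilde U^j_{x,y}$ near a boundary point $u_1\circ u_2$ restrict to the inequalities defining $\widetilde U^j_{z_0,z_1}$ on the factor. This is the same bookkeeping that made the sets $K_{m,z}$ well defined.

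\textbf{Step 3: the cover.} For every $(u,s)\in\widetilde K$ with $u\in\overline{\mathcal L}(x,y)$, the intervals $[\rho^j_{x,y}(u),\rho^{j+1}_{x,y}(u)]$, together with $[|y|,\rho^{|y|+1}]$ and $[\rho^{|x|},|x|]$, cover $[|y|,|x|]$. On $V_{x,y}$ this follows from monotonicity (ii). Outside $V_{x,y}$ all the $\rho$'s are equal, so only the two extreme sets are needed. Hence the sets $P_{x,y}^j$ cover $M$, and therefore so do the $R_z$ and the $V_z$.

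\textbf{Step 4: the retraction (main obstacle).} We glue the homotopies $H'$ on the various $U_{x,y;z}^{|z|}$ into one strong deformation retraction $V_z\to R_z$. The difficulty is that a point of $M$ can lie in several $U_{x,y}^j$, because broken lines are identified under $\pi$, and the retractions must agree on these overlaps.

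The first approach to try is an induction on the filtration $K_m$, mirroring the proof of Lemma \ref{prop_contractibility_pxyj}(3). First retract the part of $V_z$ lying in $K_m$. The affine-linearity (v) makes $H'$ on $\widetilde U_{x,y}^j$ restrict on $\partial\overline{\mathcal L}(x,y)\times[|y|,|x|]$ to the already-defined retraction for the lower-index pair. This gives compatibility with the identifications $(u,t)\sim(u_i,t)$.

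If an exact match fails, the fallback is as follows. Since $R_z$ is a compact, locally contractible subset built from finitely many cells, it is a neighborhood deformation retract, because CW-type subsets of manifolds are ENRs. Then choose $V_z$ to be a sufficiently small regular neighborhood that strongly deformation retracts onto $R_z$, while still containing $\bigcup U^{|z|}_{x,y;z}$ up to shrinking $\epsilon_0$, so that the cover property of Step 3 survives. Finally, contractibility of $V_z$ follows from that of $R_z$, which is Lemma \ref{prop_contractibility_pxyj}.
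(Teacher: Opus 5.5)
\textbf{The gap is in Step 2 (openness), and it comes from taking the union over \emph{all} pairs $(x,y)$, where the paper takes only pairs with $|x|-|y|=n$.}

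Your openness argument relies on the claim that, near a boundary point, the inequalities defining $\widetilde{U}^j_{x,y}$ restrict to those defining $\widetilde{U}^j$ of the factor. That claim is false in part of the collar. Take $\lambda=v\circ u\circ w\in\partial\overline{\mathcal{L}}(x,y)$ with $u\in\mathcal{L}(c,d)$ and $|d|<j<|c|$.
\begin{itemize}
\item The collar coordinate of $\lambda$ is $0$, so $\widehat{\lambda}=\lambda$. By (iv), the condition $\rho^j_{x,y}(\widehat\lambda)<\rho^{j+1}_{x,y}(\widehat\lambda)$ becomes $\rho^j_{c,d}(u)<\rho^{j+1}_{c,d}(u)$, tested at $u$ itself.
\item The set $\widetilde{U}^j_{c,d}$ instead tests this inequality at the collar projection $\widehat u$ of $u=(\widehat u,b)$.
\item For $b<\epsilon$ the two tests agree, because affine interpolation preserves the strict inequality. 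For $\epsilon<b<\tfrac{4}{3}\epsilon$ they do not: property (iii) gives $\rho^j_{c,d}(u)=\rho^{j+1}_{c,d}(u)=\tfrac{|c|+|d|}{2}$, while $\rho^j_{c,d}(\widehat u)<\rho^{j+1}_{c,d}(\widehat u)$ holds whenever $\widehat u$ breaks at a critical point of index $j$.
\end{itemize}
Here is the resulting failure. The point $p=\pi(u,\tfrac{|c|+|d|}{2})$ lies in $U^j_{c,d}$, hence in your $V_z$ for the corresponding $z$. If $|c|-|d|<n$, extend $u$ to $\lambda=v\circ u\circ w$ in an index-$n$ moduli space. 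The points $\pi((\lambda,a'),\tfrac{|c|+|d|}{2})$ with $a'\downarrow 0$ converge to $p$. They lie on unbroken index-$n$ flow lines, so each has a unique representative in $\widetilde{K}$. That representative fails the strict inequality, so these points are not in your $V_z$. The ball $W_z$ does not help, since $p$ need not be near $z$. Hence your $V_z$ is not open as soon as $n\ge 3$ and some $\overline{\mathcal{L}}(c,d)$ with $2\le|c|-|d|<n$ has non-empty boundary.

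\textbf{The fix is the paper's definition $V_z=\bigcup_{|x|-|y|=n}U^{|z|}_{x,y;z}$, with $W_z$ dropped.} The ball is unnecessary: $z\in R_z\subseteq V_z$ already. Any part of it outside the union would also lie outside the domain of your glued retraction.

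With this definition, the compatibility you need is only between different index-$n$ representatives $v\circ u\circ w$ and $v'\circ u\circ w'$ of a point on an unbroken piece $u\in\mathcal{L}(c,d)$.
\begin{itemize}
\item By the explicit construction, separation-function values coming from a piece in $\overline{\mathcal{L}}(c,d)$ lie in $[|d|+\tfrac12,|c|-\tfrac12]$. Since $\epsilon_0<\tfrac14$, only $j\in\{|d|,\dots,|c|\}$ can occur for such a point.
\item Both the defining conditions and the homotopy of Lemma \ref{prop_contractibility_pxyj}(1) then depend on $u$ alone.
\item At collar coordinate $0$ that homotopy only moves the height inside $u$. At positive collar coordinate the points lie on unbroken index-$n$ lines, which have unique representatives.
\end{itemize}
This is why the pieces glue. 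Your proposed induction through lower-index retractions is not the right mechanism: for collar coordinate $>\epsilon$, those homotopies move points off their flow line.

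With that change, your quotient-map criterion for openness is a valid alternative to the paper's sequential-compactness argument. Your Step 3 also supplies the covering statement that the paper leaves implicit, using that every point of $M$ lies on a broken line from an index-$n$ to an index-$0$ critical point.

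Drop the ENR fallback. You have not shown that $R_z$ is an ENR. Moreover, an abstract retracting neighbourhood would lose the description $V_z=\bigcup U^{|z|}_{x,y;z}$, which the later cocycle constructions rely on.
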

\begin{proof}
    As shown in Lemma \ref{prop_contractibility_pxyj} we have a homotopy equivalence $U_{x,y}^j\simeq P_{x,y}^j$.
    Hence we obtain components $U_{x,y;z}^j$ for each $z\in\mathrm{Crit}_j(f)$ with $P_{x,y;z}^j\neq \emptyset$.
    Recall that 
    $$ R_z = K_{n,z} =   \bigcup_{|x|-|y| = n} P_{x,y;z}^{|z|} .  $$
    We define 
    $$     V_z =  \bigcup_{|x|-|y| = n} U_{x,y;z}^{|z|}  . $$
    One checks from the explicit expression in Lemma \ref{prop_contractibility_pxyj} that the retractions $ U_{x,y;z}^j \to P_{x,y;z}^j$ fit together such that we obtain a strong deformation retraction $r_z\colon V_z\to R_z$.
    
    We claim that $V_z$ is an open set.  
    Let $p\in V_z$ with $p = \pi(u,s) \in U_{x,y;z}^{|z|}$ for critical points $x,y\in \mathrm{Crit}(f)$ with $|x|-|y| = n$.
    Assume that $u\not\in \partial\overline{\mathcal{L}}(x,y)$.
    Since the restriction of $\pi$ to ${{\mathcal{L}}(x,y)\times (0,n)}$ is a homeomorphism onto its image, it follows from the openness of $\widetilde{U}_{x,y}^{|z|}$ that $p$ has an open neighborhood in $V_z$.
    Next, suppose that $u\in \partial\overline{\mathcal{L}}(x,y)$ and denote the maximal decomposition of $u$ by $u = u_1\circ \ldots \circ u_k$.
    There are now three cases.
    In the first case there is an $\ell\in \{1,\ldots, k\}$ such that $u_{\ell}\in \mathcal{L}(z_1,z_2)$ with $|z_1| > |z| > |z_2|$ and $p = \pi(u,s) = \pi( u_{\ell},s)$ with $s\in (|z_2|,|z_1|)$.
    In the second case we have $u_{\ell}\in \overline{\mathcal{L}}(z_1,z)$ and $s\in [|z|,|z_1|)$ and in the third case we have $u_{\ell}\in \overline{\mathcal{L}}(z,z_1)$ and $s\in (|z_1|,|z|]$.

    We treat the first case, the second and third one can be dealt with using a similar argument. 
    Suppose that $p$ admits no neighborhood in $V_z$, then there is a sequence of points $q_k\in M\setminus V_z$ which converges to $p$.
    Write $q_k = \pi(u_k,s_k)$ for flow lines $u_k$ and $s_k\in [0,n]$.
    Since $f(q_k) = s_k$ we conclude that $s_k\to s$.
    We can always choose the flow lines $u_k$ such that $u_k\in\overline{\mathcal{L}}(x_k,y_k)$ with $|x_k|-|y_k| = n$.
    By the compactness of $M$ as well as by the compactness of the moduli spaces there are critical points $x_*, y_*\in\mathrm{Crit}(f)$ with $|x_*|  - |y_*| = n$ and a convergent subsequence $(u_{k_{\ell}})_{\ell\in \mathbb{N}}$ in $\overline{\mathcal{L}}(x_*,y_*)$.
    Denote the limit by $u_*\in\overline{\mathcal{L}}(x_*,y_*)$.
    Clearly, we have $\pi(u_*,s) = p$ which implies that $u_*$ has a decomposition $u_* = v\circ u_{\ell} \circ w$ with $v\in \overline{\mathcal{L}}(x_*,z_1)$, $w\in \overline{\mathcal{L}}(z_2,y_*)$ and $u_{\ell}$ the flow line as above.
    Since $\rho_{x_*,y_*}^m(u_*) = \rho_{z_1,z_2}^m(u_{\ell})$ for $m\in \{|z|,|z|+1\}$ we have that $p\in U_{x_*,y_*}^{|z|}$.
    Note that $r_z(p)\in R_z$ and thus $p\in U_{x_*,y_*;z}^{|z|}$.
    The point $(u_*,s)\in\widetilde{U}_{x_*,y_*}^{|z|}$ has a neighborhood $W\subseteq U_{x_*,y_*;z}^j$ and thus $\pi(W)\subseteq V_z$.
    For some number $N\in \mathbb{N}$ it holds that $(u_{k_{\ell}},s_{k_{\ell}}) \in W$ for $\ell \geq N$.
    Hence, we have $q_{k_{\ell}}\in \pi(W)\subseteq V_z$ for $\ell \geq N$ which contradicts our assumption. 
\end{proof}

\subsection{From local trivializations to transport functions}\label{subsec_3-2}

We have now constructed a cover of $M$ by contractible open sets, one set for each critical point.
If we have a $G$-principal bundle $q\colon E\to M$, the restriction to a contractible open set admits a local trivialization.
We will be interested in the corresponding transition functions in the sense of Section \ref{subsec_principal_bundles}.
Therefore we need to better understand the overlaps between the sets $R_z$, respectively the sets $P_{x,y}^j$.

Recall that for given $x,y\in\mathrm{Crit}(f)$ with $|y| < |x|$ the separation functions $\rho_{x,y}^j\colon \overline{\mathcal{L}}(x,y)\to \mathbb{R}$ are defined for $j= |y|+1,|y|+2,\ldots, |x|$.
For the formulation of the next lemma we define $\rho_{x,y}^{|y|},\rho_{x,y}^{|x|+1}\colon \overline{\mathcal{L}}(x,y)\to \mathbb{R}$ to be the constant functions 
$$   \rho_{x,y}^{|y|} \equiv |y|-1 \quad \text{and}\quad \rho_{x,y}^{|x|+1} \equiv |x|+1 .     $$

For $(i,j)\neq (|y|,|x|)$ consider the set
    \begin{equation}\label{eq_sxyij}
              \widetilde{S}_{x,y}^{\,i,j} = \{ (\widehat{u},a)\in U_{x,y} \,|\,  \rho_{x,y}^i(\widehat{u}) < \rho_{x,y}^{i+1}(\widehat{u}) = \rho_{x,y}^j(\widehat{u}) < \rho_{x,y}^{j+1}(\widehat{u}), \,\, a\leq \epsilon \} .       
    \end{equation}
    For $(i,j) = (|y|,|x|)$ we set $\widetilde{S}_{x,y}^{|y|,|x|} = \{u\in \overline{\mathcal{L}}(x,y) \,|\,  u\not\in \mathring{V}_{x,y}\}$.
    Furthermore, set
    $$    S_{x,y}^{i,j}= \pi\Big\{ (u,\rho_{x,y}^{i+1}(u))\in \overline{\mathcal{L}}(x,y) \times [|y|,|x|]\,\Big|\, u\in \widetilde{S}_{x,y}^{\,i,j}\Big\} \subseteq M  .   $$
    Then for all $i,j\in \{|y|,\ldots, |x|\}$ with $i< j$ it holds that  \begin{equation}\label{eq_s_contained_in_p}
         S_{x,y}^{i,j} \subseteq P_{x,y}^i\cap P_{x,y}^j
    \end{equation}
    as a direct verification shows.
Next, we want to relate transport functions and transition functions.
The technical intermediary construction is that of a pre-transport function which we now define.

\begin{definition}\label{definition_pre-transport-function}
    Let $(M,f,X)$ be Morse data and let $G$ be a topological group.
\begin{enumerate}
    \item 
    A family of continuous maps $\varphi_{x,y}^{i,j} \colon \widetilde{S}_{x,y}^{\,i,j} \to G  $ given for every pair of critical points $x,y\in\mathrm{Crit}(f)$ with $|y|<|x|$ and all $i,j\in\{ |y|,\ldots, |x|\}$ with $i<j$ is called \emph{pre-transport function} if the following conditions are satisfied.  
    \begin{enumerate}[(i)]
        \item Let $u = u_a\circ u_b\circ u_c$ with $u_a\in \overline{\mathcal{L}}(x,x_0), u_b\in\overline{\mathcal{L}}(x_0,y_0), u_c\in \overline{\mathcal{L}}(y_0,y)$ and $|x| \geq |x_0| > |y_0|\geq |y|$.
        Let $i,j\in \{|y_0|,\ldots, |x_0|\}$ and assume that $u_b\in \widetilde{S}_{x_0,y_0}^{i,j}$, then 
        $$   \varphi_{x,y}^{i,j}(u) = \varphi_{x_0,y_0}^{i,j}(u_b) .   $$
        \item Let $(u_n)_{n\in \mathbb{N}}$ be a convergent sequence of flow lines in $\overline{\mathcal{L}}(x,y)$ with $u_n\to u_*\in \overline{\mathcal{L}}(x,y)$.
        Let $|y| \leq i_1 < i_2 < \ldots < i_m \leq |x|$ be a sequence of indices and assume that $u_n\in \widetilde{S}_{x,y}^{\,i_1, i_2}\cap \widetilde{S}_{x,y}^{\,i_2, i_3}\cap \ldots \cap \widetilde{S}_{x,y}^{\,i_{m-1}, i_m}$ for all $n\in\mathbb{N}$ as well as $u_*\in \widetilde{S}_{x,y}^{\,i_1 , i_m}$.
        Then it holds that $$\lim_{n\to\infty} \varphi_{x,y}^{i_{m-1},i_m}(u_n) \cdot \ldots \cdot  \varphi_{x,y}^{i_1,i_2}(u_n) = \varphi_{x,y}^{i_1 i_m}(u_*)  . $$   
    \end{enumerate}
    \item 
    If $\Phi\colon \mathrm{mor}(\mathcal{M}_f)\to G$ is a transport function and $\{\varphi_{x,y}^{i,j}\}$ is a pre-transport function, then we say that $\{\varphi_{x,y}^{i,j}\}$ is \emph{adapted} to $\Phi$ if for $u\in \overline{\mathcal{L}}(x,y)$ and $|y| = j_0<j_1< \ldots < j_m = |x|$ it holds that
    $$    \varphi_{x,y}^{j_{m-1},j_m}(u) \cdot \ldots \cdot \varphi_{x,y}^{j_0,j_1}(u) = \Phi(u)    $$
    whenever the left hand side is defined.
\end{enumerate}
\end{definition}

\begin{example}
In Figure \ref{fig_theothersphere2} we sketch some of the sets $S_{x,y}^{i,j}$ on the $2$-sphere for a Morse function $f\colon \mathbb{S}^2\to \mathbb{R}$ with four critical points.
Assume that $\{\varphi_{x,y}^{i,j}\}$ is a pre-transport function adapted to a transport function $\Phi\colon \mathrm{mor}(\mathcal{M}_f)\to G$.
Note that the flow line $u$ lies both in the set $\widetilde{S}_{b,d}^{\,0,1}$ as well as in $\widetilde{S}_{b,d}^{\,1,2}$.
For the flow line $v$ we have group elements $\varphi_{b,d}^{0,2}(v),\varphi_{b,d}^{0,1}(v),\varphi_{b,d}^{1,2}(v)\in G$ and they have to satisfy
$$    \varphi_{b,d}^{1,2}(v) \cdot \varphi_{b,d}^{0,1}(v) = \varphi_{b,d}^{0,2}(v) = \Phi(v) .       $$
The first equality has to hold by condition (ii) and the second one since $\{\varphi_{x,y}^{i,j}\}$ is adapted to $\Phi$.
\end{example}

\begin{figure}[t]
\centering
\includegraphics[scale=1.]{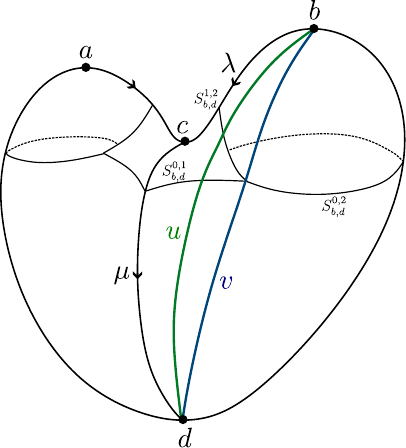}
\caption{Some of the sets $S_{x,y}^{i,j}$ on the $2$-sphere with four critical points.
The flow line $u$ lies in the sets $\widetilde{S}_{b,d}^{\,0,1}$ and $\widetilde{S}_{b,d}^{\,1,2}$ while the flow line $v$ lies in the sets  $\widetilde{S}_{b,d}^{\,0,2}, \widetilde{S}_{b,d}^{\,0,1}$ and $\widetilde{S}_{b,d}^{\,1,2}$. }
\label{fig_theothersphere2}
\end{figure}

\begin{lemma}\label{lemma_trivializations_induce_pre-transport}
    Let $(M,f,X)$ be Morse data and let $q\colon E\to M$ be a $G$-principal bundle.
    A choice of trivializations $\{E|_{V_z} \xrightarrow[]{\cong} G\times V_z\}_{z\in \mathrm{Crit}(f)}$ induces a pre-transport function $\{\varphi_{x,y}^{i,j}\}$ such that $\varphi_{x,y}^{i,j}$ is the corresponding change of trivializations on $S_{x,y}^{i,j} \cap P_{x,y;z_1}^i\cap P_{x,y;z_2}^j$ for suitable $z_1,z_2\in \mathrm{Crit}(f)$.
\end{lemma}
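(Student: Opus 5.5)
We define $\varphi_{x,y}^{i,j}$ directly through the transition functions of the chosen trivializations $\tau_z\colon E|_{V_z}\to G\times V_z$. For $u\in \widetilde{S}_{x,y}^{\,i,j}$ let $p(u) = \pi(u,\rho_{x,y}^{i+1}(u))\in S_{x,y}^{i,j}$; the map $u\mapsto p(u)$ is continuous because $\pi$ and $\rho^{i+1}_{x,y}$ are. By \eqref{eq_s_contained_in_p} we have $p(u)\in P_{x,y}^i\cap P_{x,y}^j$. By Lemma \ref{prop_contractibility_pxyj}(3), $P_{x,y}^i$ and $P_{x,y}^j$ split into disjoint components $P_{x,y;z_1}^i$ and $P_{x,y;z_2}^j$ indexed by critical points. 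For $i=|y|$ or $j=|x|$ the components are simply $P^{|y|}_{x,y}$ and $P^{|x|}_{x,y}$, with $z=y$ and $z=x$. Since all these components are contained in $R_{z}\subseteq V_{z}$, we set
\[
  \varphi_{x,y}^{i,j}(u) = \omega_{V_{z_2}V_{z_1}}(p(u)) = \mathrm{pr}_1\big(\tau_{z_2}\circ\tau_{z_1}^{-1}(e,p(u))\big),
\]
where $z_1,z_2$ are determined by $p(u)\in P^i_{x,y;z_1}\cap P^j_{x,y;z_2}$. The index order (upper trivialization after lower) is fixed so that products in condition (ii) compose as $\varphi^{i_{m-1},i_m}\cdots\varphi^{i_1,i_2}$, matching the cocycle identity \eqref{eq_cocycle_identity}. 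Continuity holds because the components are disjoint relatively clopen pieces, so $z_1,z_2$ are locally constant in $u$, and $\omega$ is continuous.

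For condition (i), take $u=u_a\circ u_b\circ u_c$ with $u_b\in\widetilde{S}^{i,j}_{x_0,y_0}$. Property (iv) of the separation functions gives $\rho^{i+1}_{x,y}(u)=\rho^{i+1}_{x_0,y_0}(u_b)$, and the equivalence relation defining $K$ gives $\pi(u,s)=\pi(u_b,s)$ for $s$ in the range of $u_b$. Hence $p(u)$ is the same point for both. We must also check that the components are labelled by the same critical points, i.e.\ that $P^i_{x,y;z}$ and $P^i_{x_0,y_0;z}$ are compatible. Both lie in $R_z$, and the $R_z$'s are pairwise disjoint by the recursive construction (the downward induction in Lemma \ref{prop_contractibility_pxyj}). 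So the label $z_1$ is determined as the unique $z$ with $p(u)\in R_z$, independently of $(x,y)$. This observation, that labels are intrinsically given by which $R_z$ contains the point, should be isolated first, since it is used repeatedly.

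For condition (ii), let $u_n\to u_*$ with $u_n\in\bigcap_k\widetilde{S}^{i_k,i_{k+1}}$ and $u_*\in\widetilde{S}^{i_1,i_m}$. Set $p_n^k=\pi(u_n,\rho^{i_k+1}(u_n))$. Along $u_n$ we have the equalities $\rho^{i_k+1}=\rho^{i_{k+1}}$, and in the limit all $\rho^{i_k+1}(u_*)$ coincide, because $u_*\in\widetilde S^{i_1,i_m}$ forces $\rho^{i_1+1}(u_*)=\rho^{i_m}(u_*)$ and the $\rho$'s are monotone by property (ii). Therefore all $p_n^k$ converge to the single point $p_*=p(u_*)$. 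Let the labels be $z_{i_1},\dots,z_{i_m}$. Then
\[
 \varphi^{i_{m-1},i_m}(u_n)\cdots\varphi^{i_1,i_2}(u_n)=\prod_k \omega_{V_{z_{i_{k+1}}}V_{z_{i_k}}}(p_n^k),
\]
which tends to $\prod_k\omega_{V_{z_{i_{k+1}}}V_{z_{i_k}}}(p_*)$. By the cocycle identity \eqref{eq_cocycle_identity} this telescopes to $\omega_{V_{z_{i_m}}V_{z_{i_1}}}(p_*)=\varphi^{i_1,i_m}(u_*)$. This needs $p_*\in V_{z_{i_k}}$ for all $k$, which follows because the $V_z$ are open neighbourhoods of the closed sets $R_z$ and the labels are locally constant in the limit; this is the step I expect to be the main obstacle. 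I would argue it by a closedness argument: $p_n^k\in R_{z_{i_k}}$, and $R_z$ is compact as a finite union of images of compact sets, so $p_*\in R_{z_{i_k}}\subseteq V_{z_{i_k}}$.

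Finally, the statement that $\varphi^{i,j}_{x,y}$ is the change of trivializations on $S^{i,j}_{x,y}\cap P^i_{x,y;z_1}\cap P^j_{x,y;z_2}$ holds by construction.
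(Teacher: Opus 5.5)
\textbf{Gap in condition (ii).} The failure is at the step you flagged yourself, and the proposed fix is false: $R_z$ is not compact. For $|y|<j<|x|$ the set $\widetilde P^{j}_{x,y}$ is cut out by the \emph{strict} inequality $\rho^{j}_{x,y}(\widehat u)<\rho^{j+1}_{x,y}(\widehat u)$. It is therefore not closed, and neither is its image.

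Here is a concrete example. Let $n=3$, $|x|=3$, $|y|=0$, and let $z,d$ be critical points of index $2,1$. Take $\widehat v\in\mathcal{L}(x,z)\times\mathcal{L}(z,d)\subseteq\partial\overline{\mathcal{L}}(x,d)$ and $w\in\mathcal{L}(d,y)$. Set $u_n=(\widehat v,t_n)\circ w$ with $t_n\uparrow\epsilon$, and $u_*=(\widehat v,\epsilon)\circ w$.
\begin{itemize}
\item Then $u_n\in\widetilde S^{1,2}_{x,y}\cap\widetilde S^{2,3}_{x,y}$ and $u_*\in\widetilde S^{1,3}_{x,y}$.
\item Your points $p^1_n,p^2_n$ lie in $P^{2}_{x,y}$, at the distinct heights $\rho^2_{x,y}(u_n)<\rho^3_{x,y}(u_n)$, and both heights tend to $2$.
\item The limit $p_*$ lies on the unbroken line $(\widehat v,\epsilon)\in\mathcal{L}(x,d)$. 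So every representative of $p_*$ in some $\overline{\mathcal{L}}(x',y')$ with $|x'|-|y'|=3$ is a broken line $w'$ having $(\widehat v,\epsilon)$ as a piece.
\item Property (iv) together with the collar formula at $t=\epsilon$ gives $\rho^2(w')=\rho^3(w')$.
\end{itemize}
Hence $p_*\notin V_{z'}$ for every $z'$ of index $2$: the intermediate label is lost in the limit. The outer labels are harmless, since $p_*\in P^{i_1}_{x,y}\cap P^{i_m}_{x,y}$ by \eqref{eq_s_contained_in_p} and the $V$'s are open.

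\textbf{Why a better point-set argument will not close it.} The cocycle identity relates values at one point, but the factors $\varphi^{i_k,i_{k+1}}(u_n)$ are evaluated at the distinct points $p^k_n$. In the example, for large $n$ and with the corrected ordering below, $\varphi^{2,3}(u_n)\varphi^{1,2}(u_n)$ equals $h(p^2_n)\,h(p^1_n)^{-1}$ times a factor converging to $\varphi^{1,3}(u_*)$, where $h=\omega_{V_zV_x}$. The term $h(p^2_n)h(p^1_n)^{-1}$ is governed by the trivialization over $V_z$ near the frontier point $p_*$, which an arbitrary choice does not control. For instance, for abelian path-connected $G$, twist that trivialization by a map $V_z\to G$ equal to $e$ at the $p^1_n$ and to a fixed $g\neq e$ at the $p^2_n$. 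Such a map exists because these sequences only accumulate at $p_*\notin V_z$, and the twist multiplies the limit by $g$.

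So some extra input is needed, e.g.\ control of the trivializations near the frontier of $V_z$, or modified sets $\widetilde P^{j}_{x,y}$, $\widetilde S^{i,j}_{x,y}$. The paper's own argument for (ii) does not supply this either. It applies the cocycle identity at a single point $q_n$, although the factors are evaluated at the different points $q^k_n$.

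\textbf{Two smaller corrections.}
\begin{itemize}
\item The $R_z$ are disjoint only for critical points of equal index; $p(u)$ itself lies in $R_{z_1}\cap R_{z_2}$. Equal-index disjointness is all your labelling argument in (i) actually uses.
\item Your index order is reversed. By \eqref{eq_cocycle_identity}, $\omega_{V_{z_3}V_{z_1}}=\omega_{V_{z_2}V_{z_1}}\cdot\omega_{V_{z_3}V_{z_2}}$. So for non-abelian $G$ you must take $\varphi^{i,j}_{x,y}=\omega_{V_{z_1}V_{z_2}}$, lower label first. This is the paper's $\psi^i_{x,y;z}\circ(\psi^j_{x,y;a})^{-1}$.
\end{itemize}
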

\begin{proof} 
    Let $z\in\mathrm{Crit}(f)$ with $|z| = k$.
    The set $V_z$ is contractible by Lemma \ref{lemma_contractibility_of_Rz}.
    We choose and fix a trivialization $\psi_z\colon E|_{V_z}\xrightarrow[]{\cong} G \times  V_z$ which restricts to a map $\psi_z\colon E|_{R_z}\xrightarrow[]{\cong} G\times R_z$.
    Let $x,y\in\mathrm{Crit}(f)$ and recall that we have $P_{x,y;z}^k\subseteq R_z$.
    We define restrictions $\psi_{x,y;z}^{k}\colon E|_{P_{x,y;z}^{k}} \to G\times P_{x,y;z}^{k}$ by restricting $\psi_z$ to $E|_{P_{x,y;z}^{k}}$.
    For critical points $z,a\in \mathrm{Crit}(f)$ with $|z| = i$, $|a| = j$ and $i< j$ we consider the compositions $\psi^i_{x,y;z}\circ (\psi^j_{x,y;a})^{-1}$.
    These are defined on $G\times (P_{x,y;z}^i\cap P_{x,y;a}^j)$ and we have
    $$   \psi^i_{x,y;z}\circ (\psi^j_{x,y;a})^{-1}|_{G\times (P_{x,y;z}^i\cap P^j_{x,y;a})} = \big( \psi_z |_{E_{P_{x,y;z}^i\cap P^j_{x,y;a}}}  \big) \circ \big(  \psi_a |_{E_{P_{x,y;z}^i\cap P^j_{x,y;a}}}  \big)^{-1}  . $$
    This map is of the form $(h,p)\mapsto ( g\cdot \psi_{x,y;z,a}^{i,j}(p),p)$ for some map $\psi_{x,y;z,a}^{i,j}\colon  P_{x,y;z}^j \cap P_{x,y;a}^i\to G$.
    Explicitly we have
    $$  \psi_{x,y;z,a}^{i,j} (p)  =   \mathrm{pr}_1 \circ \big( \psi^i_{x,y;z}\circ (\psi^j_{x,y;a})^{-1}|_{G\times (P_{x,y;z}^i\cap P^j_{x,y;a})} \big) (e,p) .       $$
    Recall that there is an inclusion $S_{x,y}^{\, i,j}\subseteq P_{x,y}^i\cap P_{x,y}^j $.
    Moreover there is a map $\widetilde{\pi}_{x,y}^{\, i,j}\colon \widetilde{S}_{x,y}^{\, i,j}\to S_{x,y}^{i,j}$ given by $u\mapsto \pi(u,\rho_{x,y}^{i+1}(u))$.
    The set ${S}_{x,y}^{i,j}$ decomposes into components $P_{x,y;z}^i\cap P_{x,y;a}^j$ and similarly the set $\widetilde{S}_{x,y}^{\, i,j}$ has components $\widetilde{S}_{x,y;z,a}^{\, i,j}$ which are the preimages of the sets ${S}_{x,y;z,a}^{i,j}$ under $\widetilde{\pi}_{x,y}^{\, i,j}$.
    Hence, $\widetilde{\pi}_{x,y}^{\, i,j}$ maps $\widetilde{S}_{x,y;z,a}^{\, i,j}$ into $P_{x,y;z}^i\cap P_{x,y;a}^j$.
    On the component $\widetilde{S}_{x,y;z,a}^{\, i,j}$ we define $\varphi_{x,y}^{i,j}\colon \widetilde{S}_{x,y;z,a}^{\, i,j}\to G$ to be the pullback of $\psi_{x,y;z,a}^{i,j}$ along $\widetilde{\pi}_{x,y}^{\,i,j}$, i.e. 
    $$     \varphi_{x,y}^{i,j}(u) :=  \psi^{i,j}_{x,y;z,a}(\pi( u,\rho_{x,y}^{i+1}(u))      \quad \text{for}\,\,\, u\in\widetilde{S}_{x,y;z,a}^{\, i,j} . $$
    In order to show that condition (ii) of a pre-transport function holds let $(u_n)_{n\in \mathbb{N}}$ be a converging sequence in $\overline{\mathcal{L}}(x,y)$ with $u_n\to u_*\in \overline{\mathcal{L}}(x,y)$.
    Assume that for all $n\in \mathbb{N}$ we have $u_n\in \widetilde{S}_{x,y}^{\, i_1, i_2}\cap \ldots \cap \widetilde{S}_{x,y}^{\, i_{m-1}, i_m}$ for indices $|y| \leq i_1 < \ldots < i_m \leq |x|$.
    Furthermore assume that $u_*\in \widetilde{S}_{x,y}^{\, i_1, i_m}$.
    There are critical points $a,b\in \mathrm{Crit}(f)$ with $ p_* = \widetilde{\pi}_{x,y}^{\, i,j}(u_*) \in S_{x,y;a,b}^{i,j}$.
    In particular $p_*\in V_a\cap V_b$.
    Moreover, there are critical points $c^k_n\in \mathrm{Crit}(f)$ with $n\in \mathbb{N}$, $k\in \{1,\ldots, m-1\}$ such that $q^k_n := \widetilde{\pi}_{x,y}^{\, i_k, i_{k+1}}(u_n) \in V_{c^k_n}\cap V_{c^{k+1}_n}$.
    Since $ \lim_{n\to\infty}q_n = \lim_{n\to\infty} \widetilde{\pi}_{x,y}^{\,i_k,i_{k+1}}(u_n) =  p_* $ there is $N\in \mathbb{N}$ such that $q_n\in V_a\cap V_b$ for $n\geq N$.
    Thus, for $n\in \mathbb{N}$, $n\geq N$ we have $q_n \in V_a \cap V_b \cap V_{c^1_n}\cap V_{c^2_n}\cap \ldots \cap V_{c^m_n}$.
    Since $$\varphi_{x,y}^{i_k,i_{k+1}}(u_n) = \psi_{x,y;c^k_nc^{k+1}_n}^{i,j} (\widetilde{\pi}_{x,y}^{\, i,j}(u_n))   $$
    and since the $\psi_{x,y;z,a}^{i,j}$ satisfy the cocycle condition we obtain that
    $$     \varphi_{x,y}^{i_{m-1},i_{m}}(u_n)\cdot \ldots \cdot \varphi_{x,y}^{i_1,i_2}(u_n) = \psi_{x,y;a,b}^{i_1,i_m}(q_n^k) .           $$
    Since $\psi_{x,y;a,b}^{i_1i_m}$ is continuous we conclude that
    $$  \lim_{n\to\infty} \varphi_{x,y}^{i_{m-1},i_{m}}(u_n)\cdot \ldots \cdot \varphi_{x,y}^{i_1,i_2}(u_n) = \lim_{n\to\infty}\psi_{x,y;a,b}^{i_1,i_m}(q^k_n) = \psi_{x,y;a,b}^{i_1,i_m}(q_*) = \varphi_{x,y}^{i_1,i_m} (u_*)    .    $$
    
    In order to show that condition (i) holds let $u = u_a\circ u_b\circ u_c$ with $u_a \in \overline{\mathcal{L}}(x,x_0)$, $u_b\in\overline{\mathcal{L}}(x_0,y_0)$ and $u_c\in\overline{\mathcal{L}}(y_0,y)$ for critical points $y,y_0,x_0,x\in\mathrm{Crit}(f)$ with $|y|\leq |y_0| < |x_0|\leq |x|$.
    We have
    \begin{eqnarray}\label{eq_transition_functions}
        \varphi_{x,y}^{i,j}(u) &=& \psi_{x,y;z,a}^{i,j}(\pi(u,\rho_{x,y}^{i+1}(u))) \\
        &=& \psi_{x,y;z,a}^{i,j}(\pi(u_b,\rho_{x_0,y_0}^{i+1}(u_b))) \nonumber \\ 
        &=& \mathrm{pr}_1 \big(  \psi_{x,y;z}^i|_{E|_{P_{x,y;z}^i\cap P_{x,y;z}^j}} \circ (  \psi_{x,y;a}^j|_{E|_{P_{x,y;z}^i\cap P_{x,y;z}^j}})^{-1}   \big) (e,\pi(u_b,\rho_{x_0,y_0}^{i+1}(u_b))) . \nonumber
    \end{eqnarray}
    By construction we have $P_{x_0,y_0;z}^i \subseteq P_{x,y;z}^i$ and $P_{x_0,y_0;a}^j\subseteq P_{x,y;a}^j$.
    Since the local trivialization $\psi^i_{x,y;z}\colon E|_{P_{x,y;z}^i}\to P_{x,y;z}^i\times G$ is the restriction of $\psi_z\colon E|_{R_z}\to R_z\times G$ and analogously for $\psi^j_{x_0,y_0;a}$ we have that
    $$       \psi_{x_0,y_0;z}^i =   \psi_{x,y;z}^i |_{E|_{P_{x_0,y_0;z}^i}}   \quad \text{and}\quad    \psi_{x_0,y_0;a}^j =   \psi_{x,y;a}^j |_{E|_{P_{x_0,y_0;a}^j}}  \, .    $$
    Inserting this into equation \eqref{eq_transition_functions} yields $\varphi_{x,y}^{i,j}(u) = \varphi_{x_0,y_0}^{i,j}(u_b)$ which shows condition (i).
\end{proof}

We now want to show that a pre-transport function induces a transport function.

\begin{prop}\label{prop_pre-transport_induces_transport}
    Let $(M,f,X)$ be Morse data and let $G$ be a topological group.
    A pre-transport function $\{\varphi_{x,y}^{i,j}\colon \widetilde{S}_{x,y}^{\, i,j}\to G\}$ induces a transport function $\Phi\colon \mathrm{mor}(\mathcal{M}_f)\to G$ such that $\{\varphi_{x,y}^{i,j}\}$ is adapted to $\Phi$.
\end{prop}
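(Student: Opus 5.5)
The plan is to define $\Phi$ on each $\overline{\mathcal{L}}(x,y)$ as an ordered product of pre-transport values along a chain of indices from $|y|$ to $|x|$ that is adapted to $u$. Then I check that this is independent of the chain, continuous, and multiplicative.

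For $u\in\overline{\mathcal{L}}(x,y)$, consider the distinct values among $\rho^{|y|+1}_{x,y}(u)\le\dots\le\rho^{|x|}_{x,y}(u)$, together with the conventions $\rho^{|y|}\equiv|y|-1$ and $\rho^{|x|+1}\equiv|x|+1$. These values determine a unique maximal chain $|y|=j_0<j_1<\dots<j_m=|x|$ for which $u\in\widetilde S^{\,j_{k-1},j_k}_{x,y}$ for all $k$. Here $j_k$ is the last index of the $k$-th block of equal $\rho$-values, and the $\rho$-values are frozen on the collar parameter $a\le\epsilon$ through the affine structure (v).
\begin{itemize}
\item If $u\notin\mathring V_{x,y}$, all $\rho$'s coincide. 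The chain is then $(|y|,|x|)$, and we set $\Phi_{x,y}(u)=\varphi^{|y|,|x|}_{x,y}(u)$.
\item Otherwise, we set
$$\Phi_{x,y}(u)=\varphi^{j_{m-1},j_m}_{x,y}(u)\cdots\varphi^{j_0,j_1}_{x,y}(u).$$
\end{itemize}
Adaptedness then requires that any other admissible chain for which the product is defined gives the same value. Such a chain is obtained by merging consecutive blocks at points where two $\rho$'s agree. I plan to deduce that these agree from condition (ii) of a pre-transport function, applied to the constant sequence $u_n=u$.

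Next I prove the multiplicativity \eqref{eq_transport_function}. Let $u=u_1\circ u_2$ with $u_1\in\overline{\mathcal{L}}(x,z)$ and $u_2\in\overline{\mathcal{L}}(z,y)$. By property (iv) of the separation functions, $\rho^j_{x,y}(u)=\rho^j_{x,z}(u_1)$ for $j>|z|$ and $\rho^j_{x,y}(u)=\rho^j_{z,y}(u_2)$ for $j\le|z|$. Moreover, $\rho^{|z|}_{x,y}(u)<|z|<\rho^{|z|+1}_{x,y}(u)$ since the values lie in the open intervals of (i). So $|z|$ appears as some $j_k$ in the maximal chain of $u$. The chain splits into the maximal chains of $u_2$ (indices $\le|z|$) and of $u_1$ (indices $\ge|z|$). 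Condition (i) identifies each factor $\varphi^{j_{l-1},j_l}_{x,y}(u)$ with the corresponding factor of $u_1$ or $u_2$. The product therefore factors as $\Phi_{x,z}(u_1)\cdot\Phi_{z,y}(u_2)$.

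The main obstacle is continuity of $\Phi_{x,y}$. The maximal chain jumps where $\rho$-values collide in the limit, or where $u$ crosses $\partial V_{x,y}$. Let $u_n\to u_*$, and pass to subsequences on which the chain is constant, say $i_1<\dots<i_m$. The $\widetilde S$-sets are defined by closed-type conditions except for the strict inequalities. Hence the chain of $u_*$ is obtained by merging consecutive blocks, and $u_*\in\widetilde S^{\,i_1,i_m}$ for each merged block. Condition (ii) of the pre-transport function then gives that the limit of the product over each merged block equals the corresponding factor for $u_*$. Multiplying these block limits, using continuity of multiplication in $G$, yields $\Phi(u_n)\to\Phi(u_*)$.

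Finally, since $G$ is metrizable and every subsequence has a further subsequence converging to $\Phi(u_*)$, sequential continuity follows and hence continuity. The only real care needed is the boundary $a=\epsilon$ of $V_{x,y}$. There the chain degenerates to $(|y|,|x|)$ and condition (ii) again applies, since $\widetilde S^{|y|,|x|}$ is defined as the complement of $\mathring V_{x,y}$.
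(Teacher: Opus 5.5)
Your proposal is correct and follows essentially the paper's proof: you define $\Phi_{x,y}(u)$ as the ordered product of the $\varphi^{j_{k-1},j_k}_{x,y}(u)$ along the maximal chain of indices, you get multiplicativity from property (iv) of the separation functions together with condition (i) by splitting the chain of $u_1\circ u_2$ at $|z|$, and you get continuity by passing to subsequences with a fixed chain and applying condition (ii) to each merged block. The only difference is that you handle points with collar parameter exactly $\epsilon$ explicitly: you set $\Phi_{x,y}(u)=\varphi^{|y|,|x|}_{x,y}(u)$ there and reconcile it with the finer chain of $\widehat u$ via condition (ii) for the constant sequence $u_n=u$. The paper uses this consistency step without stating it, both for adaptedness and in its claim that the maximal chain of $u_1\circ u_2$ is the concatenation of those of $u_1$ and $u_2$.
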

\begin{proof}
    Let $x,y\in\mathrm{Crit}(f)$ with $|y|\leq |x|$.
    If $x= y$ there is only the constant flow line $c_x\in \mathcal{L}(x,x)$ and we set $\Phi(c_x) = e\in G$.
    Assume now that $x\neq y$ and let $u\in \overline{\mathcal{L}}(x,y)$ be a flow line.
    Consider the maximal sequence of indices $|y| = j_0<j_1< \ldots < j_m = |x|$ such that $u\in \widetilde{S}_{x,y}^{\, j_k,j_{k+1}}$ for $k = 0,1,\ldots, m-1$.
    From the definition it follows that if $u\in \widetilde{S}_{x,y}^{\, i,j}$ for some indices $i,j\in \{|y|,|y|+1,\ldots, |x|\}$, then either $i = j_p, j = j_{p+1}$ for some $p\in \{0,\ldots, m-1\}$ or $(i,j) = (|y|,|x|)$.   
    Define 
    $$    \Phi(u) =  \varphi_{x,y}^{j_{m-1},j_m}(u) \cdot \varphi_{x,y}^{j_{m-2},j_{m-1}}(u) \cdot \ldots \cdot  \varphi_{x,y}^{j_{0},j_1}(u).   $$
    For the continuity, let $(u_n)_{n\in\mathbb{N}}$ be a sequence in $\overline{\mathcal{L}}(x,y)$ and assume that $\lim_{n\to\infty} u_n = u\in \overline{\mathcal{L}}(x,y)$.
    First, assume that for all $n\in\mathbb{N}$ the flow lines $u_n$ have the maximal sequence of indices $|y| = j_0<j_1< \ldots < j_m = |x|$, i.e. $u_n\in \widetilde{S}_{x,y}^{\, j_k,j_{k+1}}$ for $k = 0,\ldots, m-1$ for all $n\in\mathbb{N}$.
   By continuity of the separation functions the maximal sequence of indices for $u$ is a subset of the indices for the $u_n$, i.e. there is a sequence $0 = k_0 < k_1 < \ldots < k_{\ell} = m$ such that $|y| = j_{k_0} < j_{k_1} < \ldots < j_{k_{\ell}} = |x|$ is the maximal sequence for $u$.
    Therefore, by definition 
    $$   \Phi(u) =  \varphi_{x,y}^{j_{k_{\ell-1}},j_m}(u) \cdot \varphi_{x,y}^{j_{k_{\ell-2}},j_{k_{\ell-1}}}(u) \cdot \ldots \cdot  \varphi_{x,y}^{j_{0},j_{k_1}}(u)  . $$
    Since $\varphi$ is a pre-transport function we have
    $$   \lim_{n\to\infty}   \varphi_{x,y}^{j_{k_i-1},j_{k_i}}(u_n) \cdot \ldots \cdot \varphi_{x,y}^{j_{k_{i-1}},j_{k_{i-1}+1}}(u_n) = \varphi_{x,y}^{j_{k_{i-1}} j_{k_i} } (u)    $$
    for all $i = 0,\ldots ,\ell-1$.
    Therefore we have
    \begin{eqnarray*}
            \lim_{n\to\infty} \Phi(u_n)  &=&  \lim_{n\to\infty} \varphi_{x,y}^{j_{m-1},j_m}(u_n) \cdot \varphi_{x,y}^{j_{m-2},j_{m-1}}(u_n) \cdot \ldots \cdot  \varphi_{x,y}^{j_{0},j_1}(u_n)     \\
            &=&  \lim_{n\to\infty}( \varphi_{x,y}^{j_{m-1},j_m}(u_n) \cdot \ldots \cdot \varphi_{x,y}^{j_{k_{\ell-1}},j_{k_{\ell-1}+1}}(u_n)) \cdot \ldots   \\  &  & \hphantom{ergoi }  \cdot \lim_{n\to\infty} (\varphi_{x,y}^{j_{k_1 -1},j_{k_1}}(u_n)\cdot \ldots \cdot \varphi_{x,y}^{j_0,j_1}(u_n)) \\
            &=&   \varphi_{x,y}^{j_{k_{\ell-1}},j_m}(u) \cdot \varphi_{x,y}^{j_{k_{\ell-2}},j_{k_{\ell-1}}}(u) \cdot \ldots \cdot  \varphi_{x,y}^{j_{0},j_{k_1}}(u) = \Phi(u)       .    
    \end{eqnarray*}
    More generally, if $(u_n)_{n\to\infty}$ is an arbitrary converging sequence in $\overline{\mathcal{L}}(x,y)$, we obtain a decomposition into finitely many subsequences, each of which has the same maximal sequence of indices for all elements of this subsequence.
    The above argument can be repeated for each subsequence to show that the limit of $\Phi(u_n)$ exists and agrees with $\Phi(u)$.

    We still need to show that $\Phi$ is a transport function, i.e. that it respects compositions of flow lines.
    Let $u = u_1\circ u_2$ for $u_1\in \overline{\mathcal{L}}(x,z)$ and $u_2\in \overline{\mathcal{L}}(z,y)$.
    Let $|y| = j_0 < j_1< \ldots <j_m = |z|$ as well as 
    $|z|=  \ell_0 < \ell_1 < \ldots < \ell_n = |x|$ be the maximal sequences of indices for $u_1$ and $u_2$, respectively.
    Then the sequence $j_0 < j_1 < \ldots  < j_m = \ell_0 < \ell_1 < \ldots < \ell_n$ is the maximal sequence for the flow line $u$.
    We have
    \begin{eqnarray*}
        \Phi_{x,y}(u) &=&  \varphi_{x,y}^{\ell_{n-1},\ell_n}(u) \cdot \ldots \cdot   \varphi_{x,y}^{\ell_0,\ell_1}(u)  \cdot \varphi_{x,y}^{j_{m-1},j_m}(u)  \cdot \ldots \cdot  \varphi_{x,y}^{j_0,j_1}(u)  
        \\
        &=&  \varphi_{x,z}^{\ell_{n-1},\ell_n}(u_1)  \cdot \ldots \cdot   \varphi_{x,z}^{\ell_0,\ell_1}(u_1)  \cdot \varphi_{z,y}^{j_{m-1},j_m}(u_2)  \cdot \ldots \cdot  
        \varphi_{z,y}^{j_0,j_1}(u_2)
        \\
        &=& \Phi_{x,z}(u_1) \cdot \Phi_{z,y}(u_2) 
    \end{eqnarray*}
    by condition (i) of the pre-transport functions.
    This shows that $\Phi$ is a transport function and it is adapted to $\{\varphi_{x,y}^{i,j}\}$ by construction.
\end{proof}

\subsection{From transport functions to principal bundles}

In this subsection we want to see that one can recover the isomorphism class of a $G$-principal bundle from the transport function.
The idea is that a $G$-valued transport function induces a $G$-cocycle and thus a $G$-principal bundle.
We will then show that the transport function which we defined above using the local trivializations induces a bundle which is isomorphic to the one that we started with.

\begin{theorem}\label{theorem_main_result_transport-functions}
    Let $(M,f,X)$ be Morse data and let $q\colon E\to M$ be a $G$-principal bundle.
    There is an induced transport function $\Phi_q\colon \mathrm{mor}(\mathcal{M}_f)\to G$.
    Conversely, any transport function $\Psi\colon \mathrm{mor}(\mathcal{M}_f)\to G$ yields a $G$-principal bundle $p\colon E^{\Psi}\to M$ such that the principal bundle $p\colon E^{\Phi_q}\to M$ associated to the transport function $\Phi_q$ is isomorphic to the initial bundle $q\colon E\to M$.
\end{theorem}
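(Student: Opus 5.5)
The first part is immediate from what has been done. Choose trivializations $\psi_z\colon E|_{V_z}\to G\times V_z$ over the contractible sets of Lemma \ref{lemma_contractibility_of_Rz}. Lemma \ref{lemma_trivializations_induce_pre-transport} then gives a pre-transport function $\{\varphi_{x,y}^{i,j}\}$, and Proposition \ref{prop_pre-transport_induces_transport} turns it into a transport function $\Phi_q$ to which it is adapted. The substance is the converse: build a $G$-valued cocycle from an arbitrary transport function $\Psi$, and then check that for $\Psi=\Phi_q$ this cocycle is equivalent to the transition cocycle of $\{\psi_z\}$.

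\textbf{Step 1: a cocycle on closed pieces.} Given $\Psi$, I first produce a pre-transport function adapted to $\Psi$, working by induction on $|x|-|y|$. Condition (i) forces the values on $\partial\overline{\mathcal{L}}(x,y)$ from lower index differences. On the collar $V_{x,y}$, where $u=(\widehat u,a)$, I set every factor except one equal to its boundary value $\varphi^{i,j}_{x,y}(\widehat u)$. The remaining factor is chosen so that the product equals $\Psi(u)$, namely $\varphi^{i,j}_{x,y}(\widehat u)\cdot\Psi(\widehat u)^{-1}\Psi(u)$, placed appropriately in the product. Condition (ii) then follows from the continuity of $\Psi$ together with the transport identity \eqref{eq_transport_function} at the boundary.

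With these functions I define, for $z,a\in\mathrm{Crit}(f)$, maps $\omega_{za}$ on $R_z\cap R_a$. On each piece $S^{i,j}_{x,y}\cap P^i_{x,y;z}\cap P^j_{x,y;a}$, the map $\omega_{za}$ is $\varphi^{i,j}_{x,y}$ pulled back along $\widetilde\pi^{\,i,j}_{x,y}$. Condition (i) makes this independent of the chosen representative $(x,y,u)$ of a point. Over the identified sets, the cocycle identity \eqref{eq_cocycle_identity} reduces to the adaptedness relation $\varphi^{j,k}\varphi^{i,j}=\varphi^{i,k}$, which comes from condition (ii).

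\textbf{Step 2: passing to open sets.} The cocycle lives on the closed cover $\{R_z\}$, but the clutching theorem (\cite{husemoller1966fibre}) needs an open cover. I would extend $\omega_{za}$ to $V_z\cap V_a$ by composing with the strong deformation retractions $r_z\colon V_z\to R_z$ from Lemma \ref{lemma_contractibility_of_Rz}. This is consistent because, by the explicit formulas in Lemma \ref{prop_contractibility_pxyj}, these retractions move points only in the $s$-direction or collar direction, compatibly across pieces. Along the way I must also check that the $R_z\cap R_a$ really are covered by the sets $S^{i,j}_{x,y}$; this is exactly the content of \eqref{eq_s_contained_in_p} combined with the decomposition in Lemma \ref{prop_contractibility_pxyj}(3).

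Alternatively, and perhaps more robustly, I would glue $\bigsqcup_z G\times R_z$ directly along the $\omega_{za}$. Local triviality near each point then follows from the fact that the $R_z$ are closed and the $V_z$ give open neighbourhoods retracting onto them; this gives $E^\Psi$.

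\textbf{Step 3: recovering $q$.} For $\Psi=\Phi_q$, take the pre-transport function to be the one from Lemma \ref{lemma_trivializations_induce_pre-transport}. There its $\varphi^{i,j}_{x,y}$ are literally the transition maps $\psi_z\circ\psi_a^{-1}$ restricted to these pieces. Hence the cocycle of Step 1 equals the transition cocycle of $q$ restricted to the $R_z$, and the gluing reproduces $E$ up to isomorphism.

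The independence of the auxiliary pre-transport function only matters for the later isotopy statement. Here it suffices to use this particular choice, since the theorem only asserts that $E^{\Phi_q}$ for this induced $\Phi_q$ is isomorphic to $q$. The extra care needed is to make the construction of $E^\Psi$ well defined: the extension of Step 1 should be shown to produce equivalent cocycles for different choices. I would handle this by connecting any two adapted extensions on the collar, which agree on the boundary, through a family built from their quotient in $G$.

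The main obstacle I expect is Step 2. The transition data are only defined on the thin sets $S^{i,j}_{x,y}$, and showing that they determine a genuinely continuous cocycle on open overlaps requires the convergence condition (ii) together with metrizability of $G$. This point-set gluing across the corners of the moduli spaces is where I would spend most of the effort.
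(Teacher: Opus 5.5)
\paragraph{Overall.} Your outline has the same architecture as the paper's proof: trivializations give a pre-transport function and hence $\Phi_q$, a transport function gives cocycle data, and that data is compared with $q$. The first part is exactly the paper's. But the step you flag as the main obstacle is left open, and the mechanism you propose for it does not work.

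\paragraph{Passing to open sets.} Composing with the retractions $r_z\colon V_z\to R_z$ of Lemma \ref{lemma_contractibility_of_Rz} does not map $V_z\cap V_b$ into $R_z\cap R_b$. Take $|x|-|y|=n$ and $u=(\widehat u,a)\in U_{x,y}$ with $\widehat u$ broken and $a<\epsilon$ close to $\epsilon$. Then $0<\rho^{|x|}_{x,y}(u)-\rho^{|y|+1}_{x,y}(u)<2\epsilon_0$. So for $s$ halfway between these two values, $p=\pi(u,s)$ lies in $U^{|y|}_{x,y}\cap U^{|x|}_{x,y}\subseteq V_y\cap V_x$.
\begin{itemize}
\item $r_y$ sends $p$ to $\pi(u,\rho^{|y|+1}_{x,y}(u))$ and $r_x$ sends it to $\pi(u,\rho^{|x|}_{x,y}(u))$. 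Neither point lies in $R_y\cap R_x$, which consists of the points $\pi(u',\tfrac{|x|+|y|}{2})$ with $u'\notin\mathring V_{x,y}$. So $\omega_{yx}$ is not even defined at the retracted points.
\item Where retracted points do land in the intersections, the three retractions on a triple overlap give three different points. So the cocycle identity does not transfer from the $R$'s to the $V$'s.
\item Gluing $\bigsqcup_z G\times R_z$ directly has the same problem in disguise. Local triviality near $R_z\cap R_b$ requires extending the transition maps off the closed sets. The retractions $V_z\to R_z$ do not provide such extensions, and for a merely metrizable $G$ general topology does not either.
\item Equation \eqref{eq_s_contained_in_p} is the inclusion $S\subseteq P\cap P$, the opposite of what you cite it for. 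Moreover, at $a=\epsilon$ there are points of $P^i_{x,y}\cap P^j_{x,y}$ with $i,j$ non-consecutive that are not in $S^{i,j}_{x,y}$, so products are needed there.
\end{itemize}
The missing idea is the paper's: work from the start on the open sets $\widetilde W^{i,j}_{x,y}\supseteq\widetilde S^{i,j}_{x,y}$ with extended pre-transport functions. These satisfy the cocycle identity pointwise on triple overlaps. Their values for non-consecutive indices are products over the maximal sequence. For an arbitrary transport function, their collar values come from the projection $U_{x,y}\to\partial\overline{\mathcal L}(x,y)$ plus a remainder factor (Lemma \ref{lemma_transport_induces_ept}). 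Lemma \ref{lemma_extended_pre-transport_induces_cocycle} then gives a cocycle on the open cover $\{V_a\}$ directly.

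\paragraph{The comparison in Step 3.} $E^{\Phi_q}$ has to be built from $\Phi_q$ alone by your Step 1 construction. So you cannot substitute the pre-transport function of Lemma \ref{lemma_trivializations_induce_pre-transport}: comparing the two is the actual content of the converse. Your proposed comparison assumes the two adapted functions agree on $\partial\overline{\mathcal L}(x,y)$. By condition (i), however, their boundary values are their values at lower levels, including on the interiors of lower collars. There the two already differ once $|x|-|y|\geq 3$.

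The homotopy therefore has to be built level by level and staged in time, with each level's homotopy restricting on the boundary to the lower ones. This is the isotopy $\chi^{i,j;t}_{x,y}$, $t\in[0,n]$, of Lemma \ref{lemma_second_isotopy}.

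Two further points are needed. On open overlaps, the cocycle obtained by evaluating the transition maps at the projected points $\pi(u,\rho^{i+1}_{x,y}(u))$ is not equal to $\psi_{a,b}$. It is only isotopic to it, by sliding $s$ (Lemma \ref{lemma_first_isotopy}). And each isotopy gives an isomorphism of bundles only through homotopy invariance over $M\times[0,1]$, which again needs genuine open-cover cocycles.
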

The rest of this subsection is devoted to the proof of this Theorem.
The first goal is to better understand the intersections of the sets $V_a$, $a\in\mathrm{Crit}(f)$.
Note that $V_a\cap V_b\neq \emptyset$ implies that $|a| \neq |b|$, hence intersections can only happen for the open sets $V_x$ for critical points with different index.

In Section \ref{subsec_3-2} we related the intersection $P_{x,y}^i \cap P_{x,y}^j$ to the set of flow lines $\widetilde{S}_{x,y}^{i,j}$.
We now introduce an analogue for the open sets $U_{x,y}^i\cap U_{x,y}^j$.
For $(i,j)\neq (|y|,|x|)$ we define
\begin{eqnarray*}
        \widetilde{W}_{x,y}^{i,j} = \{ (\widehat{u},a)\in U_{x,y} \,|\, \rho_{x,y}^j(\widehat{u},a) -\epsilon_0 < \rho_{x,y}^{i+1}(\widehat{u},a) + \epsilon_0 , \,\, \rho_{x,y}^i(\widehat{u}) < \rho_{x,y}^{i+1}(\widehat{u}) , \\
        \rho_{x,y}^j(\widehat{u}) < \rho_{x,y}^{j+1}(\widehat{u}), \,\, a < \tfrac{4}{3}\epsilon \}
\end{eqnarray*}
and for $(i,j) = (|y|,|x|)$ we set
$$    \widetilde{W}_{x,y}^{|y|,|x|} = \{u\in \overline{\mathcal{L}}(x,y) \,|\, \rho_{x,y}^{|x|}(u) -\epsilon_0 < \rho_{x,y}^{|y|+1}(u) + \epsilon_0 \} .            $$

\begin{lemma}\label{lemma_maximal_sequences}
    Let $x,y\in\mathrm{Crit}(f)$ be critical points and let $i,j\in \{|y|,\ldots ,|x|\}$ with $i< j$.
    \begin{enumerate}
        \item 
        The sets $\widetilde{W}_{x,y}^{i,j}\subseteq \overline{\mathcal{L}}(x,y)$ are open and if $p = \pi(u,s)\in U_{x,y}^i \cap U_{x,y}^j$, then $u\in \widetilde{W}_{x,y}^{i,j}$.
        \item If $\widehat{u}\in\partial\overline{\mathcal{L}}(x,y)$ with $\rho_{x,y}^i (\widehat{u}) < \rho_{x,y}^{i+1}(\widehat{u}) = \rho_{x,y}^{j}(\widehat{u})< \rho_{x,y}^{j+1}(\widehat{u})$, then $(\widehat{u},a)\in \widetilde{W}_{x,y}^{i,j}$ for all $ a\in[0,\tfrac{4}{3}\epsilon)$.
        \item Let $u = (\widehat{u},a)\in U_{x,y}$ and assume that $|y| = i_0 < i_1 < \ldots < i_m = |x|$ is the maximal sequence of indices such that $\rho_{x,y}^{i_r}(\widehat{u}) < \rho_{x,y}^{i_{r}+1}(\widehat{u}) = \rho_{x,y}^{i_{r+1}}(\widehat{u}) < \rho_{x,y}^{i_{r+1}+1}(\widehat{u})$. 
        Then $\widehat{u}$ has the maximal decomposition $\widehat{u} = u_1\circ \ldots \circ u_k$ with $u_1\in\overline{\mathcal{L}}(x,z_1),u_2\in\overline{\mathcal{L}}(z_1,z_2),\ldots , u_k\in \overline{\mathcal{L}}(z_{k-1},y)$ and there are indices $m > r_1 > \ldots >  r_{k-1}> 0 $ such that $|z_{\ell}|  = i_{r_{\ell}}$ for $\ell\in \{1,\ldots, k-1\}$.
        \item It holds that $\widetilde{S}_{x,y}^{i,j}\subseteq \widetilde{W}_{x,y}^{i,j}$ where $\widetilde{S}_{x,y}^{i,j}$ is the set defined in equation \eqref{eq_sxyij}.
    \end{enumerate}
\end{lemma}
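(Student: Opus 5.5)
The four parts all follow from the explicit form of the separation functions built in Lemma \ref{lemma_separation_functions_exist}. On $V_{x,y}$ they are affine interpolations in the collar variable $a$ between the boundary value $\widehat{\rho}^j_{x,y}(\widehat{u})$ and the constant $\tfrac{|x|+|y|}{2}$. Off $V_{x,y}$ they all equal that constant. I would prove the parts in the order (1), (2), (4), (3).

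\textbf{Part (1).} Openness of $\widetilde{W}_{x,y}^{i,j}$ comes from continuity. The conditions $\rho^j_{x,y}(\widehat{u},a)-\epsilon_0<\rho^{i+1}_{x,y}(\widehat{u},a)+\epsilon_0$ and $a<\tfrac43\epsilon$ are open conditions on $U_{x,y}$. The strict inequalities $\rho^i(\widehat{u})<\rho^{i+1}(\widehat{u})$ and $\rho^j(\widehat{u})<\rho^{j+1}(\widehat{u})$ concern only the boundary point $\widehat{u}$, which is the collar projection and depends continuously on $(\widehat{u},a)$. So these are open as well, and $U_{x,y}$ itself is open. The case $(|y|,|x|)$ is immediate.

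For the second claim, let $p=\pi(u,s)\in U^i_{x,y}\cap U^j_{x,y}$. First reduce to a representative in $\overline{\mathcal{L}}(x,y)$; different preimages under $\pi$ have compatible separation values by property (iv). Reading off the defining inequalities of $\widetilde{U}^i_{x,y}$ and $\widetilde{U}^j_{x,y}$ gives $\rho^j(u)-\epsilon_0<s<\rho^{i+1}(u)+\epsilon_0$. This is exactly the main inequality in $\widetilde{W}^{i,j}_{x,y}$, and the remaining side conditions are inherited from the definitions of the $\widetilde{U}$'s.

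\textbf{Parts (2) and (4).} For part (2), the hypothesis gives $\rho^{i+1}(\widehat{u})=\rho^j(\widehat{u})$. By property (v), along the collar line $a\mapsto(\widehat{u},a)$ both functions are affine and both tend to $\tfrac{|x|+|y|}{2}$ at $a=\epsilon$, so they agree for all $a\in[0,\tfrac43\epsilon)$. The inequality $\rho^j-\epsilon_0<\rho^{i+1}+\epsilon_0$ is then trivially true, and the other conditions are exactly the hypothesis. Part (4) follows at once, since $\widetilde{S}^{i,j}_{x,y}$ requires $a\le\epsilon<\tfrac43\epsilon$ and otherwise reproduces the hypothesis of (2). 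For $(i,j)=(|y|,|x|)$, off $\mathring V_{x,y}$ all $\rho$'s are equal, so the defining inequality holds there too.

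\textbf{Part (3)} is the only substantive step. Write $\widehat{u}=u_1\circ\dots\circ u_k$ maximally decomposed. Property (iv), together with the fact that each unbroken $u_\ell$ has constant separation functions $\tfrac{|z_{\ell-1}|+|z_\ell|}{2}$ by (iii) (with $z_0=x$, $z_k=y$), gives
$$\rho^m_{x,y}(\widehat{u})=\tfrac{|z_{\ell-1}|+|z_\ell|}{2}\quad\text{for } |z_\ell|<m\le|z_{\ell-1}|.$$
Hence $m\mapsto\rho^m(\widehat u)$ is a nondecreasing step function in $m$. Its jumps occur precisely between $m=|z_\ell|$ and $m=|z_\ell|+1$, that is, at the indices $|z_\ell|$. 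The maximal sequence $i_0<\dots<i_m$ records exactly the indices at which a jump occurs, together with the endpoints $|y|$ and $|x|$ (which enter through the conventions $\rho^{|y|}\equiv|y|-1$ and $\rho^{|x|+1}\equiv|x|+1$). Therefore each $|z_\ell|$ equals some $i_{r_\ell}$, and the ordering $|z_1|>\dots>|z_{k-1}|$ forces $m>r_1>\dots>r_{k-1}>0$.

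I expect the main obstacle to be the bookkeeping in this jump argument. It hinges on verifying that consecutive pieces have strictly different midpoint values, so that the jumps are real and strict. This holds because $|z_{\ell-1}|>|z_\ell|>|z_{\ell+1}|$, so consecutive midpoints $\tfrac{|z_{\ell-1}|+|z_\ell|}{2}$ differ.
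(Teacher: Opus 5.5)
Your treatment of parts (1), (2) and (4) is fine; the paper dismisses these as direct verifications. The gap is in part (3). The identity $\rho^m_{x,y}(\widehat u)=\tfrac{|z_{\ell-1}|+|z_\ell|}{2}$ for $|z_\ell|<m\le |z_{\ell-1}|$ is false in general.

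\begin{itemize}
\item Property (iii) gives constant values only for flow lines outside $V_{z_{\ell-1},z_\ell}$.
\item An unbroken piece $u_\ell\in\mathcal{L}(z_{\ell-1},z_\ell)$ can lie inside that collar, say $u_\ell=(\widehat v,b)$ with $0<b<\epsilon$.
\item There $\rho^m_{z_{\ell-1},z_\ell}(u_\ell)=(1-\tfrac{b}{\epsilon})\,\widehat\rho^m(\widehat v)+\tfrac{b}{\epsilon}\cdot\tfrac{|z_{\ell-1}|+|z_\ell|}{2}$.
\item This value inherits strict jumps at the breaking indices of the broken line $\widehat v$.
\end{itemize}

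So your claim that the jumps occur \emph{precisely} at the indices $|z_\ell|$ is wrong. The maximal sequence of $\widehat u$ can contain indices that are not breaking indices of $\widehat u$. This is why the statement asserts only an inclusion, and why the proof of Lemma \ref{lemma_transport_induces_ept} treats the maximal sequence of the piece $u_{\ell+1}$ separately. For the same reason, the mechanism you name as the main obstacle (distinct consecutive midpoints) is not what makes the jumps strict.

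The direction actually needed follows from a weaker input. By properties (iv) and (i), for $|z_\ell|<m\le|z_{\ell-1}|$ one has $\rho^m_{x,y}(\widehat u)=\rho^m_{z_{\ell-1},z_\ell}(u_\ell)\in(|z_\ell|,|z_{\ell-1}|)$. Hence $\rho^{|z_\ell|}_{x,y}(\widehat u)<|z_\ell|<\rho^{|z_\ell|+1}_{x,y}(\widehat u)$, which is a strict jump at every $|z_\ell|$. Together with monotonicity (ii), this puts every $|z_\ell|$ in the maximal sequence.

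This is the paper's argument, phrased there by contradiction. If $i_r<|z_\ell|<i_{r+1}$, then $\rho^{i_r+1}_{x,y}(\widehat u)<|z_\ell|<\rho^{i_{r+1}}_{x,y}(\widehat u)$, contradicting $\rho^{i_r+1}_{x,y}(\widehat u)=\rho^{i_{r+1}}_{x,y}(\widehat u)$. If you replace your exact midpoint formula with these bounds and drop the ``precisely'' claim, your jump argument becomes correct and essentially coincides with the paper's.
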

\begin{proof}
    The first and the second claim are direct verifications using the definitions.
    For the third part let $u = (\widehat{u},a)$ and assume that $\widehat{u} = u_1\circ \ldots \circ u_k$ is the maximal decomposition with $u_1\in\overline{\mathcal{L}}(x,z_1),u_2\in\overline{\mathcal{L}}(z_1,z_2),\ldots , u_k\in \overline{\mathcal{L}}(z_{k-1},y)$.
	We need to show that $|z_i| \in \{i_1,\ldots, i_{m-1}\}$ for $i = 1,\ldots, k-1$.
	Assume this is not true, i.e. there is an $i\in \{1,\ldots, k-1\}$ and an $r\in \{0,\ldots, m-1\}$ such that $i_r <|z_i| < i_{r+1}$ and hence $i_r +1 \leq |z_i| < i_{r+1}$.
    We thus have
    $$  \rho_{x,y}^{i_{r +1}}(\widehat{u}) = \rho_{z_{i-1},z_i}^{i_{r+1}}(u_{i-1}) > |z_i|     $$
    and on the other hand
    $$    \rho_{x,y} {i_r +1}(\widehat{u}) = \rho_{z_i,z_{i+1}}^{i_r+1}(u_{i+1}) < |z_i| .$$
    However, by assumption we have $\rho_{x,y}^{i_r+1}(\widehat{u}) = \rho_{x,y}^{i_{r+1}}(\widehat{u})$ which is a contradiction to the inequalities above.
    Hence, the maximal decomposition of $\widehat{u}$ has the claimed form.	
\end{proof}

Let $a,b\in \mathrm{Crit}(f)$ with  $i = |a|$, $j = |b|$ and $i< j$.
We choose trivializations $\psi_a \colon E|_{V_a}\to G\times V_a$ and define
$$  \psi_{a,b}\colon V_a\cap V_b \to G    $$
to be the change of trivializations, i.e. explicitly we have
$$   \psi_{a,b}(p)  = \mathrm{pr}_2 \big(  \psi_a|_{E_{V_a\cap V_b}} \circ \psi_b|_{E|_{V_a}\cap E|_{V_b}}^{-1}   \big) (e,p)  .  $$
This constitutes a cocycle $\{\psi_{a,b}\}$ and the $G$-principal bundle $E^{\psi}$ induced by this cocycle is isomorphic to the original bundle $E$.
For $U_{x,y;a}^i \cap U_{x,y;b}^j$ we define $\psi_{x,y}^{i,j}\colon U_{x,y;a}^i \cap U_{x,y;b}^j\to G$ by restricting $\psi_{a,b}$.
This defines a continuous map $\psi_{x,y}^{i,j}\colon U_{x,y}^i\cap U_{x,y}^j\to G$.
Further we define maps
$$   \mathring{\omega}_{x,y}^{i,j}\colon \widetilde{W}_{x,y}^{i,j}\to G     $$
recursively as follows.
For $j = i+1$ with $\widetilde{W}_{x,y}^{i,j} \neq \emptyset$ we set
$$     \mathring{\omega}_{x,y}^{i,j}(u) = \psi_{x,y}^{i,j}(\pi(u, \rho_{x,y}^{i+1}(u)))  .  $$
Now, let $ j> i+1$ and assume that all maps $\mathring{\omega}_{x,y}^{k,\ell}$ have been defined for $\ell - k < j-i$.
Let $u\in \widetilde{W}_{x,y}^{i,j}$ and let $|y| = i_0 < i_1 <  \ldots < i_m = |x| $ be the maximal sequence for $u$ as in Lemma \ref{lemma_maximal_sequences}.(3).
If $\rho_{x,y}^{i+1}(u) = \rho_{x,y}^j(u)$, then we have $i = i_r$ and $j= i_{r+1}$ for some $r\in \{0,\ldots, m-1\}$ and we set
$$   \mathring{\omega}_{x,y}^{i,j}(u) = \psi_{x,y}^{i,j}(\pi(u, \rho_{x,y}^{i+1}(u))) .     $$
Otherwise, let $i= i_r <  \ldots < i_{r+\ell} = j$ and set
$$    \mathring{\omega}_{x,y}^{i,j}(u) =  \mathring{\omega}_{x,y}^{i_{r+\ell-1},j}(u) \cdot \ldots \cdot \mathring{\omega}_{x,y}^{i,i_{r+1}}(u) .     $$
Note that this means in particular that 
$$   \mathring{\omega}_{x,y}^{|y|,|x|}(u) =  \begin{cases}
    \psi_{x,y}^{|y|,|x|}(\pi (u, \tfrac{|y|+|x|}{2})), & u\not\in V_{x,y} \\
    \mathring{\omega}_{x,y}^{i_{m-1},|x|}(u)   \cdot \ldots \cdot \mathring{\omega}_{x,y}^{|y|,i_1}(u),& \text{else}
\end{cases}       $$
for $u\in \widetilde{W}_{x,y}^{|y|,|x|}$.
Finally, we define $\omega_{x,y}^{i,j}\colon U_{x,y}^i\cap U_{x,y}^j\to G$ with $\omega_{x,y}^{i,j}(\pi(u,s))) = \mathring{\omega}_{x,y}^{i,j}(u)$.

\begin{definition}
    Let $(M,f,X)$ be Morse data and let $G$ be a topological group.
    A family of continuous maps $\varphi_{x,y}^{i,j} \colon \widetilde{W}_{x,y}^{i,j} \to G  $ for all pairs of critical points $x,y\in\mathrm{Crit}(f)$ with $|y|<|x|$ and for all $i,j\in\{ |y|,\ldots, |x|\}$ with $i<j$ is called \emph{extended pre-transport function} if the following conditions are satisfied.  
    \begin{enumerate}[(i)]
        \item If $u = u_a\circ u_b\circ u_c$ with $u_a\in \overline{\mathcal{L}}(x,x_0)$, $u_b\in\overline{\mathcal{L}}(x_0,y_0)$ and $u_c\in \overline{\mathcal{L}}(y_0,y)$ and $i,j\in \{|y_0|,\ldots, |x_0|\}$ and $u_b\in \widetilde{W}_{x_0,y_0}^{i,j}$, then 
        $$   \varphi_{x,y}^{i,j}(u) = \varphi_{x_0,y_0}^{i,j}(u_b) .   $$
        \item Let $u\in\overline{\mathcal{L}}(x,y)$ and $i,j,k\in\{|y|,\ldots, |x|\}$ with $i<j<k$.
        If $u\in \widetilde{W}_{x,y}^{i,j} \cap \widetilde{W}_{x,y}^{j,k} \cap \widetilde{W}_{x,y}^{i,k}$ then
        $$     \varphi_{x,y}^{i,k}(u)  = \varphi_{x,y}^{j,k}(u) \cdot \varphi_{x,y}^{i,j}(u)      . $$
    \end{enumerate}
\end{definition}

\begin{lemma}
    The maps $\mathring{\omega}_{x,y}^{i,j}$ form an extended pre-transport function.
\end{lemma}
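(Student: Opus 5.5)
We have to check three things for the maps $\mathring{\omega}_{x,y}^{i,j}$: that each is well defined and continuous on $\widetilde{W}_{x,y}^{i,j}$, that it satisfies the restriction property (i), and that it satisfies the cocycle property (ii). The guiding observation is that, by Lemma \ref{lemma_maximal_sequences}.(1), the point $\pi(u,\rho_{x,y}^{i+1}(u))$ lies in $U_{x,y}^i\cap U_{x,y}^j$ whenever $u\in\widetilde{W}_{x,y}^{i,j}$. Hence every value of $\mathring{\omega}$ is, after unwinding the recursion, a finite product of transition functions $\psi_{a,b}$ of the fixed trivializations $\{\psi_a\}$, each evaluated at points on the flow line $\gamma_u$. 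So everything should reduce to the cocycle identity \eqref{eq_cocycle_identity} for $\{\psi_{a,b}\}$, together with the fact that all the relevant points lie in a common triple intersection $V_a\cap V_b\cap V_c$.

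\textbf{Well-definedness and continuity.} I would first show that the recursion does not depend on how it is unwound. Unwinding gives
\[
\mathring{\omega}^{i,j}_{x,y}(u)=\prod_r \psi_{c_{r+1},c_r}(p_r),
\]
where $c_r$ are the critical points with $p_r\in U^{i_r}_{x,y;c_r}$ and $p_r=\pi(u,\rho^{i_r+1}_{x,y}(u))$. Within a single component of the collar, any two such points $p_r,p_r'$ can be joined through $U^{i_r}_{x,y;c_r}$. Because $\psi_{a,b}$ is a map into $G$ that is defined on all of $V_a\cap V_b$, the main work is to show that the consecutive points chosen by the recursion lie in a common triple intersection $V_{c_r}\cap V_{c_{r+1}}\cap V_{c_{r+2}}$. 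The cocycle identity then collapses the product to the single factor $\psi_{c_m,c_0}$ evaluated at $\pi(u,\rho^{i+1}(u))$. I expect to obtain this from the explicit form of the retractions in Lemma \ref{prop_contractibility_pxyj}: points of $U^{i_r}$ near the common level $\rho^{i+1}=\rho^{j}$ stay in the relevant $V$'s, because the width $\epsilon_0<\tfrac14$ keeps them within distance $2\epsilon_0<1$ of one another. The consequence is that $\mathring{\omega}^{i,j}_{x,y}$ agrees on all of $\widetilde{W}^{i,j}_{x,y}$ with a single continuous expression of the form $u\mapsto\psi_{a,b}(\pi(u,\rho^{i+1}(u)))$ on each component, which proves continuity.

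\textbf{Property (ii).} Suppose $u\in\widetilde{W}^{i,j}\cap\widetilde{W}^{j,k}\cap\widetilde{W}^{i,k}$. After the reduction above,
\[
\mathring{\omega}^{j,k}(u)\,\mathring{\omega}^{i,j}(u)=\psi_{c,b}(q)\,\psi_{b,a}(p),
\]
where $a,b,c$ are the critical points whose $U$-components contain $u$ at levels $i,j,k$. As in the previous step, all of $p,q$ and the evaluation point for $\mathring{\omega}^{i,k}$ lie in $V_a\cap V_b\cap V_c$, so the cocycle identity \eqref{eq_cocycle_identity} gives $\psi_{c,a}$, which is $\mathring{\omega}^{i,k}(u)$. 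One subtlety has to be checked here: the multiplication order in the recursion must match the left-action convention of \eqref{eq_cocycle_identity}, namely $\omega_{UV}=\omega_{WV}\cdot\omega_{UW}$.

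\textbf{Property (i).} This follows the proof of condition (i) in Lemma \ref{lemma_trivializations_induce_pre-transport}. Take $u=u_a\circ u_b\circ u_c$ with $u_b\in\widetilde{W}^{i,j}_{x_0,y_0}$. By property (iv) of the separation functions,
\[
\pi(u,\rho^{i+1}_{x,y}(u))=\pi(u_b,\rho^{i+1}_{x_0,y_0}(u_b)),
\]
and the maximal sequence for $u_b$ is a segment of the maximal sequence for $u$, by Lemma \ref{lemma_maximal_sequences}.(3). In addition, $U^i_{x_0,y_0;z}\subseteq V_z$ with the same trivialization $\psi_z$. So the two recursions evaluate identical factors and agree.

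The main obstacle is the first step: checking that the evaluation points used by the recursion really lie in a common triple intersection of the $V$'s, and not merely in pairwise ones. This requires careful use of the explicit shape of $\widetilde{U}^j_{x,y}$ together with the constraints $a<\tfrac43\epsilon$ and $\epsilon_0<\tfrac14$.
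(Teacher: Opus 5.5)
Your argument for property (i) matches the paper's: property (iv) of the separation functions identifies the evaluation points, and all trivializations are restrictions of the fixed $\psi_z$. The step on which you rest continuity and property (ii), however, is false.

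The cocycle identity \eqref{eq_cocycle_identity} is pointwise: $\psi_{a,b}(p)=\psi_{c,b}(p)\cdot\psi_{a,c}(p)$ for a single $p\in V_a\cap V_b\cap V_c$. The factors produced by the recursion are evaluated at the points $p_r=\pi(u,\rho^{i_r+1}_{x,y}(u))$, and these are pairwise distinct points of the flow line whenever the collar coordinate of $u$ is $<\epsilon$.

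For example, take $|x|-|y|=2$ and $u=(u_1\circ u_2,a)\in\widetilde{W}^{|y|,|x|}_{x,y}$ with $u_1\in\mathcal{L}(x,z)$, $u_2\in\mathcal{L}(z,y)$ and $a<\epsilon$. Then $\mathring{\omega}^{|y|,|x|}_{x,y}(u)=\psi_{z,x}(p_1)\cdot\psi_{y,z}(p_0)$, where $p_0=\pi(u,\rho^{|y|+1}_{x,y}(u))$ and $p_1=\pi(u,\rho^{|x|}_{x,y}(u))$ lie at different heights. No triple-intersection argument turns this into $\psi_{y,x}$ evaluated at one point.

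Your guiding observation also reverses part (1) of Lemma \ref{lemma_maximal_sequences}. That lemma says $\pi(u,s)\in U^i_{x,y}\cap U^j_{x,y}$ forces $u\in\widetilde{W}^{i,j}_{x,y}$, not the converse. Membership in $\widetilde{W}^{i,j}_{x,y}$ only gives $\rho^j_{x,y}(u)-\rho^{i+1}_{x,y}(u)<2\epsilon_0$, whereas $\widetilde{U}^j_{x,y}$ requires $s>\rho^j_{x,y}(u)-\epsilon_0$. In the example, once $\rho^{|x|}_{x,y}(u)-\rho^{|y|+1}_{x,y}(u)\geq\epsilon_0$, the point $p_0$ lies neither in $U^{|x|}_{x,y}$ nor in $V_x$. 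So your proposed single expression is not even defined on that part of $\widetilde{W}^{|y|,|x|}_{x,y}$.

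A sanity check confirms this: if $\mathring{\omega}$ collapsed to single transition functions, then $\omega_{a,b}=\psi_{a,b}$. The isotopy $\Omega^t$ of Lemma \ref{lemma_first_isotopy}, which slides all evaluation points to a common point, would then be pointless.

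Here is what you need instead. First, there is no well-definedness issue: the recursion is deterministic, namely one ordered product along the maximal sequence of $u$.

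Property (ii) holds by construction, not by the cocycle identity. For $u$ in $\widetilde{W}^{i,j}_{x,y}\cap\widetilde{W}^{j,k}_{x,y}\cap\widetilde{W}^{i,k}_{x,y}$, the strict inequalities in the definition of $\widetilde{W}$ say that $i,j,k$ all belong to the maximal sequence of $u$.
\begin{itemize}
\item If the collar coordinate of $u$ is $<\epsilon$, the ordered product defining $\mathring{\omega}^{i,k}_{x,y}(u)$ simply splits at $j$ as $\mathring{\omega}^{j,k}_{x,y}(u)\cdot\mathring{\omega}^{i,j}_{x,y}(u)$.
\item If the collar coordinate is $\geq\epsilon$, all separation values at $u$ coincide. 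The three values are then single transition functions evaluated at the same point $\pi(u,\tfrac{|x|+|y|}{2})$, and there the pointwise cocycle identity applies.
\end{itemize}

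Continuity must be proved with sequences, as the paper does by reference to Lemma \ref{lemma_trivializations_induce_pre-transport}. Take $u_n\to u_*$ and pass to subsequences with fixed maximal sequence and fixed components. Each factor converges to the corresponding transition function at the limit of its evaluation point. Evaluation points belonging to one plateau of $u_*$ converge to one and the same point. Only at that limit point do you use the cocycle identity, to merge the limiting factors into the factor defining $\mathring{\omega}^{i,j}_{x,y}(u_*)$.
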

\begin{proof}
    We need to show that conditions (i) and (ii) of an extended pre-transport function are satisfied as well as the continuity of the $\mathring{\omega}_{x,y}^{i,j}$.
    For the first condition, let $u = u_a\circ u_b\circ u_c\in \overline{\mathcal{L}}(x,y)$ with $u_b\in\overline{\mathcal{L}}(x_0,y_0)$ and such that $u_b\in \widetilde{W}_{x_0,y_0}^{i,j}$. 
    Assume that $\rho_{x,y}^{i+1}(u) = \rho_{x,y}^j(u)$ and note that by construction we have $\rho_{x,y}^k(u) =\rho_{x_0,y_0}^k(u_b)$ for $k\in \{|y_0|+1,\ldots ,|x_0|\}$.
    This implies that $\pi(u,\rho_{x,y}^{i+1}(u))= \pi(u_b,\rho_{x_0,y_0}^{i+1}(u_b))$.
    Consequently, 
    $$    \mathring{\omega}_{x,y}^{i,j}(u) = \psi_{x,y}^{i,j}(\pi(u,\rho_{x,y}^{i+1}(u))) = \psi_{x_0,y_0}^{i,j}(\pi(u_b,\rho_{x_0,y_0}^{i,j}(u_b))) = \mathring{\omega}_{x_0,y_0}^{i,j}(u_b) .     $$
    Condition (i) for other indices $i,j\in \{|x_0|+1,\ldots, |y_0|\}$ then follows immediately.
    The second condition follows from the cocycle condition and the construction.
    The continuity can then be shown with the same ideas as in the proof of Lemma \ref{lemma_trivializations_induce_pre-transport}.
\end{proof}

\begin{lemma}\label{lemma_extended_pre-transport_induces_cocycle}
    An extended pre-transport function $\{\mathring{\tau}_{x,y}^{i,j}\colon \widetilde{W}_{x,y}^{i,j}\to G\}$ induces a cocycle $\{\tau_{a,b}\colon V_a\cap V_b\to G\}$.
\end{lemma}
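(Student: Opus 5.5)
The plan is to mirror the way $\omega_{x,y}^{i,j}$ was obtained from $\mathring{\omega}_{x,y}^{i,j}$ just before the statement. First I define maps $\tau_{x,y}^{i,j}\colon U_{x,y}^i\cap U_{x,y}^j\to G$ by
$$\tau_{x,y}^{i,j}(\pi(u,s)) := \mathring{\tau}_{x,y}^{i,j}(u).$$
This makes sense on the image because Lemma \ref{lemma_maximal_sequences}(1) gives $u\in\widetilde{W}_{x,y}^{i,j}$ whenever $\pi(u,s)\in U_{x,y}^i\cap U_{x,y}^j$. Next I restrict to components. For critical points $a,b$ with $|a|=i<j=|b|$, I set $\tau_{a,b}:=\tau_{x,y}^{i,j}$ on $U_{x,y;a}^i\cap U_{x,y;b}^j$. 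Since $V_a=\bigcup U_{x,y;a}^{|a|}$, these pieces cover $V_a\cap V_b$, so they give a candidate function $\tau_{a,b}$ on all of $V_a\cap V_b$. I then put $\tau_{b,a}:=\tau_{a,b}^{-1}$ and $\tau_{a,a}\equiv e$, and use the left-action ordering of \eqref{eq_cocycle_identity}.

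Step one is well-definedness. A point $p$ can be written as $\pi(u,s)=\pi(u',s')$ with $u\in\overline{\mathcal{L}}(x,y)$ and $u'\in\overline{\mathcal{L}}(x',y')$, possibly for different pairs. By construction of $\pi$ this forces $s=s'$ and forces $u$ and $u'$ to share a maximally decomposed piece $u_b\in\overline{\mathcal{L}}(x_0,y_0)$ with $p=\pi(u_b,s)$. One then has to check that $u_b\in\widetilde{W}^{i,j}_{x_0,y_0}$; this uses $\rho_{x,y}^k(u)=\rho_{x_0,y_0}^k(u_b)$ from property (iv) together with Lemma \ref{lemma_maximal_sequences}(3). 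Condition (i) of an extended pre-transport function then gives
$$\mathring{\tau}_{x,y}^{i,j}(u)=\mathring{\tau}_{x_0,y_0}^{i,j}(u_b)=\mathring{\tau}_{x',y'}^{i,j}(u').$$
Points of the open part $\mathcal{L}(x,y)\times(|y|,|x|)$ have a unique representative, so there is nothing to check there. The same argument shows that the value does not depend on the pair $(x,y)$ used to exhibit $p\in U_{x,y;a}^i\cap U_{x,y;b}^j$.

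Step two is continuity. On each $U^i_{x,y}\cap U^j_{x,y}$, the map $\tau^{i,j}_{x,y}$ is induced from the continuous map $(u,s)\mapsto\mathring{\tau}^{i,j}_{x,y}(u)$ on the open set $\pi^{-1}(\cdots)\cap(\widetilde{W}^{i,j}_{x,y}\times[|y|,|x|])$. Since $M$ carries the quotient topology from $\widetilde K$ and these restricted pieces are compatible by step one, $\tau_{a,b}$ is continuous. Where it helps, I will run the sequence argument used in the proof of Lemma \ref{lemma_contractibility_of_Rz}: take $p_n\to p$, lift to flow lines in a fixed $\overline{\mathcal{L}}(x_*,y_*)$ by compactness, and use continuity of $\mathring{\tau}$ there.

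Step three is the cocycle identity on $V_a\cap V_b\cap V_c$. Since nonempty intersections force distinct indices, I may order $|a|=i<|b|=j<|c|=k$ after using the inverse convention. Take $p$ in the triple intersection and write $p=\pi(u,s)$ with a single $u\in\overline{\mathcal{L}}(x,y)$ for which $p\in U^i_{x,y;a}\cap U^j_{x,y;b}\cap U^k_{x,y;c}$. The existence of one such $u$ serving all three sets is the step I expect to be the \textbf{main obstacle}. I would obtain it by taking $|x|-|y|=n$, so that every point of $M$ lies in the image of the top-dimensional moduli spaces, and by using the compactness-and-limit argument above to find a single broken flow line through $p$ in the correct components. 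Given such a $u$, Lemma \ref{lemma_maximal_sequences}(1) puts $u$ in $\widetilde{W}^{i,j}\cap\widetilde{W}^{j,k}\cap\widetilde{W}^{i,k}$. Condition (ii) then gives $\mathring\tau^{i,k}=\mathring\tau^{j,k}\cdot\mathring\tau^{i,j}$ at $u$, which is exactly \eqref{eq_cocycle_identity} for $\tau$ in the appropriate orientation. The remaining orderings of $a,b,c$ follow formally from $\tau_{b,a}=\tau_{a,b}^{-1}$.
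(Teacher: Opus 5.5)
Your proposal follows the paper's proof. The paper defines $\tau_{a,b}$ piecewise on $U^{|a|}_{x,y;a}\cap U^{|b|}_{x,y;b}$ by $\mathring{\tau}^{|a|,|b|}_{x,y}(u)$ via Lemma \ref{lemma_maximal_sequences}(1), gets well-definedness from the unbroken piece through $p$ shared by all representatives together with condition (i), gets continuity by gluing, and gets the cocycle identity from condition (ii). The single-flow-line issue you flag for triple intersections needs no compactness argument: once your step-one check ($u_b\in\widetilde{W}^{i,j}_{x_0,y_0}$) is in place, $\tau_{a,b}(p)$, $\tau_{b,c}(p)$ and $\tau_{a,c}(p)$ can all be computed on the same unbroken piece through $p$, and condition (ii) applies to that piece directly.
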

\begin{proof}
    Let $a,b\in\mathrm{Crit}(f)$ with $V_a\cap V_b \neq \emptyset$.
    The non-empty intersection implies that $|a| \neq |b|$ so without loss of generality we will assume $|a|<|b|$.
    Recall that $V_a = \bigcup U_{x,y;a}^{|a|}$ and similarly for $V_b$.  
    We therefore write $V_a = U_1\cup \ldots \cup U_{\ell}$ and $V_b = W_1\cup \ldots \cup W_m$ such that
     $    U_i = U_{x_i,y_i;a}^{|a|} $ and $ W_i = U_{x_i,y_i;b}^{|b|}      $
     for $i\in \{1,\ldots, k\}$ for some $k\leq \max\{\ell,m\}$
    and such that for $j > k$ we have $U_j = U_{x_j,y_j;a}^{|a|}$ and $W_j = U_{\overline{x}_j,\overline{y}_j;b}^{|b|}$ with $(x_j,y_j)\neq (\overline{x}_m,\overline{y}_m)$ for all $j\in \{1,\ldots, \ell\}$ and $m\in\{\ell+1,\ldots, m\}$.
     The intersection $V_a\cap V_b$ satisfies
     $$  V_a \cap V_b = \bigcup_{ i= 1}^k U_{x_i,y_i;a}^{|a|}\cap U_{x_i,y_i;b}^{|b|} .     $$
    Recall from Lemma \ref{lemma_maximal_sequences} that if $p = \pi(u,s)\in U_{x_i,y_i;a}^{|a|}\cap U_{x_i,y_i;b}^{|b|}$ then $u\in \widetilde{W}_{x_i,y_i}^{|a|,|b|}$.
    We define $\omega_{a,b}\colon V_a\cap V_b \to G$ by setting $\tau_{a,b}(p) = \mathring{\tau}_{x_i,y_i}^{|a|,|b|}(u)$ for $p = \pi(u,s)\in U_{x_i,y_i;a}^{|a|}\cap U_{x_i,y_i;b}^{|b|}$.
    We claim that this is a well-defined and continuous map and that the collection $\{\tau_{a,b}\}$ satisfies the cocycle condition.
    For the well-definedness let $p\in (U_{x_i,y_i;a}^{|a|} \cap U_{x_i,y_i;b}^{|b|}) \cap  (U_{x_j,y_j;a}^{|a|} \cap U_{x_j,y_j;b}^{|b|})$ and write $p = \pi(u_i,s) = \pi(u_j,s)$ for $u_i\in \overline{\mathcal{L}}(x_i,y_i)$, $u_j\in\overline{\mathcal{L}}(x_j,y_j)$ and $s\in \mathbb{R}$.
    We need to show that $\mathring{\tau}_{x_i,y_i}^{|a|,|b|}(u_i ) = \mathring{\tau}_{x_j,y_j}^{|a|,|b|}(u_j)$.
    We can write $u_i = w_1\circ w_2\circ w_3$ with $w_2\in \mathcal{L}(c,d)$ unbroken and such that $|d| < s < |c|$.
    Similarly, we write $u_j = v_1\circ v_2\circ v_3$ with $v_2\in \mathcal{L}(\overline{c},\overline{d})$ with $|\overline{d}| < s < |\overline{c}|$.
    Since $$\pi(w_2,s) = \pi(u_i,s)  = p  = \pi(u_j,s) = \pi(v_2,s) $$ we have $w_2 = v_2$ and thus $(c,d) = (\overline{c},\overline{d})$.
    Since $\mathring{\tau}_{x,y}^{i,j}$ is an extended pre-transport function we have $\mathring{\tau}_{x_i,y_i}^{|a|,|b|}(u_i) = \mathring{\tau}_{c,d}^{|a|,|b|}(w_2) = \mathring{\tau}_{x_j,y_j}^{|a|,|b|}(u_j)$ which we wanted to show for the well-definedness.
    With the same argument one can check the continuity of $\tau_{a,b}$ since all $\mathring{\tau}_{x_i,y_i}^{|a|,|b|}$ are continuous and as we have shown they glue together nicely.
    The cocycle condition can be seen as follows. 
    Assume that $V_a\cap V_b\cap V_c\neq \emptyset$ is a non-empty intersection.
    Then we have that $|a|,|b|$ and $|c|$ are pairwise distinct.
    The cocycle condition then follows from condition (ii) of an extended pre-transport function.    
\end{proof}

Next, we want to show that the cocycles $\{\psi_{a,b}\}$ and $\{\omega_{a,b}\}$ induce isomorphic $G$-principal bundles.
We will do this by constructing an isotopy between these two cocycles.

Let $x,y\in\mathrm{Crit}(f)$ and $i,j\in \{|y|,\ldots, |x|\}$ with $i < j$.
We shall define maps $\Omega_{x,y}^{i,j;t}\colon U_{x,y}^{i}\cap U_{x,y}^j \to G$ for $t\in [0,1]$ by a recursive definition over the difference $j-i$.
For $j = i+1$ we define
$$      \Omega_{x,y}^{i,j;t}(\pi(u,s)) = \psi_{x,y}^{i,j}(\pi( u ,t s + (1-t) \rho_{x,y}^{i+1}(u) ))  .     $$
Now, let $ j> i$ and assume that all maps $\mathring{\Omega}_{x,y}^{k,\ell;t}$ have been defined for $\ell - k < j-i$ and $t\in [0,1]$.
Let $p = \pi(u,s) \in U_{x,y}^i \cap U_{x,y}^j$.
If $\rho_{x,y}^{i+1}(u) = \rho_{x,y}^j(u)$, we set
$$   {\Omega}_{x,y}^{i,j;t}(\pi(u,s)) = \psi_{x,y}^{i,j}(\pi( u , t s + (1-t) \rho_{x,y}^{i+1}(u))) .     $$
Otherwise, let $i= i_0 < i_1 < \ldots < i_m = j$ be the maximal sequence of indices such that $\rho_{x,y}^{i_p}(u) < \rho_{x,y}^{i_p + 1}(u) = \rho_{x,y}^{i_{p+1}}(u) < \rho_{x,y}^{i_{p+1}+1}(u) $ for $p = 1,\ldots, m$.
Then we set
$$    \Omega_{x,y}^{i,j;t}(p) =  \Omega_{x,y}^{i_{m-1},j;t}(p) \cdot \ldots \cdot {\Omega}_{x,y}^{i,i_1;t}(p) .     $$

\begin{lemma}\label{lemma_first_isotopy}
    For 
    $t\in [0,1]$ the maps $\Omega_{x,y}^{i,j;t}$ determine a cocycle $\{\Omega_{a,b}^t\colon V_a\cap V_b \to G\}$ such that $\Omega_{a,b}^0 = \omega_{a,b}$ and $\Omega_{a,b}^1 = \psi_{a,b}$. Consequently, the cocycles $\{\omega_{a,b}\}$ and $\{\psi_{a,b}\}$ induce isomorphic $G$-principal bundles.
\end{lemma}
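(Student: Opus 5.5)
The plan is to verify three things for the family $\{\Omega^{i,j;t}_{x,y}\}$: for each fixed $t$ it glues to a well-defined continuous cocycle $\{\Omega^t_{a,b}\}$ on the cover $\{V_a\}$; it depends continuously on $(p,t)$ jointly; and its endpoints are $\omega_{a,b}$ and $\psi_{a,b}$. The isomorphism of bundles then follows from a standard homotopy-invariance argument over $M\times[0,1]$.

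\emph{Step 1: the definition makes sense.} Let $p=\pi(u,s)\in U^i_{x,y}\cap U^j_{x,y}$. By Lemma~\ref{lemma_maximal_sequences}(1) we have $u\in\widetilde W^{i,j}_{x,y}$. Both $s$ and $\rho^{i+1}_{x,y}(u)$ lie in the $s$-interval cut out by the defining inequalities of $\widetilde U^{i}_{x,y}\cap\widetilde U^{j}_{x,y}$ (up to the $\epsilon_0$-thickening). These conditions are convex in $s$, so the whole segment $\pi(u,ts+(1-t)\rho^{i+1}_{x,y}(u))$, $t\in[0,1]$, stays in $U^i_{x,y}\cap U^j_{x,y}$. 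Being a path, it also stays in a single component $U^i_{x,y;a}\cap U^j_{x,y;b}$, so $\psi^{i,j}_{x,y}$ is defined along it.

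\emph{Step 2: gluing, continuity and the cocycle identity.} Independence of the representative $(u,s)$, and gluing over different pairs $(x,y)$, go exactly as in the proof of Lemma~\ref{lemma_extended_pre-transport_induces_cocycle}. Write both representatives as $w_1\circ w_2\circ w_3$ with the same unbroken middle piece $w_2$. The separation functions satisfy property (iv), so all the $\rho$-values and the interpolated points agree. Hence each factor, and therefore $\Omega^t$, agrees.

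Joint continuity in $(p,t)$ follows from continuity of $\psi_{a,b}$, of $\pi$ and of the $\rho$'s. Where the maximal sequence jumps, I would repeat the subsequence argument of Proposition~\ref{prop_pre-transport_induces_transport}.

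For the cocycle identity on $V_a\cap V_b\cap V_c$ with $|a|<|b|<|c|$, the maximal sequence between $|a|$ and $|c|$ is the concatenation of those between $|a|,|b|$ and $|b|,|c|$. The recursive product definition then gives $\Omega^t_{a,c}=\Omega^t_{b,c}\cdot\Omega^t_{a,b}$ directly, in the left-action ordering of \eqref{eq_cocycle_identity}.

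\emph{Step 3: endpoints.} At $t=0$ every factor is evaluated at $\pi(u,\rho^{i+1}_{x,y}(u))$, which is verbatim the recursive definition of $\mathring\omega^{i,j}_{x,y}$; hence $\Omega^0_{a,b}=\omega_{a,b}$. At $t=1$ every factor is evaluated at the same point $p$, so $\Omega^1_{a,b}(p)$ is a product $\psi_{c_{m-1},b}(p)\cdots\psi_{a,c_1}(p)$ over the critical points $c_k$ with $p\in V_{c_k}$ attached to the maximal sequence. The cocycle identity for the honest transition functions $\psi$ then collapses this product to $\psi_{a,b}(p)$.

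I expect the main obstacle to be justifying that $p$ really lies in all the intermediate sets $U^{i_k}_{x,y;c_k}$. This should follow from Lemma~\ref{lemma_maximal_sequences}(2),(3), because $u\in\widetilde W^{i_k,i_{k+1}}_{x,y}$ for every consecutive pair. The delicate part is checking that the $\epsilon_0$-neighbourhoods of the $s$-values overlap, and I would try to obtain this from $\epsilon_0<\tfrac14$ together with the affine form of the $\rho$'s on the collar.

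\emph{Step 4: conclusion.} The maps $\Omega_{a,b}(p,t):=\Omega^t_{a,b}(p)$ form a $G$-valued cocycle on the cover $\{V_a\times[0,1]\}$ of $M\times[0,1]$. Let $\mathcal P\to M\times[0,1]$ be the principal bundle it defines, via the bijection between equivalence classes of cocycles and isomorphism classes of bundles recalled earlier. Its restrictions to $M\times\{0\}$ and $M\times\{1\}$ are the bundles of $\{\omega_{a,b}\}$ and $\{\psi_{a,b}\}$. Since $M\times[0,1]$ is paracompact, homotopy invariance of pullbacks of principal bundles (as in \cite{husemoller1966fibre}) gives $\mathcal P|_{M\times\{0\}}\cong\mathcal P|_{M\times\{1\}}$, which proves the final claim.
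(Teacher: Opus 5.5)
Your proposal is correct and follows the paper's route, with more detail than the paper's brief sketch: you glue the $\Omega^{i,j;t}_{x,y}$ into a cocycle on $\{V_a\times[0,1]\}$ as in Lemma~\ref{lemma_extended_pre-transport_induces_cocycle}, read off $\{\omega_{a,b}\}$ and $\{\psi_{a,b}\}$ at $t=0$ and $t=1$, and conclude by homotopy invariance over $M\times[0,1]$. The obstacle you flag follows directly from the $s$-bounds, and it is needed for every $t$, not only $t=1$: $p=\pi(u,s)\in U^i_{x,y}\cap U^j_{x,y}$ gives $\rho^j_{x,y}(u)-\epsilon_0<s<\rho^{i+1}_{x,y}(u)+\epsilon_0$, so monotonicity of the separation functions puts $s$ in the admissible range of every intermediate $U^{i_k}_{x,y}$, and the convexity argument of your Step~1 should then be run only on consecutive pairs of the maximal sequence, where $\rho^{i_k+1}_{x,y}(u)=\rho^{i_{k+1}}_{x,y}(u)$.
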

\begin{proof}
    One checks that $\Omega_{x,y}^{i,j;t}$ induces a cocycle $\Omega_{a,b}^t$ as in the proof of Lemma \ref{lemma_extended_pre-transport_induces_cocycle}. 
    The maps $\Omega_{a,b}^{\cdot}\colon V_a\cap V_b\times [0,1]\to G$ are cocycles on the space $M\times [0,1]$ with respect to the covering $\{V_a\times [0,1]\}_{a\in\mathrm{Crit}(f)}$ and they induce a $G$-principal bundle $E^{\Omega}\to M\times [0,1]$.
    The restrictions of $\Omega_{a,b}^{\cdot}$ to $V_a\cap V_b\times \{0\}$ and $V_a\cap V_b\times \{1\}$ yield the cocycles $\{\omega_{a,b}\}$ and $\{\psi_{a,b}\}$, respectively.
    By the homotopy invariance of $G$-principal bundles, it follows that the bundles $E|_{M\times \{0\}}\to M\times \{0\}\cong M$ and $E|_{M\times \{1\}}\to M\times \{1\}\cong M$ are isomorphic.
    Since these bundles are isomorphic to $E^{\omega}$ and $E^{\psi}$, respectively the claim of the lemma is shown.
\end{proof}

Next, we shall see that any transport function $\Phi\colon \mathrm{mor}(\mathcal{M}_f)\to G$ induces a $G$-valued cocycle and hence a $G$-principal bundle.

\begin{lemma}\label{lemma_transport_induces_ept}
    A transport function $\Phi\colon \mathrm{mor}(\mathcal{M}_f)\to G$ induces a cocycle $\{\eta_{a,b}\colon V_a\cap V_b\to G\}$ such that $\eta_{a,b}(p) = \Phi(u)$ whenever $u\in \overline{\mathcal{L}}(a,b)\setminus V_{a,b}$ is an unbroken flow line and $p = \pi(u, \tfrac{|a|+|b|}{2})$.
\end{lemma}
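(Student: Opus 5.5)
The plan is to reduce the statement to Lemma \ref{lemma_extended_pre-transport_induces_cocycle}. That is, from $\Phi$ I will build an extended pre-transport function $\{\mathring{\eta}_{x,y}^{i,j}\colon \widetilde{W}_{x,y}^{i,j}\to G\}$, and then let $\{\eta_{a,b}\}$ be the cocycle that lemma produces.

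To define $\mathring{\eta}_{x,y}^{i,j}(u)$ for $u\in\widetilde{W}_{x,y}^{i,j}$, I will look at the part of $u$ that lies between the levels $i$ and $j$.
\begin{itemize}
\item If $u$ lies in the collar, write $u=(\widehat{u},a)$ with $\widehat{u}\in\partial\overline{\mathcal{L}}(x,y)$. By Lemma \ref{lemma_maximal_sequences}(3), the maximal decomposition $\widehat{u}=u_1\circ\ldots\circ u_k$ has its intermediate critical points $z_\ell$ at indices in the maximal sequence of $\widehat{u}$.
\item For $i<j$ with $\rho^{i+1}_{x,y}(\widehat u)=\rho^j_{x,y}(\widehat u)$, the points at levels $i$ and $j$ along the broken line are bracketed by critical points $c,d$ of $\widehat u$ with $|d|\le i<j\le |c|$. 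Let $w\in\overline{\mathcal{L}}(c,d)$ be the corresponding sub-flow line $u_r\circ\ldots\circ u_s$, chosen minimal.
\item In that case set $\mathring{\eta}^{i,j}_{x,y}(u):=\Phi_{c,d}(w)$ exactly when $i=|d|$ and $j=|c|$, and set $\mathring{\eta}^{i,j}_{x,y}(u):=e$ when the two levels lie inside a single unbroken piece. In the latter case both $V$-sets contain a region whose points lie near the same critical point, so no transport is needed.
\item For general $i<j$, insert the intermediate indices of the maximal sequence and take the ordered product, as in the recursive definition of $\mathring{\omega}$.
\item If $u\notin V_{x,y}$, then $(i,j)=(|y|,|x|)$, and I set $\mathring{\eta}^{|y|,|x|}_{x,y}(u)=\Phi_{x,y}(u)$. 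This yields the normalisation $\eta_{a,b}(\pi(u,\tfrac{|a|+|b|}{2}))=\Phi(u)$ claimed in the statement.
\end{itemize}
On the collar part I would interpolate in the collar variable $a$, using the affine behaviour of the separation functions given by property (v). Concretely, I take the same formula with $\widehat{u}$ replaced by $u$ for $a>0$. Then for $a\ge\epsilon$ it agrees with $\Phi_{x,y}(u)$, because for such $u$ the sets $\widetilde W^{i,j}$ with $(i,j)\ne(|y|,|x|)$ are empty there.

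Next I verify the axioms.
\begin{itemize}
\item Condition (i) for the restriction to $u_b$ follows because sub-flow lines of $u_a\circ u_b\circ u_c$ between levels in $[|y_0|,|x_0|]$ are sub-flow lines of $u_b$. This uses property (iv) of the separation functions, which gives $\rho_{x,y}^k(u)=\rho^k_{x_0,y_0}(u_b)$.
\item Condition (ii) is the cocycle identity. It reduces to functoriality \eqref{eq_transport_function} of $\Phi$: the product of transports over consecutive pieces equals the transport over the concatenation, and the convention in \eqref{eq_cocycle_identity} fixes the order of the factors.
\end{itemize}

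The main obstacle is continuity of $\mathring{\eta}^{i,j}_{x,y}$ on $\widetilde{W}^{i,j}_{x,y}$. The difficulty is where the maximal sequence jumps, that is, where a broken line is approached by less broken lines. I would handle this as in the proof of Proposition \ref{prop_pre-transport_induces_transport}: split a convergent sequence into finitely many subsequences with constant maximal index sequence, then use continuity of each $\Phi_{c,d}$ together with the gluing identity $\Phi(u_1\circ u_2)=\Phi(u_1)\Phi(u_2)$. This shows the products converge to the value at the limit. If the direct definition is not continuous near $\partial\widetilde W$, I would modify the definition there using the collar parameter. Once continuity holds, Lemma \ref{lemma_extended_pre-transport_induces_cocycle} gives the cocycle $\{\eta_{a,b}\}$.
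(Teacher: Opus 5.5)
Your outline matches the paper's: build an extended pre-transport function $\{\mathring{\eta}^{i,j}_{x,y}\}$ from $\Phi$ with $\mathring{\eta}^{|y|,|x|}_{x,y}=\Phi_{x,y}$, then apply Lemma~\ref{lemma_extended_pre-transport_induces_cocycle}. However, the maps you actually define are not an extended pre-transport function, for two reasons.

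\textbf{The rule ``$e$ on a single unbroken piece'' violates condition (i).} Suppose the pieces are read off the collar projection $\widehat u$, as in your main definition. Take $\widehat u=u_1\circ u_2\in\partial\overline{\mathcal{L}}(x,y)$, where $u_2\in\mathcal{L}(z,y)$ is unbroken, $|z|-|y|=2$, and $u_2$ lies at collar parameter $b<\epsilon$ near $v_1\circ v_2\in\partial\overline{\mathcal{L}}(z,y)$ through $w$. Then $|w|$ belongs to the maximal sequence of $\widehat u$.
\begin{itemize}
\item Inside $\overline{\mathcal{L}}(x,y)$, your rule gives $\mathring{\eta}^{|y|,|w|}_{x,y}(\widehat u)=e$.
\item Inside $\overline{\mathcal{L}}(z,y)$, your bracketing rule gives $\mathring{\eta}^{|y|,|w|}_{z,y}(u_2)=\Phi(v_2)$.
\end{itemize}
Condition (i) requires these two values to agree, and they do not in general.

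\textbf{The sets $\widetilde{W}^{i,j}_{x,y}$ with $(i,j)\neq(|y|,|x|)$ are not empty for $a\geq\epsilon$.} They are cut off only at $a<\tfrac{4}{3}\epsilon$, and for $a\ge\epsilon$ their first defining inequality holds automatically. Take $|x|-|y|=2$ and $\widehat u=u_1\circ u_2$ through $z$. For $a\in(\epsilon(1-2\epsilon_0),\tfrac{4}{3}\epsilon)$ the point $u=(\widehat u,a)$ lies in $\widetilde{W}^{|y|,|z|}_{x,y}\cap\widetilde{W}^{|z|,|x|}_{x,y}\cap\widetilde{W}^{|y|,|x|}_{x,y}$. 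So condition (ii) demands $\mathring{\eta}^{|z|,|x|}_{x,y}(u)\,\mathring{\eta}^{|y|,|z|}_{x,y}(u)=\Phi(u)$.
\begin{itemize}
\item If the pieces are read off $\widehat u$, they multiply to $\Phi(u_1)\Phi(u_2)=\Phi(\widehat u)$. Functoriality of $\Phi$ only constrains $\Phi$ on broken lines, so it gives no relation between $\Phi(u)$ for unbroken $u$ and $\Phi(\widehat u)$. In general $\Phi(\widehat u)\neq\Phi(u)$.
\item If the pieces are read off the unbroken $u$ itself for $a>0$, every intermediate pair falls into your ``$e$'' case. That is discontinuous at $a=0$ and forces $\Phi(u)=e$ on the overlap.
\end{itemize}
Hence (ii) does not reduce to functoriality, and interpolating in $a$ cannot produce the exact identity that (ii) requires.

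\textbf{The missing idea is one factor that absorbs the discrepancy between $\Phi(\widehat u)$ and $\Phi(u)$ exactly.} The paper defines the consecutive pieces $\mathring{\eta}^{i_p,i_{p+1}}_{x,y}$ with $p>0$ through $\widehat u$ only, so they are constant along the collar:
\begin{itemize}
\item they equal $\Phi(u_{\ell+1})$ when $(i_p,i_{p+1})=(|z_{\ell+1}|,|z_\ell|)$;
\item otherwise they are defined recursively as $\mathring{\eta}^{i_p,i_{p+1}}_{z_\ell,z_{\ell+1}}(u_{\ell+1})$ from the lower moduli space, never $e$.
\end{itemize}
It then \emph{defines} the bottom piece as the correction
\[
\mathring{\eta}^{i_0,i_1}_{x,y}(u)=\big(\mathring{\eta}^{i_1,i_2}_{x,y}(u)\big)^{-1}\cdots\big(\mathring{\eta}^{i_{m-1},i_m}_{x,y}(u)\big)^{-1}\,\Phi(u),
\]
with non-consecutive pairs given by ordered products. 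With this, (ii) holds by construction. Multiplicativity of $\Phi$ is used where it is actually needed: in checking condition (i) for the corrected bottom piece, $\mathring{\eta}^{i_0,i_1}_{x,y}(u^a\circ u^b)=\mathring{\eta}^{i_0,i_1}_{z,y}(u^b)$. Continuity then follows by induction on $|x|-|y|$. The pieces with $p>0$ factor through the collar projection, so it suffices to check them on $\partial\overline{\mathcal{L}}(x,y)$, and the bottom piece is built from these and $\Phi$.
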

\begin{proof}
    Let $\Phi\colon \mathrm{mor}(\mathcal{M}_f)\to G$ be a transport function.
    We want to define an extended pre-transport function $\mathring{\eta}_{x,y}^{i,j}\colon \widetilde{W}_{x,y}^{i,j}\to G$.
    We will do so recursively over the index difference $|x|-|y|$.
    If $|x|-|y| = 1$, we set
    $$    \mathring{\eta}_{x,y}^{|y|,|x|} (u) = \Phi(u) \quad \text{for}\,\,\, u\in \widetilde{W}_{x,y}^{|y|,|x|} .    $$
    Assume that the maps $\mathring{\eta}_{x',y'}^{i,j}$ have been defined for all $x',y'\in\mathrm{Crit}(f)$ with $|x'|-|y'| \leq k$ and let $x,y\in\mathrm{Crit}(f)$ with $|x|-|y| = k+1$.
    If $U_{x,y} = \emptyset$, we only need to define $\mathring{\eta}_{x,y}^{|y|,|x|}\colon\widetilde{W}_{x,y}^{|y|,|x|} \to G$ and we do so by setting $\mathring{\eta}_{x,y}^{|y|,|x|}(u)  = \Phi(u)$.
    Now assume that $U_{x,y}\neq \emptyset $.
    For $i = |y|,j = |x|$ and $u\in \widetilde{W}_{x,y}^{|y|,|x|}$ we set   
    $    \mathring{\eta}_{x,y}^{|y|,|x|}( u) =\Phi(u)     $.
    Let $i,j\in \{|y|,\ldots ,|x|\}$ with $i<j$ and $(i,j)\neq (|y|,|x|)$ and let $u \in\widetilde{W}_{x,y}^{i,j}$ be a flow line.
    By Lemma \ref{lemma_maximal_sequences} we have a maximal sequence $|y| = i_0 < i_1 < \ldots < i_m = |x|$.
    First, assume that $i = i_p$ and $j = i_{p+1}$ for some $p > 0$.
    Again by Lemma \ref{lemma_maximal_sequences} there is a maximal decomposition  $\widehat{u} = u_1\circ \ldots \circ u_k$ with $u_1\in\overline{\mathcal{L}}(x,z_1),u_2\in\overline{\mathcal{L}}(z_1,z_2),\ldots , u_k\in \overline{\mathcal{L}}(z_{k-1},y)$ and there are indices $m > r_1 > \ldots >  r_{k-1}> 0 $ such that $|z_{\ell}|  = i_{r_{\ell}}$ for $\ell\in \{1,\ldots, k-1\}$.
    Set $x = z_0$ and $ y = z_{k+1}$.
    There is an $\ell\in \{0,\ldots, k\}$ such that $ |z_{\ell}| \geq  i_{p+1} > i_p \geq |z_{\ell+1}|$.
    Since $\rho_{x,y}^s(\widehat{u}) = \rho_{z_{\ell},z_{\ell+1}}^{s}(u_{\ell+1})$ for $s\in \{|z_{\ell+1}|+1,\ldots , |z_{\ell}|\}$ we have
    \begin{equation}\label{eq_separation_fct_uell}
            \rho_{z_{\ell},z_{\ell+1}}^{i_p} (u_{\ell+1})  < \rho_{z_{\ell},z_{\ell+1}}^{i_p+1}(u_{\ell+1})    =\rho_{z_{\ell},z_{\ell+1}}^{i_{p+1}}(u_{\ell+1}) < \rho_{z_{\ell},z_{\ell+1}}^{i_{p+1}+1}(u_{\ell+1})  .    
    \end{equation}
    If $i_p = |z_{\ell+1}|$ and $i_{p+1} = |z_{\ell}|$, then one checks that we must have $u_{\ell+1}\in \mathcal{L}(z_{\ell},z_{\ell+1}) \setminus \mathring{V}_{z_{\ell},z_{\ell+1}}$ and thus $u_{\ell+1}\in \widetilde{W}_{z_{\ell},z_{\ell+1}}^{|z_{\ell+1}|,|z_{\ell}|}$.
    In this case we set $\mathring{\eta}_{x,y}^{i_p,i_{p+1}}(u) = \mathring{\eta}_{z_{\ell},z_{\ell+1}}^{|z_{\ell+1}|,|z_{\ell}|}(u_{\ell+1}) = \Phi(u_{\ell+1})$.
    Otherwise, write $u_{\ell+1} = (\widehat{v},b)\in U_{z_{\ell},z_{\ell+1}}$ and let $|z_{\ell+1}| = j_0 < j_1 < \ldots< j_N = |z_{\ell}|$ be the maximal sequence for $u_{\ell}$ as in Lemma \ref{lemma_maximal_sequences}.
    From equation \eqref{eq_separation_fct_uell} and the explicit expression of the separation functions we conclude that $b< \epsilon$ and that
    $$\rho_{z_{\ell},z_{\ell+1}}^{i_p}(\widehat{v})  < \rho_{z_{\ell},z_{\ell+1}}^{i_p+1}(\widehat{v}) = \rho_{z_{\ell},z_{\ell+1}}^{i_{p+1}}(\widehat{v}) < \rho_{z_{\ell},z_{\ell+1}}^{i_{p+1}+1}(\widehat{v})  . $$
    It follows that $i_p = j_{p^*}$ and $i_{p+1} = j_{p^*+1}$ for some $0\leq p^*< N$.
    Hence, $u_{\ell+1}\in \widetilde{W}_{z_{\ell},z_{\ell-1}}^{i_p,i_{p+1}}$ by Lemma \ref{lemma_maximal_sequences}(ii) and we define $\mathring{\eta}_{x,y}^{i_p,i_{p+1}}(u) = \mathring{\eta}_{z_{\ell},z_{\ell+1}}^{i_p,i_{p+1}}(u_{\ell+1})$.
    In general if $i = i_s ,j = i_t$ with $s,t\in\{1,\ldots, m\}$ and $t-s \geq 2$, then we define
    $$  \mathring{\eta}_{x,y}^{i_s,i_t}(u) = \mathring{\eta}_{x,y}^{i_{t-1},i_t}(u)\cdot \ldots \cdot \mathring{\eta}_{x,y}^{i_s,i_{s+1}}(u) .      $$
    Finally, we define $\mathring{\eta}_{x,y}^{i_0,i_1}\colon \widetilde{W}_{x,y}^{i_0,i_1}\to G$ by
    $$    \mathring{\eta}_{x,y}^{i_0,i_1}(u) =    (\mathring{\eta}_{x,y}^{i_1,i_2}(u))^{-1} \cdot \ldots \cdot (\mathring{\eta}_{x,y}^{i_N,i_{N+1}}(u))^{-1}\cdot \Phi(u)  .     $$
    We need to check that this defines an extended pre-transport function.

    Condition (ii) of an extended pre-transport function holds by construction.
    It remains to show that condition (i) holds and that the maps $\mathring{\eta}_{x,y}^{i,j}$ are continuous.
    Let $u = u^a\circ u^b\in \partial\overline{\mathcal{L}}(x,y)$ with $u^a\in \overline{\mathcal{L}}(x,z)$ and $u^b\in\overline{\mathcal{L}}(z,y)$.
    As before let $|y| = i_0 < i_1 < \ldots  < i_N < i_{N+1} = |x|$ be the maximal sequence of indices for $u$.
    By Lemma \ref{lemma_maximal_sequences}.(iii) there is $k\in \{1,\ldots ,N\}$ such that $|z| = i_k$.
    If $u^a$ is unbroken, then a maximal decomposition of $u$ is given by $u = u^a\circ (v_1\circ \ldots \circ v_l)$ with $u^b = v_1\circ \ldots \circ v_{\ell}$ the maximal decomposition of $u_b$.
    For $r\geq k$ we have by definition that
    $$   \mathring{\eta}_{x,y}^{i_r,i_{r+1}}(u) = \mathring{\eta}_{x,z}^{i_r,i_{r+1}}(u_a)     $$
    and thus condition (i) holds in this case.
    If $u^a$ is broken and has maximal decomposition $u^a = w_1\circ \ldots \circ w_k$, then the maximal decomposition of $u$ is given by $u = w_1\circ \ldots \circ w_k\circ v_1\circ \ldots \circ v_{\ell}$.
    If $r\geq k$ then by construction we have
    $$  \mathring{\eta}_{x,y}^{i_r,i_{r+1}}(u) = \mathring{\eta}_{z_1,z_2}^{i_r,i_{r+1}}(v_s) = \mathring{\eta}_{x,z}^{i_r,i_{r+1}}(u^a)    $$
    where $v_s\in \mathcal{L}(z_1,z_2)$ is the flow line such that $|z_2| \leq i_r < i_{r+1}\leq |z_1|$.
    With similar arguments one checks that 
    $$    \mathring{\eta}_{x,y}^{i_r,i_{r+1}}(u) = \mathring{\eta}_{z,y}^{i_r,i_{r+1}}(u^b)     $$
    for $r\in \{1,\ldots,k-1\}$.
    For $r =0$ we have
    \begin{eqnarray*}
        \mathring{\eta}_{x,y}^{i_0,i_1}(u) &=&    (\mathring{\eta}_{x,y}^{i_1,i_2}(u))^{-1} \cdot \ldots \cdot (\mathring{\eta}_{x,y}^{i_N,i_{N+1}}(u))^{-1}\cdot \Phi(u^a \circ u^b) \\
        &=&   (\mathring{\eta}_{y,z}^{i_1,i_2}(u^b))^{-1} \cdot \ldots \cdot (\mathring{\eta}_{y,z}^{i_{k-1},i_k}(u^b))^{-1} \cdot (\mathring{\eta}_{x,z}^{i_{k},i_{k+1}}(u^a))^{-1} \cdot \ldots  \\ &  &  \cdot (\mathring{\eta}_{x,z}^{i_N,i_{N+1}}(u^a))^{-1}\cdot \Phi(u^a) \cdot \Phi(u^b) \\
        &=& (\mathring{\eta}_{y,z}^{i_1,i_2}(u^b))^{-1} \cdot \ldots (\mathring{\eta}_{y,z}^{i_{k-1},i_k}(u^b))^{-1} \cdot \Phi(u^b) \\
        &=& \mathring{\eta}_{y,z}^{i_0,i_1}(u^b)
    \end{eqnarray*}
    which we wanted to show.
    Here we used the multiplicativity of a transport function and the fact that 
    $$     \Phi(u^a ) = \mathring{\eta}_{x,z}^{i_N,i_{N+1}}(u^a) \cdot \ldots \cdot \mathring{\eta}_{x,z}^{i_0,i_1}(u^a)    .  $$
    Lastly, we need to show the continuity of the $\mathring{\eta}_{x,y}^{i,j}$.
    For $|x|-|y| = 1$ continuity is clear since $\mathcal{L}(x,y)$ is discrete in this case.
    Now assume that we have shown continuity for all pairs of critical points of index difference less than $ k$ and let $x,y\in\mathrm{Crit}(f)$ with $|x|-|y| = k+1$.
    Let $i,j\in \{|y|,\ldots, |x|\}$ with $i< j$ and consider $\mathring{\eta}_{x,y}^{i,j}\colon \widetilde{W}_{x,y}^{i,j}\to G$.
    Note that for $(i,j)= (|y|,|x|)$ we have $\mathring{\eta}_{x,y}^{|y|,|x|}(u) = \Phi(u)$ and thus the continuity of $\mathring{\eta}_{x,y}^{|y|,|x|}$ follows from the continuity of $\Phi$.
    If $(i,j)\neq (|y|,|x|)$ and $i > |y|$ then we note that $\mathring{\eta}_{x,y}^{i,j}$ satisfies 
    $$    \mathring{\eta}_{x,y}^{i,j} (u) = \big( \mathring{\eta}_{x,y}^{i,j}|_{\widetilde{W}_{x,y}^{i,j}\cap \partial\overline{\mathcal{L}}(x,y)}  \circ \mathrm{pr}\,\big)  (u)     $$
    for $\mathrm{pr}\colon U_{x,y}\to \partial\overline{\mathcal{L}}(x,y)$ the projection of the collar neighborhood.
    Hence, the continuity of $\mathring{\eta}_{x,y}^{i,j}$ will follow from the continuity $\mathring{\eta}_{x,y}^{i,j}$ on the boundary.
    
    Let $(u^n)_{n\in\mathbb{N}}$ be a convergent sequence in $\widetilde{W}_{x,y}^{i,j}\cap \partial\overline{\mathcal{L}}(x,y)$.
    Assume for the moment that for all $u^n$ we have $u^n\in\mathcal{L}(x,z_1)\times \mathcal{L}(z_1,z_2)\times \ldots \times \mathcal{L}(z_k,y)$ and that all the $u^n$ have the maximal sequence of indices $|y| = i_0 < i_1 < \ldots < i_m = |x|$.
    Denote the limit by $u_* = \lim_{n\to\infty} u^n \in \widetilde{W}_{x,y}^{i,j}\cap \partial \overline{\mathcal{L}}(x,y)$.
    In particular we have $u_* \in \overline{\mathcal{L}}(x,z_1)\times \ldots \times \overline{\mathcal{L}}(z_k,y)$.
    By the continuity of the separation functions we have that the maximal sequence of indices for $u_*$, which we denote by $|y| = j_0 < j_1 < \ldots < j_r = |x|$, is a subsequence of $(i_0,\ldots, i_m)$.
    Let $i= i_p$ and $j = i_{p+1}$ for some $p\in \{1,\ldots, m-1\}$.
    Since $u_*\in \widetilde{W}_{x,y}^{i,j}$ we have that $i_p$ and $i_{p+1}$ are in the maximal sequence of indices for $u_*$, i.e. there is $s\in \{1,\ldots, r\}$ such that $i= j_s$ and $j = j_{s+1}$.
    By construction we have $\mathring{\eta}_{x,y}^{i,j}(u^n) = \mathring{\eta}_{z_{\ell},z_{\ell+1}}(u^n_{\ell+1})$ for some $\ell\in \{1,\ldots ,k\}$.
    Note that we have $u^n_{\ell+1}\to v_*\in \overline{\mathcal{L}}(z_{\ell},z_{\ell+1})$.
    By the inductive hypothesis the map $\mathring{\eta}_{z_{\ell},z_{\ell+1}}^{i,j}$ is continuous and therefore $\mathring{\eta}_{x,y}^{i,j}(u^n)\to \mathring{\eta}_{z_{\ell},z_{\ell+1}}^{i,j}(v_*)$ for $n\to \infty$.
    It remains to show that $\mathring{\eta}_{z_{\ell},z_{\ell+1}}^{i,j}(v_*) = \mathring{\eta}_{x,y}^{i,j}(u_*)$.
    We can write $v_* = v_1\circ \ldots \circ v_{\nu}$ for $v_1\in {\mathcal{L}}(z_{\ell},a_1)$, $v_2\in \mathcal{L}(a_1,a_2),\ldots v_{\nu}\in \mathcal{L}(a_{\nu-1},z_{\ell+1})$.
    Since $i =j_s$ and $j = j_{s+1}$ are in the maximal sequence of indices for $u_*$ we must have by Lemma \ref{lemma_maximal_sequences}.(iii) that there is $\tau\in \{1,\ldots, \nu\}$ such that $|a_{\mu}| \geq j_{s+1}> j_s \geq |a_{\mu+1}|$ for some $\mu \in \{0,\ldots, \nu-1\}$.
    Here, we set $a_0 = z_{\ell}$ and $a_{\nu} = z_{\ell+1}$.
    Therefore, we have by construction that $\mathring{\eta}_{x,y}^{i,j}(u_*) = \mathring{\eta}_{a_{\mu},a_{\mu+1}}^{i,j}(v_{\mu})$.
    By condition (i) of an extended pre-transport function we have $\mathring{\eta}_{z_{\ell},z_{\ell+1}}(v_* ) = \mathring{\eta}_{a_{\mu},a_{\mu+1}}^{i,j}(v_{\mu})$ and therefore $\mathring{\eta}_{x,y}^{i,j}(u_*) = \mathring{\eta}_{z_{\ell},z_{\ell+1}}^{i,j}(v_*)$ which we wanted to show.

    For an arbitrary $(u^n)_{n\in\mathbb{N}}$ convergent sequence in $\widetilde{W}_{x,y}^{i,j}\cap \partial\overline{\mathcal{L}}(x,y)$ one argues similarly by passing to subsequences which have fixed maximal sequences and which lie in fixed strata of $\partial\overline{\mathcal{L}}(x,y)$.

    Hence, the maps $\mathring{\eta}_{x,y}^{i,j}$ give an extended pre-transport function and by Lemma \ref{lemma_extended_pre-transport_induces_cocycle} this induces a cocycle $\{\eta_{a,b}\colon V_a\cap V_b\to G\}_{a,b\in\mathrm{Crit}(f)}$.
    It is a direct verification that the cocycle $\{\eta_{a,b}\}$ has the property stated in the claim of the lemma.
\end{proof}

We now consider the transport function $\Phi\colon \mathrm{mor}(\mathcal{M}_f)\to G$ which is induced by the local trivializations $\psi_a\colon E|_{V_a}\to G\times V_a$ as in Section \ref{subsec_3-2}.
Recall that for a flow line $u\in \widetilde{W}_{x,y}^{|y|,|x|}$ we have 
\begin{equation}\label{eq_ept_are_already_equal_for_Certain_sets}
\mathring{\eta}_{x,y}^{|y|,|x|}(u) = \Phi(u) = \psi_{x,y}^{|y|,|x|}(\pi(u,\tfrac{|x|+|y|}{2})) = \mathring{\omega}_{x,y}^{|y|,|x|}(u) .
\end{equation}
Hence, the extended pre-transport functions $\{\mathring{\eta}_{x,y}^{i,j}\}$ and $\{\mathring{\omega}_{x,y}^{i,j}\}$ already agree on a large part of the moduli spaces of flow lines.
In order to complete the proof of Theorem \ref{theorem_main_result_transport-functions} the goal is now to show that the $G$-valued cocycles associated to $\{\mathring{\eta}_{x,y}^{i,j}\}$ and $\{\mathring{\omega}_{x,y}^{i,j}\}$, respectively induce isomorphic $G$-principal bundles.

\begin{lemma}\label{lemma_second_isotopy}
    Let $\Phi\colon \mathrm{mor}(\mathcal{M}_f)\to G$ be the transport function induced by the local trivializations $\psi_a\colon E|_{V_a}\to G\times V_a$.
    Let $\{{\omega}_{a,b}\}$ be the cocycle as before and let $\{\eta_{a,b}\}$ be the cocycle induced from $\Phi$ as in Lemma \ref{lemma_transport_induces_ept}.
    There is an isotopy between the cocycles $\{\eta_{a,b}\}$ and $\{\omega_{a,b}\}$.
\end{lemma}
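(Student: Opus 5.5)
We compare the two extended pre-transport functions $\{\mathring{\eta}_{x,y}^{i,j}\}$ and $\{\mathring{\omega}_{x,y}^{i,j}\}$ directly on the moduli spaces and construct a one-parameter family of extended pre-transport functions $\{\mathring{\Theta}_{x,y}^{i,j;t}\}_{t\in[0,1]}$ interpolating between them. By Lemma \ref{lemma_extended_pre-transport_induces_cocycle}, applied with the parameter carried along (equivalently, on $M\times[0,1]$ with the cover $\{V_a\times[0,1]\}$), this family yields a cocycle $\{\Theta^t_{a,b}\}$ with $\Theta^0=\eta$ and $\Theta^1=\omega$. This is the desired isotopy, and as in Lemma \ref{lemma_first_isotopy} it gives isomorphic bundles.

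\emph{Step 1: where the two functions agree.} By \eqref{eq_ept_are_already_equal_for_Certain_sets} we have $\mathring{\eta}^{|y|,|x|}_{x,y}=\mathring{\omega}^{|y|,|x|}_{x,y}$ on all of $\widetilde W^{|y|,|x|}_{x,y}$. For index difference one, both reduce to $\Phi(u)=\psi_{x,y}^{|y|,|x|}(\pi(u,\tfrac{|x|+|y|}2))$. Next, I would argue by induction on $|x|-|y|$ that the two agree on every $\widetilde W^{i_p,i_{p+1}}_{x,y}$ with $p\ge1$. Both satisfy condition (i) of an extended pre-transport function, so these pieces are determined by restriction to an unbroken factor $u_{\ell+1}$ of smaller index difference, and the inductive hypothesis applies. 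The only discrepancy is therefore in the bottom pieces $\mathring{\eta}^{i_0,i_1}_{x,y}$, which are defined by dividing $\Phi(u)$ by the others. Even there, both sides multiply with the upper pieces to give the same total $\Phi(u)$ wherever all pieces are defined. The real difference is only in how $\Phi(u)$ is split across the collar $V_{x,y}$, for $a\in[0,\tfrac43\epsilon)$. For $\omega$, the pieces are the actual transition functions evaluated at $\pi(u,\rho^{i+1}_{x,y}(u))$ with $u=(\widehat u,a)$. For $\eta$, the upper pieces are evaluated at the projection $\widehat u$ (see the proof of Lemma \ref{lemma_transport_induces_ept}), and the bottom piece absorbs the rest.

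\emph{Step 2: the interpolation.} I would use the collar coordinate. For $u=(\widehat u,a)$ and $t\in[0,1]$, set $u_t=(\widehat u,(1-t)a)$ and define $\mathring{\Theta}^{i,j;t}_{x,y}(u)=\mathring{\omega}^{i,j}_{x,y}(u_t)$ for the upper pieces $p\ge1$. For the bottom piece, set
\[
\mathring{\Theta}^{i_0,i_1;t}_{x,y}(u)=\bigl(\mathring{\Theta}^{i_1,i_2;t}_{x,y}(u)\bigr)^{-1}\cdots\bigl(\mathring{\Theta}^{i_N,i_{N+1};t}_{x,y}(u)\bigr)^{-1}\Phi(u).
\]
At $t=1$ the upper pieces become the boundary values, which are exactly the $\mathring\eta$ pieces by Step 1. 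At $t=0$, the pieces are the $\mathring\omega$ pieces, and the bottom piece equals $\mathring\omega^{i_0,i_1}$ by the cocycle relation together with \eqref{eq_ept_are_already_equal_for_Certain_sets}. Using the reverse parametrization, or $t\mapsto1-t$, gives the direction stated in the lemma.

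\emph{Step 3: verification, and the main obstacle.} Condition (ii) holds by construction, and continuity follows as in Lemma \ref{lemma_transport_induces_ept}. The hard part is condition (i) together with well-definedness near the boundary. One must show that shrinking $a$ in the collar of $\overline{\mathcal L}(x,y)$ is compatible with the collars of the smaller moduli spaces under the identification $u=u_a\circ u_b\circ u_c$. The two contractions can differ, so $\mathring\omega$ evaluated at $u_t$ need not equal $\mathring\omega$ evaluated at the corresponding deformation of $u_b$. My first attempt would be an inductive construction over the index difference. The homotopy is defined on the boundary $\partial\overline{\mathcal L}(x,y)$ by condition (i) from the already constructed lower homotopies. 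It is then extended across the collar by the affine structure of property (v) of the separation functions, that is, by linear interpolation in $a$ between the prescribed boundary homotopy and the fixed values on $\overline{\mathcal L}(x,y)\setminus V_{x,y}$. If pointwise interpolation in $G$ is unavailable, I would instead reparametrize the evaluation point $\pi(u,\cdot)$, as was done for $\Omega^{i,j;t}$ before Lemma \ref{lemma_first_isotopy}.
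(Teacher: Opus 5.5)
Your Step 1 is false, and Steps 2 and 3 depend on it. Condition (i) only determines values on broken flow lines. A collar point $u=(\widehat u,a)$ with $a>0$ is unbroken, and there the upper pieces ($p\ge 1$) of the two functions genuinely differ. $\mathring{\omega}_{x,y}$ evaluates the transition function at the point $\pi(u,\rho^{i_p+1}_{x,y}(u))$ on $u$ itself. $\mathring{\eta}_{x,y}$ is by construction $\mathring{\eta}_{x,y}|_{\partial}\circ\mathrm{pr}$. You say this yourself two sentences later, which contradicts ``the only discrepancy is in the bottom pieces''.

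Moreover, through condition (i) the collar discrepancies of the smaller moduli spaces reappear on $\partial\overline{\mathcal{L}}(x,y)$. There the pieces reduce to $\mathring{\omega}_{z_\ell,z_{\ell+1}}(u_{\ell+1})$ and $\mathring{\eta}_{z_\ell,z_{\ell+1}}(u_{\ell+1})$ respectively, and these differ whenever the unbroken factor $u_{\ell+1}$ lies in the collar of $\overline{\mathcal{L}}(z_\ell,z_{\ell+1})$. So the two functions differ at every level of the stratification, not only in the top collar.

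Consequently your single slide $u_t=(\widehat u,(1-t)a)$ fails at both ends of what it should do:
\begin{itemize}
\item At the far end it produces $\mathring{\omega}_{x,y}(\widehat u,0)=\mathring{\omega}_{z_\ell,z_{\ell+1}}(u_{\ell+1})$, which is not $\mathring{\eta}_{z_\ell,z_{\ell+1}}(u_{\ell+1})$.
\item For intermediate $t$ it violates condition (i). For broken $u=u_a\circ u_b\circ u_c$ the collar coordinate of $u$ is $0$, so the left side stays $\mathring{\omega}_{x_0,y_0}(u_b)$ while the right side $\mathring{\omega}_{x_0,y_0}((u_b)_t)$ moves.
\end{itemize}

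Step 3 names the right obstruction, but the remedy is not a construction. $G$ has no linear structure to interpolate in. There are also no ``fixed values'' on $\overline{\mathcal{L}}(x,y)\setminus V_{x,y}$ to interpolate towards: $\widetilde{W}^{i,j}_{x,y}$ extends to all $a<\tfrac43\epsilon$, and at every such $a$ the upper pieces must travel from $\mathring{\omega}(\widehat u,a)$ to $\mathring{\eta}(\widehat u)$.

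What is missing is a schedule that concatenates each level's own collar slide with the lower-level homotopies while keeping condition (i) at every time. The paper does this by staggering in time. The parameter runs over $[0,n]$, and for index difference $k+1$:
\begin{itemize}
\item the upper pieces equal $\mathring{\omega}$ on $[0,n-k-1]$;
\item on $[n-k-1,n-k]$ they slide their own collar coordinate $a\to 0$, ending at $\mathring{\omega}_{z_\ell,z_{\ell+1}}(u_{\ell+1})$;
\item on $[n-k,n]$ they are set equal to the already constructed lower homotopy $\chi^{t}_{z_\ell,z_{\ell+1}}(u_{\ell+1})$ of the unbroken factor of $\widehat u$;
\item the top piece is $\Phi$ throughout, and the bottom piece is $\Phi(u)$ divided by the upper pieces, which keeps condition (ii).
\end{itemize}
Every lower level is still constant while a higher level slides, and a higher level only copies lower ones afterwards. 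Hence condition (i) holds for each $t$, and the endpoint is $\mathring{\eta}$.

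Your inductive sketch needs this, or another explicit extension of the boundary homotopy across the collar with the correct values at $t=0$ and $t=1$. One such alternative is a retraction of the $(a,t)$-rectangle onto its three sides $t=0$, $a=0$, $t=1$. Without one of these it is not yet a proof.
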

\begin{proof}
    We construct an isotopy of extended pre-transport functions by which we mean maps $\chi_{x,y}^{i,j;t}\colon \widetilde{W}_{x,y}^{i,j}\to G$ for $t\in [0,n]$ such that for each $t\in [0,n]$ the maps $\chi_{x,y}^{i,j;t}$ satisfy the conditions of an extended pre-transport function.
    It is easy to see that this yields an isotopy of cocycles between the cocycles defined by $\chi_{x,y}^{i,j;0}$ and $\chi_{x,y}^{i,j;n}$.

    We define the maps $\chi_{x,y}^{i,j;t}$ recursively over the index difference.
    Let $x,y\in\mathrm{Crit}(f)$ with $|x|-|y| = 1$.
    For $t\in [0,n]$ with $n = \mathrm{dim}(M)$ we set $\chi_{x,y}^{|y|,|x|;t}\colon \widetilde{W}_{x,y}^{|y|,|x|}\to G$ to be the map $\chi_{x,y}^{|y|,|x|;t}(u)  = \Phi(u)$, constant in $t$.
    Note that this implies $\chi_{x,y}^{i,j;0}(u) = \mathring{\omega}_{x,y}^{i,j}(u)$ and $\chi_{x,y}^{i,j;n}(u) = \mathring{\eta}_{x,y}^{i,j}(u)$ by equation \eqref{eq_ept_are_already_equal_for_Certain_sets}.

    Assume now that for all critical points $x,y\in\mathrm{Crit}(f)$ with $|x|-|y|\leq k$ we have constructed a continuous map $\chi_{x,y}^{i,j;t}\colon \widetilde{W}_{x,y}^{i,j}\to G$ for $t\in [0,n]$ such that $\chi_{x,y}^{i,j;t}$ is independent of $t$ for $t\in [0, n-k]$ and such that $\chi_{x,y}^{i,j;n-k}(u) = \mathring{\omega}_{x,y}^{i,j}(u)$ as well as $\chi_{x,y}^{i,j;n}(u) = \mathring{\eta}_{x,y}^{i,j}(u)$.
    Let $x,y\in\mathrm{Crit}(f)$ with $|x|-|y| = k+1$ and let $i,j\in\{|y|,\ldots,|x|\}$ with $i< j$.
    If $(i,j) = (|y|,|x|)$ we define $\chi_{x,y}^{|y|,|x|;t}(u) = \Phi(u)$ and again we have $\chi_{x,y}^{|y|,|x|;n-k-1}= \mathring{\omega}_{x,y}^{|y|,|x|}$ and $\chi_{x,y}^{|y|,|x|,n} = \mathring{\eta}_{x,y}^{|y|,|x|}$ by equation \eqref{eq_ept_are_already_equal_for_Certain_sets}.
    Moreover, $\chi_{x,y}^{|y|,|x|;t}$ is clearly constant in $t$ for $t\in [0,n-k-1]$.
    
    If $(i,j)\neq (|y|,|x|)$, let $u\in \widetilde{W}_{x,y}^{i,j}$ and let $|y| = i_0 < i_1 < \ldots < i_m = |x|$ be the maximal sequence of indices in the sense of Lemma \ref{lemma_maximal_sequences}.
    Assume that $i =i_p$ and $j = i_{p+1}$ and let $u = (\widehat{u},a)\in\widetilde{W}_{x,y}^{i_p,i_{p+1}}$.
    If the maximal decomposition of $\widehat{u}$ is given by $\widehat{u} = u_1 \circ \ldots \circ u_k$, then as in the proof of Lemma \ref{lemma_transport_induces_ept} there is $u_{\ell}\in \overline{\mathcal{L}}(z_{\ell-1},z_{\ell})$ with $u_{\ell}\in \widetilde{W}_{z_{\ell-1},z_{\ell}}^{i_p,i_{p+1}}$.
    We define $\chi_{x,y}^{i_p,i_{p+1};t}(u)$ to be equal to $\mathring{\omega}_{x,y}^{i_p,i_{p+1}}(u)$ for $t\in [0,n-k-1]$.
    For $t\in [n-k-1,n-k]$ we define
    $$  \chi_{x,y}^{i_p,i_{p+1};t}(u) = \psi_{x,y}^{i_p,i_{p+1}}((\widehat{u}, -at + a(n-k)), \rho_{x,y}^{i_p+1}(\widehat{u},-at + a(n-k))) .    $$
    Note that 
    $$\chi_{x,y}^{i_p,i_{p+1};n-k}(u) = \psi_{x,y}^{i_p,i_{p+1}}(\widehat{u},\rho_{x,y}^{i_p+1}(\widehat{u}))= \psi_{z_{\ell},z_{\ell-1}}^{i_p,i_{p+1}}(u_{\ell},\rho_{z_{\ell},z_{\ell-1}}^{i_p+1}(u_{\ell})) = \mathring{\omega}_{z_{\ell},z_{\ell-1}}^{i_p,i_{p+1}}(u_{\ell}) . $$
    For $t \in [n-k,n]$ we define $\chi_{x,y}^{i_p,i_{p+1};t}(u) = \chi_{z_{\ell},z_{\ell-1}}^{i_p,i_{p+1};t}(u_{\ell})$.
    We have  $\chi_{x,y}^{i_p,i_{p+1};n}(u) = \mathring{\eta}_{z_{\ell},z_{\ell-1}}^{i_p,i_{p+1}}(u_{\ell})$ and this is equal to $\mathring{\eta}_{x,y}^{i_p,i_{p+1}}(u)$ by condition (i) of an extended pre-transport function.
    Moreover, for $p= 0$ we set
    $$   \chi_{x,y}^{i_0,i_1;t} (u) = (\chi_{x,y}^{i_1,i_2;t}(u))^{-1}\cdot \ldots \cdot (\chi_{x,y}^{i_{m-1},i_m;t}(u))^{-1}\cdot \Phi(u) .      $$
    It is clear by construction that $\chi_{x,y}^{i_0,i_1;n}(u) = \mathring{\eta}_{x,y}^{i_0,i_1}(u)$.
    For $t \leq n-k-1$ we have 
    $$     \chi_{x,y}^{i_0,i_1;t}(u) = \mathring{\omega}_{x,y}^{i_1,i_2}(u))^{-1}\cdot \ldots \cdot (\mathring{\omega}_{x,y}^{i_{m-1},i_m}(u))^{-1}\cdot \Phi(u)   .    $$
    Since $\Phi(u) = \mathring{\omega}_{x,y}^{i_{m-1},i_m}(u) \cdot \ldots \cdot \mathring{\omega}_{x,y}^{i_0,i_1}(u)$ we have that $     \chi_{x,y}^{i_0,i_1;t}(u)  = \mathring{\omega}_{x,y}^{i_0,i_1}(u)$ is constant in $t$ for $t \in [0, n-k-1]$.
    As usual for arbitrary $i= i_s$ and $j = i_v$ with $v > s$ we set
    $$    \chi_{x,y}^{i_s,i_v;t}(u) = \chi_{x,y}^{i_{v-1},i_v;t}(u) \cdot \ldots \cdot \chi_{x,y}^{i_s,i_{s+1};t}(u) .       $$
    By induction we have thus constructed continuous maps $\chi_{x,y}^{i,j;\cdot}\colon \widetilde{W}_{x,y}^{i,j}\times [0,n]\to G$ with the property that $\chi_{x,y}^{i,j;t}$ is constant in $t$ for $t\in [0,n-k-1]$ and $\chi_{x,y}^{i,j;n-k-1} = \mathring{\omega}_{x,y}^{i,j}$ as well as $\chi_{x,y}^{i,j;n} = \mathring{\eta}_{x,y}^{i,j}$.
    
    Overall we have hence constructed an isotopy between $\mathring{\omega}_{x,y}^{i,j}$ and $\mathring{\eta}_{x,y}^{i,j}$ and as explained in the beginning of the proof this yields an isotopy between $\{\omega_{a,b}\}$ and $\{\eta_{a,b}\}$.
\end{proof}

\begin{proof}[Proof of Theorem \ref{theorem_main_result_transport-functions}]
    By construction the cocycle $\{\psi_{a,b}\colon V_a\cap V_b\to G\}$ represents the isomorphism class of $q\colon E\to M$.
    The transport function $\Phi_q\colon \mathrm{mor}(\mathcal{M}_f)\to G$ induced from the cocycle $\{\psi_{a,b}\}$ induces a $G$-principal bundle $E^{\Phi_q}\to M$ through the cocycle $\{\eta_{a,b}\colon V_a\cap V_b\to G\}$ which we constructed in Lemma \ref{lemma_transport_induces_ept}.
    In Lemmas \ref{lemma_first_isotopy} and \ref{lemma_second_isotopy} we have shown that the cocycles $\{\eta_{a,b}\}$ and $\{\psi_{a,b}\}$ induce isomorphic $G$-principal bundles, hence the $G$-principal bundles $q\colon E\to M$ and $E^{\Phi_q}\to M$ are isomorphic.
\end{proof}

\subsection{Equivalence of transport functions}\label{subsec_equivalence}

We have shown in Theorem \ref{theorem_main_result_transport-functions} that in every isomorphism class of $G$-principal bundles there is at least one representative which is induced by a transport function.
It is a natural question to ask in what situations two transport functions $\Phi,\Psi\colon \mathrm{mor}(\mathcal{M}_f)\to G$ induce isomorphic $G$-principal bundles.
We characterize this property for a path-connected topological group $G$ through the notion of isotopic transport functions.

\begin{definition}
    Let $(M,f,X)$ be Morse data and let $G$ be a topological group.
    Consider the set of transport functions 
    $$\mathrm{T}(\mathcal{M}_f,G)  = \{\Phi\colon \mathrm{mor}(\mathcal{M}_f)\to G\,|\, \Phi \text{ is a transport function}\,\} . $$
    We say that $\Phi,\Psi\in \mathrm{T}(\mathcal{M}_f,G)$ are \emph{isotopic} if there is a homotopy $\mathcal{H}\colon \mathrm{mor}(\mathcal{M}_f)\times [0,1]\to G$ between $\Phi$ and $\Psi$ such that $\mathcal{H}(\cdot, t)$ is a transport function for each $t\in [0,1]$.
\end{definition}

\begin{theorem}\label{theorem_isotopic_tr_functions_yield_isomorphic_bundles}
    Let $(M,f,X)$ be Morse data and let $G$ be a topological group.
    \begin{enumerate}
        \item Isotopic transport functions induce isomorphic $G$-principal bundles.
        \item If $G$ is path-connected and $\Phi,\Psi\in T(\mathcal{M}_f,G)$ induce isomorphic $G$-principal bundles, then they are isotopic.
    \end{enumerate}
    For path-connected $G$ we thus obtain a bijection between isotopy classes of $G$-valued transport functions on $\mathcal{M}_f$ and isomorphism classes of $G$-principal bundles over $M$.
\end{theorem}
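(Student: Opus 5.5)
For part (1) I will run the construction of Lemma \ref{lemma_transport_induces_ept} in a parametrized form. Let $\mathcal{H}\colon \mathrm{mor}(\mathcal{M}_f)\times[0,1]\to G$ be an isotopy from $\Phi$ to $\Psi$. For every $t$ the formulas of that lemma give maps $\mathring{\eta}^{i,j;t}_{x,y}\colon \widetilde W^{i,j}_{x,y}\to G$. These formulas are built only from products, inverses and restrictions to strata of $\mathcal{H}(\cdot,t)$. So the same induction over the index difference should show that the combined maps $\widetilde W^{i,j}_{x,y}\times[0,1]\to G$ are jointly continuous; the sequence argument there carries over verbatim with $t_n\to t_*$ added. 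By Lemma \ref{lemma_extended_pre-transport_induces_cocycle}, applied to $M\times[0,1]$ with the cover $\{V_a\times[0,1]\}$, we get a cocycle $\{\eta^{\cdot}_{a,b}\}$. It defines a principal bundle over $M\times[0,1]$ whose restrictions to the two ends are $E^\Phi$ and $E^\Psi$. Homotopy invariance of principal bundles, used exactly as in Lemma \ref{lemma_first_isotopy}, then gives $E^\Phi\cong E^\Psi$.

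For part (2), suppose $E^\Phi\cong E^\Psi\cong E$. The plan is to connect each of $\Phi$ and $\Psi$ by isotopies to a transport function built from trivializations of one fixed bundle $E$, and then to connect any two such trivialization-induced transport functions. This takes three steps.

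\emph{Step A.} For a fixed bundle, changing the trivializations $\psi_z$ on the contractible sets $V_z$ by $r_z\colon V_z\to G$ changes the pre-transport function by conjugation: $\varphi^{i,j}\mapsto r_{z_2}\varphi^{i,j}r_{z_1}^{-1}$, evaluated at points of $S^{i,j}$. In the product defining $\Phi$ in Proposition \ref{prop_pre-transport_induces_transport}, the intermediate factors cancel telescopically, except where the same critical point is reached at different points. Since $V_z$ is contractible and $G$ is path-connected, each $r_z$ is homotopic to the constant map $e$ through maps $r^t_z$, and this yields an isotopy of pre-transport functions and hence of transport functions. The same argument shows that an isomorphism $E\cong E'$ transports the choice of trivializations, so the transport function of Lemma \ref{lemma_trivializations_induce_pre-transport} is, up to isotopy, an invariant of the isomorphism class.

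\emph{Step B.} Let $E=E^\Phi$ be the bundle built from the cocycle $\eta$ of Lemma \ref{lemma_transport_induces_ept}. Its tautological trivializations over $V_a$ have transition functions $\eta_{a,b}$. I will show that the transport function these trivializations induce is isotopic to $\Phi$. The idea is to reuse Lemma \ref{lemma_second_isotopy}: the isotopy $\chi^{t}$ there interpolates between the extended pre-transport functions coming from trivializations and those coming from $\Phi$. Its end values $\chi^{|y|,|x|;t}=\Phi$ are constant in $t$, so it cannot be used as is. I will instead run the analogous sliding homotopy, the collar parameter $a\mapsto a(1-t)$, directly on the pre-transport functions. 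Their product over a maximal index sequence gives a family of transport functions from the trivialization-induced one to $\Phi$, and multiplicativity holds for each $t$ by condition (i).

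\emph{Step C.} Combining, we get $\Phi\simeq\Phi_{E}\simeq\Phi_{E'}\simeq\Psi$.

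The main obstacle is Step B: showing that the transport function recovered from $\eta$ really equals $\Phi$ up to isotopy, and not merely that it induces an isomorphic bundle. If the direct sliding homotopy fails to preserve multiplicativity near corners, I would first try to prove that recovering a transport function from $\eta$ returns exactly $\Phi$ on the sets $\widetilde S^{i,j}$. This should follow by comparing the defining formula of $\mathring\eta^{i_0,i_1}$ with that of $\Phi$ in Proposition \ref{prop_pre-transport_induces_transport}, after the collar coordinate is contracted.
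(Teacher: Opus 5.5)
Part (1) and the overall architecture of part (2) are essentially the paper's. Part (1) is exactly your family of cocycles on $M\times[0,1]$; your remark on joint continuity in $t$ fills in what the paper leaves implicit. For part (2) the paper does the following:
\begin{itemize}
\item it writes $\Phi_{x,y}(u)$ as a product of values ${}^{\Phi}\eta$ at points $p_r$ on $u$;
\item it substitutes the equivalence ${}^{\Phi}\eta_{a,b}=r_b\,{}^{\Psi}\eta_{a,b}\,r_a^{-1}$;
\item it replaces $r_z(p)$ by $r_z(H_z(p,t))$, using a contraction $H_z$ of $V_z$;
\item it finishes with paths in $G$ from $r_z(z)$ to $e$.
\end{itemize}
This is your Step A, phrased for cocycles instead of trivializations. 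For the final bijection you also need surjectivity, which is Theorem \ref{theorem_main_result_transport-functions}.

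The weak point is Step B as you primarily plan it, because there is nothing to interpolate. The round trip bundle $\to$ transport function $\to$ bundle in Lemmas \ref{lemma_first_isotopy} and \ref{lemma_second_isotopy} needs isotopies. The round trip transport function $\to$ cocycle $\to$ transport function, by contrast, is the identity on the nose:
\begin{itemize}
\item The tautological trivializations of $E^{\Phi}$ have transition functions ${}^{\Phi}\eta_{z,a}$. By Lemma \ref{lemma_trivializations_induce_pre-transport}, their pre-transport function is $u\mapsto {}^{\Phi}\eta_{z,a}(\pi(u,\rho^{i+1}_{x,y}(u)))$.
\item By the well-definedness argument in Lemma \ref{lemma_extended_pre-transport_induces_cocycle} (condition (i)) and $\widetilde S^{i,j}_{x,y}\subseteq \widetilde W^{i,j}_{x,y}$, this is just the restriction of $\mathring\eta^{i,j}_{x,y}$.
\item The product over the maximal index sequence is then $\Phi(u)$, by the very definition of $\mathring\eta^{i_0,i_1}_{x,y}$ in Lemma \ref{lemma_transport_induces_ept}.
\end{itemize}
This exact recovery is the key claim in the paper's proof.

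Your sliding family, on the other hand, starts at $\Phi$ and leaves it. On $V_{x,y}$ it amounts to $\Phi\circ c_t$ with $c_t(\widehat u,a)=(\widehat u,(1-t)a)$. The map $c_t$ fixes a broken line $u_1\circ u_2$, since its collar coordinate is $0$, but moves the factors $u_1,u_2$ inside their own collars. So the reparametrization does not preserve condition (i), and for $0<t<1$ the maps are in general not multiplicative. Your claim that multiplicativity holds for each $t$ by condition (i) is therefore false.

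Discard the sliding homotopy and use your fallback, without any collar contraction. With that substitution, Steps A--C give a complete proof.
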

\begin{proof}
    Let $\Phi,\Psi\in T(\mathcal{M}_f,G)$ be transport functions.
    First, assume that $\Phi$ and $\Psi$ are isotopic via a map $\mathcal{H}\colon \mathrm{mor}(\mathcal{M}_f)\times [0,1] \to G$.
    By Lemmas \ref{lemma_extended_pre-transport_induces_cocycle} and \ref{lemma_transport_induces_ept} the family of transport functions $\mathcal{H}(\cdot, t)$ induces a family of cocycles $\{ h_{a,b}^t \colon V_a\cap V_b\to G\}$ for $t\in [0,1]$ such that $h_{a,b}^0$ is the cocycle induced by $\Phi$ and $h_{a,b}^1$ is the cocycle induced by $\Psi$.
    Consequently, $\Phi$ and $\Psi$ induce isomorphic $G$-principal bundles.

    Conversely, let $\Phi, \Psi\in T(\mathcal{M}_f,G)$ be transport functions and assume that they induce isomorphic $G$-principal bundles.
    By Lemmas \ref{lemma_extended_pre-transport_induces_cocycle} and \ref{lemma_transport_induces_ept} the transport functions $\Phi$ and $\Psi$ induce cocycles ${}^{\Phi}\eta_{a,b}, {}^{\Psi}\eta_{a,b} \colon V_a\cap V_b\to G$.
    Since the induced bundles are isomorphic by assumption, the two cocycles are equivalent, i.e. there are maps $r_z\colon V_z\to G$ for each $z\in \mathrm{Crit}(f)$ such that for $a,b\in \mathrm{Crit}(f)$ and $p\in V_a\cap V_b$ we have \begin{equation}\label{eq_equivalence_transport_isotopy} {}^{\Phi}\eta_{a,b} (p) =  r_b(p) \cdot {}^{\Psi}\eta_{a,b}(p)  \cdot r_a(p)^{-1} .
    \end{equation}    
    We claim that we can recover the transport function $\Phi$ from the cocycle ${}^{\Phi}\eta_{a,b}$.
    Recall from Lemma \ref{lemma_maximal_sequences} that $\widetilde{S}_{x,y}^{i,j}\subseteq \widetilde{W}_{x,y}^{i,j}$.
    Let $u\in \overline{\mathcal{L}}(x,y)$ be a flow line.
    If $u\in \overline{\mathcal{L}}(x,y)\setminus V_{x,y}$, then we have $\Phi(u) = {}^{\Phi}\eta_{x,y}(u)$ by construction of the cocycle ${}^{\Phi}\eta_{a,b}$.
    Now, assume that $u = (\widehat{u},a)\in U_{x,y}$
    and let $|y| = i_0 < i_1 < \ldots < i_m = |x|$ be the maximal sequence of indices with $\rho_{x,y}^{i_k}(\widehat{u}) < \rho_{x,y}^{i_k+1}(\widehat{u}) = \rho_{x,y}^{i_{k+1}}(\widehat{u}) < \rho_{x,y}^{i_{k+1}+1}(\widehat{u}) $.
    We have $u\in \widetilde{S}_{x,y}^{i_0,i_1} \cap \ldots \cap \widetilde{S}_{x,y}^{i_{m-1},i_m}$ and there are points $p_1\in S_{x,y;y,z_1}^{i_0,i_1}, \ldots , p_{m}\in S_{x,y;z_{m-1},x}^{i_{m-1},i_m}$ such that $p_r$ lies on the flow line $u$.
    Moreover, by the construction in Lemma \ref{lemma_extended_pre-transport_induces_cocycle} we have
    $$    \Phi_{x,y}(u) =  {}^{\Phi}\eta_{z_1,x}(p_1) \cdot {}^{\Phi}\eta_{z_2,z_1}(p_2) \ldots  \cdot {}^{\Phi}\eta_{y,z_k}(p_{k+1})   .   $$
    By equation \eqref{eq_equivalence_transport_isotopy} we see that the equation
    \begin{eqnarray}\label{eq_expression_for_Phi}
        \Phi_{x,y}(u) &=&   r_x(p_1) \cdot   {}^{\Psi}\eta_{z_1,x}(p_1) \cdot r_{z_1}(p_1)^{-1} \cdot r_{z_1}(p_2)\cdot {}^{\Psi}\eta_{z_2,z_1} \cdot r_{z_2}(p_2)^{-1} \ldots  \\ &  &   r_{z_k}(p_{k+1}) \cdot {}^{\Psi}\eta_{y,z_k}(p_{k+1}) \cdot r_y(p_{k+1})^{-1}   \nonumber
    \end{eqnarray}
    holds as well.
    Recall that $V_z$ is contractible for each $z\in \mathrm{Crit}(f)$. Hence, there is a homotopy $H_z\colon V_z\times [0,1]\to V_z$ with $H_z(\cdot, 0) = \mathrm{id}_{V_z}$ and $H_z(\cdot,1) = z$.
    Define a map $\mathcal{H}_{x,y}\colon \overline{\mathcal{L}}(x,y)\times [0,1]\to G$ by setting 
    \begin{eqnarray*}
           \mathcal{H}_{x,y}(u,t) &=& r_x(H_x(p_1,t)) \cdot   {}^{\Psi}\eta_{z_1,x}(p_1) \cdot r_{z_1}(H_{z_1} (p_1,t))^{-1} \cdot r_{z_1}(H_{z_1}(p_2,t)) \cdot   {}^{\Psi}\eta_{z_2,z_1}(p_2) \cdot  \\ &  &   r_{z_2}(H_{z_2}(p_2,t)) \cdot  \ldots \cdot  r_{z_k}( H_{z_k}(p_{k+1},t)) \cdot {}^{\Psi}\eta_{y,z_k}(p_{k+1}) \cdot r_y( H_y( p_{k+1},t))^{-1}  .      
    \end{eqnarray*} 
    By equation \eqref{eq_expression_for_Phi} we see that $\mathcal{H}_{x,y}(\cdot ,0) = \Phi$ and $\mathcal{H}_{x,y}(u,1) = r_x(x) \cdot \Psi_{x,y}(u)\cdot r_y^{-1}(y)$.
    Further, one checks that $\mathcal{H}$ is an isotopy of transport functions between $\Phi$ and $\Sigma = \mathcal{H}(\cdot,1)$.
    For $z\in\mathrm{Crit}(f)$ choose a continuous path $g_z\colon [0,1]\to G$ with $g_z(0) =z$ and $g_z(1) = e$.
    Those paths exist by the assumption that $G$ is path-connected.
    Define $\mathcal{H}'_{x,y}\colon \overline{\mathcal{L}}(x,y)\times [0,1]\to G$ by 
    $$   \mathcal{H}'_{x,y}(u,t) = g_x(t) \cdot \Psi_{x,y}(u)\cdot g_y(t)^{-1} .        $$
    This defines an isotopy of transport functions between $\Sigma$ and $\Psi$ and by transitivity it follows that $\Phi$ and $ \Psi$ are isotopic. 
\end{proof}

In the case that $G$ is not path-connected, there are non-isotopic transport functions which induce isomorphic $G$-principal bundles.
Therefore, we introduce the following relation.
\begin{definition}
    Let $(M,f,X)$ be Morse data and let $\Phi,\Psi\in T(\mathcal{M}_f,G)$ be transport functions.
    \begin{enumerate}
        \item We say that $\Phi$ and $\Psi$ are c-\emph{equivalent} if there are group elements $r_x\in G$ for $x\in\mathrm{Crit}(f)$ such that $\Phi_{x,y} = r_x \cdot \Psi_{x,y} \cdot r_y^{-1}$.
        \item The equivalence relation on $T(\mathcal{M}_f,G)$ generated by isotopy and $G$-equivalence will be denoted by $\sim_{\mathrm{tr}}$.
    \end{enumerate}
\end{definition}
The \emph{c} stands for \emph{cocycle}, since the condition of being c-equivalent resembles the notion of equivalence of cocycles.
\begin{example}\label{example_transport_z2}
    Let $f\colon \mathbb{S}^1\to \mathbb{R}, (x,y)\mapsto y$ be the standard height function on the circle and let $G = \mathbb{Z}_2$.
    The critical points are $N = (0,1)\in \mathbb{S}^1$ and $S = (0,-1)\in\mathbb{S}^1$ and the only non-trivial moduli space is the two-point space $\mathcal{L}(N,S) = \{\lambda_{\mathrm{left}},\lambda_{\mathrm{right}}\}$.
    There are precisely four transport functions $\Phi\colon \mathrm{mor}(\mathcal{M}_f)\to \mathbb{Z}_2$.
    We have the two constant transport functions $\Phi_{+1},\Phi_{-1}\colon \mathrm{mor}(\mathcal{M}_f)\to \mathbb{Z}_2$ given by $\Phi_{+1} \equiv +1$ and $\Phi_{-1} \equiv -1$.
   Furthermore, the map $\Psi_a\colon \mathcal{L}(N,S)\to \mathbb{Z}_2 $ given by $$\Psi_a(\lambda_{\mathrm{left}}) = -1  \quad \text{and}\quad \Psi_a(\lambda_{\mathrm{right}}) = +1  $$ is a transport function as is $\Psi_b\colon \mathcal{L}(N,S)\to \mathbb{Z}_2$ defined by $\Psi_b = - \Psi_a$.
    None of these transport functions is isotopic to another one.
    However $\Psi_b$ and $\Psi_a$ are c-equivalent transport functions and similarly, $\Phi_{+1}$ and $\Phi_{-1}$ are c-equivalent.
    There are thus two equivalence classes in $\mathrm{T}(\mathcal{M}_f,\mathbb{Z}_2)$ with respect to $\sim_{\mathrm{tr}}$.
    These classes correspond to the two isomorphism classes of $\mathbb{Z}_2$-principal bundle over $\mathbb{S}_1$.
\end{example}
\begin{remark}
    If $G$ is path-connected, then the equivalence relation $\sim_{\mathrm{tr}}$ is generated by the isotopy relation, i.e. c-equivalent transport functions $\Phi,\Psi$ are already isotopic. 
    In fact if there are group elements $r_x\in G$, $x\in\mathrm{Crit}(f)$ with $\Psi_{x,y}= r_x \cdot \Phi_{x,y}\cdot  r_y^{-1}$, then choose paths $g_x\colon [0,1]\to  G$ connecting $r_x$ to $e$ for each $x\in \mathrm{Crit}(f)$.
    As in the proof of Theorem \ref{theorem_isotopic_tr_functions_yield_isomorphic_bundles} one sees that $\Psi$ is isotopic to $\Phi$.
\end{remark}
Again, with the same idea as in the proof of Theorem \ref{theorem_isotopic_tr_functions_yield_isomorphic_bundles} one can see the following.
\begin{cor}\label{cor_same_bundle_equialent_tr_func}
    Let $(M,f,X)$ be Morse data and let $G$ be a topological group.
    If two transport functions $\Phi,\Psi\in T(\mathcal{M}_f,G)$ induce isomorphic $G$-principal bundles, then $\Phi\sim_{\mathrm{tr}} \Psi$.
\end{cor}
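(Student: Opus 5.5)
We follow the second half of the proof of Theorem \ref{theorem_isotopic_tr_functions_yield_isomorphic_bundles}, with the final step, where path-connectedness of $G$ was used, replaced by a c-equivalence. Let $\Phi,\Psi\in T(\mathcal{M}_f,G)$ induce isomorphic $G$-principal bundles.

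\textbf{Step 1.} By Lemmas \ref{lemma_extended_pre-transport_induces_cocycle} and \ref{lemma_transport_induces_ept}, $\Phi$ and $\Psi$ induce cocycles ${}^{\Phi}\eta_{a,b}$ and ${}^{\Psi}\eta_{a,b}$ on the cover $\{V_a\}$. Since the bundles are isomorphic, these cocycles are equivalent. So there are maps $r_z\colon V_z\to G$ satisfying equation \eqref{eq_equivalence_transport_isotopy}.

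\textbf{Step 2.} As in that proof, we recover $\Phi_{x,y}(u)$ as an ordered product of cocycle values ${}^{\Phi}\eta$ at points $p_1,\ldots,p_{k+1}$ lying on $u$. Substituting \eqref{eq_equivalence_transport_isotopy} gives the expression \eqref{eq_expression_for_Phi}. The factors $r_z(p_r)^{-1}r_z(p_{r+1})$ sit between consecutive ${}^{\Psi}\eta$-factors.

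\textbf{Step 3.} Using contractions $H_z$ of $V_z$ to $z$, we define
\[
\mathcal{H}_{x,y}(u,t)
\]
by replacing each $r_z(p)$ with $r_z(H_z(p,t))$. This homotopy involves no choice of paths in $G$, only paths in the contractible sets $V_z$, so it needs no connectivity of $G$. It gives an isotopy from $\Phi$ to the transport function
\[
\Sigma_{x,y}(u)= r_x(x)\,\Psi_{x,y}(u)\,r_y(y)^{-1}.
\]
This holds because at $t=1$ the inner factors cancel pairwise, $r_z(z)^{-1}r_z(z)=e$, and the remaining ${}^{\Psi}\eta$-product recombines to $\Psi_{x,y}(u)$.

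\textbf{Step 4.} $\Sigma$ is c-equivalent to $\Psi$ via the group elements $r_x:=r_x(x)\in G$, by definition. Hence
\[
\Phi\ \text{isotopic to}\ \Sigma\ \text{c-equivalent to}\ \Psi,
\]
and therefore $\Phi\sim_{\mathrm{tr}}\Psi$.

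\textbf{Main obstacle.} The hard part is checking that $\mathcal{H}(\cdot,t)$ is a genuine transport function for every $t$: it must be continuous in $u$ and multiplicative under breaking. The points $p_r$ depend on $u$ through the separation functions and the maximal sequences, which jump along strata. The plan is to deduce both properties from the corresponding properties of the extended pre-transport functions. Concretely, we first define the twisted cocycle
\[
\eta^t_{a,b}(p)=r_b(H_b(p,t))\,{}^{\Psi}\eta_{a,b}(p)\,r_a(H_a(p,t))^{-1},
\]
which is again a cocycle because conjugating by the $r$'s preserves the cocycle identity. Then $\mathcal{H}(\cdot,t)$ is the transport function read off from $\eta^t$ by the same recovery recipe as in Step 2. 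Continuity and multiplicativity then follow exactly as they did for $\Phi$ in the proof of Lemma \ref{lemma_transport_induces_ept} and Proposition \ref{prop_pre-transport_induces_transport}.
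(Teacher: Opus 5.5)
Your proposal is correct and is essentially the paper's own argument. The contraction homotopies $H_z$ from the proof of Theorem~\ref{theorem_isotopic_tr_functions_yield_isomorphic_bundles} give an isotopy from $\Phi$ to $r_x(x)\,\Psi_{x,y}\,r_y(y)^{-1}$ with no connectivity assumption on $G$, and the path-connectedness step is replaced by noting that this transport function is c-equivalent to $\Psi$. Your reading of $\mathcal{H}(\cdot,t)$ as the transport function extracted from the conjugated cocycle $\eta^t$, via Lemma~\ref{lemma_trivializations_induce_pre-transport} and Proposition~\ref{prop_pre-transport_induces_transport}, is a sensible way to make the paper's ``one checks'' precise.
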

\begin{proof}
    Let $\Phi,\Psi\in T(\mathcal{M}_f,G)$ and assume that the induced $G$-principal bundles are isomorphic.
    Following the proof of Theorem \ref{theorem_isotopic_tr_functions_yield_isomorphic_bundles} one shows that there are elements $r_x\in G$ such that the transport function $\Phi$ is isotopic to the transport function $\Xi\colon \mathrm{mor}(\mathcal{M}_f)\to G$ given by $\Xi_{x,y} = r_x \cdot \Psi_{x,y}\cdot r_y^{-1}$.
    Clearly $\Xi$ and $\Psi$ are c-equivalent and therefore $\Phi\sim_{\mathrm{tr}}\Psi$.
\end{proof}


\section{Morse homology with DG coefficients using transport functions}\label{sec_4}

In this section we define a DG Morse complex given enhanced Morse data $(M,f,X,o,\{s_{x,y}\})$ as well as a transport function $\Phi\colon \mathrm{mor}(\mathcal{M}_f)\to G$ for a topological group $G$ and a DG module $\mathcal{F}_{\bullet}$ over $\mathrm{C}_{\bullet}(G)$.
We shall see that equivalent transport functions induce quasi-isomorphic chain complexes and we shall consider some particular cases and examples.

\subsection{Definition of the twisted complex}

Let $(M,f,X,o,\{s_{x,y}\})$ be enhanced Morse data.
We refer to Section \ref{subsec_prelim_morse_DG} for the definition of enhanced Morse data.
Further, let $R$ be a commutative unital ring and consider cubical chains with coefficients in $R$.
Let $G$ be a topological group with multiplication $\mu\colon G\times G\to G$.
The cubical chains $\mathrm{C}_{\bullet}(G)$ become a strictly associative differential graded algebra by setting
$$      c\cdot d := \mu_* (c\times d) \quad \text{for}\,\,\, c,d\in \mathrm{C}_{\bullet}(G) .         $$

Let $\Phi\colon \mathrm{mor}(\mathcal{M}_f)\to G$ be a transport function.
Recall that $\Phi$ consists of a collection of continuous maps $\Phi_{x,y}\colon \overline{\mathcal{L}}(x,y)\to G$.
We consider the collection of chains
\begin{equation}\label{eq_left-twisting-cocycle_lie_group}
       m_{x,y} := (\Phi_{x,y})_* s_{x,y} \in \mathrm{C}_{|x|-|y|-1}(G) , \quad x,y\in\mathrm{Crit}(f), |y|<|x| .     
\end{equation}
If we want to stress the transport function we shall also write $m^{\Phi}_{x,y}$ for $m_{x,y}$.
Using the property of the transport function that $\Phi_{x,y}(u_1\circ u_2)  = \Phi_{x,z}(u_1)\cdot \Phi_{z,y}(u_2)$ for $u_1\in\overline{\mathcal{L}}(x,z), u_2\in \overline{\mathcal{L}}(z,y)$ one checks the following.
\begin{lemma}
    Let $(M,f,X,o,\{s_{x,y}\})$ be enhanced Morse data and let $\Phi\colon \mathrm{mor}(\mathcal{M}_f)\to G$ a transport function.
    Then the chains $m_{x,y} = (\Phi_{x,y})_* (s_{x,y})\in \mathrm{C}_{|x|-|y|-1}(G)$ for $x,y\in \mathrm{Crit}(f)$ constitute a twisting cocycle. 
\end{lemma}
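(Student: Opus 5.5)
The plan is to apply the chain map $(\Phi_{x,y})_*$ to the defining relation of the representing chain system. Then use the multiplicativity \eqref{eq_transport_function} of $\Phi$ to turn cross products of chains into products in the DGA $\mathrm{C}_{\bullet}(G)$.

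First, since $(\Phi_{x,y})_*\colon \mathrm{C}_{\bullet}(\overline{\mathcal{L}}(x,y))\to \mathrm{C}_{\bullet}(G)$ is induced by a continuous map, it is a chain map. Hence
$$\partial m_{x,y} = (\Phi_{x,y})_*(\partial s_{x,y}) = \sum_{|y|<|z|<|x|} (-1)^{|x|-|z|} (\Phi_{x,y})_*(s_{x,z}\times s_{z,y}).$$
Here $s_{x,z}\times s_{z,y}$ is viewed as a chain in $\overline{\mathcal{L}}(x,y)$ via the composition (gluing) map $c_{x,z,y}\colon\overline{\mathcal{L}}(x,z)\times \overline{\mathcal{L}}(z,y)\to \partial\overline{\mathcal{L}}(x,y)\subseteq \overline{\mathcal{L}}(x,y)$, $(u_1,u_2)\mapsto u_1\circ u_2$. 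This is the implicit identification in the boundary formula for $\{s_{x,y}\}$, and I would make it explicit as $(c_{x,z,y})_*(s_{x,z}\times s_{z,y})$.

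Second, the transport property \eqref{eq_transport_function} says exactly that
$$\Phi_{x,y}\circ c_{x,z,y} = \mu\circ(\Phi_{x,z}\times \Phi_{z,y})$$
as maps $\overline{\mathcal{L}}(x,z)\times \overline{\mathcal{L}}(z,y)\to G$. Cubical chains have a strictly natural cross product: for maps $f,g$ and cubes $\sigma,\tau$ we have $(f\times g)\circ(\sigma\times\tau)=(f\circ\sigma)\times(g\circ\tau)$. It follows that $(f\times g)_*(a\times b)=f_*a\times g_*b$, and this is compatible with the quotient by degenerate cubes. Therefore
$$(\Phi_{x,y})_*(c_{x,z,y})_*(s_{x,z}\times s_{z,y}) = \mu_*\big((\Phi_{x,z})_*s_{x,z}\times (\Phi_{z,y})_*s_{z,y}\big) = m_{x,z}\cdot m_{z,y},$$
using the definition $c\cdot d=\mu_*(c\times d)$ of the product in $\mathrm{C}_{\bullet}(G)$. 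Substituting this into the first display gives precisely \eqref{eq_twisting_cocycle}, with the same signs $(-1)^{|x|-|z|}$. The degree $|x|-|y|-1$ of $m_{x,y}$ is inherited from $s_{x,y}$.

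The only real point of care is the second step. One must check that the product in $\mathrm{C}_{\bullet}(\overline{\mathcal{L}}(x,y))$ appearing in the definition of a representing chain system really is the pushforward along the gluing map $c_{x,z,y}$. One must also check that the cross product of cubical chains is strictly natural, with no shuffle or subdivision involved, which holds for cubical chains. This is why cubical rather than singular simplicial chains are used. Once these identifications are in place, the rest is formal.
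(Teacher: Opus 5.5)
Your proposal is correct and is exactly the argument the paper has in mind; the paper only says ``one checks'' using the multiplicativity $\Phi_{x,y}(u_1\circ u_2)=\Phi_{x,z}(u_1)\cdot\Phi_{z,y}(u_2)$. You spell this out by pushing the boundary relation of $\{s_{x,y}\}$ forward along $(\Phi_{x,y})_*$, and you correctly identify the needed facts: the gluing-map interpretation of $s_{x,z}\times s_{z,y}$ and the strict naturality of the cubical cross product.
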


Now, assume that $\mathcal{F}_{\bullet}$ is a differential graded $\mathrm{C}_{\bullet}(G)$ right-module via a map
$$   \Psi\colon\mathcal{F}_{\bullet} \otimes \mathrm{C}_{\bullet}(G) \to \mathcal{F}_{\bullet} .    $$
We define the Morse complex with coefficients in $\mathcal{F}$ to be the graded $R$-module
$$      \mathrm{C}_{\bullet}(M; (\Phi,\mathcal{F}_{\bullet})) :=    \mathcal{F}_{\bullet}  \otimes  R\langle \mathrm{Crit}_{\bullet}(f) \rangle       $$
together with the differential
$$    \partial ( \alpha \otimes x) :=   (\partial \alpha)\otimes x  + (-1)^{|\alpha|}  \sum_{\substack{y \in \mathrm{Crit}(f)\\ |y| < |x|}}    \Psi(\alpha\otimes m_{x,y}) \otimes y  $$
for $x\in\mathrm{Crit}(f)$ and $\alpha\in\mathcal{F}_{\bullet}$.
We shall denote the homology of this complex by 
$   \mathrm{H}_{\bullet}( M;(\Phi,\mathcal{F}_{\bullet}))       $
since we want to stress the dependence on the transport function.
Note that the complex $\mathrm{C}_{\bullet}(M;(\Phi,\mathcal{F}_{\bullet}))$ also depends on the other choices of the enhanced Morse data but we choose to omit this from notation.

We would like to show that equivalent transport functions induce quasi-isomorphic chain complexes.
Since the proof that isotopic transport functions induce quasi-isomorphic chain complexes is technical and uses a construction which is not used otherwise in this paper, we complete the proof of the following theorem in Appendix \ref{sec_proof_isotopy_quasi}.
We will prove the isotopy part only with $\mathbb{Z}_2$-coefficients.
\begin{theorem}\label{theorem_isotopic_tr_induces_quasi-iso}
    Let $(M,f,X,o,\{s_{x,y}\})$ be enhanced Morse data and let $G$ be a topological group.
    Let $\mathcal{F}_{\bullet}$ be a DG $\mathrm{C}_{\bullet}(G)$-module and let $\Phi,\Psi\colon \mathrm{mor}(\mathcal{M}_f)\to G$ be transport functions.
    \begin{enumerate}
        \item If $\Phi$ and $\Psi$ are c-equivalent, then the DG Morse complexes $\mathrm{C}_{\bullet}(M;(\Phi,\mathcal{F}_{\bullet}))$ and $\mathrm{C}_{\bullet}(M;(\Psi,\mathcal{F}_{\bullet}))$ are isomorphic as chain complexes.
        \item If $\Phi$ and $\Psi$ are isotopic and we take chain complexes with $\mathbb{Z}_2$-coefficients then the DG Morse complexes  $\mathrm{C}_{\bullet}(M;(\Phi,\mathcal{F}_{\bullet}))$ and $\mathrm{C}_{\bullet}(M;(\Psi,\mathcal{F}_{\bullet}))$ are quasi-isomorphic.
    \end{enumerate}
\end{theorem}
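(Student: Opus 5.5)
For part (1), I would write down the isomorphism explicitly. Let $r_x\in G$, $x\in\mathrm{Crit}(f)$, be group elements with $\Phi_{x,y}=r_x\cdot\Psi_{x,y}\cdot r_y^{-1}$. Write $[r]\in\mathrm{C}_0(G)$ for the $0$-cube at $r$. Then $m^{\Phi}_{x,y}=[r_x]\cdot m^{\Psi}_{x,y}\cdot[r_y^{-1}]$, because pushforward along $u\mapsto r_x\Psi(u)r_y^{-1}$ is the same as multiplying a chain by $0$-cubes on both sides in the DGA $\mathrm{C}_\bullet(G)$. I would then define
$$T\colon \mathrm{C}_\bullet(M;(\Phi,\mathcal{F}_\bullet))\to \mathrm{C}_\bullet(M;(\Psi,\mathcal{F}_\bullet)),\qquad T(\alpha\otimes x)=\Psi_{\mathcal F}(\alpha\otimes[r_x])\otimes x,$$
where $\Psi_{\mathcal F}$ denotes the module structure map. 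Since $[r_x]$ is a $0$-cycle, acting by it commutes with $\partial$ on $\mathcal{F}_\bullet$ and preserves degree, so no extra signs appear. For the twisted term I need
$$(\alpha\cdot [r_x])\cdot m^\Psi_{x,y}=\big(\alpha\cdot m^{\Phi}_{x,y}\big)\cdot[r_y],$$
and this follows from associativity of the module action together with $[r_x]m^\Psi_{x,y}=m^\Phi_{x,y}[r_y]$. Hence $T$ is a chain map. Its inverse is given by the same formula with $r_x^{-1}$, using $[r][r^{-1}]=[e]$, which is the unit. So $T$ is an isomorphism of chain complexes.

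For part (2), let $\mathcal{H}\colon\mathrm{mor}(\mathcal{M}_f)\times[0,1]\to G$ be the isotopy. I would build a chain map
$$\mathcal{C}\colon \mathrm{C}_\bullet(M;(\Phi,\mathcal F_\bullet))\to\mathrm{C}_\bullet(M;(\Psi,\mathcal F_\bullet)),\qquad \mathcal{C}(\alpha\otimes x)=\alpha\otimes x+\sum_{|y|<|x|}\alpha\cdot \nu_{x,y}\otimes y,$$
with continuation chains $\nu_{x,y}\in\mathrm{C}_{|x|-|y|}(G)$. The natural candidate is $\nu_{x,y}=(\mathcal{H}_{x,y})_*(s_{x,y}\times[0,1])$ plus correction terms. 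The required identity is a twisted intertwining relation: $\partial\nu_{x,y}$ should equal $m^\Psi_{x,y}-m^\Phi_{x,y}$ plus quadratic terms $\sum_z\big(m^\Phi_{x,z}\nu_{z,y}\pm\nu_{x,z}m^\Psi_{z,y}\big)$. Computing $\partial\big(s_{x,y}\times[0,1]\big)$ gives the two ends, which produce the linear terms, together with the product terms $(s_{x,z}\times s_{z,y})\times[0,1]$. Since $\mathcal H(u_1\circ u_2,t)=\mathcal H(u_1,t)\mathcal H(u_2,t)$, the image of these product terms is the diagonal in $t$. It is not the split form $(s_{x,z}\times[0,1])\times(s_{z,y}\times\{0\})$ plus $(s_{x,z}\times\{1\})\times(s_{z,y}\times[0,1])$. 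The diagonal square $[0,1]\to[0,1]^2$ is homologous to this staircase via a $2$-chain. So the $\nu_{x,y}$ must be corrected by induction on $|x|-|y|$, choosing chains in products of moduli spaces and cubes that bound these discrepancies.

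This correction step is the main obstacle. It is essentially constructing an $A_\infty$-type homotopy, and the signs become unmanageable, which is why I would work over $\mathbb{Z}_2$ as the statement allows. An alternative I would try first is to encode the isotopy as a single transport function on a flow category for $M\times[0,1]$ with a Morse function having critical points $x\times\{0\}$ and $x\times\{1\}$. Its DG complex is a cone-like complex, and a standard argument would produce the continuation map. Either way, once $\mathcal{C}$ is a chain map, it preserves the canonical filtration and induces the identity on $E^1_{p,q}=\mathrm{H}_q(\mathcal F_\bullet)\otimes R\langle\mathrm{Crit}_p(f)\rangle$, because its diagonal part is the identity. The filtration is bounded, so the comparison theorem for spectral sequences shows that $\mathcal{C}$ is a quasi-isomorphism.
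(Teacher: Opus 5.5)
Part (1) is correct and is the paper's argument. Under the convention $\Phi_{x,y}=r_x\Psi_{x,y}r_y^{-1}$, your map $\alpha\otimes x\mapsto\alpha\cdot[r_x]\otimes x$ is the one that actually goes from the $\Phi$-complex to the $\Psi$-complex; the paper's formula with $r_x^{-1}$ is its inverse.

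For part (2) your framework is also the paper's: a map $\alpha\otimes x\mapsto\sum_y\alpha\cdot\nu_{x,y}\otimes y$ with $\nu_{x,x}=e$, followed by a filtration argument. That final step is fine; the off-diagonal part is nilpotent, so such a chain map is even an isomorphism. The gap is that the one substantive step is left open: you never show that chains $\nu_{x,y}$ satisfying the intertwining relation exist in all index differences. ``Correct by induction, choosing chains that bound the discrepancies'' is not automatic. The discrepancy at level $(x,y)$ is a cycle of degree $|x|-|y|-1$, and its class depends on the earlier choices. For instance, changing a level-one $\nu_{x,z}$ by a $1$-cycle $c$ shifts the level-two discrepancy by $c\cdot m^{\Psi}_{z,y}$, whose class can be nonzero in $\mathrm{H}_1(G;\mathbb{Z}_2)$. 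So the induction has to run in a source space over $G$ in which each discrepancy is known to be null-homologous. You never build such a space, and beyond the first $2$-chain (diagonal versus staircase) you do not say what the higher homotopies are. Your alternative via $M\times[0,1]$ has the same hole: it needs a transport function and a representing chain system on the moduli spaces from $(x,1)$ to $(y,0)$, which amounts to the same construction.

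The missing idea is to attach an $(\ell+1)$-simplex to each stratum $\mathcal{L}(x,c_1)\times\cdots\times\mathcal{L}(c_\ell,y)$, mapped to $G$ by $(u_1,\ldots,u_{\ell+1},\tau)\mapsto\mathcal{H}(u_1,\tau_1)\cdots\mathcal{H}(u_{\ell+1},\tau_{\ell+1})$.
\begin{itemize}
\item Because each $\mathcal{H}(\cdot,t)$ is a transport function, the faces $\tau_i=\tau_{i+1}$ match the next coarser stratum.
\item The two extreme faces produce the products with $m^{\Phi}$ and $m^{\Psi}$.
\item Your $2$-chain is the case $\ell=1$.
\end{itemize}
The paper glues these pieces into $W_{x,y}\subseteq\overline{\mathcal{L}}(x,y)\times\Delta^n_{\mathrm{st}}$. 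It proves $W_{x,y}$ is a compact topological manifold whose boundary is $\overline{\mathcal{L}}(x,y)\times\partial I$ together with copies of $W_{x,z}\times\overline{\mathcal{L}}(z,y)$ and $\overline{\mathcal{L}}(x,z)\times W_{z,y}$. It then chooses relative fundamental chains $w_{x,y}$ inductively with prescribed boundary. That prescribed boundary is a mod $2$ cycle representing $[\partial W_{x,y}]$, which is exactly why it bounds, and $\nu_{x,y}$ is the pushforward of $w_{x,y}$.

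You could also avoid the manifold argument. Take $\nu_{x,y}$ to be the sum, over all chains $x=c_0,\ldots,c_{\ell+1}=y$, of the pushforwards of $s_{c_0,c_1}\times\cdots\times s_{c_\ell,c_{\ell+1}}\times\delta^{\ell+1}$. Here $\delta^k$ are cubical chains on the simplices whose boundaries are the sums of the face chains. Mod $2$, the terms coming from $\partial s$ cancel against the diagonal faces.

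Finally, mind the direction. Your staircase, $\mathcal{H}(u_1,t)\Phi(u_2)$ followed by $\Psi(u_1)\mathcal{H}(u_2,t)$, yields $\nu_{x,z}m^{\Phi}_{z,y}+m^{\Psi}_{x,z}\nu_{z,y}$. That is the relation for a map from the $\Psi$-complex to the $\Phi$-complex, not the relation $m^{\Phi}_{x,z}\nu_{z,y}+\nu_{x,z}m^{\Psi}_{z,y}$ you wrote. Either use simplices with $\tau_1\le\cdots\le\tau_{\ell+1}$ or reverse the direction of $\mathcal{C}$.
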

\begin{proof}
    Assume that $\Phi $ and $ \Psi$ are c-equivalent, i.e. there are points $r_x\in G$ such that $\Phi_{x,y} = r_x \cdot \Psi_{x,y}\cdot r_y^{-1}$.
    Define a map $\Xi\colon \mathrm{C}_{\bullet}(M;(\Phi,\mathcal{F}_{\bullet})) \to \mathrm{C}_{\bullet}(M;(\Psi,\mathcal{F}_{\bullet}))$ by 
    $$   \Xi(\alpha\otimes x) =  \alpha\cdot r_x^{-1}\otimes x \quad \text{for}\,\,\,\alpha \in\mathcal{F}_{\bullet},\,\,x\in \mathrm{Crit}(f) .     $$
    Note that we have $m^{\Phi}_{x,y} = r_x \cdot m^{\Psi}_{x,y}\cdot r_y^{-1}$.
    A straight-forward computation shows that $\Xi$ is a chain map and it is clearly an isomorphism of chain complexes.
    The second part of the theorem is proved in Appendix \ref{sec_proof_isotopy_quasi}.
\end{proof}

\begin{remark}
   The complex $\mathrm{C}_{\bullet}(M;(\Phi,\mathcal{F}_{\bullet}))$ further depends on the choice of enhanced Morse data $(M,f,X,o,\{s_{x,y}\})$.
   We do not show independence of this data here but we believe that this can be shown in the style of \cite[Section 6]{barraud2025morse}.
\end{remark}

We note that our main source of $\mathrm{C}_{\bullet}(G)$-modules $\mathcal{F}_{\bullet}$ comes from right actions of $G$.
Let $\psi\colon  F\times G \to F$ be a continuous right action of $G$ on a topological space $F$.
The map $\Psi = \psi_* \circ \times \colon \mathrm{C}_{\bullet}(F) \otimes  \mathrm{C}_{\bullet}(G) \to \mathrm{C}_{\bullet}(F)$ makes $\mathrm{C}_{\bullet}(F)$ into a DG right module over $\mathrm{C}_{\bullet}(G)$.
In Section \ref{sec_assoc_bundles} we will show that under certain conditions the homology of the DG Morse chain complex $\mathrm{C}_{\bullet}(M; (\Phi,\mathrm{C}_{\bullet}(F)))$ is the homology of the total space of the associated bundle $F\times_G E$ if $\Phi\colon \mathrm{mor}(\mathcal{M}_f)\to G$ is a transport function induced by the $G$-principal bundle $q\colon E\to M$.
We will also encounter the following situation.
Assume that $F'\subseteq F$ is a $G$-invariant subspace, then the relative chains $\mathrm{C}_{\bullet}(F,F')$ are a DG left module over $\mathrm{C}_{\bullet}(G)$.

\begin{remark}
    We refer to Section \ref{subsec_prelim_morse_DG} for an overview over the construction of the DG Morse complex in \cite{barraud2025morse}.
    The setup in \cite{barraud2025morse} is more universal compared to our present construction since the evaluation maps $q_{x,y}\colon \overline{\mathcal{L}}(x,y)\to \Omega M$ are chosen once and for all.
    In contrast, the transport functions, resp. their equivalence class, depends on the particular principal bundle $q\colon E\to M$.
    We will come back to the comparison between the construction in this article and the constructions in \cite{barraud2025morse} in Section \ref{sec_comparison}.
\end{remark}

Before we turn to examples we recall a result of \cite{barraud2025morse} which will enable us to do explicit computations.
\begin{prop}[\cite{barraud2025morse}, Proposition 4.8]\label{prop_smaller_complex}
    Let $(M,f,X,o,\{s_{x,y}\})$ be enhanced Morse data.
    Let $\mathbf{R}_{\bullet},\mathbf{R}_{\bullet}'$ be DGAs and let $\mathcal{F}_{\bullet}$, and $\mathcal{F}_{\bullet}'$ be DG modules over $\mathbf{R}_{\bullet}$, resp. $\mathbf{R}_{\bullet}'$.
    Assume that $f\colon \mathbf{R}_{\bullet}\to \mathbf{R}_{\bullet}'$ is a morphism of DGAs and $g\colon \mathcal{F}_{\bullet}\to \mathcal{F}_{\bullet}'$ be a morphism of complexes such that 
    $$       g( a\cdot r) =  g(a)\cdot f(r) \quad \text{for}\,\,\,  a\in\mathcal{F}_{\bullet} , \,\, r\in \mathbf{R}_{\bullet}.   $$
    Further, let $\{m_{x,y}\}$ be a twisting cocycle in $\mathbf{R}_{\bullet} $ and set $m'_{x,y} = f(m_{x,y})$.
    Then $m'_{x,y}$ is a twisting cocycle in $\mathbf{R}'_{\bullet}$ and the map 
    $$   \widetilde{g}\colon  \mathcal{F}_{\bullet} \otimes R\langle \mathrm{Crit}_{\bullet}(f)\rangle  \to  \mathcal{F}_{\bullet}' \otimes R\langle \mathrm{Crit}_{\bullet}(f)\rangle , \quad \alpha\otimes x \mapsto g(\alpha)\otimes x     $$
    is a morphism of complexes.
    If $g$ is a quasi-isomorphism, then so is $\widetilde{g}$.
\end{prop}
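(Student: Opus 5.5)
The plan is to verify the three claims of the statement in turn: that $m'_{x,y}$ is a twisting cocycle, that $\widetilde g$ is a chain map, and that $\widetilde g$ is a quasi-isomorphism whenever $g$ is. The first two are formal and follow from the compatibility hypotheses. The third is a comparison of spectral sequences for the canonical filtration.

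For the twisting cocycle property, apply $f$ to equation \eqref{eq_twisting_cocycle}. Since $f$ is a morphism of DGAs, it commutes with $\partial$ and is multiplicative, so
\[
\partial m'_{x,y} = f(\partial m_{x,y}) = \sum_{|y|<|z|<|x|} (-1)^{|x|-|z|} f(m_{x,z})\, f(m_{z,y}),
\]
which is exactly \eqref{eq_twisting_cocycle} for $\{m'_{x,y}\}$. Note that $f$ preserves degree, so the degree conditions are also kept.

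For the chain map property, compute on a generator $\alpha\otimes x$. On one side,
\[
\widetilde g(\partial(\alpha\otimes x)) = g(\partial\alpha)\otimes x + (-1)^{|\alpha|}\sum_y g(\alpha\cdot m_{x,y})\otimes y .
\]
On the other side,
\[
\partial\widetilde g(\alpha\otimes x) = \partial g(\alpha)\otimes x + (-1)^{|g(\alpha)|}\sum_y g(\alpha)\cdot f(m_{x,y})\otimes y .
\]
These agree for three reasons: $g$ is a chain map, $g$ preserves degree, and $g(a\cdot r)=g(a)\cdot f(r)$.

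For the quasi-isomorphism statement, use the canonical filtration $F_p$ by critical point index recalled in Section \ref{subsec_prelim_morse_DG}. The differential only lowers or preserves the index of the critical point, so $F_p$ is a subcomplex, and $\widetilde g$ preserves the filtration by construction. The associated graded pieces are $F_p/F_{p-1} = \mathcal{F}_\bullet\otimes R\langle \mathrm{Crit}_p(f)\rangle$ with differential $\partial\otimes\mathrm{id}$ only, because the twisting terms strictly lower the index. On these pieces $\widetilde g$ induces $g\otimes \mathrm{id}$.

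Since $\mathrm{Crit}_p(f)$ is finite, $R\langle \mathrm{Crit}_p(f)\rangle$ is free of finite rank. Hence $g\otimes\mathrm{id}$ is a quasi-isomorphism on each graded piece, which means the map is an isomorphism on $E^1$-pages. The filtration is bounded, $0\subseteq F_0\subseteq\cdots\subseteq F_n$ with $n=\dim M$, so a finite induction concludes. Concretely, use the five lemma on the long exact sequences of $0\to F_{p-1}\to F_p\to F_p/F_{p-1}\to 0$, which avoids any spectral sequence convergence issues. Induction on $p$ shows $\widetilde g|_{F_p}$ is a quasi-isomorphism for all $p$, and in particular for $F_n$, the whole complex.

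The only step requiring any care is the last one, specifically checking that $\widetilde g$ is compatible with the short exact sequences so that the five lemma applies. This is immediate because $\widetilde g$ acts diagonally on the critical point factor. Sign bookkeeping in the chain map verification is the other minor point; it is resolved by $|g(\alpha)|=|\alpha|$.
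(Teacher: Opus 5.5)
Your argument is correct: the twisting-cocycle and chain-map checks are the formal computations, and the quasi-isomorphism follows from the canonical filtration, on whose graded pieces $\widetilde g$ is $g\otimes\mathrm{id}$. The paper gives no proof of its own and simply cites \cite{barraud2025morse}. Your route is the standard canonical-filtration comparison on $E^1$-pages, the same kind of argument the paper uses in the proof of Theorem~\ref{theorem_assoc_bundle}; the five-lemma induction over the finite filtration $F_0\subseteq\cdots\subseteq F_n$ is just an elementary replacement for the spectral sequence comparison theorem.
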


\subsection{Examples and particular cases}\label{subsec_examples}

\hphantom{b}
\medskip
\newline
\textbf{Trivial bundles}
    Let $ E= M\times G$ be the trivial $G$-principal bundle over $M$ and take $R = \mathbb{Z}$.
    The trivial bundle $E$ is induced by the trivial transport function, i.e. $\Phi^{\mathrm{triv}}\colon \mathrm{mor}(\mathcal{M}_f)\to G$ defined by $\Phi(u) = e$ for all $u\in\mathrm{mor}(\mathcal{M}_f)$.
    We claim that $m_{x,y} = 0$ for all pairs of critical points $(x,y)$ with $|x|-|y| \geq 2$.
    Recall that $m_{x,y} = (\Phi_{x,y})_* s_{x,y} $ and $s_{x,y}\in \mathrm{C}_{|x|-|y|-1}(\overline{\mathcal{L}}(x,y))$ is a class of positive degree.
    However, $\Phi_{x,y}$ is constant, i.e. it factors through the inclusion of the unit $\overline{\mathcal{L}}(x,y) \to \{e\} \hookrightarrow G$.
    Hence, the chain $m_{x,y}\in \mathrm{C}_{|x|-|y|-1}(G)$ is trivial since cubical chains are taken modulo degenerate chains.
    Moreover, if $|x|-|y| = 1$ then $m_{x,y} = (\Phi_{x,y})_*(s_{x,y}) =  n(x,y)\cdot e$ where $n(x,y)\in \mathbb{Z}$ is the signed count of gradient flow lines connecting $x$ and $y$ as in ordinary Morse homology with $\mathbb{Z}$-coefficients. 
    Let $\mathcal{F}_{\bullet}$ be a DG $\mathrm{C}_{\bullet}(G)$-module.
    For $x\in\mathrm{Crit}(f)$ and $\alpha\in\mathcal{F}_{\bullet}$ one sees that the differential of the twisted complex is
    \begin{eqnarray*}
             \partial (\alpha \otimes x) &=&   (\partial \alpha) \otimes x  + (-1)^{|\alpha|} \sum_{|y| = |x|-1} \Psi_*(\alpha\otimes m_{x,y})    \otimes y  \\
             &=&   (\partial \alpha) \otimes x + (-1)^{|\alpha|} \sum_{|y| = |x|-1}  \alpha \otimes  n(x,y) y  \\ &=&
               (\partial \alpha) \otimes x  + (-1)^{|\alpha|} \alpha\otimes (\partial^{\mathrm{Morse}} y) .
    \end{eqnarray*}
    The complex $\mathrm{C}_{\bullet}(M; (\Phi^{\mathrm{triv}},\mathcal{F}))$ is thus simply the tensor product
    $ \mathcal{F}_{\bullet}  \otimes (\mathrm{C}_{\bullet}(M), \partial^{\mathrm{Morse}})       $.

\medskip
\textbf{Discrete groups as fibers}
Assume that $G = \Gamma$ a discrete group and let $\Phi\colon \mathrm{mor}(\mathcal{M}_f)\to \Gamma$ be a transport function.
By construction of the cubical chains we have that $\mathrm{C}_i(\Gamma) = \{0\}$ for $i>0$ and there are natural isomorphisms of algebras
$$    \mathrm{C}_{\bullet}(\Gamma) \cong \mathrm{C}_0(\Gamma) \cong R[\Gamma]      $$
with $R[\Gamma]$ the group ring of $\Gamma$.
In the twisting cocycle $\{m_{x,y}\}$ we therefore have $m_{x,y} = 0$ for $|x|-|y| \geq 2$.
Let $\mathcal{F}_{\bullet}$ be a differential graded right module over $\mathrm{C}_{\bullet}(\Gamma)$.
The differential in the twisted complex $\mathrm{C}_{\bullet}(M;(\Phi, \mathcal{F}_{\bullet}))$ is given by
\begin{equation}\label{eq_twisted_complex_discrete_group}   \partial ( \alpha\otimes x)  = (\partial \alpha) \otimes x +  (-1)^{|\alpha |} \sum_{|y| = |x|-1}  \Psi (\alpha \otimes m_{x,y})  \otimes y .    \end{equation}
\begin{prop}
    Let $(M,f,X,o,\{s_{x,y}\})$ be enhanced Morse data and let $\Gamma$ be a discrete group.
    Let $R$ be a commutative unital ring.
    Let $\Phi\colon \mathrm{mor}(\mathcal{M}_f)\to \Gamma$ be a transport function and let $\mathcal{F}_{\bullet}$ be a $\mathrm{C}_{\bullet}(\Gamma)$-module.
    The DG Morse complex $\mathrm{C}_{\bullet}(M;(\Phi,\mathcal{F}_{\bullet}))$ is isomorphic as a chain complex of $R$-modules to the chain complex $\mathcal{F}_{\bullet} \otimes_{R[\Gamma]} \widetilde{C}_{\bullet} $, which is the tensor product of chain complexes of $R[\Gamma]$-modules between $\mathcal{F}_{\bullet}$ and the lifted complex $\widetilde{C}_{\bullet}$, see Section \ref{subsec_prelim_morse_DG}.
\end{prop}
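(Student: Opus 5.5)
The approach is to write down an explicit identification of underlying graded modules and then check by hand that it intertwines the two differentials. Since $\Gamma$ is discrete, $\mathrm{C}_{\bullet}(\Gamma)$ is concentrated in degree $0$ and equals $R[\Gamma]$. This has two consequences. First, $\mathrm{H}_0(\mathrm{C}_{\bullet}(\Gamma)) = R[\Gamma]$, so the lifted complex is $\widetilde{C}_i = R[\Gamma]\otimes_R R\langle \mathrm{Crit}_i(f)\rangle$, a free left $R[\Gamma]$-module on $\mathrm{Crit}_i(f)$, with differential $\delta(x) = \sum_{|y|=|x|-1} m_{x,y}\, y$; here $[m_{x,y}] = m_{x,y}$ because there are no boundaries in degree $0$. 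Second, $m_{x,y}=0$ whenever $|x|-|y|\geq 2$, as noted before the statement, so the differential of $\mathrm{C}_{\bullet}(M;(\Phi,\mathcal{F}_{\bullet}))$ is given by \eqref{eq_twisted_complex_discrete_group} with only index-difference-one terms.

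Next I set up the candidate isomorphism
$$ \Theta\colon \mathcal{F}_{\bullet}\otimes_R R\langle \mathrm{Crit}_{\bullet}(f)\rangle \to \mathcal{F}_{\bullet}\otimes_{R[\Gamma]} \widetilde{C}_{\bullet}, \qquad \alpha\otimes x\mapsto \alpha\otimes (e\otimes x). $$
It is bijective by the standard isomorphism $\mathcal{F}\otimes_{R[\Gamma]}(R[\Gamma]\otimes_R N)\cong \mathcal{F}\otimes_R N$ for a free module, with inverse $\alpha\otimes (g\otimes x)\mapsto \Psi(\alpha\otimes g)\otimes x$. It preserves total degree, since $\widetilde{C}_i$ sits in degree $i$.

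It remains to compare differentials. I use the tensor product differential with the Koszul sign, $d(\alpha\otimes c) = \partial\alpha\otimes c + (-1)^{|\alpha|}\alpha\otimes \delta c$. Applied to $\alpha\otimes(e\otimes x)$, this gives
$$ \partial\alpha\otimes(e\otimes x) + (-1)^{|\alpha|}\sum_{|y|=|x|-1}\alpha\otimes (m_{x,y}\otimes y) = \Theta\Big(\partial\alpha\otimes x + (-1)^{|\alpha|}\sum \Psi(\alpha\otimes m_{x,y})\otimes y\Big). $$
The second equality holds because $m_{x,y}\in R[\Gamma]$ can be moved across the balanced tensor product. This is exactly $\Theta$ applied to \eqref{eq_twisted_complex_discrete_group}.

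The only point needing care is bookkeeping of conventions: the right module structure on $\mathcal{F}$ must match the left module structure on $\widetilde{C}$, and the sign convention of the tensor product must agree with the sign in the twisted differential. Both match once the conventions above are fixed.
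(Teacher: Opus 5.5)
Your proposal is correct and follows the same route as the paper. Both use $\mathrm{C}_{\bullet}(\Gamma)=R[\Gamma]$ to identify $\mathcal{F}_{\bullet}\otimes_{R[\Gamma]}\bigl(R[\Gamma]\otimes_R R\langle\mathrm{Crit}(f)\rangle\bigr)\cong\mathcal{F}_{\bullet}\otimes_R R\langle\mathrm{Crit}(f)\rangle$ and then compare differentials via \eqref{eq_twisted_complex_discrete_group}; you simply make explicit the isomorphism, its inverse and the Koszul-sign check that the paper leaves as ``one checks''.
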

\begin{proof}
   	Recall from Section \ref{subsec_prelim_morse_DG} that $\widetilde{C}_{\bullet}:= \mathrm{H}_0(\Gamma)\otimes_R \mathrm{R}\langle \mathrm{Crit}(f)\rangle$ is a chain complex in $\mathrm{H}_0(\Gamma) = R[\Gamma]$-modules.
    We recall that the differential is
    $$     \delta(x) = \sum_{|y| = |x|-1} [m_{x,y}] y \quad \text{for}\,\,\, x \in\mathrm{Crit}(f) .       $$
    Since $\mathrm{C}_{\bullet}(\Gamma) = \mathrm{H}_0(\Gamma) = R[\Gamma]$ it follows that $\mathcal{F}_{\bullet}$ is a chain complex in $R[\Gamma]$-modules as well.
    As $R$-modules we have $\mathcal{F}_{\bullet}\otimes_{R[\Gamma]} \widetilde{C}_{\bullet} = \mathcal{F}_{\bullet}\otimes_{R[\Gamma]} R[\Gamma]\otimes_R R\langle \mathrm{Crit}(f) \cong \mathrm{C}_{\bullet}(M;(\Phi,\mathcal{F}_{\bullet}))$.
    Using equation \eqref{eq_twisted_complex_discrete_group} one checks that the differential in the complex $\mathcal{F}_{\bullet}\otimes_{R[\Gamma]} \widetilde{C}_{\bullet}$ agrees with the differential in the DG Morse complex $\mathrm{C}_{\bullet}(M;(\Phi,\mathcal{F}_{\bullet}))$.
\end{proof}

\begin{example}
    Consider the two-fold covering of the circle by itself $q\colon \mathbb{S}^1\to \mathbb{S}^1, z\mapsto z^2$ which is a $\mathbb{Z}_2$-principal bundle.
    Here, we consider $\mathbb{S}^1\subseteq \mathbb{C}\cong \mathbb{R}^2$ as the unit circle in $\mathbb{C}\cong \mathbb{R}^2$.
    Let $f\colon \mathbb{S}^1\to \mathbb{R}$ be the height function on the circle $f(x,y) = y$. 
    There are two gradient flow lines from $N = (0,1)$ to $S= (0,-1)$, which we denote by $\lambda_{\mathrm{left}},\lambda_{\mathrm{right}}\in \mathcal{L}(N,S)$.
    With a suitable choice of orientations we get $s_{N,S} = \lambda_{\mathrm{right}} - \lambda_{\mathrm{left}}\in \mathrm{C}_0(\mathcal{L}(N,S))$.
    A transport function induced by the bundle $q$ is given by the map $\Phi\colon \mathcal{L}(N,S)\to \mathbb{Z}_2$,
    $$    \Phi(\lambda_{\mathrm{right}}) = +1, \quad \Phi(\lambda_{\mathrm{left}})= -1 ,    $$
    see Example \ref{example_transport_z2}.
    As mentioned above we have $\mathrm{C}_{\bullet} (\mathbb{Z}_2)  = \mathrm{C}_0(\mathbb{Z}_2) \cong \mathbb{Z}[\mathbb{Z}_2]$ as a DGA.
    Hence, the twisting cocycle is 
    $$   m_{N,S} = \Phi_* ( s_{N,S}) = (+1) - (-1) \in \mathbb{Z}[\mathbb{Z}_2]  .    $$
    Now, let $ \psi\colon \mathbb{S}^1\times \mathbb{Z}_2 \to \mathbb{S}^1, (x,y)\mapsto (x,-y)$ be the action induced by reflection along the real axis.
    Consider the complex $\mathcal{F}_{\bullet}' = \mathrm{C}_{\bullet}(\mathbb{S}^1)$ with the induced $\mathrm{C}_{\bullet}(\mathbb{Z}_2)$-module structure.
    In order to compute the homology of the twisted complex we want to replace $\mathrm{C}_{\bullet}(\mathbb{S}^1)$ by a smaller complex using Proposition \ref{prop_smaller_complex}.
    Let $a_0 = (1,0)\in \mathbb{S}^1$.
    We denote the cubical $0$-simplex given by $a_0$ by $a_0\in \mathrm{C}_0(\mathbb{S}^1)$ as well.
    We have $ a_0 \cdot (-1) = a_0$ for $(-1)\in\mathbb{Z}_2$.
    Further, let $b_1 \in \mathrm{C}_1(\mathbb{S}^1)$ be the $1$-cube $b_1(t) =e^{2\pi i t}$ and let $c_1\in\mathrm{C}_1(\mathbb{S}^1)$ be defined by $c_1(t) = e^{-2\pi i t}$.
    The chains $b_1$ and $c_1$ are cycles and we have $b_1 \cdot (-1) = c_1$.
    Let $b_2 \in \mathrm{C}_2(\mathbb{S}^1)$ be a chain such that $\partial b_2 = b_1 + c_1$ and set $c_2 = b_2 \cdot (-1)$.
    We have that $b_2 - c_2$ is a cycle, so we need to introduce higher-dimensional chains $b_3,c_3\in\mathrm{C}_3(\mathbb{S}^1)$ to kill of $b_2-c_2$.
    This process goes on and we thus obtain a $\mathbb{Z}_2$-invariant subcomplex $\mathcal{F}_{\bullet}\subseteq \mathrm{C}_{\bullet}(\mathbb{S}^1)$ with $\mathcal{F}_0 = \mathbb{Z}[a_0]$, $\mathcal{F}_1=  \mathbb{Z}[b_1,c_1]$, $\mathcal{F}_2  =\mathbb{Z}[b_2,c_2]$ and so on.
    The inclusion $\mathcal{F}_{\bullet}\hookrightarrow \mathrm{C}_{\bullet}(\mathbb{S}^1)$ is a $\mathbb{Z}_2$-equivariant quasi-isomorphism.
    Hence, we can use the complex $\mathcal{F}_{\bullet}$ to compute the DG Morse homology by Proposition \ref{prop_smaller_complex}.
    One finds that the differential of the complex $\mathcal{F}_{\bullet}\otimes \mathbb{Z}\langle \mathrm{Crit}_{\bullet}(f)\rangle $ vanishes in degree $1$, i.e. we have
    $$   \mathrm{ker}(d_1) = \mathbb{Z}[ b_1\otimes S, c_1\otimes S, a_0 \otimes N ] .     $$
    By computing the differential $d_2$ we find
    $$   \mathrm{im}(d_2) = \mathbb{Z}[  (b_1+c_1)\otimes S,  (b_1-c_1)\otimes S]  .   $$
    It is straight-forward to check that there is an isomorphism
    $$    \mathrm{H}_1( \mathcal{F}_{\bullet}\otimes  \mathbb{Z}\langle \mathrm{Crit}_{\bullet}(f)\rangle, \partial) \cong \mathbb{Z}\oplus \mathbb{Z}_2 .     $$
    Furthermore, the zero'th homology group is $\mathbb{Z}$ and the higher homology groups vanish.
    Hence, the homology of the twisted complex $ \mathcal{F}_{\bullet} \otimes \mathbb{Z}\langle \mathrm{Crit}_{\bullet}(f)\rangle$ is isomorphic to the singular homology of the Klein bottle.
    The Klein bottle is the total space of the associated bundle $\mathbb{S}^1\times_{\mathbb{Z}_2} \mathbb{S}^1 \to \mathbb{S}^1$.
    Our findings are therefore in agreement with Theorem \ref{theorem_assoc_bundle}.
\end{example}

\medskip
\textbf{Principal bundles on spheres} Recall from Example \ref{example_clutching} that for the standard height function on the sphere $\mathbb{S}^n$ a transport function $\Phi\colon {\mathcal{L}}(N,S) \cong \mathbb{S}^{n-1}\to G$ can be understood as a clutching function.

\begin{example}\label{example_hopf_fibration_transport}
    For $\mathbb{S}^1$-principal bundles over the $2$-sphere, the transport function is a map $\Phi\colon \mathbb{S}^1\to \mathbb{S}^1$.
    Consider the map $\Phi_k(e^{2\pi it}) = e^{2\pi ikt}$ for an integer $k\in \mathbb{Z}$.
    Clearly, the maps $\Phi_k$, $k\in\mathbb{Z}$, yield all isotopy classes of transport functions as $k$ ranges over $\mathbb{Z}$.

    As a representing chain system we choose $\sigma = s_{N,S}\in \mathrm{C}_1(\mathbb{S}^1)$  for $\sigma\colon I\to \mathbb{S}^1$ the standard cycle $\sigma(t) = e^{2\pi i t}$.
    If $k = 1$ we have $m_{N,S} = \sigma$.
    Again the explicit computation of the homology is facilitated by Proposition \ref{prop_smaller_complex}.
    We take a DGA model $\mathbf{R}_{\bullet}$ for $\mathrm{C}_{\bullet}(\mathbb{S}^1)$ with 
    $$    \mathbf{R}_0 = \mathbb{Z}[e], \quad \mathbf{R}_1 = \mathbb{Z}[\sigma]      $$
    with $e\in \mathrm{C}_0(\mathbb{S}^1)$ the unit.
    We further need to add the chain $y= \sigma\cdot \sigma\in \mathrm{C}_2(\mathbb{S}^1)$ to $\mathbf{R}_2$. This is a cycle so we also need to add a chain $z\in \mathrm{C}_3(\mathbb{S}^1)$ with $\partial z = y$.
    Furthermore we need to have $\sigma^3 \in\mathbf{R}_3$, i.e. we let $u = \sigma^3\in\mathrm{C}_{\bullet}(\mathbb{S}^1)$ be in our DGA $\mathbf{R}_{\bullet}$.
    This procedure has to be continued.
    One obtains a DGA $\mathbf{R}_{\bullet}$ together with a quasi-isomorphism of DGAs $\mathbf{R}_{\bullet}\hookrightarrow \mathrm{C}_{\bullet}(\mathbb{S}^1)$.

    Since we explicitly know the DGA $\mathbf{R}_{\bullet}$ in low degrees we compute the kernel of the differential in degree $1$.
    One finds that
    $    \mathrm{ker} (\partial_1) = \mathbb{Z}[\sigma \otimes S]    $
    and since 
    \begin{equation}\label{eq_differential_hopf_fibration}      
        \partial_2 (e\otimes N)  = (\partial e) \otimes N - (e\cdot \sigma)\otimes S  = - \sigma\otimes S   
    \end{equation}
    we see that the first homology vanishes.
    Similarly, one finds that the second homology vanishes and that $\mathrm{H}_3(\mathbf{R}_{\bullet} \otimes \mathbb{Z}[N,S]) \cong \mathbb{Z}$, i.e. we recover the homology of the $3$-sphere.
    The $3$-sphere is the total space of the principal $\mathbb{S}^1$-bundle $q\colon \mathbb{S}^3\to \mathbb{S}^2$ given by the Hopf fibration.
    This corresponds to the transport function $\Phi_1$, therefore our computation of the homology is in agreement with Theorem \ref{theorem_assoc_bundle}.
    
\end{example}

\section{DG Morse homology and associated bundles}\label{sec_assoc_bundles}

In this section we investigate the relationship between the homology of the total space of an associated bundle $F\times_G E\to M$ and the homology of the DG Morse complex $\mathrm{C}_{\bullet}(M;(\Phi,\mathrm{C}_{\bullet}(F)))$ for a transport function $\Phi\colon \mathrm{mor}(\mathcal{M}_f)\to G$ as well as a $G$-space $F$.
Here, $\Phi$ is a transport function induced by a $G$-principal bundle $q\colon E\to M$.
We shall consider certain section of bundles obtained by pulling back $q\colon E\to M$ to the compactification of the unstable manifolds $\overline{W}^u(x)$.
We shall see that the existence of these sections only depends on the equivalence class of the transport function and that the obstruction to the existence can be understood via classical obstruction theory.
If these sections exist, then we can show that the DG Morse homology agrees with the singular homology of the total space of the associated bundle $F\times_G E$.

\medskip
Let $(M,f,X,o,\{s_{x,y}\})$ be enhanced Morse data.
We consider the compactified unstable manifold $\overline{W}^u(x)$ which is homeomorphic to a ball $\mathbb{D}^{|x|}$, see \cite{latour1994existence} and \cite{qin2010moduli}.
Note that the boundary of $\overline{W}^u(x)$ is given by
$$     \partial \overline{W}^u(x) = \bigcup_{\substack{y\in \mathrm{Crit}(f) \\  |y| < |x|}} \overline{\mathcal{L}}(x,y) \times W^u(y) .       $$
There is a continuous map $g_x\colon \overline{W}^u(x)\to M$ given by $g_x(p) = p$ for $p\in W^u(x)$ and $g_x(u,p) = p$ for $(u,p)\in\overline{\mathcal{L}}(x,y)\times W^u(y)$.
The map $g_x$ maps the interior $W^u(x)\subseteq M$ homeomorphically onto itself and is actually the cell attachment map of CW structure induced by $f$, i.e. $\partial\overline{W}^u(x)\cong \mathbb{S}^{|x|-1}$ is mapped into the $(k-1)$-skeleton.
See \cite[Section 4.9]{audin2014morse} for details.

\begin{definition}
    Let $(M,f,X)$ be Morse data and let $q\colon E\to M$ be a $G$-principal bundle over $M$.
    Let $\Phi\colon \mathrm{mor}(\mathcal{M}_f)\to G$ be an induced transport function.
    We say that a family of maps $\{\phi_x\colon \overline{W}^u(x)\to E\}_{x\in\mathrm{Crit(f)}}$
    is a collection of \emph{compatible sections} for $\Phi$ if for each $(u,a),\in \overline{\mathcal{L}}(x,y)\times W^u(y)\subseteq \overline{W}^u(x)$ we have
    \begin{equation} \label{eq_compatible_section}
     \phi_x( u,a) = \Phi_{x,y}(u)   \cdot \phi_y(a)             \end{equation}
     and if the following diagram commutes
     $$
     \begin{tikzcd}
         &  E \arrow[]{d}{q} \\
         \overline{W}^u(x) \arrow[]{ru}{\phi_x} \arrow[]{r}{g_x} & M  .
     \end{tikzcd}
     $$
\end{definition}

A compatible section $\phi_x\colon \overline{W}^u(x)\to E$ is thus nothing else but a global section of the pullback bundle $g_x^*E\to \overline{W}^u(x)$.
Since $\overline{W}^u(x)$ is contractible, global sections always exist, but the boundary condition \eqref{eq_compatible_section} is non-trivial.
Let $x\in\mathrm{Crit}(f)$ be a critical point and assume that there exist compatible sections $\phi_y\colon \overline{W}^u(y)\to E$ for all $y\in\mathrm{Crit}(f)$ with $|y|<|x|$.
Note that $\partial \overline{W}^u(x)\cong \mathbb{S}^{|x|-1}$.
Equation \eqref{eq_compatible_section} determines a map $h_x\colon \partial\overline{W}^u(x)\to E$.
\begin{lemma}\label{lemma_continuity_compatible_section_on_boundary}
    Let $(M,f,X)$ be Morse data, let $q\colon E\to M$ be a $G$-principal bundle and assume that $\Phi\colon \mathrm{mor}(\mathcal{M}_f)\to G$ is an induced transport function.
    Let $x\in\mathrm{Crit}(f)$ be a critical point and assume that the compatible sections $\phi_y\colon \overline{W}^u(y)\to E$ exist for all $y\in \mathrm{Crit}(f)$ with $|y| < |x|$.
    The map $h_x\colon \partial\overline{W}^u(x)\to E$ defined by equation \eqref{eq_compatible_section} is continuous.
\end{lemma}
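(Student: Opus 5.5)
The plan has two parts. First check that $h_x$ is well defined on the overlaps of the strata of $\partial\overline{W}^u(x)$. Then prove continuity stratum by stratum, and across strata by a sequential argument using the compactness of the moduli spaces.

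\emph{Well-definedness.} The boundary $\partial\overline{W}^u(x)$ is the union of the pieces $\overline{\mathcal{L}}(x,y)\times W^u(y)$ for $|y|<|x|$. Inside $\overline{W}^u(x)$, a point of the form $(u_1\circ u_2, a)$ with $u_1\in\overline{\mathcal{L}}(x,z)$, $u_2\in\overline{\mathcal{L}}(z,y)$ and $a\in W^u(y)$ is identified with $(u_1,(u_2,a))$. Here $(u_2,a)\in\partial\overline{W}^u(z)$, although it does not lie in $W^u(z)$. So we read \eqref{eq_compatible_section} as
\[
h_x(u,p)=\Phi_{x,y}(u)\cdot\phi_y(p)\quad\text{for } (u,p)\in\overline{\mathcal{L}}(x,y)\times\overline{W}^u(y),
\]
using the closure of the stratum. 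Consistency on the two descriptions of the same point then reduces to
\[
\Phi_{x,y}(u_1\circ u_2)\,\phi_y(a)=\Phi_{x,z}(u_1)\,\Phi_{z,y}(u_2)\,\phi_y(a)=\Phi_{x,z}(u_1)\,\phi_z(u_2,a).
\]
The first equality is the transport property \eqref{eq_transport_function}. The second is the compatibility \eqref{eq_compatible_section} of $\phi_z$, which holds by hypothesis since $|z|<|x|$. The same computation shows that $q\circ h_x=g_x|_{\partial}$: $G$ acts fiberwise and $q\circ\phi_y=g_y$.

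\emph{Continuity.} On each closed piece $\overline{\mathcal{L}}(x,y)\times\overline{W}^u(y)$, the map $(u,p)\mapsto\Phi_{x,y}(u)\cdot\phi_y(p)$ is continuous. Indeed, $\Phi_{x,y}$ and $\phi_y$ are continuous, and the action $G\times E\to E$ is continuous. These closed pieces cover $\partial\overline{W}^u(x)$, and there are only finitely many of them. Continuity of $h_x$ therefore follows from the pasting lemma, provided the canonical map
\[
\bigsqcup_{y}\overline{\mathcal{L}}(x,y)\times\overline{W}^u(y)\to\partial\overline{W}^u(x)
\]
is a closed quotient map, i.e.\ each piece is closed in $\partial\overline{W}^u(x)$.

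\emph{Main obstacle.} The hard step is exactly this point-set fact: the topology on $\overline{W}^u(x)$ from \cite{latour1994existence}, \cite{qin2010moduli} makes these images closed and the gluing a quotient. I would argue by sequences, since everything is metrizable. Take $p_n\to p$ in $\partial\overline{W}^u(x)$. Pass to a subsequence lying in a single stratum $\mathcal{L}(x,y)\times W^u(y)$, and write $p_n=(u_n,a_n)$. Compactness of $\overline{\mathcal{L}}(x,y)\times\overline{W}^u(y)$ gives a further convergent subsequence. Its limit maps to $p$ under the natural continuous (inclusion) map, and this map is compatible with the convergence notion defining the topology of $\overline{W}^u(x)$. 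Continuity of $\Phi_{x,y}$ and $\phi_y$ then gives $h_x(p_{n_k})\to h_x(p)$. Since every subsequence has such a further subsequence, $h_x(p_n)\to h_x(p)$.
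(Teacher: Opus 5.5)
Your proposal is correct and is essentially the paper's argument: the paper also takes a sequence in $\overline{\mathcal{L}}(x,a)\times W^u(a)$ converging to a point $(\lambda,(\mu,p))$ of a deeper stratum, with $p\in W^u(b)$. It then uses continuity of $\phi_a$ on $\overline{W}^u(a)$ together with exactly your consistency identity $\Phi_{x,a}(\lambda)\cdot\phi_a(\mu,p)=\Phi_{x,a}(\lambda)\cdot\Phi_{a,b}(\mu)\cdot\phi_b(p)=\Phi_{x,b}(\lambda\circ\mu)\cdot\phi_b(p)$.

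Your packaging via the closed pieces $\overline{\mathcal{L}}(x,y)\times\overline{W}^u(y)$ and the pasting lemma makes explicit the point-set input that the paper uses tacitly, namely continuity of these face maps into $\partial\overline{W}^u(x)$, which together with compactness justifies the paper's ``only case'' reduction. In your sequential step the pieces should be $\overline{\mathcal{L}}(x,y)\times W^u(y)$, since the unbroken $\mathcal{L}(x,y)\times W^u(y)$ do not cover the boundary.
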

\begin{proof}
    We refer to \cite[Section 4.9]{audin2014morse} for the construction of the topology on the compactifications $\overline{W}^u(x)$.
    We prove the assertion by induction over the index of $x$.
    For $|x| = 1$, there is nothing to show.
    Therefore let $|x| = k$ and assume that continuity has already been shown for all critical points of index less than $k$.
    The only case we need to consider is the following.
    Let $(\lambda_n,x_n)\in \overline{\mathcal{L}}(x,a)\times W^u(a)$ be a sequence with $(\lambda_n,x_n)\to (\lambda, (\mu,x))\in \overline{\mathcal{L}}(x,a)\times \overline{\mathcal{L}}(a,b)\times W^u(b) \subseteq \overline{\mathcal{L}}(x,b)\times W^u(b)$ for $n\to \infty$.
    Since $|a|< k$ we have that $\phi_a$ is continuous by the induction hypothesis and thus 
    $$   \Phi_{x,a}(\lambda_n)\cdot \phi_a(x_n) \to \Phi_{x,a}(\lambda)\cdot \phi_a(\mu,x) = \Phi_{x,a}(\lambda)\cdot \Phi_{a,b}(\mu)\cdot \phi_b(x) = \Phi_{x,b}(\lambda\circ \mu)\cdot \phi_b(x)    $$
    for $n\to \infty$.
    Here, we used that $\Phi$ is a transport function.
    This shows that $h_x(\lambda_n,x_n) \to h_x((\lambda\circ \mu),x)$ for $n\to \infty$ and thus the continuity of $h_x$.
\end{proof}

\begin{example}
        We recall the situation of Example \ref{example_transport_z2}.
           For the non-trivial $2$-fold covering $q\colon \mathbb{S}^1\to \mathbb{S}^1$ and the height function on $\mathbb{S}^1$ we considered the transport function $\Phi\colon \mathcal{L}(N,S)\to \mathbb{Z}_2$ with $\Phi(\lambda_{\mathrm{right}}) = +1$ and $\Phi(\lambda_{\mathrm{left}})= -1$.
          Note that the compactification $\overline{W}^u(N) \cong [-1,1]$ has boundary points $(\lambda_{\mathrm{left}},S)$ and $(\lambda_{\mathrm{right}},S)$.
          Hence for an arbitrary choice of $\phi_S(S) = y\in \mathbb{S}^1$ we must have
          $     \phi_N( (\lambda_{\mathrm{left}},S)) = \Phi_{N,S}(\lambda_{\mathrm{left}}) \cdot \phi_S(S)  = -y    $ while
          $     \phi_N( (\lambda_{\mathrm{right}},S)) = \Phi_{N,S}(\lambda_{\mathrm{right}}) \cdot \phi_S(S)  = y  .  $
          One checks that the map $$\phi_N\colon [-1,1]\to \mathbb{S}^1, t\mapsto e^{i(\tfrac{\pi}{2}(1-t))}y$$ is a compatible section.
\end{example}

The next lemma shows that the question of the existence of compatible sections can be understood through classical obstruction theory.

\begin{lemma}
    Let $(M,f,X)$ be Morse data, let $q\colon E\to M$ be a $G$-principal bundle and assume that $\Phi\colon \mathrm{mor}(\mathcal{M}_f)\to G$ be an induced transport function. 
    Let $x\in\mathrm{Crit}(f)$ be a critical point with index $|x| = k$.
    Assume that compatible sections exist for all critical points $y\in\mathrm{Crit}(f)$ with $|y|<|x|$.
    There is a homotopy class $\gamma_x\in \pi_{k-1}(G)$ with the following property: a compatible section $\phi_x\colon \overline{W}^u(x)\to E$ exists if and only if $\gamma_x = 0$.
\end{lemma}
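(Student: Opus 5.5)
The approach is to reduce the existence of $\phi_x$ to an extension problem for a map from a sphere into $G$. By Lemma \ref{lemma_continuity_compatible_section_on_boundary}, the boundary condition \eqref{eq_compatible_section} defines a continuous map $h_x\colon \partial\overline{W}^u(x)\to E$. Moreover $q\circ h_x = g_x|_{\partial \overline{W}^u(x)}$, because $q(\Phi_{x,y}(u)\cdot\phi_y(a)) = q(\phi_y(a)) = g_y(a) = g_x(u,a)$. So $h_x$ is a section of the pullback bundle $g_x^*E$ over $\partial\overline{W}^u(x)\cong\mathbb{S}^{k-1}$. A compatible section $\phi_x$ is then exactly an extension of this boundary section to a section of $g_x^*E\to\overline{W}^u(x)\cong\mathbb{D}^k$.

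The key steps run as follows.
\begin{enumerate}
\item Trivialize. Since $\overline{W}^u(x)$ is contractible (homeomorphic to $\mathbb{D}^k$), the pullback $G$-principal bundle $g_x^*E$ is trivial. We fix a global section $\sigma\colon\overline{W}^u(x)\to g_x^*E$, which gives a $G$-equivariant trivialization $g_x^*E\cong G\times\overline{W}^u(x)$, $(g,p)\mapsto g\cdot\sigma(p)$.
\item Translate to $G$. Under this trivialization, sections of $g_x^*E$ correspond to continuous maps $\overline{W}^u(x)\to G$. The boundary section $h_x$ corresponds to a map $\eta_x\colon\partial\overline{W}^u(x)\cong\mathbb{S}^{k-1}\to G$, determined by $h_x(p)=\eta_x(p)\cdot\sigma(p)$. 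Continuity of $\eta_x$ follows from continuity of $h_x$ and local triviality, since the map $(e,e')\mapsto g$ with $e=g\cdot e'$ is continuous on $E\times_M E$.
\item Define the obstruction. We set $\gamma_x$ to be the class of $\eta_x$. The natural home is the free homotopy classes $[\mathbb{S}^{k-1},G]$. Because $G$ is a topological group, this set is identified with $\pi_{k-1}(G,e)$ after translating $\eta_x$ by $\eta_x(p_0)^{-1}$; translation is homotopic to the identity only on the path component of $e$, but the identification is still a bijection of sets. For $k=1$ the statement is read with $\pi_0(G)$, and the condition becomes that the two endpoint values lie in the same path component.
\item Prove the equivalence. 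A map $\mathbb{S}^{k-1}\to G$ extends over $\mathbb{D}^k$ if and only if it is null-homotopic, i.e.\ if and only if $\gamma_x=0$. An extension $\tilde\eta_x$ gives $\phi_x(p)=\tilde\eta_x(p)\cdot\sigma(p)$, which satisfies both defining conditions of a compatible section. Conversely, any compatible section $\phi_x$ yields such an extension.
\end{enumerate}

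The main obstacle is well-definedness of $\gamma_x$: it must not depend on the chosen trivialization $\sigma$. Another section has the form $\sigma'=r\cdot\sigma$ with $r\colon\mathbb{D}^k\to G$, which changes $\eta_x$ to $\eta_x\cdot r^{-1}|_{\mathbb{S}^{k-1}}$. Since $r|_{\mathbb{S}^{k-1}}$ extends over the disk, it is null-homotopic, so $\eta_x\cdot r^{-1}$ is homotopic to $\eta_x$ times a constant. I would handle this via the pointwise group structure on $[\mathbb{S}^{k-1},G]$, or simply by noting that the criterion ``$\eta_x$ extends over $\mathbb{D}^k$'' is manifestly independent of $\sigma$, because multiplying by a map that already extends preserves extendability. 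Finally, $\gamma_x$ depends on the previously chosen $\phi_y$, which should be noted but is harmless for the lemma as stated.
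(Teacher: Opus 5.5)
Your proposal is correct and follows the same route as the paper. The paper reduces the existence of $\phi_x$ to the lifting problem for $(\overline{W}^u(x),\partial\overline{W}^u(x))\cong(\mathbb{D}^k,\mathbb{S}^{k-1})$ with boundary data $h_x$ and then simply cites classical obstruction theory; you make that citation explicit by trivializing $g_x^*E$ over the disk and extending $\eta_x\colon\mathbb{S}^{k-1}\to G$.

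One small correction: for non-path-connected $G$, the normalization $[\eta]\mapsto[\eta\cdot\eta(p_0)^{-1}]$ from $[\mathbb{S}^{k-1},G]$ to $\pi_{k-1}(G,e)$ is not a bijection, since it forgets which component $\eta$ lands in. It is, however, well defined and vanishes exactly when $\eta$ is freely null-homotopic, which is all the lemma needs.
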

\begin{proof}
    Let $h_x\colon \partial\overline{W}^u(x)\to  E$ be the map defined by equation \eqref{eq_compatible_section}.
    The existence of a compatible section is nothing else but a solution to the lifting problem
    $$   
    \begin{tikzcd}
               \partial \overline{W}^u(x) \arrow[]{r}{h_x} \arrow[hook]{d}{} & [2.5em] E \arrow[]{d}{q} \\
        \overline{W}^u(x) \arrow[]{r}{g_x} \arrow[dashed]{ru}{} & M   
    \end{tikzcd}
 $$
 Under the homeomorphism $(\overline{W}^u(x),\partial\overline{W}^u(x))\cong (\mathbb{D}^k,\mathbb{S}^{k-1})$ we can envoke the classical result in obstruction theory which says that there exists a homotopy class $\gamma\in \pi_{k-1}(G)$ which is the complete obstruction to the existence of the desired lift.
\end{proof}

\begin{example}
    Let $q\colon E\to M$ be a $T^r$-principal bundle with $T^r$ the $r$-torus for $r\geq 1$.
    Since $\pi_i(T^r) \cong \{0\}$ for $i\geq 2$ we see that the only non-trivial obstructions can exist for critical points of index $2$.
    In the next section, we shall see however that these obstructions vanish since compatible sections always exist for $T^r$-principal bundles.
    Indeed, we shall show that compatible sections always exist for $G$-principal bundles with $G$ a Lie group.
\end{example}

It is natural to ask whether the existence of compatible sections depends on the specific transport function or only on its equivalence class.
The next proposition answers this.

\begin{prop}\label{prop_isotop_transport_functions_both_admit_sections_if_one_does}
    Let $(M,f,X)$ be Morse data, let $G$ be a topological group and let $\Phi,\Psi\colon \mathrm{mor}(\mathcal{M}_f)\to G$ be equivalent transport functions.
    If $\Phi$ admits compatible sections, then so does $\Psi$.
\end{prop}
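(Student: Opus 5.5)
Since $\sim_{\mathrm{tr}}$ is generated by c-equivalence and isotopy, I will treat the two generators separately; transitivity then gives the general case. Note that both $\Phi$ and $\Psi$ are meant as transport functions for the same bundle $q\colon E\to M$, since compatible sections are sections of $g_x^*E$.

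\emph{c-equivalence.} Suppose $\Psi_{x,y} = r_x^{-1}\cdot \Phi_{x,y}\cdot r_y$ for elements $r_x\in G$. Given compatible sections $\{\phi_x\}$ for $\Phi$, I set $\phi'_x := r_x^{-1}\cdot \phi_x$. This is again a section of $g_x^*E$, because the $G$-action is fiberwise. On $\overline{\mathcal{L}}(x,y)\times W^u(y)$ we get
$$\phi'_x(u,a) = r_x^{-1}\Phi_{x,y}(u)\phi_y(a) = r_x^{-1}\Phi_{x,y}(u) r_y\cdot r_y^{-1}\phi_y(a) = \Psi_{x,y}(u)\cdot\phi'_y(a).$$
So $\{\phi'_x\}$ is a collection of compatible sections for $\Psi$, and this case is immediate.

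\emph{Isotopy.} Let $\mathcal{H}\colon \mathrm{mor}(\mathcal{M}_f)\times[0,1]\to G$ be an isotopy with $\mathcal{H}(\cdot,0)=\Phi$ and $\mathcal{H}(\cdot,1)=\Psi$. I will build compatible sections for every $\mathcal{H}_t$ simultaneously, that is, maps $\Phi^t_x\colon \overline{W}^u(x)\times[0,1]\to E$ over $g_x\circ \mathrm{pr}_1$ satisfying $\Phi^t_x(u,a) = \mathcal{H}_{x,y}(u,t)\cdot \Phi^t_y(a)$ on the boundary, with $\Phi^0_x=\phi_x$. The construction is by induction on $|x|$. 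For $|x|=0$ take $\Phi^t_x\equiv\phi_x$. For the inductive step, assume the maps are built for all $y$ with $|y|<|x|$. The boundary formula then defines a map on $\partial\overline{W}^u(x)\times[0,1]$, and it is continuous by the argument of Lemma \ref{lemma_continuity_compatible_section_on_boundary}, run with the parameter $t$ and using that each $\mathcal{H}(\cdot,t)$ is a transport function. On $\overline{W}^u(x)\times\{0\}$ we take $\phi_x$. These two pieces agree on $\partial\overline{W}^u(x)\times\{0\}$, so together they give a lift of $g_x\circ\mathrm{pr}_1$ defined on $\overline{W}^u(x)\times\{0\}\cup\partial\overline{W}^u(x)\times[0,1]$.

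The step I expect to carry the weight is extending this partial lift to all of $\overline{W}^u(x)\times[0,1]$. The subspace is a retract of $\mathbb{D}^k\times[0,1]$, and $q$ is a fibration because $M$ is paracompact. So the homotopy lifting property for the pair $(\mathbb{D}^k,\mathbb{S}^{k-1})$, which is the relative homotopy lifting for a CW pair, gives the extension. If one prefers to avoid this, one can pull $E$ back along $g_x\circ\mathrm{pr}_1$ to the contractible space $\mathbb{D}^k\times[0,1]$, where it is trivial. The lifting problem then becomes extending a $G$-valued map from $\mathbb{D}^k\times\{0\}\cup\mathbb{S}^{k-1}\times[0,1]$, which works by precomposing with a retraction. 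Restricting the resulting maps to $t=1$ gives the compatible sections $\phi'_x:=\Phi^1_x$ for $\Psi$, which completes the induction.
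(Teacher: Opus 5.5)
Your proof is correct and follows the paper's argument essentially step by step. You handle c-equivalence by translating the sections by the constants $r_x^{\pm1}$, and isotopy by inductively building a one-parameter family $\overline{W}^u(x)\times[0,1]\to E$ of compatible sections from the relative lifting problem for $(\mathbb{D}^k\times[0,1],\,\mathbb{D}^k\times\{0\}\cup\mathbb{S}^{k-1}\times[0,1])$, then restricting to $t=1$. Your alternative justification of the extension step, which trivializes the pullback over the contractible $\mathbb{D}^k\times[0,1]$ and extends a $G$-valued map via a retraction, is a valid and slightly more self-contained substitute for the paper's citation of the relative lifting theorem.
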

\begin{proof}
    We first show the claim for isotopic transport functions $\Phi,\Psi\colon \mathrm{mor}(\mathcal{M}_f)\to G$.
    Assume that compatible sections $\phi^{\Phi}_x\colon \overline{W}^u(x)\to E$, $x\in\mathrm{Crit}(f)$ exist for $\Phi$ and let $\mathcal{H}\colon \mathrm{mor}(\mathcal{M}_f)\times [0,1]\to G$ be an isotopy between $\Phi$ and $\Psi$.
    We want show the following claim by induction.
    There are maps $\rho_x\colon \overline{W}^u(x)\times [0,1]\to E$ such that the diagram
    \begin{equation}\label{eq_com_diagam_comp}
        \begin{tikzcd}
            & [2em] E \arrow[]{d}{q} \\
             \overline{W}^u(x)\times [0,1]\arrow[]{r}{g_x\circ \mathrm{pr}_1} \arrow[]{ru}{\rho_x} & M
        \end{tikzcd}
    \end{equation}    
    commutes and such that for $(\lambda,a)\in  \overline{\mathcal{L}}(x,y)\times \overline{W}^u(y)\subseteq \partial\overline{W}^u(x)$ and $s\in [0,1]$ we have
    $$    \rho_x ((u,a),s) = \mathcal{H}(u,s)\cdot \rho_y(a,s) .     $$ 
    In particular $\phi^{\Psi}_x := \rho_x(\cdot ,1)\colon \overline{W}^u(x)\to E$ is a compatible section for $\Psi$, so this implies the claim of the proposition for isotopic transport functions.    
    Let $|x| = 0$, and set $\rho_x\colon \overline{W}^u(x)\times [0,1]$ to be the map $\rho_x(x,s) = \phi_x^{\Phi}(x)$.
    Clearly, diagram \eqref{eq_com_diagam_comp} commutes and the boundary condition is empty in this case.

    Now, assume that we have shown the claim of the induction for critical points $x'\in\mathrm{Crit}(f)$ with $ |x'| \leq k$ and let $x\in \mathrm{Crit}(f)$ with $|x| = k+1$.
    Define a map $$\zeta\colon \overline{W}^u(x)\times \{0\}\cup \partial \overline{W}^u(x)\times [0,1]\to E$$ by $    \zeta( p,0) =  \phi^{\Phi}_x (p)     $ for $u\in\overline{W}^u(x)$ and 
    $$     \zeta((u,a),s) =   \mathcal{H}_{x,y}(u,s)\cdot \rho_y(a,s) .    \quad \text{for}\,\,\, (u,a)\in \overline{\mathcal{L}}(x,y)\times W^u(y),\,\, s\in [0,1]       $$
    Similarly to the proof of Lemma \ref{lemma_continuity_compatible_section_on_boundary} one checks that $\zeta$ is well-defined and continuous.
    Under the identification $(\overline{W}^u(x),\partial\overline{W}^u(x))\cong (\mathbb{D}^n,\mathbb{S}^{n-1})$ we see that we need to solve the relative lifting problem
    $$
        \begin{tikzcd}
            \mathbb{D}^n\times \{0\}\cup \mathbb{S}^{n-1}\times [0,1] \arrow[]{r}{\zeta} \arrow[hook]{d}{} & E \arrow[]{d}{q} \\
            \mathbb{D}^n\times [0,1] \arrow[dashed]{ru}{} \arrow[]{r}{g_x\circ \mathrm{pr}_1} & M
        \end{tikzcd}
    $$
    in order to obtain a map $\rho_x\colon \overline{W}^u(x)\times [0,1]\to E$ whose existence we wanted to prove.
    Since $\mathbb{D}^n\times \{0\}\cup \mathbb{S}^{n-1}\times [0,1]$ is a strong deformation retraction of $\mathbb{D}^n\times [0,1]$ it is well-known that a solution $Z\colon \mathbb{D}^n\times [0,1]\to E$ to this relative lifting problem exists, see e.g. \cite[Theorem VII.6.4]{bredon:2013}
    Under the homeomorphism $\mathbb{D}^n\cong \overline{W}^u(x)$ the map $Z$ yields a map $\rho_x\colon \overline{W}^u(x)\times [0,1]\to E$ which satisfies the claimed properties.
    This completes the induction.

    Next, assume that $\Phi,\Psi\colon \mathrm{mor}(\mathcal{M}_f)\to G$ are c-equivalent transport functions, i.e. there are elements $r_x\in G$ for $x\in \mathrm{Crit}(f)$ such that $\Phi_{x,y} = r_x\cdot \Psi_{x,y}\cdot r_y^{-1}$.
    Assume that there are maps $\phi_x^{\Phi}\colon \overline{W}^u(x)\to E $ which are compatible sections for $\Phi$.
    Define $\phi_x^{\Psi}\colon \overline{W}^u(x)\to E$ by $\phi_x^{\Psi}(a) = r_x\cdot \phi_x^{\Phi}(a)$.
    It is clear $q\circ \phi_x^{\Psi} = g_x$.
    For $(u,a)\in\partial\overline{W}^u(x)$ we have 
    $$   \phi_x^{\Psi}(u,a) = r_x  \phi_x^{\Phi}(u,a) = r_x  \Phi_{x,y}(u)r_y^{-1} r_y \phi_y^{\Phi}(a) = \Psi_{x,y}(u) \phi_y^{\Psi}(a)   .  $$
    Thus, $\phi_x^{\Psi}$ is a compatible section for $\Psi$.
    This completes the proof.
\end{proof}

In the next section we shall see transport functions for smooth principal bundles defined through parallel transport.
In this case compatible sections always exist and we construct them explicitly.
In general, we do not have an example where compatible sections do not exist.

We now want to show that if $\Phi$ is a transport function induced by the $G$-principal bundle $q\colon E\to M$, then the DG Morse complex $\mathrm{C}_{\bullet}(M;(\Phi,\mathrm{C}_{\bullet}(F)))$ computes the homology of the total space of the associated bundle $F\times_G E$ where $F$ is a right $G$-space.
Barraud, Damian, Humilière and Oancea define in \cite{barraud2025morse} a \emph{compatible representing chain system} as follows.
\begin{definition}
    Let $\{s_{x,y}\}$ be a representing chain system.
    A \emph{compatible representing chain system} is a collection $\{s_x\in \mathrm{C}_{|x|}(\overline{W}^u(x))\}_{x\in\mathrm{Crit}(f)}$ such that
    \begin{enumerate}
        \item the chain $s_x$ is a cycle relative to the boundary and represents the fundamental class $[\overline{W}^u(x)]\in \mathrm{H}_{|x|}(\overline{W}^u(x),\partial\overline{W}^u(x))$.
        \item The following equation holds
        $$    \partial s_x =   \sum_{\substack{y \in \mathrm{Crit}(f)\\ |y|<|x|}} s_{x,y}\times s_y .  $$
    \end{enumerate}
\end{definition}

In \cite[Lemma 7.6]{barraud2025morse} the authors show the following: given a representing chain system $\{s_{x,y}\}$there exists a compatible representing chain system $\{s_x\}$.

\begin{theorem}\label{theorem_assoc_bundle}
    Let $(M,f,X,o,\{s_{x,y}\})$ be enhanced Morse data and let $q\colon E\to M$ be a $G$-principal bundle.
    Let $\Phi\colon \mathrm{mor}(\mathcal{M}_f)\to G$ be a transport function induced by $q\colon E\to M$ and assume that compatible sections exist for all critical points $x\in \mathrm{Crit}(f)$.    
    Let $F$ be a right $G$-space and consider the associated bundle $F\times_G E\to M$.
    There is a chain map $t\colon \mathrm{C}_{\bullet}(M;(\Phi,\mathrm{C}_{\bullet}(F)))\to \mathrm{C}_{\bullet}(F\times_G E)$ which induces an isomorphism between the canonical spectral sequence associated to the DG Morse complex $\mathrm{C}_{\bullet}(M;(\Phi,\mathrm{C}_{\bullet}(F)))$ and the Leray-Serre spectral sequence associated to the bundle $F\times_G E\to M$.
    In particular, the homology of the chain complex $\mathrm{C}_{\bullet}(M; (\Phi,\mathrm{C}_{\bullet}(F)))$ is isomorphic to the singular homology of $F\times_G E$.
\end{theorem}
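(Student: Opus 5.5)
Following the strategy of \cite[Theorem 7.2]{barraud2025morse}, I would construct $t$ from the compatible representing chain system $\{s_x\}$ (\cite[Lemma 7.6]{barraud2025morse}) and the compatible sections $\{\phi_x\}$. For each $x\in\mathrm{Crit}(f)$ define
$$ \Theta_x\colon F\times \overline{W}^u(x)\to F\times_G E,\qquad \Theta_x(v,p) = [v,\phi_x(p)], $$
and set
$$ t(\alpha\otimes x) := (\Theta_x)_*(\alpha\times s_x) \quad\text{for}\,\,\alpha\in \mathrm{C}_{\bullet}(F). $$
The map $\Theta_x$ lies over $g_x$, since $q\circ\phi_x = g_x$.

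\textbf{Step 1: $t$ is a chain map.} We have
$$ \partial t(\alpha\otimes x) = (\Theta_x)_*(\partial\alpha\times s_x) + (-1)^{|\alpha|}\sum_y (\Theta_x)_*(\alpha\times s_{x,y}\times s_y). $$
On $\overline{\mathcal{L}}(x,y)\times W^u(y)$, equation \eqref{eq_compatible_section} gives
$$ \Theta_x(v,(u,a)) = [v,\Phi_{x,y}(u)\phi_y(a)] = [v\cdot\Phi_{x,y}(u),\phi_y(a)] = \Theta_y(\psi(v,\Phi_{x,y}(u)),a). $$
Here the middle equality uses the defining relation $[f\cdot g^{-1}, g u] = [f,u]$ of the associated bundle. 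Hence $(\Theta_x)_*(\alpha\times s_{x,y}\times s_y) = (\Theta_y)_*(\Psi(\alpha\otimes m_{x,y})\times s_y)$, which is exactly $t$ applied to the twisted part of the differential. The one point to check carefully is that the sign conventions of the twisting cocycle and of the compatible chain system match.

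\textbf{Step 2: filtrations.} Filter $\mathrm{C}_{\bullet}(F\times_G E)$ by the preimages of the skeleta of the CW structure: $X_p = \pi^{-1}(M_p)$, where $M_p = \bigcup_{|x|\le p} g_x(\overline{W}^u(x))$ (the sets $K_p$ of Section \ref{subsec_morse_flow}). Since $s_x$ is supported in $\overline{W}^u(x)$, the map $t$ sends the canonical filtration $F_p$ into $\mathrm{C}_{\bullet}(X_p)$. This filtration computes the Leray--Serre spectral sequence, which is the standard cellular construction of it.

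\textbf{Step 3: comparison on $E^1$.} This is the main step. The $E^1$ page of the Serre side is $\mathrm{H}_{p+q}(X_p,X_{p-1})$. By excision and the local triviality of $g_x^*(F\times_G E)\to\overline{W}^u(x)$ over the disk, this is $\bigoplus_{|x|=p}\mathrm{H}_{p+q}(F\times(\mathbb{D}^p,\mathbb{S}^{p-1}))$. The trivialization can be taken to be $\Theta_x$ itself, since $\phi_x$ is a global section of $g_x^*E$. Under this identification, the Künneth theorem (cross product with the fundamental class represented by $s_x$) identifies $E^1_{p,q}$ with $\mathrm{H}_q(F)\otimes R\langle\mathrm{Crit}_p(f)\rangle$. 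This is precisely the map induced by $t$ on the $E^1$ page of the canonical spectral sequence, so the map is an isomorphism.

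\textbf{Conclusion.} Both filtrations are bounded, finite and exhaustive. By the comparison theorem, $t$ is therefore a quasi-isomorphism and induces an isomorphism of spectral sequences from $E^1$ onward. The main obstacle I expect is making the $E^1$ identification rigorous: cubical chains must be handled via relative homeomorphisms of pairs, and the identification of $(X_p,X_{p-1})$ with a disjoint union of copies of $F\times(\mathbb{D}^p,\mathbb{S}^{p-1})$ requires $g_x$ to be a relative homeomorphism together with an excision argument (using a collar neighbourhood of the skeleton, so that $X_{p-1}$ is a deformation retract of a neighbourhood).
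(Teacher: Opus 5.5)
Your proposal is correct and is essentially the paper's own proof. Since $\Theta_x = p\circ(\mathrm{id}_F\times\phi_x)$, your $t(\alpha\otimes x)=(\Theta_x)_*(\alpha\times s_x)$ equals the paper's $p_*(\alpha\times m_x)$ with $m_x=(\phi_x)_*(s_x)$. The chain-map check uses the same identity $[v,gu]=[vg,u]$ together with $\partial s_x=\sum_y s_{x,y}\times s_y$, and the $E^1$ comparison uses exactly the maps $\chi_x=\Theta_x$ followed by the comparison theorem.

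One small slip: the parenthetical identifying $M_p$ with the sets $K_p$ of Section~\ref{subsec_morse_flow} is wrong. Those sets filter by index difference of flow lines; for example, $K_0=\mathrm{Crit}(f)$ contains the maximum. Your actual definition $M_p=\bigcup_{|x|\le p}g_x(\overline{W}^u(x))$ is the correct skeletal filtration, and it is the one the argument needs.
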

The following proof transfers the strategy of the proof of \cite[Theorem 7.2]{barraud2025morse} to our situation.
The role of the lifting function is taken by the canonical map $p\colon F\times E\to F\times_G E$ of the associated bundle.
\begin{proof}
    Write $X = F \times_G E$ for the total space of the associated bundle.
    We shall construct a chain map $t \colon \mathrm{C}_{\bullet}(M; (\Phi,\mathrm{C}_{\bullet}(F)))\to \mathrm{C}^{\text{cubical}}_{\bullet}(X)$ and we shall show that it respects filtrations and that it induces an isomorphism of the first page of the associated spectral sequences.

    By assumption there are compatible sections $\phi_x\colon \overline{W}^u(x)\to E$.
    Define a collection of chains $\{m_x \in \mathrm{C}_{\bullet}(E) \}_{x\in\mathrm{Crit}(f)}$ by setting $m_x := (\phi_x)_*(s_x)\in \mathrm{C}_{|x|}(E)$.
    By the boundary condition for the compatible sections, see \eqref{eq_compatible_section} and by the property of the compatible representing chain system we find that 
    $$  \partial m_x =   \sum_{\substack{y \in \mathrm{Crit}(f)\\ |y|<|x|}} m_{x,y} \cdot m_y   $$
    where we use the left $\mathrm{C}_{\bullet}(G)$-module structure of $\mathrm{C}_{\bullet}(E)$ through the left action of $G$ on $E$.
    We define a map $t \colon \mathrm{C}_{\bullet}(M; (\Phi,\mathrm{C}_{\bullet}(F)))\to \mathrm{C}^{\text{cubical}}_{\bullet}(X)$ by setting
    $$   t(\alpha \otimes x) = p_* (  \alpha \times m_x   )   , \quad \text{for}\,\,\, x\in \mathrm{Crit}(f),\,\, \alpha\in\mathrm{C}_{\bullet}(F)    $$
    where $p\colon F\times E\to X$ is the canonical projection.
    In order to show that $t$ is a chain map let $x\in\mathrm{Crit}(f)$ and $\alpha\in\mathrm{C}_{\bullet}(F)$.
    Then 
    \begin{eqnarray*}
        \partial t  (\alpha \otimes x ) &=& \partial p_*(\alpha \times m_x) 
        \\
        &=& p_* (  (\partial \alpha) \times m_x + (-1)^{|\alpha|} \alpha \times (\partial m_x)  ) \\
        &=&  p_* ((\partial \alpha)\times m_x)   \, + \,  (-1)^{|\alpha|} \sum_{\substack{y \in \mathrm{Crit}(f)\\ |y|<|x|}} p_* (\alpha \times (m_{x,y}\cdot m_y))  \\
        &=&
        p_* (( \partial \alpha) \times m_x) \, + \, (-1)^{|\alpha|}
        \sum_{\substack{y \in \mathrm{Crit}(f)\\ |y|<|x|}}  p_*( \alpha \cdot m_{x,y} \times  m_y  )
    \end{eqnarray*}
    where the last equality holds because of the fact that $p(z,g\cdot e) = p(z\cdot g,e)$ for $e\in E,g\in G$ and $z\in F$.
    On the other hand we see that the last term in the above equation is nothing else but $t (\partial (x\otimes \alpha))$ and consequently, $t$ is a chain map.

    The maps $g_x\colon \overline{W}^u(x)\to M$ induce a CW decomposition of $M$, see \cite[Section 4.9]{audin2014morse}.
    The skeletal filtration  of $M$ is given by
    $$   \mathrm{Sk}_k(M) = \bigcup_{\substack{x \in \mathrm{Crit}(f)\\ |x|\leq k}} g_x (\overline{W}^u(x)) .      $$
    Consequently, there is a filtration of the total space of the associated bundle $\pi\colon X\to M$ with $X_k  := \pi^{-1}(\mathrm{Sk}_k(M))$.
    We shall consider the Leray-Serre spectral sequence $\mathcal{E}_{p,q}$ associated to the filtration $\mathrm{C}_{\bullet}(X_k)\subseteq \mathrm{C}_{\bullet}(X)$ as well as the canonical spectral sequence $E_{p,q}$ associated to the DG Morse complex, see Section \ref{subsec_prelim_morse_DG}.
    We claim that $t$ preserves filtrations and that it induces an isomorphism on the first page of the spectral sequence.
    
    Write $\pi\colon X\to M$ for the bundle map of the associated bundle $X = F\times_G E\to M$.
   	By construction we have the identity $\pi\circ p =  q\circ \mathrm{pr}_2$ as maps $ F\times E\to M$.
    Thus we have
    $$   \pi_*(t(\alpha\otimes x)) = \pi_* (p_*(\alpha\times m_x)) = (q\circ \mathrm{pr}_2)_*(\alpha\times m_x) .   $$
    Hence, the chain $\alpha\otimes x\in \mathrm{C}_{\bullet}(F) \otimes \big(\bigoplus_{|y|\leq |x|} R\langle y\rangle\big)$ is mapped to $\mathrm{C}_{\bullet}(X_{|x|})$ and $t$ is therefore filtration-preserving.
    It is well-known that the spectral sequence $\mathcal{E}_{p,q}$ satisfies
    $$    \mathcal{E}^1_{p,q} = \mathrm{H}_{p+q}(X_p,X_{p-1}) \cong \mathrm{H}_q(F)\otimes \bigoplus_{\substack{x \in \mathrm{Crit}(f)\\ |x| = p}} \mathrm{H}_p(\overline{W}^u(x),\partial\overline{W}^u(x)) .  $$
    Explicitly, this isomorphism is induced by
    $$   \chi_x \colon F\times \overline{W}^u(x)  \to X_p   , \quad \chi_x(f,a) = [f,\phi_x(a)] = p(f,\phi_x(a))    $$
    for $f\in F$, $a\in \overline{W}^u(x)$.
    We obtain the following commutative diagram
    $$
        \begin{tikzcd}
            E^1_{p,q}= \mathrm{H}_q(F)\otimes R\langle \mathrm{Crit}_p(f)\rangle \arrow[]{r}{t_*} \arrow[]{d}{\zeta} & [3em] \mathrm{H}_{p+q}(X_p,X_{p-1}) = \mathcal{E}_{p,q}^1 \\
            \bigoplus_{\substack{x \in \mathrm{Crit}(f)\\ |x| = p}} \mathrm{H}_q(F)\otimes \mathrm{H}_p(\overline{W}^u(x),\partial\overline{W}^u(x)) \arrow[swap]{ru}{\oplus(\chi_x)_*,\,\,\cong} & 
        \end{tikzcd}
    $$
    where $\zeta$ is the map $\zeta([\alpha]\otimes x) \mapsto [\alpha]\otimes [s_x]$.
    The map $\zeta$ is an isomorphism and by the commutativity of the diagram we conclude that $t$ is an isomorphism as well.
    Hence, the chain map $t$ induces an isomorphism of the first page of the respective spectral sequences and thus an isomorphism of spectral sequences.
    Hence, $t$ induces an isomorphism in homology by a well-known statement in homological algebra, see \cite[Proposition VII.2.6]{brown1982cohomology}.
    This completes the proof.   
\end{proof}

\begin{example}
    Let $G$ be a Lie group and $H \subseteq G$ a closed subgroup
    The canonical projection $G\to H\backslash G = M$ is an $H$-principal fiber bundle and by Example \ref{example_free_loop_space} we can write the free loop space as the associated bundle $\Omega M\times_{H} G$.
    Let $\Phi\colon \mathrm{mor}(\mathcal{M}_f)\to H$ be a transport function.
    In case that all compatible sections exist then by Theorem \ref{theorem_assoc_bundle} we see that the homology of the DG Morse complex $\mathrm{C}_{\bullet}(M;(\Phi,\mathrm{C}_{\bullet}(\Omega M)))$ yields the homology of the free loop space $\Lambda M$.
    Since $H$ is a Lie group we shall see in the next section that compatible sections do indeed exist in this situation.
\end{example}

For later purposes we also want to establish a relative version of the above Theorem.
\begin{cor}
    Let $(M,f,X,o,\{s_{x,y}\})$ be enhanced Morse data and let $q\colon E\to M$ be a $G$-principal bundle.
    Let $\Phi\colon \mathrm{mor}(\mathcal{M}_f)\to G$ be an induced transport function and assume that all compatible sections exist.
    Let $F_1\hookrightarrow F_2$ be an embedding of $G$-spaces.
    Then the homology of the DG Morse complex $\mathrm{C}_{\bullet}(M;(\Phi,\mathrm{C}_{\bullet}(F_2,F_1)))$ is isomorphic to the relative homology of the pair $(F_2\times_G E, F_1\times_G E)$.
\end{cor}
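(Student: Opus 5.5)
The plan is to run the proof of Theorem \ref{theorem_assoc_bundle} verbatim with pairs of spaces in place of spaces. We use the compatible sections $\phi_x\colon \overline{W}^u(x)\to E$ and a compatible representing chain system $\{s_x\}$, and form $m_x = (\phi_x)_*(s_x)\in \mathrm{C}_{|x|}(E)$ exactly as before. The embedding $F_1\hookrightarrow F_2$ is $G$-equivariant, so $\mathrm{C}_{\bullet}(F_1)\subseteq \mathrm{C}_{\bullet}(F_2)$ is a DG $\mathrm{C}_{\bullet}(G)$-submodule. The quotient $\mathrm{C}_{\bullet}(F_2,F_1)$ is therefore a DG module. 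Since the DG Morse complex is $\mathcal{F}_{\bullet}\otimes R\langle\mathrm{Crit}(f)\rangle$ with a differential that is natural in the module, we obtain a short exact sequence of chain complexes
$$0\to \mathrm{C}_{\bullet}(M;(\Phi,\mathrm{C}_{\bullet}(F_1)))\to \mathrm{C}_{\bullet}(M;(\Phi,\mathrm{C}_{\bullet}(F_2)))\to \mathrm{C}_{\bullet}(M;(\Phi,\mathrm{C}_{\bullet}(F_2,F_1)))\to 0 .$$
This sequence is exact because tensoring with the free module $R\langle \mathrm{Crit}(f)\rangle$ is exact.

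Next we take the chain maps $t_i(\alpha\otimes x) = (p_i)_*(\alpha\times m_x)$ for $i=1,2$, where $p_i\colon F_i\times E\to F_i\times_G E$. The inclusion $F_1\times_G E\hookrightarrow F_2\times_G E$ is induced by the embedding, so $t_2$ restricted to the subcomplex equals $t_1$ followed by the inclusion. Hence $t_2$ descends to a chain map
$$\bar t\colon \mathrm{C}_{\bullet}(M;(\Phi,\mathrm{C}_{\bullet}(F_2,F_1)))\to \mathrm{C}_{\bullet}(F_2\times_G E, F_1\times_G E).$$
This yields a map of short exact sequences into $0\to \mathrm{C}_{\bullet}(F_1\times_G E)\to \mathrm{C}_{\bullet}(F_2\times_G E)\to \mathrm{C}_{\bullet}(F_2\times_G E,F_1\times_G E)\to 0$.

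Theorem \ref{theorem_assoc_bundle} says that $t_1$ and $t_2$ are quasi-isomorphisms. The induced map of long exact homology sequences commutes, including the connecting homomorphisms, by naturality of the snake lemma. The five lemma then shows that $\bar t$ induces an isomorphism in homology.

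The one delicate point is whether $F_1\times_G E\to F_2\times_G E$ is an embedding, so that the relative homology is that of a genuine subspace. Locally over $V_z$ this map is $F_1\times V_z\to F_2\times V_z$, so it is an embedding. If one wants to avoid this point, a second route works directly: filter the relative complex by $X_k=\pi^{-1}(\mathrm{Sk}_k M)$ and identify $\mathcal{E}^1_{p,q}\cong \mathrm{H}_q(F_2,F_1)\otimes\bigoplus_{|x|=p}\mathrm{H}_p(\overline{W}^u(x),\partial\overline{W}^u(x))$ through the relative versions of the maps $\chi_x$, as in the proof of the theorem. I expect the five-lemma argument to be the cleaner route.
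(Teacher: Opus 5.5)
Your proposal is correct and is essentially the paper's argument. The paper just says the corollary follows from the proof of Theorem~\ref{theorem_assoc_bundle} together with the five lemma, which is exactly your morphism of short exact sequences given by $t_1$, $t_2$ and the descended map $\bar t$. Your extra checks fill in details the paper leaves implicit: that $\mathrm{C}_{\bullet}(F_1)\subseteq\mathrm{C}_{\bullet}(F_2)$ is a $\mathrm{C}_{\bullet}(G)$-submodule, that exactness survives tensoring with the free module $R\langle\mathrm{Crit}(f)\rangle$, and that $F_1\times_G E\to F_2\times_G E$ is an embedding by local triviality.
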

\begin{proof}
    This follows from the proof of Theorem \ref{theorem_assoc_bundle} and the $5$-Lemma.
\end{proof}

\section{Transport functions for smooth principal bundles via parallel transport}\label{sec_comparison}

In this section we consider smooth $G$-principal bundles $E\to M$ and their associated bundles $F\times_G E\to M$.
Note that we do not require the fiber $F$ to be a manifold.
We shall see that the choice of a principal connection $\omega$ on $E$ and the induced holonomy yield a transport function  $\Phi^{\omega}\colon \mathrm{mor}(\mathcal{M}_f)\to G$.
The obvious question is how the transport function $\Phi^{\omega}$ and the transport functions that arise from local trivializations in Section \ref{sec_transport_function} are related.
After addressing this question we shall study the relation between the DG Morse complex with coefficients in the $\mathrm{C}_{\bullet}(G)$ module $\mathrm{C}_{\bullet}(F)$ induced by a $G$-action on $F$ and a DG Morse complex in the style of the DG Morse complexes in \cite{barraud2025morse}.
One technical complication which arises is that in \cite{barraud2025morse} the authors use a homotopy inverse to the map $M\to M/\mathcal{Y}$ which collapses a tree $\mathcal{Y}$.
In general it is not clear whether the collapsing map and its homotopy inverse preserve piecewise smooth paths.
Therefore we consider a construction using the DG category $\{\mathrm{C}_{\bullet}(P'_{x\to y}M)\}_{x,y\in\mathrm{Crit}(f)}$ with $P'_{x\to y}M$ the space of piecewise smooth Moore paths instead of the DGA $\mathrm{C}_{\bullet}(\Omega M)$.

\medskip
Let $q\colon E\to M$ be a smooth $G$-principal bundle and let $\omega$ be a principal connection on $E$, i.e. $\omega$ is a $G$-equivariant one-form on $E$ with values in the Lie algebra $\mathfrak{g}$ of $G$ or equivalently a $G$-invariant horizontal distribution in $TE$.
We refer to \cite[Section II]{kobayashi:1963} for an introduction to principal connections and the induced constructions like parallel transport and holonomy.
Recall that we are considering principal bundles with left actions in contrast to most of the literature which considers right actions.
Let ${P}'M$ be the space of piecewise smooth Moore paths in $M$, i.e.
$$  {P}'M = \{(\gamma,a)\in PM\times [0,\infty)\,|\, \gamma(t) = \gamma(a) \quad \text{for}\,\,t\geq a , \,\, \gamma \text{ piecewise smooth }\}.     $$
By abuse of notation we shall often just write $\gamma$ for an element $(\gamma,a)\in P'M $.
We further introduce the spaces
$$    {P}'_{x \to y}M = \{ (\gamma,a)\in {P}'M \,|\,   \gamma(0)= x, \,\gamma(a) = y\}     $$
for $x,y\in M$ as well as the based loop space $   \Omega_{x_0}'M = {P}_{x_0\to x_0}M     $ for a basepoint $x_0\in M$.
Note that the collection $\{P'_{x\to y}M\}_{x,y\in M}$ yields a topological category.
The composition is given by concatenation which is strictly associative since we are working with Moore paths.

We further recall that the principal connection $\omega$ induces a notion of \emph{horizontal lift}, i.e. there is a map
$$     {E}\tensor[{_{q}}]{{\times}}{_{\mathrm{ev}_0}}  {P}'M\to P' E     , \quad (u,(\gamma,a))\mapsto  (\widetilde{\gamma}_u,a) $$
such that $\widetilde{\gamma}_u(0) = u$ and $q\circ \widetilde{\gamma}_u = \gamma$.
Note that the horizontal lift depends on the connection.
As usual, a horizontal lift induces a \emph{parallel transport}
$$   \mathcal{P}^{\omega} \colon {E}\tensor[{_{q}}]{{\times}}{_{\mathrm{ev}_0}}  {P}'M\to E , \quad     (u,(\gamma,a))\mapsto  \mathcal{P}^{\omega}_{\gamma}(u)  :=  \widetilde{\gamma}_u(a)  . $$
Note that if $(\gamma,a)\in P'M$ then the map $\mathcal{P}_{\gamma}^{\omega}\colon E_{\gamma(0)}\to E_{\gamma(a)}$ is a $G$-equivariant diffeomorphism with inverse \begin{equation}\label{eq_parallel_transport_inverse}
    (\mathcal{P}_{\gamma}^{\omega})^{-1} = \mathcal{P}_{\overline{\gamma}}^{\omega}
\end{equation}  where $(\overline{\gamma},a) \in P'M$ is the Moore path $\overline{\gamma}(t) = \gamma(a-t)$ for $t\in [0,a]$.

We now recall the \emph{ratio map} of a principal bundle.
Let $q\colon E\to M$ be a $G$-principal fiber bundle and define a map $r\colon E\times_M E\to G$ as follows.
For $p_1,p_2\in E$ with $q(p_1) = q(p_2)$, let $g\in G$ be the unique element such that $g\cdot p_1 = p_2$.
We set $r(p_1,p_2) = g$.
The ratio map $r$ is a smooth map which can be seen by going to local trivializations.
One further checks that the ratio map is invariant under parallel transport, i.e. 
    \begin{equation}\label{eq_ratio_map_invariant_parallel}
              r(u,v) = r(\mathcal{P}^{\omega}_{\gamma}(u), \mathcal{P}^{\omega}_{\gamma}(v))    
        \end{equation} 
        for $(u,v)\in E\times_M E$ and a path $\gamma\in P'M$ with $\gamma(0) = q(u) = q(v)$.
    This follows from the $G$-equivariance of parallel transport.

The parallel transport induces the \emph{holonomy} of the connection $\omega$.
Fix a basepoint $x_0\in M$ as well as a point $e_0\in E_{x_0}$ in the fiber over $x_0$.
Then the map 
$$  \mathrm{hol}_{e_0}^{\omega}\colon \Omega' M\to G, \quad \gamma\mapsto r(e_0, \mathcal{P}_{\gamma}^{\omega}(e_0))    $$
is the \emph{holonomy} induced by $\omega$, i.e. $\mathrm{hol}_{e_0}^{\omega}(\gamma) = g$ for the unique group element $g\in G$ such that $\mathcal{P}_{\gamma}^{\omega}(e_0) = g\cdot e_0$.
We note that the holonomy satisfies 
\begin{equation}\label{equation_transformation_formula_holonomy}
        \mathrm{hol}_{k\cdot e_0}^{\omega}(\gamma) = k \cdot \mathrm{hol}_{e_0}^{\omega}(\gamma)\cdot  k^{-1}       
\end{equation}
for $\gamma\in \Omega'M$ and $k\in G$ as well as
\begin{equation}\label{eq_holonomy_is_multiplicative}
    \mathrm{hol}_{e_0}^{\omega} (\gamma\circ \sigma) = \mathrm{hol}_{e_0}^{\omega}(\gamma)\cdot \mathrm{hol}_{e_0}^{\omega}(\sigma)
\end{equation}
for $\gamma,\sigma\in \Omega'M$.

Fix enhanced Morse data $(M,f,X,o,\{s_{x,y}\})$ on $M$.
We choose and fix a tree $\mathcal{Y}$ in $M$ which connects the basepoint to each critical point with a piecewise smooth path.
We denote the paths given by the tree by $\gamma_{x_0\to x}$ for $x\in\mathrm{Crit}(f)$ and we shall write $\gamma_{x\to y_0}$ for the inverse paths, i.e. $\gamma_{y\to x_0} = \overline{\gamma}_{x_0\to y}$.
Define maps $Q_{x,y}\colon \overline{\mathcal{L}}(x,y)\to \Omega'M$ by 
$$   Q_{x,y} (u) =  \gamma_{x_0\to x}\circ \overline{\gamma_u} \circ \gamma_{y\to x_0}     $$
with $\gamma_u\colon [f(y),f(x)]\to M$ the parametrization of the flow line $u$ as in Section \ref{subsec_morse_flow}.
Note that these are not the same maps as the evaluation maps $q_{x,y}$ which are used in \cite{barraud2025morse}. The maps $Q_{x,y}$ do not strictly preserve compositions, i.e. we do not have that $Q_{x,y}(u_1\circ u_2) = Q(x,y)(u_1)\circ Q_{x,y}(u_2)$ in general.
However, we can use the maps $Q_{x,y}$ to obtain a transport function.
\begin{lemma}
    The maps $\Phi_{x,y}^{\omega}\colon \overline{\mathcal{L}}(x,y)\to G$ defined by $\Phi_{x,y}^{\omega}(u) = \mathrm{hol}_{e_0}^{\omega}(Q_{x,y}(u))$ yield a transport function.
\end{lemma}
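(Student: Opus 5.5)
To prove the lemma we need two things: continuity of each $\Phi^{\omega}_{x,y}$, and the multiplicativity condition \eqref{eq_transport_function}. The idea is that $Q_{x,y}$ fails to be strictly multiplicative only because of a back-and-forth along a tree path. Holonomy does not see such a back-and-forth, because parallel transport along $\gamma$ followed by $\overline{\gamma}$ is the identity by \eqref{eq_parallel_transport_inverse}.

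\emph{Multiplicativity.} Let $u_1\in\overline{\mathcal{L}}(x,z)$ and $u_2\in\overline{\mathcal{L}}(z,y)$. By the concatenation formula for $\gamma_{u_1\circ u_2}$ from Section \ref{subsec_morse_flow}, reversing orientation gives the Moore path identity $\overline{\gamma_{u_1\circ u_2}} = \overline{\gamma_{u_1}}\circ\overline{\gamma_{u_2}}$. Hence
$$ Q_{x,y}(u_1\circ u_2) = \gamma_{x_0\to x}\circ\overline{\gamma_{u_1}}\circ\overline{\gamma_{u_2}}\circ\gamma_{y\to x_0}, $$
while
$$ Q_{x,z}(u_1)\circ Q_{z,y}(u_2) = \gamma_{x_0\to x}\circ\overline{\gamma_{u_1}}\circ\gamma_{z\to x_0}\circ\gamma_{x_0\to z}\circ\overline{\gamma_{u_2}}\circ\gamma_{y\to x_0}. $$
Parallel transport is functorial under concatenation: $\mathcal{P}^\omega_{\gamma\circ\sigma}=\mathcal{P}^\omega_\sigma\circ\mathcal{P}^\omega_\gamma$, since horizontal lifts concatenate. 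Combined with \eqref{eq_parallel_transport_inverse}, the middle factor $\gamma_{z\to x_0}\circ\gamma_{x_0\to z}$ acts as the identity on $E_z$. So both loops have the same parallel transport $E_{x_0}\to E_{x_0}$, and therefore the same holonomy. Applying \eqref{eq_holonomy_is_multiplicative} to the right-hand loop then gives
$$\Phi^\omega_{x,y}(u_1\circ u_2)=\Phi^\omega_{x,z}(u_1)\cdot\Phi^\omega_{z,y}(u_2).$$
The orientation and left/right conventions need care here, but in each case the order of the product can be read off from \eqref{eq_holonomy_is_multiplicative} applied to the concatenation $Q_{x,z}(u_1)\circ Q_{z,y}(u_2)$. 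For the trivial flow line $u_x$ we get $Q_{x,x}(u_x)=\gamma_{x_0\to x}\circ\gamma_{x\to x_0}$, whose holonomy is $e$, as it should be.

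\emph{Continuity.} This is the main obstacle. The issue is that $u\mapsto\gamma_u$ varies continuously in $C^0$, whereas parallel transport is a priori only continuous in a $C^1$-type topology on piecewise smooth paths. Near broken flow lines, $\gamma_u$ develops corners and its derivative blows up in a neighbourhood of the intermediate critical points. I would handle this by writing the holonomy as the solution of the transport ODE in local trivializations: along $\gamma$, the lift $g(t)$ satisfies $\dot g = -A(\gamma(t))[\dot\gamma(t)]\,g$. Because $\gamma_u$ is parametrized by the value of $f$, the equation can be reparametrized in the $f$-level $t$ itself. Near a critical point $c$, in Morse coordinates, a flow line passing close to $c$ has total length uniformly bounded in terms of its endpoints on the levels $f(c)\pm\delta$. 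Moreover, such flow lines converge in $W^{1,1}$ to the broken path through $c$, which has finite length.

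Transport ODE solutions depend continuously on the path in the $W^{1,1}$ (bounded-length, $C^0$-close) topology. This follows from a Grönwall estimate, since the connection form $A$ is smooth and hence bounded on $M$. So $u\mapsto \mathrm{hol}^\omega_{e_0}(Q_{x,y}(u))$ is continuous on $\overline{\mathcal{L}}(x,y)$. The tree paths are fixed and contribute only constant factors, so they do not affect continuity. If a uniform length bound near critical points is awkward to obtain, a fallback is to assume $X$ is the gradient in Morse charts near the critical points, where flow lines are explicit and the $W^{1,1}$ convergence can be checked directly.
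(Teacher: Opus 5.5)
Your multiplicativity argument is the paper's proof. You insert the back-and-forth $\gamma_{z\to x_0}\circ\gamma_{x_0\to z}$, which has trivial parallel transport by \eqref{eq_parallel_transport_inverse}. This turns $Q_{x,y}(u_1\circ u_2)$ into $Q_{x,z}(u_1)\circ Q_{z,y}(u_2)$, and then \eqref{eq_holonomy_is_multiplicative} applies. Your ordering $\overline{\gamma_{u_1\circ u_2}}=\overline{\gamma_{u_1}}\circ\overline{\gamma_{u_2}}$ is the correct one. The paper's first display swaps the two factors, and its second display silently corrects this.

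The real difference is that the paper's proof stops there and never checks that $\Phi^{\omega}_{x,y}$ is continuous. Continuity is part of the definition of a transport function, and it is not automatic. Holonomy is not continuous under $C^0$-convergence of paths, and $u\mapsto Q_{x,y}(u)$ is only $C^0$-continuous, with speeds blowing up near broken limits. Your $W^{1,1}$ argument supplies this missing step and is sound.

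What you call a fallback is not an extra hypothesis. That $X$ is the standard negative gradient in Morse charts is part of the usual definition of a pseudo-gradient, as in Audin--Damian, which the paper follows. In that model the $f$-parametrized paths have speed bounded by $C\,|t-f(c)|^{-1/2}$ near each critical level $f(c)$, uniformly in $u$. So dominated convergence gives $L^1$-convergence of the derivatives there. Away from critical levels the paths converge in $C^1$ as integral curves of $X/df(X)$. A Grönwall estimate in finitely many local trivializations then finishes the argument.

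Two small remarks. First, your parenthetical gloss ``bounded-length, $C^0$-close'' is weaker than $W^{1,1}$-convergence and is not what Grönwall uses. You do establish the stronger property, so this is only a matter of wording. Second, the same framework also justifies applying $\mathrm{hol}^{\omega}_{e_0}$ to $Q_{x,y}(u)$ at all. In the $f$-parametrization these loops are absolutely continuous but not literally piecewise smooth at the critical levels.
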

\begin{proof}
    Let $u = u_1\circ u_2\in \overline{\mathcal{L}}(x,y)$ be a broken flow line with $u_1\in \overline{\mathcal{L}}(x,z)$ and $u_2\in \overline{\mathcal{L}}(z,y)$.
    We have that
    $$    Q_{x,y}(u_1 \circ u_2) = \gamma_{x_0\to x}\circ \overline{\gamma}_{u_1\circ u_2} \circ \gamma_{y\to x_0}  =  \gamma_{x_0\to x}\circ \overline{\gamma}_{u_2}\circ \overline{\gamma}_{u_1} \circ \gamma_{y\to x_0}.    $$
    Note that parallel along the path $\gamma_{z\to x_0}\circ \gamma_{x_0\to z}$ is the trivial map, see equation \eqref{eq_parallel_transport_inverse}.
    Therefore parallel transport along the path $Q_{x,y}(u_1\circ u_2)$ agrees with parallel transport along
    $$   \gamma_{x_0\to x}\circ \overline{\gamma}_{u_1}  \circ \gamma_{z\to x_0}\circ \gamma_{x_0\to z} \circ \overline{\gamma}_{u_2}\circ \gamma_{y\to x_0}   = Q_{x,z}(u_1)\circ Q_{z,y}(u_2)   .  $$
    By equation \eqref{eq_holonomy_is_multiplicative} we obtain
    $$  \Phi_{x,y}^{\omega}(u_1\circ u_2) =  \mathrm{hol}_{e_0}^{\omega}(Q_{x,y}(u_1\circ u_2)) = \mathrm{hol}_{e_0}^{\omega}(Q_{x,z}(u_1)) \cdot \mathrm{hol}_{e_0}^{\omega}(Q_{z,y}(u_2)) = \Phi_{x,z}^{\omega}(u_1) \cdot \Phi_{z,y}^{\omega}(u_2) .   $$
    This shows that the maps $\{\Phi_{x,y}^{\omega}\}$ are a transport function.
\end{proof}

We denote the transport function from the above Lemma by $\Phi^{\omega}\colon \mathrm{mor}(\mathcal{M}_f)\to G$.
We shall now see that the $G$-principal bundle induced by $\Phi^{\omega}$ is isomorphic to the original bundle $q\colon E\to M$ that we started with.

\begin{theorem}\label{theorem_parallel_transport_induces_og_bundle}
    Let $(M,f,X)$ be Morse data and let $q\colon E\to M$ be a smooth $G$-principal bundle with principal connection $\omega$.
    Let $\mathcal{Y}$ be a piecewise smooth tree rooted at the basepoint $x_0\in M$.
    The $G$-principal bundle induced by the transport function $\Phi^{\omega}\colon \mathrm{mor}(\mathcal{M}_f)\to G$ is isomorphic as a topological $G$-principal bundle to the bundle $q\colon E\to M$.
\end{theorem}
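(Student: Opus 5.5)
The plan is to show that $\Phi^{\omega}$ is c-equivalent, up to isotopy, to a transport function $\Phi_q$ arising from local trivializations as in Section \ref{subsec_3-2}. Theorem \ref{theorem_main_result_transport-functions} says that $\Phi_q$ induces a bundle isomorphic to $q$, and Theorem \ref{theorem_isotopic_tr_functions_yield_isomorphic_bundles}(1) together with the invariance of the cocycle class under c-equivalence then gives the claim. I will choose the trivializations $\psi_z\colon E|_{V_z}\to G\times V_z$ adapted to the connection, so that the transition functions can be computed by parallel transport.

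First, I will use a contraction $H_z\colon V_z\times[0,1]\to V_z$ onto $z$, which exists by Lemma \ref{lemma_contractibility_of_Rz}. For $p\in V_z$, let $c_p$ denote the path $t\mapsto H_z(p,1-t)$ from $z$ to $p$. After a small homotopy the contraction may be taken to produce piecewise smooth paths depending continuously on $p$; I will also check that only continuity of $\mathcal{P}^\omega$ on continuous families of paths is needed, which is plausible because the bundle is smooth. Set $e_z:=\mathcal{P}^{\omega}_{\gamma_{x_0\to z}}(e_0)$ and define the section $\sigma_z(p):=\mathcal{P}^{\omega}_{c_p}(e_z)$ on $V_z$; this gives the trivialization $\psi_z^{-1}(g,p)=g\cdot\sigma_z(p)$. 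By $G$-equivariance of parallel transport and the ratio map identity \eqref{eq_ratio_map_invariant_parallel}, the transition function on $V_a\cap V_b$ is the ratio $r(\sigma_b(p),\sigma_a(p))$ (or its inverse, according to the left-action convention). This equals the holonomy based at $e_0$ of the loop $\gamma_{x_0\to a}\circ c_p\circ\overline{c^b_p}\circ\gamma_{b\to x_0}$, after conjugating into the form used in $\mathrm{hol}^\omega_{e_0}$ via \eqref{equation_transformation_formula_holonomy}.

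Second, I will compute the transport function $\Phi_q$ produced from these trivializations by Lemma \ref{lemma_trivializations_induce_pre-transport} and Proposition \ref{prop_pre-transport_induces_transport}. For $u\in\overline{\mathcal{L}}(x,y)$, $\Phi_q(u)$ is a product of transition functions evaluated at the points $p_1,\dots,p_m$ lying on the flow line $u$. Multiplicativity \eqref{eq_holonomy_is_multiplicative} makes this the holonomy of a single concatenated loop: from $x_0$ to $x$, then along the contraction paths to $p_1$, back to the intermediate critical points $z_i$, out to $p_{i+1}$, and so on, ending at $y$ and returning to $x_0$. Compare this loop with $Q_{x,y}(u)$, which runs from $x_0$ to $x$, down the flow line $u$ through $p_1,\dots,p_m$ to $y$, and back. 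The two differ only by inserting, for each segment of $u$ between $p_i$ and $p_{i+1}$, the contraction path detour through $z_i$. I will therefore build a homotopy of the broken paths, continuous in $u$ and compatible with breaking, that slides each contraction path in $V_{z_i}$ onto the corresponding piece of $u$ through the set $P^{|z_i|}_{x,y;z_i}$. The explicit contractions of Lemma \ref{prop_contractibility_pxyj} essentially flow along the flow line toward $z_i$, so I expect them to have exactly this property.

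Composing with holonomy then gives, for each $\tau\in[0,1]$, maps that still satisfy \eqref{eq_transport_function}: the homotopy is defined segment-wise, and each inserted back-and-forth path $\gamma_{z\to x_0}\circ\gamma_{x_0\to z}$ has trivial holonomy by \eqref{eq_parallel_transport_inverse}. This yields an isotopy between $\Phi_q$ and $\Phi^{\omega}$, possibly up to a c-equivalence by constants $r_x\in G$ that accounts for the choice of $e_z$ versus $e_0$. Since c-equivalence only changes the cocycle by the constant maps $r_z$, the induced bundles are isomorphic and the theorem follows.

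I expect the main obstacle to be the homotopy in the previous paragraph. It must be continuous on all of $\overline{\mathcal{L}}(x,y)$, including where the maximal index sequence jumps. It must be compatible with the recursive, gluing-based definition of $\Phi_q$. And it must keep all paths piecewise smooth, or at least continuous, so that parallel transport and hence holonomy stays continuous. If making this fully explicit proves too messy, the fallback is to avoid $\Phi_q$ altogether: define a cocycle directly from $\Phi^{\omega}$ via Lemma \ref{lemma_transport_induces_ept}, and show that it is cohomologous to the parallel-transport cocycle above by exhibiting the maps $r_z(p)=r(\sigma_z(p),\cdot)$ explicitly on each $V_z$.
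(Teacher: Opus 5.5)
Your proposal follows the paper's proof essentially step by step. It uses the same connection-adapted trivializations $\psi_z^{-1}(g,p)=g\cdot\mathcal{P}^{\omega}_{\gamma_{x_0\to z}\circ\Gamma_z(p)}(e_0)$ built from a smoothed contraction of $V_z$, the same holonomy computation of the transition functions and of $\Phi_\psi(u)$, an isotopy of transport functions from $\Phi_\psi$ to $\Phi^{\omega}$, and the same conclusion via Theorems \ref{theorem_main_result_transport-functions} and \ref{theorem_isotopic_tr_functions_yield_isomorphic_bundles}; with your choice $e_z=\mathcal{P}^{\omega}_{\gamma_{x_0\to z}}(e_0)$ no c-equivalence correction is needed. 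For the step you flag as the main obstacle, note that the contraction paths do not run along $u$ in general (only on $V_{c_i}\cap(W^u(c_i)\cup W^s(c_i))$, which the paper secures by a Whitney approximation relative to that set), so the paper does not slide them onto $u$; instead it slides the two points $p_i,p_{i+1}$ along $u$ to their midpoint $\bar p$, staying in $V_{c_i}$ by construction of the separation functions, so the detour becomes $\overline{\Gamma}_{c_i}(\bar p)\circ\Gamma_{c_i}(\bar p)$, whose parallel transport is trivial.
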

\begin{proof}
    Recall the open cover $\{V_z\}_{z\in \mathrm{Crit}(f)}$ of $M$ from Section \ref{sec_transport_function}.
    Using parallel transport with respect to $\omega$ we shall construct local trivializations $\psi_z\colon E|_{V_z}\to G\times V_z$ which induce a transport function $\Phi_{\psi}\colon \mathrm{mor}(\mathcal{M}_f)\to G$ by Lemma \ref{lemma_trivializations_induce_pre-transport} and Proposition \ref{prop_pre-transport_induces_transport}.
    We will show that the transport function $\Phi_{\psi}$ is isotopic as a transport function to the transport function $\Phi^{\omega}$.
    By Theorem \ref{theorem_isotopic_tr_functions_yield_isomorphic_bundles} this implies that the $G$-principal bundle induced by $\Phi^{\omega}$ is isomorphic to $E$.

    Let $H'\colon V_z\times I \to V_z$ be the strong deformation retraction of $V_z$ to $z$ which we constructed in Section \ref{sec_transport_function}, i.e. $H'(p,0) =p$ and $H'(p,1) =z$ for all $p\in V_z$.
    On $A= V_z \cap (W^u(z) \cup W^s(z))$ the homotopy $H'$ moves the points along the flow lines on which they lie until they reach $z$.
    Moreover, $H'|_{V_z\times \{0\}}$ and $H'|_{V_z\times \{1\}}$ are smooth maps since $H'(p,0) =p$ and $H'(p,1) = z$ for $p\in V_z$.
    Thus $H'$ is smooth on the subset $A\times [0,1]\cup V_z\times \{0,1\}$ which is a closed subset in $V_z\times [0,1]$.
    By the Whitney approximation theorem there is hence a smooth map $H\colon V_z\times I\to V_z$ which agrees with $H'$ on $A\times [0,1]\cup V_z\times \{0,1\}$ and which is homotopic to $H'$.
    Hence, $H$ yields a smooth strong deformation retraction of $V_z$.
    Let $\Gamma_z\colon V_z\to {P}'M$ be the map $\Gamma_z(p)(t) = H(p,1-t)$, i.e. the path $\Gamma_z(p)\in {P}'M$ is a smooth path from $z$ to $p\in V_z$.
    Choose a point $e_0\in E_{x_0}$ in the fiber over the basepoint $x_0\in M$ and define $\psi_z\colon E|_{V_z}\to G\times V_z$ by
    $$   \psi_z (u) =  \big(     r(\mathcal{P}^{\omega}_{\gamma_{x_0\to z}\circ \Gamma_z(q(u))} (e_0), u), q(u)  \big) \quad \text{for}\,\,\, u\in E|_{V_z}      $$
    with $r\colon E\times_M E\to G$ the ratio map.
    This is a local trivialization of $q\colon E\to M$.
    Let $z,a\in\mathrm{Crit}(f)$.
    We want to compute the change of local trivializations $\psi_z\circ \psi_a^{-1}\colon V_z\cap V_a\times G\to V_z\cap V_a\times G$.
    One checks that 
    $$   \psi_a^{-1}(e,p) = \mathcal{P}^{\omega}_{\gamma_{x_0\to a}\circ \Gamma_a(p)}(e_0)     $$
    for $p\in V_a$ and $e\in G$ the neutral element.
    We compute for $p\in V_a\cap V_z$ that
    \begin{eqnarray*}
       \psi_z \circ \psi_a^{-1} (e,p)
        &=& \psi_z\big( \mathcal{P}^{\omega}_{\gamma_{x_0\to a}\circ \Gamma_a(p)} (e_0)  \big) \\
        &=& \big(   r(  \mathcal{P}^{\omega}_{\gamma_{x_0\to z}\circ \Gamma_z(p)} (e_0) ,\mathcal{P}^{\omega}_{\gamma_{x_0\to a}\circ \Gamma_a(p)} (e_0)), p  \big)
 .    \end{eqnarray*}
Hence, the transition function is given by
\begin{eqnarray*}
    \psi_{z,a}(p) &=&  r(  \mathcal{P}^{\omega}_{\gamma_{x_0\to z}\circ \Gamma_z(p)} (e_0) ,\mathcal{P}^{\omega}_{\gamma_{x_0\to a}\circ \Gamma_a(p)} (e_0)) \\
    &=& r \big(     e_0, (\mathcal{P}^{\omega}_{\gamma_{x_0\to z}\circ \Gamma_z(p)})^{-1} (\mathcal{P}^{\omega}_{\gamma_{x_0\to a}\circ \Gamma_a(p)} (e_0))    \big) \\
    &=& r\big( e_0,   \mathcal{P}^{\omega}_{\gamma_{x_0\to a}\circ \Gamma_a(p)\circ \overline{\Gamma}_z(p)\circ \gamma_{z\to x_0}}(e_0)\big) \\
    &=& \mathrm{hol}^{\omega}_{e_0} (\gamma_{x_0\to a}\circ \Gamma_a(p)\circ \overline{\Gamma}_z(p)\circ \gamma_{z\to x_0}) .
\end{eqnarray*}
where we used the invariance of the ratio map with respect to parallel transport, see equation \eqref{eq_ratio_map_invariant_parallel}.
Let $\Phi_{\psi}\colon \mathrm{mor}(\mathcal{M}_f)\to G$ be the transport function induced by the local trivializations $\psi_z$.
The explicit construction is given in Lemma \ref{lemma_trivializations_induce_pre-transport} and Proposition \ref{prop_pre-transport_induces_transport}.
Let $u\in \overline{\mathcal{L}}(x,y)$ for critical points $x,y\in\mathrm{Crit}(f)$.
Assume that the maximal sequence of indices as in the proof of Proposition \ref{prop_pre-transport_induces_transport} is given by $|y| = i_0< i_1< \ldots < i_m = |x|$.
By construction we have
\begin{eqnarray*}
    \Phi_{\psi}(u) &=&    \psi_{c_1,x}(\pi(u,\rho_{x,y}^{i_{m-1}+1}(u))) \cdot \ldots \cdot \psi_{y,c_{m-1}}(\pi(u,\rho_{x,y}^{i_0+1}(u))) \\
    &=&    \mathrm{hol}_{e_0}^{\omega} \Big(  \gamma_{x_0\to x}\circ \Gamma_x(\pi(u,\rho_{x,y}^{i_{m-1}+1}(u))) \circ \overline{\Gamma}_{c_1}( \pi(u,\rho_{x,y}^{i_{m-1}+1}(u)))  \circ \\  &   & \gamma_{c_1\to x_0} \circ \gamma_{x_0\to c_1} \circ \Gamma_{c_1}(\pi(u,\rho_{x,y}^{i_{m-2}+1}(u))) \circ \overline{\Gamma}_{c_2}(\pi(u,\rho_{x,y}^{i_{m-2}+1}(u))) \circ \gamma_{c_2\to x_0} \circ \ldots \circ  \\   &  &  \gamma_{x_0\to c_{m-1}}\circ \Gamma_{c_{m-1}}(\pi(u,\rho_{x,y}^{i_0+1}(u))) \circ \overline{\Gamma}_{y} (\pi(u,\rho_{x,y}^{i_0+1}(u))) \circ \gamma_{y\to x_0}    \Big)  
\end{eqnarray*}
In the last expression the consecutive paths $\gamma_{c_i\to x_0}\circ \gamma_{x_0\to c_i}$ cancel out by equation \eqref{eq_parallel_transport_inverse}.
Recall the explicit parametrizations $\gamma_u\colon [f(y),f(x)]\to M$ of the closure of the flow line $u$ in Section \ref{subsec_morse_flow}.
Furthermore note that for $i\in \{0,\ldots, m-1\}$ the composition 
\begin{equation}\label{eq_gamma_which_we_want_to_homotope}
    \overline{\Gamma}_{c_i}( \pi(u,\rho_{x,y}^{i_{m-i}+1}(u)))  \circ  \Gamma_{c_i}(\pi(u,\rho_{x,y}^{i_{m-i-1}+1}(u)))
\end{equation} which appears in the expression for $\Phi_{\psi}(u)$ is a path from $\pi(u,\rho_{x,y}^{i_{m-i}+1}(u)) = \gamma_u(\rho_{x,y}^{i_{m-i}+1}(u))$ to $\pi(u,\rho_{x,y}^{i_{m-i-1}+1}(u)) = \gamma_u(\rho_{x,y}^{i_{m-i-1}+1}(u))$.
We want to homotope the path in equation \eqref{eq_gamma_which_we_want_to_homotope} to the path
\begin{eqnarray}\label{eq_composition_which_we_want}      \overline{\gamma_{u}|_{ [\tfrac{1}{2}(\rho_{x,y}^{i_{m-i-1}+1}(u) + \rho_{x,y}^{i_{m-i}+1}(u)),  \rho_{x,y}^{i_{m-i}+1}(u)]}} \circ \overline{\Gamma}_{c_i}(\pi (u, \tfrac{1}{2}(\rho_{x,y}^{i_{m-i-1}+1}(u) + \rho_{x,y}^{i_{m-i}+1}(u)))) \circ  \\ {\Gamma}_{c_i}(\pi (u, \tfrac{1}{2}(\rho_{x,y}^{i_{m-i-1}+1}(u) + \rho_{x,y}^{i_{m-i}+1}(u))))   \circ  \overline{\gamma_{u}|_{ [\rho_{x,y}^{i_{m-i-1}+1}(u),  \tfrac{1}{2}(\rho_{x,y}^{i_{m-i-1}+1}(u) + \rho_{x,y}^{i_{m-i}+1}(u))} ]} .\nonumber  \end{eqnarray}
See Figure \ref{fig_isotopy_transport} for a sketch.
Note that parallel transport along the middle two paths in the composition of paths in equation \eqref{eq_composition_which_we_want} cancels out so that we will be left with parallel transport along $\overline{\gamma_u|_{[\rho_{x,y}^{i_{m-i-1}+1}(u) , \rho_{x,y}^{i_{m-i}+1}(u)]}}$.
We write down an explicit homotopy.
We define the shorthand notation $\beta^i(u) = \tfrac{1}{2}(\rho_{x,y}^{i_{m-i-1}+1}(u) + \rho_{x,y}^{i_{m-i}+1}(u))\in \mathbb{R}$ and for $\tau\in [0,1]$ let $$\xi^i(u,\tau) = (1-\tau)\rho_{x,y}^{i_{m-i}+1}(u) + \tau \beta^i(u) \quad \text{and}\quad  \kappa^i(u,\tau) = (1-\tau)\rho_{x,y}^{i_{m-i-1}+1}(u) + \tau\beta^i(u) .$$
Note that for all $\tau\in [0,1]$ the points $\pi(u,\xi^i(u,\tau))$ and $\pi(u,\kappa^i(u,\tau))$ lie in $V_{c_i}$ by construction of the separation functions, see Section \ref{subsec_separation_functions}.
For $\tau\in [0,1]$ define the path
$$ h^i(u,\tau) = \overline{\gamma_u|_{ [\xi^i(u,\tau),\rho_{x,y}^{i_{m-i}+1}(u)]}    }  \circ \overline{\Gamma}_{c_i}\big(   \pi(u, \xi^i(u,\tau))  \big) \circ \Gamma_{c_i}(\zeta^i(u,\tau)) \circ \overline{\gamma_u|_{[\rho_{x,y}^{i_{m-i-1}+1}(u), \kappa^i(u,\tau)]}} . $$
One checks that this yields a homotopy between the paths in equations \eqref{eq_gamma_which_we_want_to_homotope} and \eqref{eq_composition_which_we_want}.
\begin{figure}[t]
\centering
\includegraphics[scale=0.8]{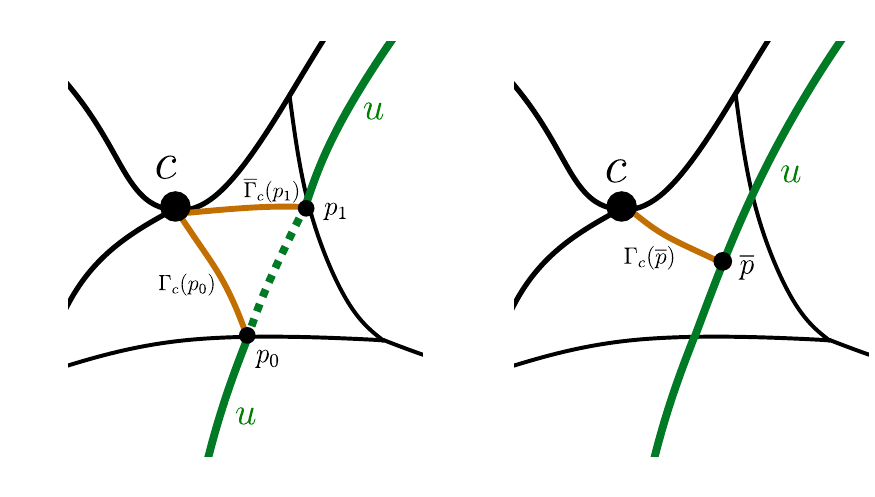}
\caption{Sketch of the construction of the isotopy between $\Phi^{\psi}(u)$ and $\Phi^{\omega}(u)$ for $u\in\overline{\mathcal{L}}(x,y)$ with $|x|-|y| = 2$. In the left figure we do parallel transport along the part of $u$ from $x$ to $p_1 = \pi(u,\rho_{x,y}^{i_1+1}(u))$ followed by parallel transport along $\overline{\Gamma}_c(p_1)\circ \Gamma_c(p_0)$ for $p_0 = \pi(u,\rho_{x,y}^{i_0+1}(u))$ and by parallel transport along $u$ from $p_0$ to $y$. By sliding the points $p_0$ and $p_1$ towards each other along $u$ we obtain the point $\overline{p} = \pi(u, \tfrac{1}{2}(\rho_{x,y}^{i_0+1}(u) + \rho_{x,y}^{i_1+1}(u)))$.
We homotope the parallel transport along the path in the left picture to parallel transport along $u$ from $x$ to $\overline{p}$, then going back and forth along $\Gamma_c(\overline{p})$ followed by parallel transport along the remainder of $u$. Since the parallel transport along $\overline{\Gamma}_c(\overline{p})\circ \Gamma_c(\overline{p})$ is trivial, this is just parallel transport along $u$.}
\label{fig_isotopy_transport}
\end{figure}
Using these homotopies we define 
\begin{eqnarray*}
    \mathcal{H}_{x,y}(u,\tau) &=& \mathrm{hol}_{e_0}^{\omega} \big(  \gamma_{x_0\to x}\circ \Gamma_x(\pi(u,\rho_{x,y}^{i_{m-1}+1(u)})) \circ  h^1(u,\tau) \circ \ldots \circ \\ &  &  \hphantom{ergroisrhsrthejrg}  h^{m-1}(u,\tau) \circ \overline{\Gamma}_y(\pi(u,\rho_{x,y}^{i_0+1}(u) \circ \gamma_{c_{y\to x_0}}  \big)  
\end{eqnarray*}
for $\tau\in [0,1]$.
By construction we have that $$\Gamma_x(u,\rho_{x,y}^{i_{m-1}+1}(u)) = \overline{\gamma_u|_{[\rho_{x,y}^{i_{m-1}+1}(u), f(x)]}} \quad \text{and}\quad \overline{\Gamma }_y(\pi(u,\rho_{x,y}^{i_0+1}(u))) = \overline{\gamma_u|_{[f(y),\rho_{x,y}^{i_0+1}(u)]} }  . $$
Hence, for $\tau = 0$ we have $\mathcal{H}_{x,y}(u,0)  =\Phi_{\psi}(u)$ and for $\tau= 1$ we obtain 
$$   \mathcal{H}_{x,y}(u,1) = \mathrm{hol}_{e_0}^{\omega}(\gamma_{x_0\to x} \circ \overline{\gamma}_u \circ \gamma_{y\to x_0}) = \Phi^{\omega}(u)  . $$
One further checks that for fixed $\tau\in [0,1]$ the map $u\mapsto \mathcal{H}_{x,y}(u,\tau)$ preserves composition of flow lines and therefore the map $\mathcal{H}\colon \mathrm{mor}(\mathcal{M}_f)\times [0,1]\to G$ is an isotopy between the transport function $\Phi_{\psi}$ and $\Phi^{\omega}$.
\end{proof}

Together with Corollary \ref{cor_same_bundle_equialent_tr_func} we obtain the following.

\begin{cor}\label{cor_equivalence_to_parallel_transport_function}
    Let $(M,f,X)$ be Morse data and let $q\colon E\to M$ be a smooth $G$-principal bundle with a principal connection $\omega$.
    If a transport function $\Phi\colon \mathrm{mor}(\mathcal{M}_f)\to G$ induces a $G$-principal bundle $q_{\Phi}\colon \mathcal{E}\to M$ which is isomorphic to $q\colon E\to M$ as a continuous $G$-principal bundle, then $\Phi$ is equivalent to $\Phi^{\omega}$.
\end{cor}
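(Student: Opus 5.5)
The corollary should follow by chaining Theorem \ref{theorem_parallel_transport_induces_og_bundle} with Corollary \ref{cor_same_bundle_equialent_tr_func}. Essentially nothing new has to be constructed; the work is in matching the hypotheses.

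\textbf{Step 1.} Apply Theorem \ref{theorem_parallel_transport_induces_og_bundle} to $q\colon E\to M$ with the connection $\omega$ and a fixed piecewise smooth rooted tree $\mathcal{Y}$, which exists since $M$ is a connected manifold. This shows that the principal bundle $q_{\Phi^{\omega}}$ induced by $\Phi^{\omega}$ is isomorphic, as a topological $G$-principal bundle, to $q\colon E\to M$.

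\textbf{Step 2.} By hypothesis, $q_{\Phi}\colon\mathcal{E}\to M$ is isomorphic as a continuous $G$-principal bundle to $q$. Isomorphism of principal bundles is transitive, so $q_{\Phi}\cong q_{\Phi^{\omega}}$. In other words, the transport functions $\Phi$ and $\Phi^{\omega}$ induce isomorphic $G$-principal bundles.

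\textbf{Step 3.} Corollary \ref{cor_same_bundle_equialent_tr_func} now gives $\Phi\sim_{\mathrm{tr}}\Phi^{\omega}$, which is the notion of equivalence defined in Section \ref{subsec_equivalence}. Here I would add a remark: if $G$ is path-connected, the remark following Example \ref{example_transport_z2} (or Theorem \ref{theorem_isotopic_tr_functions_yield_isomorphic_bundles}(2) directly) upgrades this to an isotopy.

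\textbf{Main point to check.} There are two compatibility issues, and I would address both explicitly in the write-up.

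First, the bundles in Section \ref{sec_transport_function} are built from cocycles on the cover $\{V_z\}$. That construction requires a self-indexing Morse function and a metrizable $G$. A Lie group is metrizable, and the reduction to self-indexing $f$ leaves the moduli spaces unchanged, so both constructions ($q_\Phi$ and $q_{\Phi^{\omega}}$) take place in the same framework.

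Second, Corollary \ref{cor_same_bundle_equialent_tr_func} is stated for bundles induced by transport functions, compared up to isomorphism. Its proof only uses equivalence of the induced cocycles ${}^{\Phi}\eta_{a,b}$ and ${}^{\Phi^{\omega}}\eta_{a,b}$ on the common cover $\{V_z\}$. That equivalence follows from the bundle isomorphism in Step 2 via the standard cocycle classification theorem recalled in Section \ref{subsec_principal_bundles}.
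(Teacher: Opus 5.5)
Your proposal is correct and matches the paper, which obtains this corollary in exactly the same way. It combines Theorem~\ref{theorem_parallel_transport_induces_og_bundle}, which gives $q_{\Phi^{\omega}}\cong q$, with transitivity and Corollary~\ref{cor_same_bundle_equialent_tr_func}. Your extra compatibility remarks (metrizability of Lie groups, the self-indexing reduction, and the common cover $\{V_z\}$) are consistent with the standing assumptions of Section~\ref{sec_transport_function}, though the paper does not spell them out.
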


For connected Lie groups we obtain the following stronger statement.
\begin{cor}
    Let $G$ be a connected Lie group and let $(M,f,X)$ be Morse data.
    Every isotopy class of transport functions contains a function of the type $\Phi^{\omega}\colon \mathrm{mor}(\mathcal{M}_f)\to G$ for a principal connection $\omega$.
\end{cor}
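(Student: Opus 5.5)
The statement asserts that for a connected Lie group $G$, every isotopy class of $G$-valued transport functions on $\mathcal{M}_f$ contains some $\Phi^{\omega}$. I would reduce this to the bijection in Theorem \ref{theorem_isotopic_tr_functions_yield_isomorphic_bundles} together with Theorem \ref{theorem_parallel_transport_induces_og_bundle}. The key input I need is that every continuous $G$-principal bundle over $M$ carries a smooth structure with a principal connection. The argument then runs in four steps.

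\textbf{Step 1.} Fix a transport function $\Phi$. By Lemmas \ref{lemma_transport_induces_ept} and \ref{lemma_extended_pre-transport_induces_cocycle} it induces a cocycle on the cover $\{V_z\}$, and hence a continuous $G$-principal bundle $q_{\Phi}\colon \mathcal{E}\to M$.

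\textbf{Step 2.} Replace $q_{\Phi}$ by a smooth bundle. Since $G$ is a Lie group and $M$ is a closed manifold, every continuous principal $G$-bundle over $M$ is isomorphic, as a continuous principal bundle, to a smooth one. To see this, I would approximate the continuous cocycle $\{\eta_{a,b}\colon V_a\cap V_b\to G\}$ by a smooth cocycle, passing to a shrunken cover if necessary. The cleanest route is via classifying maps: $[M,BG]$ can be computed with smooth representatives, for instance by pulling back from smooth finite-dimensional approximations of the universal bundle. Alternatively, one can apply the standard result that topological and smooth principal $G$-bundles over a manifold have the same isomorphism classes (e.g. via Steenrod's smoothing of cocycles). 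Call the resulting smooth bundle $E\to M$. It is isomorphic to $q_{\Phi}$ as a topological principal bundle.

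\textbf{Step 3.} Choose any principal connection $\omega$ on $E$; this exists by a partition of unity argument. Also fix a piecewise smooth tree $\mathcal{Y}$ and a point $e_0$. Theorem \ref{theorem_parallel_transport_induces_og_bundle} then says that $\Phi^{\omega}$ induces a bundle isomorphic to $E$, and hence isomorphic to $q_{\Phi}$.

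\textbf{Step 4.} Now $\Phi$ and $\Phi^{\omega}$ induce isomorphic $G$-principal bundles. Since $G$ is connected and a Lie group, it is locally path-connected, so it is path-connected. Part (2) of Theorem \ref{theorem_isotopic_tr_functions_yield_isomorphic_bundles} therefore shows that $\Phi$ and $\Phi^{\omega}$ are isotopic, which is the claim.

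The main obstacle is Step 2, the smoothing of a continuous bundle. It is the only step not already covered by the paper's earlier results, so I would state it carefully with a reference rather than reprove it. Everything else is a direct combination of results stated earlier in the paper.
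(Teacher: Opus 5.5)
Your proposal is correct and follows essentially the paper's route: Theorem~\ref{theorem_isotopic_tr_functions_yield_isomorphic_bundles} identifies isotopy classes with topological bundle classes, the class is smoothed (the paper cites \cite{muller2009equivalences} for this step), a connection $\omega$ is chosen, and Theorem~\ref{theorem_parallel_transport_induces_og_bundle} places $\Phi^{\omega}$ in the given class. Your explicit remark that a connected Lie group is path-connected, which is needed to invoke part (2), is a detail the paper leaves implicit.
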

\begin{proof}
    Let $[\Phi]$ be an isotopy class of transport functions.
    By Theorem \ref{theorem_isotopic_tr_functions_yield_isomorphic_bundles} an isotopy class of transport functions corresponds to a isomorphism class of $G$-principal bundles in the topological category.
    By \cite{muller2009equivalences} every continuous $G$-principal bundle is continuously isomorphic to a smooth $G$-principal bundle.
    Therefore the isomorphism class of $G$-principal bundles corresponding to $[\Phi]$ contains a smooth representative $q\colon E\to M$.
    If we choose a principal connection $\omega$ for $q$, then by Theorem \ref{theorem_parallel_transport_induces_og_bundle} the transport function $\Phi^{\omega}$ lies in the isotopy class $[\Phi]$.
\end{proof}

Next, we show that compatible sections exist for $\Phi^{\omega}$.

\begin{prop}\label{prop_transport_smooth_admits_comp_sections}
    Let $(M,f,X)$ be Morse data and $q\colon E\to M$ be a smooth $G$-principal bundle with principal connection $\omega$.
    The transport function $\Phi^{\omega}\colon \mathrm{mor}(\mathcal{M}_f)\to G$ admits compatible sections.
\end{prop}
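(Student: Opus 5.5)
The natural candidate is to define the sections explicitly by parallel transport along flow lines. For a point $p\in W^u(x)$ there is a unique flow line from $x$ to $p$. Reparametrize it by the value of $f$ as in Section \ref{subsec_morse_flow}: let $\gamma_p\colon[f(p),f(x)]\to M$ be given by $\gamma_p(t)=\overline{u}\cap f^{-1}(\{t\})$, where $u$ is the flow line through $p$ and $\gamma_p(f(x))=x$. Set $e_x:=\mathcal{P}^{\omega}_{\gamma_{x_0\to x}}(e_0)\in E_x$ and define
$$\phi_x(p):=\mathcal{P}^{\omega}_{\overline{\gamma_p}}(e_x),$$
which transports along the flow line from $x$ down to $p$. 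For a boundary point $(u,a)\in\overline{\mathcal{L}}(x,y)\times W^u(y)$ the corresponding path is the broken one: first $\overline{\gamma_u}$ from $x$ to $y$, then $\overline{\gamma_a}$ from $y$ to $a$. Accordingly we set $\phi_x(u,a):=\mathcal{P}^{\omega}_{\overline{\gamma_a}}\mathcal{P}^{\omega}_{\overline{\gamma_u}}(e_x)$. By construction $q\circ\phi_x=g_x$.

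Next I check the boundary condition \eqref{eq_compatible_section}. By definition of holonomy and the transport function $\Phi^\omega$, we have $\mathcal{P}^\omega_{\gamma_{x_0\to x}\circ\overline{\gamma_u}\circ\gamma_{y\to x_0}}(e_0)=\Phi^\omega_{x,y}(u)\cdot e_0$. Transporting this identity along $\gamma_{x_0\to y}$ and using \eqref{eq_parallel_transport_inverse} together with $G$-equivariance of parallel transport gives $\mathcal{P}^\omega_{\overline{\gamma_u}}(e_x)=\Phi^\omega_{x,y}(u)\cdot e_y$. Transporting further along $\overline{\gamma_a}$ and using equivariance once more yields
$$\phi_x(u,a)=\Phi^\omega_{x,y}(u)\cdot\phi_y(a).$$
This is a short path bookkeeping computation, analogous to the proof that $\Phi^\omega$ is a transport function. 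The deeper boundary strata, $(u_1\circ u_2,a)$ with $a$ further down, are handled identically, so $\phi_x$ is well-defined on all of $\overline{W}^u(x)$, which is consistent with the gluing description of $\partial\overline{W}^u(x)$.

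The main obstacle is continuity of $\phi_x$ on $\overline{W}^u(x)$, especially near the critical point $x$ and near the boundary, where the paths $\gamma_p$ converge to broken paths. My approach is to reduce this to continuity of parallel transport $\mathcal{P}^\omega\colon E\times_M P'M\to E$. In a suitable topology on paths (uniform convergence with bounded $L^1$ norm of derivatives, or $C^0$ convergence of horizontal lifts), the horizontal lift is the solution of the ODE $\dot g=-\omega(\dot\gamma)g$ in a local trivialization. Its solutions depend continuously on the path as long as the paths converge uniformly and have uniformly bounded length. Parametrized by the value of $f$, the flow-line paths vary continuously in $C^0$ on $\overline{W}^u(x)$; this is exactly the topology of the compactification, see \cite[Section 4.9]{audin2014morse}. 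Their lengths are uniformly bounded, since $|\dot\gamma|$ is controlled by $|X|/|df(X)|$ away from critical points.

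Near critical points I expect the lengths to require care, because $df(X)$ vanishes there. I would first try to avoid this by choosing the metric, or locally adapting $X$, to be the Euclidean gradient in Morse charts. In such charts flow lines passing near a critical point have length comparable to the change in $f$, so a uniform length bound holds. If this approach fails, the alternative is to use the obstruction-theoretic reduction in the preceding lemma together with Proposition \ref{prop_isotop_transport_functions_both_admit_sections_if_one_does}, constructing $\phi_x$ inductively over the index by a lifting argument. That route still needs the boundary map $h_x$ to extend, which returns to the same holonomy identity. Granting continuity, the maps $\phi_x$ are compatible sections for $\Phi^\omega$.
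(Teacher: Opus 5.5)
Your construction is the paper's: $\phi_x$ is the parallel transport of $\mathcal{P}^{\omega}_{\gamma_{x_0\to x}}(e_0)$ along the (possibly broken) flow line from $x$, and \eqref{eq_compatible_section} is verified exactly as you do it, by cancelling $\gamma_{y\to x_0}\circ\gamma_{x_0\to y}$ and using $G$-equivariance of parallel transport. The paper takes continuity of $\phi_x$ for granted, so your continuity argument (uniform convergence of the $f$-parametrized paths with a uniform length bound) is an addition rather than a departure; drop the option of ``locally adapting $X$'', since $\Phi^{\omega}$ and $\overline{W}^u(x)$ depend on the given $X$, and with the usual convention that $X$ is the Euclidean negative gradient in Morse charts the length bound already holds.
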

\begin{proof}
    Let $x\in \mathrm{Crit}(f)$ be a critical point.
    We recursively define maps $\Gamma_x\colon \overline{W}_u(x) \to P'_{x\to M}M$ by associating to $a\in W^u(x)$ the unbroken flow line $u$ from $x\to a$ parametrized on $[0,f(x)-f(a)]$ and to an element $(u,a)\in\partial \overline{W}^u(x)$ the concatenation $\overline{\gamma}_u\circ \Gamma_y(a)\in P'_{x\to M}M$.
    Define $\phi_x\colon \overline{W}_u(x)\to E$ by
    $$   \phi_x(b) =  \mathcal{P}^{\omega}_{\Gamma_x(b)}(\mathcal{P}_{\gamma_{x_0\to x}}^{\omega}(e_0)) =\mathcal{P}^{\omega}_{\gamma_{x_0\to x}\circ \Gamma_x(b)}(e_0)  \quad \text{for}\,\,\, b\in \overline{W}_u(x)  .   $$
    Here, $\gamma_{x_0\to x}\in P'_{x_0\to x}M$ is the fixed path from $x_0$ to $x$ as before.
    It is clear that $\phi_x$ satisfies $q\circ \phi_x = g_x$ for $g_x\colon \overline{W}_u(x)\to M$ the map as in Section \ref{sec_assoc_bundles}.
    We need to show that for $(u,a)\in\overline{\mathcal{L}}(x,y)\times \overline{W}_u(y) \subseteq  \partial\overline{W}^u(x)$ we have $\phi_x(u,a) =\Phi_{x,y}(u)\cdot \phi_y(a) $.
    Let $(u,a)\in \overline{\mathcal{L}}(x,y)\times W^u(y)$, then we have
    \begin{eqnarray*}
        \phi_x(u,a) &=& \mathcal{P}^{\omega}_{\gamma_{x_0\to x}\circ \Gamma_x(u,a)}(e_0) \\
        &=& \mathcal{P}^{\omega}_{\gamma_{x_0\to x}\circ \overline{\gamma}_u\circ \gamma_{y\to x_0}\circ \gamma_{x_0\to y}\circ \Gamma_y(a)} (e_0) \\
        &=& \mathcal{P}^{\omega}_{\gamma_{x_0\to y}\circ \Gamma_y(a)} \big(   \mathcal{P}^{\omega}_{\gamma_{x_0\to x}\circ \overline{\gamma}_u\circ \gamma_{y\to x_0}}  (e_0) \big) \\
        &=& \mathcal{P}^{\omega}_{\gamma_{x_0\to y}\circ \Gamma_y(a)} ( \Phi_{x,y}^{\omega}(u)\cdot e_0) \\
        &=& \Phi_{x,y}^{\omega}(u) \cdot \mathcal{P}^{\omega}_{\gamma_{x_0\to y}\circ \Gamma_y(a)}(e_0) \\
        &=& \Phi_{x,y}^{\omega}(u)\cdot \phi_y(a) .
    \end{eqnarray*}
    Here, we used the left-invariance of parallel transport.
    This show that the $\{\phi_x\}$ are compatible sections for $\Phi^{\omega}$.
\end{proof}

\begin{cor}\label{cor_lie_group_all_comp_sections_exist}
    Let $(M,f,X)$ be Morse data and let $q\colon E\to M$ be a smooth $G$-principal bundle with $G$ a connected Lie group.
    Then any transport function $\Phi\colon \mathrm{mor}(\mathcal{M}_f)\to G$ which induces $q\colon E\to M$ admits compatible sections.
\end{cor}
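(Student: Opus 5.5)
The plan is to reduce to the parallel-transport transport function and then transfer compatible sections along equivalences.

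First fix a principal connection $\omega$ on $E$; one exists because $M$ is a smooth closed manifold and $G$ is a Lie group, so a partition of unity argument applies. For any tree $\mathcal{Y}$ of piecewise smooth paths rooted at $x_0$, this gives the transport function $\Phi^{\omega}$ of the preceding Lemma. By Proposition \ref{prop_transport_smooth_admits_comp_sections}, $\Phi^{\omega}$ admits compatible sections $\{\phi_x\}$, built explicitly from parallel transport along broken flow lines.

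Next, let $\Phi$ be any transport function whose induced bundle $q_{\Phi}$ is isomorphic to $q\colon E\to M$. By Theorem \ref{theorem_parallel_transport_induces_og_bundle}, the bundle induced by $\Phi^{\omega}$ is also isomorphic to $q$, so $\Phi$ and $\Phi^{\omega}$ induce isomorphic $G$-principal bundles. Since $G$ is a connected Lie group, it is path-connected. Hence Theorem \ref{theorem_isotopic_tr_functions_yield_isomorphic_bundles}(2) applies directly and shows that $\Phi$ and $\Phi^{\omega}$ are isotopic; in general, Corollary \ref{cor_equivalence_to_parallel_transport_function} would still give $\Phi\sim_{\mathrm{tr}}\Phi^{\omega}$. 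Proposition \ref{prop_isotop_transport_functions_both_admit_sections_if_one_does} then transfers compatible sections from $\Phi^{\omega}$ to $\Phi$. That proposition is stated for equivalent transport functions, so it covers both the isotopy steps and any c-equivalence steps in a chain realizing $\sim_{\mathrm{tr}}$. Since equivalence is symmetric, the direction of transfer causes no difficulty.

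The main thing to check is a matter of interpretation. ``Induces $q$'' should mean that the bundle built from $\Phi$ is isomorphic to $E$, and compatible sections for $\Phi$ should be taken in $E$ itself rather than in $q_{\Phi}$. This is consistent because the definition of compatible sections only uses the $G$-action on $E$ and the map $g_x$. The isotopy argument in Proposition \ref{prop_isotop_transport_functions_both_admit_sections_if_one_does} works with sections valued in this fixed $E$, so no bundle isomorphism has to be carried along. It therefore remains only to observe that connectedness of $G$ is used solely to replace $\sim_{\mathrm{tr}}$ by isotopy, which Proposition \ref{prop_isotop_transport_functions_both_admit_sections_if_one_does} does not even require.
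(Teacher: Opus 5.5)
Your proposal is correct and follows the paper's proof essentially step for step. The paper also chooses a connection $\omega$, uses Theorem \ref{theorem_parallel_transport_induces_og_bundle} and Proposition \ref{prop_transport_smooth_admits_comp_sections} for $\Phi^{\omega}$, gets $\Phi\sim_{\mathrm{tr}}\Phi^{\omega}$ from Corollary \ref{cor_same_bundle_equialent_tr_func}, and transfers the compatible sections via Proposition \ref{prop_isotop_transport_functions_both_admit_sections_if_one_does}. Your remark that connectedness of $G$ is not really needed for this argument is also accurate.
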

\begin{proof}
    Let $\omega$ be a principal connection for the smooth bundle $q$.
    The transport function $\Phi^{\omega}$ induces a $G$-principal bundle isomorphic to $q\colon E\to M$ by Theorem \ref{theorem_parallel_transport_induces_og_bundle}.
    Moreover, $\Phi^{\omega}$ admits compatible sections by Proposition \ref{prop_transport_smooth_admits_comp_sections}.
    Every transport function $\Phi\colon \mathrm{mor}(\mathcal{M}_f)\to G$ which induces the isomorphism class of $q$ is equivalent to $\Phi^{\omega}$ by Corollary \ref{cor_same_bundle_equialent_tr_func}.
    By Proposition \ref{prop_isotop_transport_functions_both_admit_sections_if_one_does} the transport function $\Phi$ therefore also admits compatible sections.
\end{proof}

Together with Theorem \ref{theorem_assoc_bundle} we obtain the following.

\begin{cor}
    Let $(M,f,X,o,\{s_{x,y}\})$ be enhanced Morse data and let $q\colon E\to M$ be a smooth $G$-principal bundle with $G$ connected.
    Let $\Phi\colon \mathrm{mor}(\mathcal{M}_f)\to G$ be a transport function which induces the bundle $q\colon E\to M$.
    Let $\psi\colon F\times G\to F$ be a continuous $G$-action.
    Then the homology of the DG Morse complex $\mathrm{C}_{\bullet}(M;(\Phi,\mathrm{C}_{\bullet}(F)))$ is isomorphic to the homology of the total space of the associated bundle $F\times_G E\to M$.
\end{cor}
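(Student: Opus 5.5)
This is a direct combination of earlier results. The plan is to pass from $\Phi$ to a transport function that is known to admit compatible sections, and then apply Theorem \ref{theorem_assoc_bundle}.

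First, I fix a principal connection $\omega$ on the smooth bundle $q\colon E\to M$; such a connection exists by a partition of unity argument. Its parallel transport transport function $\Phi^{\omega}$ then satisfies the following:
\begin{enumerate}
\item By Theorem \ref{theorem_parallel_transport_induces_og_bundle}, $\Phi^{\omega}$ induces a $G$-principal bundle isomorphic to $q$.
\item Since $\Phi$ also induces $q$, Corollary \ref{cor_same_bundle_equialent_tr_func} gives $\Phi\sim_{\mathrm{tr}}\Phi^{\omega}$.
\item Because $G$ is connected, and hence path-connected as a Lie group, the Remark following Example \ref{example_transport_z2} shows that $\sim_{\mathrm{tr}}$ is generated by isotopy alone. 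So $\Phi$ is in fact isotopic to $\Phi^{\omega}$; strictly, this refinement is not needed below.
\end{enumerate}

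Next I verify that the hypotheses of Theorem \ref{theorem_assoc_bundle} hold for $\Phi$ itself. Corollary \ref{cor_lie_group_all_comp_sections_exist} does this directly: $\Phi^{\omega}$ admits compatible sections by Proposition \ref{prop_transport_smooth_admits_comp_sections}, and Proposition \ref{prop_isotop_transport_functions_both_admit_sections_if_one_does} transports them along the equivalence to $\Phi$. Theorem \ref{theorem_assoc_bundle} also requires $\Phi$ to be a transport function \emph{induced by} $q$. By assumption $\Phi$ induces the isomorphism class of $q$, and the proof of Theorem \ref{theorem_assoc_bundle} only uses compatible sections $\phi_x\colon\overline{W}^u(x)\to E$ together with the boundary relation \eqref{eq_compatible_section}. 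Both are available for $\Phi$ with values in $E$, possibly after composing with a bundle isomorphism between $E$ and the bundle induced by $\Phi$.

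Finally, I take the right $G$-space $F$ given by $\psi$, whose chains form the $\mathrm{C}_{\bullet}(G)$-module $\mathrm{C}_{\bullet}(F)$. Theorem \ref{theorem_assoc_bundle} then provides the chain map $t\colon \mathrm{C}_{\bullet}(M;(\Phi,\mathrm{C}_{\bullet}(F)))\to \mathrm{C}_{\bullet}(F\times_G E)$, which is an isomorphism on the first pages of the canonical and Leray--Serre spectral sequences and therefore induces an isomorphism in homology.

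I expect the only delicate point to be the bookkeeping in the second step. The notion of compatible section is phrased for a transport function induced by the given bundle $E$. One has to check that ``induces the bundle $q$'' (an isomorphism class) suffices, and that the c-equivalence part of Proposition \ref{prop_isotop_transport_functions_both_admit_sections_if_one_does} is consistent with the left $G$-action on $E$. Both are handled by the explicit formula $\phi^{\Psi}_x=r_x\cdot\phi^{\Phi}_x$ given in that proof, so no new argument is needed.
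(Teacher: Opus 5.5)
Your proposal is correct and follows the paper's argument, which simply combines Corollary \ref{cor_lie_group_all_comp_sections_exist} (compatible sections exist for any transport function inducing $q$ when $G$ is a connected Lie group) with Theorem \ref{theorem_assoc_bundle}. Your unpacking of that corollary via $\Phi^{\omega}$ is the bookkeeping the paper leaves implicit in its one-line deduction. So is your observation that the proof of Theorem \ref{theorem_assoc_bundle} uses only the compatible sections into $E$.
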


We now turn to the comparison of the DG Morse complex $\mathrm{C}_{\bullet}(M;(\Phi^{\omega},\mathrm{C}_{\bullet}(F)))$ to the DG Morse complex considered in \cite{barraud2025morse}.
Let $q\colon E\to M$ be a smooth $G$-principal fiber bundle with principal connection $\omega$
and let $\psi\colon F\times G\to F$ be a $G$-right action.
As before, we fix a point $e_0\in E$ over the basepoint $x_0\in M$.
Consider the induced bundle $\mathcal{E}= F\times_G E\to M$.
We note that the horizontal lifts for $E$ also induce lifts of paths from $M$ to $\mathcal{E}$.
Indeed, if $\gamma\colon [0,1]\to M$ is a piecewise smooth curve, then as before we denote by $\widetilde{\gamma}_u\colon [0,1]\to E$ the horizontal lift with respect to $\omega$ such that $\widetilde{\gamma}_u( 0) = u\in E_{\gamma(0)}$.
If $v = [f,u]\in \mathcal{E}$, i.e. $f\in F$ and $u\in E$, then the map $$\widehat{\gamma}_v \colon [0,1]\to \mathcal{E},\quad  t\mapsto [f,\gamma_{u}(t)]$$ is a lift of $\gamma$ to $\mathcal{E}$ with $\widehat{\gamma}_v(0) = v$.
Hence we obtain a lifting function 
$$    \mathring{{\xi}}\colon   \mathcal{E} \tensor[{_{q}}]{{\times}}{_{\mathrm{ev}_0}} {P}'M \to {P}'\mathcal{E} , \quad ([f,u],\gamma)\mapsto  \widehat{\gamma}_{[f,u]}  .   $$
By evaluating this at the endpoint we obtain a map
$$   \mathring{\Xi}\colon \mathcal{E}\tensor[{_{q}}]{{\times}}{_{\mathrm{ev}_0}} {P}'M\to \mathcal{E}    . $$
Recall the notion of a transitive lifting function from Section \ref{subsec_prelim_morse_DG}.
\begin{lemma}
    The map $\mathring{\Xi}\colon \mathcal{E}\tensor[{_{q}}]{{\times}}{_{\mathrm{ev}_0}} {P}'M\to \mathcal{E}$, $([(f,u)],(\gamma,a))\mapsto \widehat{\gamma}_{[(f,u)]}(a)$ is a transitive lifting function.
\end{lemma}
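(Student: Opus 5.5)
We check the defining properties of a transitive lifting function from Section \ref{subsec_prelim_morse_DG} directly: well-definedness and continuity of $\mathring{\Xi}$, the projection condition $q_{\mathcal{E}}\circ\mathring{\Xi} = \mathrm{ev}_1\circ \mathrm{pr}_2$, the unit condition for constant paths, and transitivity under concatenation. Everything is reduced to statements about horizontal lifts in the principal bundle $E$, where they are standard.

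\emph{Well-definedness.} For $[f,u]=[f\cdot g^{-1}, g\cdot u]$ we need $[f\cdot g^{-1}, \widetilde{\gamma}_{g\cdot u}(t)] = [f,\widetilde{\gamma}_u(t)]$ for all $t$. This holds because horizontal lifts are $G$-equivariant, $\widetilde{\gamma}_{g\cdot u} = g\cdot \widetilde{\gamma}_u$. The reason is that $g\cdot \widetilde{\gamma}_u$ is again horizontal, since the horizontal distribution is $G$-invariant, and it starts at $g\cdot u$; uniqueness of horizontal lifts then gives the equality. Since $G$ acts on $E$ on the left here, we restate the invariance for the left action $L_g$ and note that the proof is the same as in \cite{kobayashi:1963}.

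\emph{Projection and unit conditions.} The projection condition follows from $q\circ\widetilde{\gamma}_u=\gamma$ and $\pi([f,e]) = q(e)$, since the endpoint lies over $\gamma(a)$. For a constant path the horizontal lift is constant, so $\mathring{\Xi}(v,c_{q(v)}) = v$.

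\emph{Transitivity.} Let $(\gamma,a),(\sigma,b)$ be concatenable Moore paths. By uniqueness of horizontal lifts, the concatenation of $\widetilde{\gamma}_u$ with $\widetilde{\sigma}_{\widetilde{\gamma}_u(a)}$ is the horizontal lift of $\gamma\circ\sigma$ starting at $u$; this is well defined because piecewise horizontality is preserved under concatenation. Hence $\mathcal{P}^{\omega}_{\gamma\circ\sigma} = \mathcal{P}^{\omega}_\sigma\circ\mathcal{P}^{\omega}_\gamma$, and applying $[f,\cdot]$ gives $\mathring{\Xi}(\mathring{\Xi}([f,u],\gamma),\sigma) = \mathring{\Xi}([f,u],\gamma\circ\sigma)$.

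\emph{Continuity — the main obstacle.} We must show that $(e,(\gamma,a))\mapsto\widetilde{\gamma}_e(a)$ is continuous on $E\times_M P'M$ with the topology of $P'M$ (compact-open topology together with the length parameter). The plan is to work locally. Cover $\gamma$ by finitely many trivializing charts. In a chart the lift solves the ODE $\dot g = -\mathrm{Ad}\text{-twisted }A(\dot\gamma)\, g$, where $A$ is the local connection form, and the solution depends continuously on $\gamma$ in the relevant topology on piecewise smooth paths. We then define $\mathring{\Xi}$ on $F\times E\times_M P'M$ and observe that the quotient map $F\times E\to \mathcal{E}$ is, over trivializing opens, locally of the form $F\times G\times U\to F\times U$. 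So the induced map on $\mathcal{E}\times_M P'M$ is continuous, which can be checked via local sections of $E$.

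The subtle point is that continuity requires the topology on $P'M$ to control derivatives. If $P'M$ only carries the compact-open topology, parallel transport is not continuous, so we will state explicitly that $P'M$ is topologized with a $C^1$-type (piecewise) topology, as is implicit in the paper's use of $\mathcal{P}^{\omega}$ and holonomy. Continuity of $\mathring{\Xi}$ is then reduced to continuous dependence of solutions of linear ODEs on their coefficients.
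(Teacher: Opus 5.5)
Your checks of the projection condition, the unit condition and transitivity are exactly the paper's proof. It consists of these three verifications, with transitivity coming from $\widetilde{\gamma\circ\sigma}_u=\widetilde{\gamma}_u\circ\widetilde{\sigma}_{\widetilde{\gamma}_u(a)}$. The paper does not address well-definedness on $F\times_G E$ or continuity. Your well-definedness argument via $\widetilde{\gamma}_{g\cdot u}=g\cdot\widetilde{\gamma}_u$ is correct and fills an implicit step.

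On continuity, your diagnosis is right and goes beyond the paper. With $P'M\subseteq PM\times[0,\infty)$ in the compact-open topology, $\mathring{\Xi}$ is not continuous in general. Take loops on $[0,1]$ running $N$ times around a circle of radius $r$ with $Nr^2$ fixed: they converge uniformly to a constant loop, while their holonomy for a connection with non-zero curvature stays away from $e$; take $F=G$.

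Your remedy is too strong, however, and a $C^1$-type topology is not implicit in the paper. The paths $\sigma_u$ in $\zeta_{x,y}$ (and $\gamma_u$ in $Q_{x,y}$) are parametrized by the values of $f$, so in a Morse chart they look like $t\mapsto\sqrt{t}\,v$ near a critical point. They are not $C^1$ there, and near broken trajectories their derivatives do not converge uniformly. So $\zeta_{x,y}$, which is used right after the lemma, would not even land in, let alone be continuous into, a $C^1$-type path space. Such a space is also not well defined as you state it, since break points and Moore lengths vary.

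What works is to control derivatives only in $L^1$. Topologize $P'M$ by convergence of Moore lengths together with $W^{1,1}$-convergence of the paths rescaled to $[0,1]$. In a trivialization, the coefficient $A_{\gamma_n}(\dot\gamma_n)$ of the linear equation for the horizontal lift then converges in $L^1$. So your ODE argument goes through, now as continuous dependence on $L^1$ coefficients. Moreover, $\zeta_{x,y}$ stays continuous, because the portions of flow lines near critical points have small length.

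Alternatively, note that the paper only uses $\mathring{\Xi}$ through $\mathring{\Xi}_{x,y}\circ(\mathrm{id}\times\zeta_{x,y})$. The proof of Theorem \ref{theorem_isomorphic_complexes_connection} identifies this with $\rho_y^{-1}\circ\psi\circ(\rho_x\times\Phi^{\omega}_{x,y})$, so the continuity actually needed is that of $\Phi^{\omega}$.
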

\begin{proof}
    The fact that $q\circ \mathring{\Xi} = \mathrm{ev}_1\circ \mathrm{pr}_2$ is clear by construction.
    Let $[f,u]\in \mathcal{E}$ and let $x = q(u)\in M$.
    Let $c_x$ be the constant path at $x$.
    The horizontal lift $\widetilde{c}_u\in {P}'E$ is the constant path at $u$ as well and thus we obtain
    $$    \mathring{\Xi}([(f,u)], c_x) = [(f, \widetilde{c}_u(1))] = [(f,u)]    $$
    as required.
    Finally, let $(\gamma,a),(\sigma,b)\in {P}'M$ be concatenable paths and let $[(f,u)]\in\mathcal{E}$.
    We have
    $$   \mathring{\Xi}([(f,u)],\gamma\circ \sigma) = [f, \widetilde{\gamma\circ \sigma}_u(a+b)] = [f, \widetilde{\sigma}_{\widetilde{\gamma_u}(a)} (b)]      $$
    since horizontal lifts commute with concatenation in the sense that
    $   \widetilde{\gamma\circ \sigma}_u = \widetilde{\gamma}_u \circ \widetilde{\sigma}_{\widetilde{\gamma}_u(a)}     $.
    On the other hand we have
    $$   \mathring{\Xi}(\mathring{\Xi}([(f,u)],\gamma),\sigma) = \mathring{\Xi}( [f, \widetilde{\gamma}_u(a)],\sigma) = [f,\widetilde{\sigma}_{\widetilde{\gamma}_u(a)}(b) ] .    $$
    Hence, $\mathring{\Xi}$ is a transitive lifting function.
\end{proof}
We now fix the identification $F\cong \mathcal{E}_{x_0}$ through the homeomorphism
\begin{equation}\label{eq_identification_tau}  \tau\colon  F\to \mathcal{E}_{x_0}    ,\quad  f\mapsto [f,e_0] . 
\end{equation}
In principle would like to compare the DG Morse complex as in \cite{barraud2025morse} with $\mathrm{C}_{\bullet}(F)$ as a $\mathrm{C}_{\bullet}(\Omega M)$-module with the DG Morse complex using the transport function.
The main technical issue here is that the \emph{evaluation maps} $q_{x,y}\colon \overline{\mathcal{L}}(x,y)\to \Omega M$ are not guaranteed to take values in the piecewise smooth loops since they are defined by collapsing a tree.
As we want to work with piecewise smooth paths we introduce a DG Morse complex very much in the style of the complexes used in \cite{barraud2025morse} but which avoids the issue of the collapse maps.

There is a topological category $P'_{f}$ with objects the critical points of $f$ and morphism the piecewise smooth Moore path spaces $P'_{x\to y}M$ for $x,y\in\mathrm{Crit}(f)$.
Consequently, the cubical chains on these morphism spaces yield a DG category $\mathcal{CP}_f$.
Define maps $\zeta_{x,y}\colon \overline{\mathcal{L}}(x,y)\to P'_{x\to y}M$ by $\zeta_{x,y}(u) = (\sigma_u, f(x)-f(y))$ with $\sigma_u\colon [0,f(x)-f(y)]\to M$ defined by $\sigma_u(t) = u\cap f^{-1}(f(x)-t)$.
We define a collection of chains $m_{x,y} = (\zeta_{x,y})_*(s_{x,y})\in \mathrm{C}_{|x|-|y|-1}(P'_{x\to y}M) $ with $s_{x,y}\in \mathrm{C}_{|x|-|y|-1}(\overline{\mathcal{L}}(x,y))$ a representing chain system, see Section \ref{subsec_prelim_morse_DG}.
This collection of chains $\{m_{x,y}\}$ satisfies
$$   \partial m_{x,y} =   \sum_{\substack{z \in \mathrm{Crit}(f)\\ |y|  < |z| <|x|}}   (-1)^{|x|-|z|} m_{x,z} \cdot m_{z,y}      $$
where the product is the composition of morphisms in the DG category $\mathcal{CP}_f$.
Let $q\colon E\to M$ be a fibration with a transitive lifting function $\Xi\colon {E}\tensor[{_{q}}]{{\times}}{_{\mathrm{ev}_0}} {P}'M\to {E}$.
We define a chain complex by considering the graded $R$-module 
$$       \mathrm{C}_{\bullet}'(M; E) := \bigoplus_{x\in\mathrm{Crit}(f)} \mathrm{C}_{\bullet}(E_x)\otimes R\langle x\rangle .  $$
For the definition of the differential define the restrictions $ {\Xi}_{x,y} := {\Xi}|_{{E}_x\times P'_{x\to y}M} \colon {E}_x\times P'_{x\to y}M \to {E}_y$. 
The differential is defined by 
$$     \partial(\alpha\otimes x) = (\partial \alpha)\otimes x + (-1)^{|\alpha|} \sum_{\substack{y\in \mathrm{Crit}(f)} \\ |y| < |x|} ({\Xi}_{x,y})_*(\alpha\times m_{x,y}) \otimes y .          $$
Similarly to what we have seen before, this yields a differential.
The author notes that many of the constructions in \cite{barraud2025morse} can be transferred from the situation of a DG module over $\mathrm{C}_{\bullet}(\Omega M)$ to the present situation of considering the DG category $\mathcal{CP}_f$.
Of course, it is often more appealing to work with a DG algebra and the corresponding DG module compared to a DG category.
However, by working with the complex $\mathrm{C}_{\bullet}'(M;E)$ we can work with piecewise smooth paths and thus compare the chain complex $\mathrm{C}_{\bullet}'(M;\mathcal{E})$ with $\mathcal{E} = F\times_G E\to M$ to the DG Morse chain complex induced by the transport function $\Phi^{\omega}$.

\begin{theorem}\label{theorem_isomorphic_complexes_connection}
    Let $(M,f,X,o,\{s_{x,y}\})$ be enhanced Morse data and let $q\colon E\to M$ be a smooth $G$-principal bundle with principal connection $\omega$.
    Let $\psi\colon F\times G\to F$ be a continuous right action of $G$ and let $\mathcal{E} = F\times_G E\to M$ be the associated bundle.
    Then the chain complexes $\mathrm{C}_{\bullet}(M;(\Phi^{\omega},\mathrm{C}_{\bullet}(F)))$ and $\mathrm{C}_{\bullet}'(M;\mathcal{E})$ are isomorphic.
\end{theorem}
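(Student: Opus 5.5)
The plan is to write down an explicit isomorphism of graded modules that is diagonal in the critical points, and then check that it intertwines the two differentials. For each $x\in\mathrm{Crit}(f)$, the tree path $\gamma_{x_0\to x}$ gives a point $e_x := \mathcal{P}^{\omega}_{\gamma_{x_0\to x}}(e_0)\in E_x$. This yields a homeomorphism
$$\tau_x\colon F\to \mathcal{E}_x,\qquad f\mapsto [f,e_x],$$
which extends the identification \eqref{eq_identification_tau}. Define
$$T\colon \mathrm{C}_{\bullet}(M;(\Phi^{\omega},\mathrm{C}_{\bullet}(F)))\to \mathrm{C}'_{\bullet}(M;\mathcal{E}),\qquad \alpha\otimes x\mapsto (\tau_x)_*\alpha\otimes x .$$
Since each $\tau_x$ is a homeomorphism, $(\tau_x)_*$ is an isomorphism of cubical chain complexes, so $T$ is a degree-preserving bijection commuting with the internal part $(\partial\alpha)\otimes x$ of the differentials. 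It remains to compare the twisting terms.

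\textbf{Key step.} The goal is the identity of maps $F\times\overline{\mathcal{L}}(x,y)\to\mathcal{E}_y$
$$\mathring{\Xi}_{x,y}\circ(\tau_x\times\zeta_{x,y}) \;=\; \tau_y\circ\psi\circ(\mathrm{id}_F\times\Phi^{\omega}_{x,y}).$$
With this identity in hand, naturality of the cross product gives
$$(\mathring{\Xi}_{x,y})_*\big((\tau_x)_*\alpha\times(\zeta_{x,y})_*s_{x,y}\big)=(\tau_y)_*\psi_*\big(\alpha\times(\Phi^{\omega}_{x,y})_*s_{x,y}\big)=(\tau_y)_*\Psi(\alpha\otimes m^{\Phi^\omega}_{x,y}),$$
and the signs $(-1)^{|\alpha|}$ agree because $(\tau_x)_*$ preserves degree. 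So $T\partial=\partial T$.

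To prove the identity, take $f\in F$ and $u\in\overline{\mathcal{L}}(x,y)$. The path $\zeta_{x,y}(u)=\sigma_u$ is $\overline{\gamma_u}$ reparametrised (a translation of the Moore interval), so parallel transport along it agrees with transport along $\overline{\gamma}_u$. Then:
\begin{itemize}
\item By the lifting construction, $\mathring{\Xi}([f,e_x],\sigma_u)=[f,\mathcal{P}^\omega_{\overline{\gamma}_u}(e_x)]$.
\item Write $\mathcal{P}^\omega_{\overline{\gamma}_u}(e_x)=\mathcal{P}^\omega_{\gamma_{x_0\to x}\circ\overline{\gamma}_u}(e_0)$. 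Insert the trivial transport along $\gamma_{y\to x_0}\circ\gamma_{x_0\to y}$, using \eqref{eq_parallel_transport_inverse}. The result is $\mathcal{P}^\omega_{\gamma_{x_0\to y}}\big(\mathcal{P}^\omega_{Q_{x,y}(u)}(e_0)\big)$.
\item By definition, $\mathcal{P}^\omega_{Q_{x,y}(u)}(e_0)=\Phi^\omega_{x,y}(u)\cdot e_0$.
\item Parallel transport is $G$-equivariant, so the previous expression equals $\Phi^\omega_{x,y}(u)\cdot e_y$. This is the same manipulation as in the proof of Proposition~\ref{prop_transport_smooth_admits_comp_sections}.
\end{itemize}
Finally, in $F\times_G E$ we have $[f,g\cdot e_y]=[f\cdot g,e_y]$, which gives $\tau_y(\psi(f,\Phi^\omega_{x,y}(u)))$.

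\textbf{Main obstacle.} The delicate points are conventions rather than ideas. One is making the orientation of $\sigma_u$ (running from $x$ down to $y$) match $\overline{\gamma}_u$ exactly as Moore paths. The other is checking that the product and sign conventions of the DG category $\mathcal{CP}_f$ and of the module structure agree, so that no extra sign arises. I would also check continuity of $\mathring{\Xi}_{x,y}$ on $\mathcal{E}_x\times P'_{x\to y}M$ in the relevant topology; this is only needed for the chains to be well defined, and it is implicit in the definition of $\mathrm{C}'_\bullet(M;\mathcal{E})$.
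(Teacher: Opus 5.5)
Your proposal is correct and is essentially the paper's proof: your $\tau_x\colon f\mapsto [f,e_x]$ is exactly the inverse of the paper's $\rho_x([f,e])=\tau^{-1}\big(\mathring{\Xi}([f,e],\gamma_{x\to x_0})\big)$, and your key identity is the same pointwise agreement of the two maps $\mathcal{E}_x\times\overline{\mathcal{L}}(x,y)\to F$ (resp.\ into $\mathcal{E}_y$) that the paper verifies. Parametrizing the fibre by $e_x$ rather than an arbitrary $e\in E_x$ simply lets you skip the holonomy conjugation formula $\mathrm{hol}_{k\cdot e_0}=k\,\mathrm{hol}_{e_0}\,k^{-1}$ that the paper invokes.
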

\begin{proof}
    We write $n_{x,y} = (\Phi^{\omega}_{x,y})_*(s_{x,y})\in \mathrm{C}_{\bullet}(G)$ for the twisting cocycles of the DG Morse complex involving the transport function and $m_{x,y}\in \mathrm{C}_{\bullet}(P'_{x\to y}M)$ for the chains in the DG category $\mathcal{CP}_f$ as above.
    For $x\in \mathrm{Crit}(f)$ let $\rho_x\colon \mathcal{E}_x\to F$ be the homeomorphism given by
    $$     \rho_x( [f,e])  =  \tau^{-1} (  \mathring{\Xi}([f,e], \gamma_{x\to x_0})      )     $$
    with $\tau$ being the homeomorphism $F\cong \mathcal{E}_{x_0}$ as in \eqref{eq_identification_tau}.
    We define a map $\eta \colon \mathrm{C}_{\bullet}'(M; \mathcal{E}) \to  \mathrm{C}_{\bullet}(M;(\Phi^{\omega}, \mathrm{C}_{\bullet}(F))) $ by setting
    $$  \eta(\alpha\otimes x) = (\rho_x)_*(\alpha) \otimes x \quad \text{for}\,\,\,x\in\mathrm{Crit}(f),\,\, \alpha\in \mathrm{C}_{\bullet}(\mathcal{E}_x)    .     $$
    It is clear that this is an isomorphism of graded modules.
    We just need to verify that $\eta$ is indeed a chain map.
    Let $x\in \mathrm{Crit}(f)$ and $\alpha\in\mathrm{C}_{\bullet}(\mathcal{E}_x)$.
    We compute that on the one hand
    \begin{eqnarray*}
        \partial (\eta(\alpha\otimes x) ) &=& \partial ((\rho_x)_*(\alpha)\otimes x) \\ &=&
        (\rho_x)_*(\partial \alpha)\otimes x + (-1)^{|\alpha|} \sum_{|y|<|x|} \psi_* ((\rho_x)_* \alpha\times n_{x,y})\otimes y  \\
        &=&   (\rho_x)_*(\partial \alpha)\otimes x + (-1)^{|\alpha|} \sum_{|y|<|x|} \psi_* ((\rho_x)_*\alpha \times (\Phi^{\omega}_{x,y})_* (s_{x,y})) \otimes y   
    \end{eqnarray*}
    while on the other hand
    \begin{eqnarray*}
           \eta(\partial (\alpha\otimes x)) & =  & ((\rho_x)_*(\partial \alpha))\otimes x + (-1)^{|\alpha|} \sum_{|y|<|x|} (\rho_y)_* \big( (\mathring{\Xi}_{x,y})_*(\alpha\times m_{x,y})\big) \otimes y    \\
           &=& 
            ((\rho_x)_*(\partial \alpha))\otimes x + (-1)^{|\alpha|} \sum_{|y|<|x|} (\rho_y)_* \big( (\mathring{\Xi}_{x,y})_*(\alpha\times (\zeta_{x,y})_*(s_{x,y}) )\big) \otimes y  .
    \end{eqnarray*}
    Hence, it is sufficient to show that the maps 
    \begin{equation}\label{eq_two_maps}
          \rho_y  \circ  \mathring{\Xi}_{x,y} \circ (\mathrm{id}\times \zeta_{x,y})\colon \mathcal{E}_x\times \overline{\mathcal{L}}(x,y)\to  F  \quad \text{and}\quad \psi \circ (\rho_x \times \Phi^{\omega}_{x,y})      \colon \mathcal{E}_x\times \overline{\mathcal{L}}(x,y) \to F    
    \end{equation}
    agree.
    Let $[f,e]\in \mathcal{E}_x$ and $u\in\overline{\mathcal{L}}(x,y)$.
    There is a unique $g\in G$ such that $\mathcal{P}^{\omega}_{\gamma_{x\to x_0}}(e) = g\cdot e_0$.
    We have
    \begin{eqnarray*}
        \tau \circ \rho_y \circ \mathring{\Xi}_{x,y}([f,e], \zeta_{x,y}(u)) &=& 
        \mathring{\Xi}(\mathring{\Xi}_{x,y}([f,e], \sigma_u), \gamma_{y\to x_0}) \\
        &=& [f, \mathcal{P}^{\omega}_{\sigma_u\circ \gamma_{y\to x_0}}(e)] \\
        &=& [ f, \mathcal{P}^{\omega}_{\gamma_{x_0\to x}\circ \sigma_u\circ \gamma_{y\to x_0}} (\mathcal{P}^{\omega}_{\gamma_x\to x_0}(e) ]  \\
        &=& [f, \mathcal{P}^{\omega}_{\gamma_{x_0\to x}\circ \sigma_u \circ \gamma_{y\to x_0}} (g\cdot e_0)  ] \\
        &=& 
        [f, \mathrm{hol}_{g\cdot e_0} ( {\gamma_{x_0\to x}\circ \sigma_u\circ \gamma_{y\to x_0}}) \cdot g \cdot e_0  ] \\ 
        &=& [\psi (f,  \mathrm{hol}_{g\cdot e_0} ( {\gamma_{x_0\to x}\circ \sigma_u \circ \gamma_{y\to x_0}})) , g\cdot e_0] \\
        &=& [ \psi( f, g \cdot \mathrm{hol_{e_0}}(  {\gamma_{x_0\to x}\circ \sigma_u\circ \gamma_{y\to x_0}}) g^{-1} ), g\cdot e_0] \\
        &=& [\psi( f , g \cdot \mathrm{hol}_{e_0} (  {\gamma_{x_0\to x}\circ \sigma_u\circ \gamma_{y\to x_0}})) , e_0] \\
        &=& 
        [\psi( f, g \cdot \Phi^{\omega}_{x,y}(u) ), e_0] 
    \end{eqnarray*}
    where we used equation \eqref{equation_transformation_formula_holonomy}.
    On the other hand we have 
    \begin{eqnarray*}
        \tau \circ \psi \circ (\rho_x\times \Phi^{\omega}_{x,y})( [f,e] , u) 
        &=& 
        \tau \circ \psi (  \tau^{-1}([ f, \mathcal{P}_{\gamma_{x\to x_0}} e] , \Phi_{x,y}^{\omega}(u)) \\
        &=& \tau \circ \psi( \tau^{-1} ( [ f, g\cdot e_0], \Phi^{\omega}_{x,y}(u)) \\
        &=& \tau \circ \psi( f, g \cdot \Phi^{\omega}_{x,y}(u))) \\
        &=& [\psi(f,g\cdot \Phi^{\omega}_{x,y}(u)), e_0 ] .
    \end{eqnarray*}
    Since $\tau$ is a homeomorphism this show that the two maps in \eqref{eq_two_maps} agree and it follows that $\eta$ is a chain map.
\end{proof}
\begin{remark}\label{remark_discrepancy_transport_bdho}
    Let $p\colon M\to M/\mathcal{Y}$ be the map collapsing the tree $\mathcal{Y}$ and let $\theta\colon M/\mathcal{Y}\to M$ be a homotopy inverse.
    The evaluation maps $q_{x,y}\colon \overline{\mathcal{L}}(x,y)\to \Omega M$ in \cite{barraud2025morse} are given by composing the path $\sigma_u\in P'_{x\to y}M$ with the map $\theta\circ p$. 
    If we assume that $q_{x,y}\colon \overline{\mathcal{L}}(x,y)\to \Omega M$ take values in the piecewise smooth loops then we could have used the loops $q_{x,y}(u)\in \Omega'M$ for $u\in\overline{\mathcal{L}}(x,y)$ in order to obtain a transport function $\widetilde{\Phi}^{\omega}$ instead of the transport function $\Phi^{\omega}$.
    Under this assumption one can compare the DG Morse complex as in \cite{barraud2025morse} to the DG Morse complex using the transport function $\widetilde{\Phi}^{\omega}$ with the same ideas as in the proof of Theorem \ref{theorem_isomorphic_complexes_connection}.    
    One finds again that the chain complexes are isomorphic.
    In general though the paths $(\theta \circ p)\circ \sigma_u \in \Omega M$ might not be smooth.
    
    By the Whitney approximation theorem, see \cite[Theorem 6.26]{lee:2013}, we can find a smooth map $\zeta\colon M\to M$ which is homotopic to $\theta \circ p$ and which agrees with $\theta \circ p$ on the given tree, i.e. $\zeta$ collapses the tree $\mathcal{Y}$ to the basepoint.
    If we define the evaluation maps $q_{x,y}\colon \overline{\mathcal{L}}(x,y)\to \Omega M$ using the map $\zeta$ we obtain piecewise smooth loops $\zeta\circ \sigma_u\in \Omega' M$.
    Hence, up to replacing $\theta \circ p\colon M\to M$ with a homotopic map $\zeta$ one can indeed achieve that the evaluation maps $q_{x,y}$ take value in the piecewise smooth loops.
    Because of the constructions in Theorem \ref{theorem_parallel_transport_induces_og_bundle} and its proof we however preferred to consider the paths $Q_{x,y}(u)\in \Omega'M$ for $u\in \overline{\mathcal{L}}(x,y)$.
\end{remark}

\section{Towards functoriality}\label{sec_functoriality}

We conclude by giving some remarks on the functoriality of the construction of DG Morse homology with transport functions.
Many aspects of the functoriality of DG Morse homology with coefficients in a $\mathrm{C}_{\bullet}(\Omega M)$-module are treated in \cite[Sections 9 and 10]{barraud2025morse}.
Furthermore, Riegel \cite{riegel2024chain}, \cite{riegel2025path} and Clivio \cite{clivio2025goresky} have advanced this investigation by studying specific situations arising in string topology.
In particular, Riegel shows that if $\mathcal{F}_{\bullet}$ and $\mathcal{G}_{\bullet}$ are $\mathrm{C}_{\bullet}(\Omega M)$ are DG-modules, then not only equivariant chain maps $\mathcal{F}_{\bullet}\to \mathcal{G}_{\bullet}$, but also morphisms $\mathcal{F}_{\bullet}\to \mathcal{G}_{\bullet}$ of $A_{\infty}$-modules over the strict DGA $\mathrm{C}_{\bullet}(\Omega M)$ induce chain maps in the DG Morse complexes.
Riegel and Clivio further show that these morphisms yield the \emph{correct} maps under the fibration theorem.
In this section we consider the situation of an associated bundle $F\times_G E\to M$ and we begin by treating direct maps and shriek maps in the fiber $F$.
We are particularly interested in shriek maps in the fiber for the following reason.
If $G$ is a compact Lie group, averaging over $G$ can yield a way to produce a $\mathrm{C}_{\bullet}(G)$-equivariant chain model of the shriek map $i_! \colon \mathrm{H}_{\bullet}(F)\to \mathrm{H}_{\bullet}(\mathring{F})$ for a $G$-equivariant embedding $\mathring{F}\hookrightarrow F$.
In the situation of $\mathrm{C}_{\bullet}(\Omega M)$-modules, it is not clear how to perform such an averaging process in general.
Clivio manages to describe shriek maps in the fiber in \cite{clivio2025goresky} in the specific situation of the string topology coproduct.
The treatment of shriek maps in this article follows Clivio's ideas.
Finally, we sketch some ideas about how direct maps in the base can be dealt with in the DG Morse complexes with transport functions.

\medskip
We begin by considering direct maps in the fiber.
In the situation of associated bundles, a $G$-equivariant map between the fibers induces a fiberwise map of the total spaces of the associated bundles.
The induced chain map in the DG Morse complex can be constructed straight-forwardly.
More precisely, let $q\colon E\to M$ be a $G$-principal bundle.
Let $F_1$ and $F_2$ be right $G$-spaces and assume that there is a $G$-equivariant map $\rho\colon F_1\to F_2$.
This induces a well-defined map of associated bundles
$$     \overline{\rho}\colon F_1\times_G E \to F_2\times_G E , \quad [a,u]\mapsto [\rho(a),u]     \quad \text{for}\,\,\, [a,u]\in F_1\times_G E  $$
which is moreover a fiberwise map.
Assume that $(M,f,X,o,\{s_{x,y}\})$ is enhanced Morse data and let $\Phi\colon \mathrm{mor}(\mathcal{M}_f)\to G$ be a transport function for $q$.
The map $\rho$ induces a map
$$    \widetilde{\rho}\colon \mathrm{C}_{\bullet}(M;(\Phi, \mathrm{C}_{\bullet}(F_1))) \to \mathrm{C}_{\bullet}(M; (\Phi,\mathrm{C}_{\bullet}(F_2)))  , \quad  \widetilde{\rho}(\alpha\otimes x) = (\rho_*(\alpha))\otimes x       $$
for $\alpha\in \mathrm{C}_{\bullet}(F_1)$, $x\in\mathrm{Crit}(f)$.
By \cite[Proposition 4.4]{barraud2025morse} this is a chain map which preserves the canonical filtrations.
\begin{prop}\label{prop_direct_maps_fiber}
    Let $q\colon E\to M$ be a $G$-principal bundle and $\rho\colon F_1\to F_2$ a $G$-equivariant map between $G$-spaces $F_1,F_2$.
    Let $(M,f,X,o,\{s_{x,y}\})$ be enhanced Morse data on $M$ and let $\Phi\colon \mathrm{mor}(\mathcal{M}_f)\to G$ be a transport function for $q$.
    Assume that all compatible sections for $\Phi$ exist.
    Under the isomorphisms $t_i\colon \mathrm{H}_{\bullet}(M;(\Phi,\mathrm{C}_{\bullet}(F_i)))\xrightarrow[]{\cong} \mathrm{H}_{\bullet}(F_i\times_G E)$, $i= 1,2$, the map induced by $\widetilde{\rho}$ in homology agrees with the map $\overline{\rho}_*\colon \mathrm{H}_{\bullet}(F_1\times_G E)\to \mathrm{H}_{\bullet}(F_2\times_G E)$.
\end{prop}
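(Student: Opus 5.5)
The plan is to show that the square formed by $\widetilde{\rho}$, $\overline{\rho}_*$ and the maps $t_1,t_2$ of Theorem \ref{theorem_assoc_bundle} commutes already on the chain level. Then the statement in homology follows at once, because $t_1$ and $t_2$ induce isomorphisms in homology by Theorem \ref{theorem_assoc_bundle}.

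First I fix the data used to build both $t_1$ and $t_2$. I use one choice of compatible sections $\phi_x\colon \overline{W}^u(x)\to E$ for $\Phi$ and one compatible representing chain system $\{s_x\}$, which exists by \cite[Lemma 7.6]{barraud2025morse}. Both sets of data depend only on $q$, $\Phi$ and the enhanced Morse data, not on the fiber. Hence both chain maps use the same chains $m_x=(\phi_x)_*(s_x)\in \mathrm{C}_{|x|}(E)$, and they are given by $t_i(\alpha\otimes x)=(p_i)_*(\alpha\times m_x)$, where $p_i\colon F_i\times E\to F_i\times_G E$ are the canonical projections.

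Next, for $\alpha\in \mathrm{C}_\bullet(F_1)$ and $x\in\mathrm{Crit}(f)$ I compute both sides. One side is
$$t_2(\widetilde{\rho}(\alpha\otimes x)) = (p_2)_*(\rho_*\alpha\times m_x) = (p_2\circ(\rho\times \mathrm{id}_E))_*(\alpha\times m_x).$$
The other side is
$$\overline{\rho}_*(t_1(\alpha\otimes x))=(\overline{\rho}\circ p_1)_*(\alpha\times m_x).$$
By definition $\overline{\rho}[a,u]=[\rho(a),u]$, so $\overline{\rho}\circ p_1=p_2\circ(\rho\times\mathrm{id}_E)$ as maps $F_1\times E\to F_2\times_G E$. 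The two cubical chains therefore agree. This step uses that the cross product of cubical chains is natural, that is, $(g\times h)_*(a\times b)=g_*a\times h_*b$.

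The chain map $\widetilde{\rho}$ is already provided by \cite[Proposition 4.4]{barraud2025morse}, since $\rho_*$ is $\mathrm{C}_\bullet(G)$-equivariant by the $G$-equivariance of $\rho$. So there is essentially no obstacle. The only point needing care is that the isomorphisms $t_i$ in the statement must be built from the same compatible sections and the same chains $s_x$ for both fibers. If one instead allowed independent choices, I would have to argue that the homology isomorphism of Theorem \ref{theorem_assoc_bundle} does not depend on these choices. I avoid that issue by fixing the common choice as above.
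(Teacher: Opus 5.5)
Your proposal is correct and follows the paper's own argument: the paper also uses the commuting square $\overline{\rho}\circ p_1 = p_2\circ(\rho\times\mathrm{id}_E)$ together with the formula $t_i(\alpha\otimes x)=(p_i)_*(\alpha\times m_x)$ to get commutativity already on the chain level. Your remark that $t_1$ and $t_2$ must be built from the same compatible sections $\phi_x$ and the same chains $s_x$ is a correct observation; the paper leaves this assumption implicit.
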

\begin{proof}
Let $p_1\colon F_1\times E\to F_1\times_G E$ and $p_2\colon F_2\times E\to F_2\times_G E$ be the canonical projections for the associated bundles.
Note that the diagram
$$
    \begin{tikzcd}
        F_1\times E\arrow[]{r}{p_1} \arrow[swap]{d}{\rho\times \mathrm{id}} & F_1\times_G E \arrow[]{d}{\overline{\rho}} \\
        F_2\times E\arrow[]{r}{p_2} & F_2\times_G E 
    \end{tikzcd}
$$
commutes.
The assertion follows from a direct computation using the definition of the maps $t_i$, see Theorem \ref{theorem_assoc_bundle}. 
\end{proof}
\begin{remark}
For the treatment of shriek maps we also need a relative version of the above Proposition.
Assume that $F_1,F_2$ are $G$-spaces and $F_1',F_2'$ are $G$-invariant subspace of $F_1,F_2$, respectively.
If $\rho\colon F_1\to F_2$ is a $G$-equivariant map which maps $F_1'\to F_2'$, then we obtain an induced map of pairs
$$     \overline{\rho}\colon  (F_1\times_G E, F_1'\times_G E)\to (F_2\times_G E, F_2'\times_G E) .      $$
Proposition \ref{prop_direct_maps_fiber} carries over directly to this situation, i.e. the chain map $$\widetilde{\rho}\colon \mathrm{C}_{\bullet}(M;(\Phi,\mathrm{C}_{\bullet}(F_1,F_1'))) \to \mathrm{C}_{\bullet}(M;(\Phi,\mathrm{C}_{\bullet}(F_2,F_2'))) , \quad \widetilde{\rho}(\alpha\otimes x) = (\rho_*(\alpha))\otimes x  $$
models the map induced by $\overline{\rho}$ in relative homology.
\end{remark}

\begin{example}
    Let $G$ be a Lie group and let $H\subseteq G$ be a closed subgroup such that the quotient $M = H\backslash G$ is compact.
    As seen in Example \ref{example_free_loop_space} this yields an $H$-principal bundle $q\colon G\to M$ and $H$ acts on the based loop space $\Omega M$.
    By Example \ref{example_free_loop_space} we have a fiberwise homeomorphism $f\colon \Omega M\times_H G\to \Lambda M$.
    Similarly, one can check that the fiber product $\Lambda M\times_M \Lambda M$ is the associated bundle $(\Omega M\times \Omega M)\times_H G$ where $H$ acts diagonally on $\Omega M\times \Omega M$.
    Note that there is a map $\overline{\phi}\colon \Lambda M\times_M \Lambda M\to \Lambda M $ which concatenates a pair $(\gamma,\sigma)\in \Lambda M\times_M \Lambda M$.
    This map is induced by the $H$-equivariant concatenation map $\phi\colon \Omega M\times \Omega M\to \Omega M$.
    Proposition \ref{prop_direct_maps_fiber} says that the map $\overline{\phi}_*\colon \mathrm{H}_{\bullet}(\Lambda M\times_M \Lambda M)\to \mathrm{H}_{\bullet}(\Lambda M)$ can be modeled in DG Morse homology by the chain map $\widetilde{\phi}\colon \mathrm{C}_{\bullet}(M;(\Phi,\mathrm{C}_{\bullet}(\Omega M\times \Omega M))) \to \mathrm{C}_{\bullet}(M;(\Phi,\mathrm{C}_{\bullet}(\Omega M))$.
    We note that the concatenation map $\overline{\phi}_*$ is one ingredient of the Chas-Sullivan product in string topology, see e.g. \cite{cohen2006string}.
    If $N$ is an arbitrary closed oriented manifold and one considers DG Morse homology with $\mathrm{C}_{\bullet}(\Omega N)$-modules as in \cite{barraud2025morse}, then the concatenation map $\phi\colon \Omega N\times \Omega N\to \Omega N$ does not strictly intertwine the $\Omega M$-actions and considerable technical effort has to be undertaken in order to remedy this problem, see \cite{riegel2024chain}.
\end{example}

We turn to shriek maps in the fiber, resp. shriek maps for the associated bundles.
Let $q\colon E\to M$ be a $G$-principal bundle.
Assume that $F_1\subseteq F_2$ is an inclusion of $G$-spaces.
We do not assume that $F_1$ or $F_2$ are finite-dimensional manifolds.
We say that $F_1\subseteq F_2$ admits a \emph{tubular neighborhood} $U\subseteq F_2$ of rank $k\in\mathbb{N}$ if there is a real vector bundle $p\colon P\to F_1$ of rank $k$ and a homeomorphism $\Gamma\colon P\to U$ onto the open set $U$ such that $\Gamma\circ s_0 = i \colon F_1\to F_2$ where $i\colon F_1\hookrightarrow F_2$ is the inclusion and $s_0\colon F_1\to P$ is the zero-section.
We say that $U$ is $G$-invariant, if it is a $G$-invariant subset of $F_2$ and if $G$ acts on $P$ by vector bundle isomorphisms such that $\Gamma$ is $G$-equivariant.
Note that we obtain three associated bundles, namely $F_2\times_G E, F_1\times_G E$ as well as $U\times_G E$.
One checks that $U\times_G E$ is a tubular neighborhood of rank $k$ of $F_1\times_G E$ inside $F_2\times_G E$.

The main example for this situation is the following.
Let $G$ be a compact Lie group and let $F_2$ be a smooth manifold with $F_1\subseteq F_2$ a $G$-invariant closed submanifold.
Then $F_1$ admits a $G$-invariant tubular neighborhood, see \cite[Theorem VI.2.2]{bredon1972introduction}.

Coming back to the general situation denote the inclusions by $i\colon F_1\hookrightarrow F_2$ and $j\colon F_1\times_G E\hookrightarrow F_2\times_G E$.
The goal is to find a chain model in the DG Morse complex for the Gysin map $j_!\colon \mathrm{H}_{\bullet}(F_2\times_G E)\to \mathrm{H}_{\bullet-k}(F_1\times_G E)$.
Recall that in singular homology the Gysin map $j_!$ is defined as the composition 
\begin{eqnarray}
    \label{eq_gysin_map_via_thom2} 
       &  & \mathrm{H}_{\bullet}(F_2\times_G E)\to \mathrm{H}_{\bullet}(F_2\times_G E, \sim F_1\times_G E) \xrightarrow[]{\cong} \mathrm{H}_{\bullet}(U\times_G E,\sim  F_1\times_G E) \xrightarrow[]{\tau\cap} \\ &   &  \nonumber \hphantom{aeorgnaeorgoaeorrgaoeirsrthsrthasetgaerggoaierhg} \mathrm{H}_{\bullet}(U\times_G E)\xrightarrow[\cong]{r_*} \mathrm{H}_{\bullet}(F_1\times_G E)     
\end{eqnarray}
where we write $(X,\sim Y)$ for a topological pair of the form $(X,X\setminus Y)$.
The second arrow in \eqref{eq_gysin_map_via_thom2} is given by excision, the third one is capping with the Thom class $\tau\in \mathrm{H}^k(U\times_G E,\sim F_1\times_G E)$ and the last arrow is induced by the retraction $r\colon U\times_G E\to F_1\times_G E$.
Heuristically speaking the chain map in the DG Morse complex should be defined by applying a chain model of the Gysin map $i_! \colon \mathrm{H}_{\bullet}(F_2)\to \mathrm{H}_{\bullet}(F_1)$ on the tensor factor $\mathrm{C}_{\bullet}(F_2)$ in the DG Morse complex $\mathrm{C}_{\bullet}(M;(\Phi,\mathrm{C}_{\bullet}(F_2)))  = \mathrm{C}_{\bullet}(F_2)\otimes R\langle \mathrm{Crit}(f)\rangle$.
Note that the difficulty here lies in the fact that we do not only need a chain map representing $i_!$, but we need every chain map to be $\mathrm{C}_{\bullet}(G)$-equivariant.

\begin{theorem}\label{theorem_gysin_map_chain_map}
    Let $q\colon E\to M$ be a $G$-principal bundle.
    Let $F_1\hookrightarrow F_2$ be an equivariant embedding of $G$-spaces admitting a $G$-invariant orientable tubular neighborhood $U$ of rank $k\in \mathbb{N}$.
    Further let $(M,f,X,o,\{s_{x,y}\})$ be enhanced Morse data on $M$ and let $\Phi\colon \mathrm{mor}(\mathcal{M}_f)\to G$ be a transport function for $E$.
    Assume that all compatible sections for $\Phi$ exist.
    Let $\iota\colon U\hookrightarrow U\times_G E$ be the embedding $x\mapsto [x,e_0]$ for a fixed point $e_0\in E$ and consider chains with field coefficients.
    
    If there is a cocycle $\tau\in \mathrm{C}^k(U\times_G E,\sim F_1\times_G E)$ representing the Thom class such that the chain map $\iota^* \tau\cap \cdot \colon \mathrm{C}_{\bullet}(U,U\setminus F_1)\to \mathrm{C}_{\bullet-k}(U)$ is $\mathrm{C}_{\bullet}(G)$-equivariant, then there exists a DG Morse chain model for the Gysin map $\mathrm{H}_i(F_2\times_G E)\to \mathrm{H}_{i-k}(F_1\times_G E)$.
\end{theorem}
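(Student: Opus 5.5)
The plan is to realize each arrow of the composition \eqref{eq_gysin_map_via_thom2} by a filtration-preserving chain map between DG Morse complexes. Each such map should come from a $\mathrm{C}_{\bullet}(G)$-equivariant chain map on the fiber level, and each should be compatible with the quasi-isomorphisms $t$ of Theorem \ref{theorem_assoc_bundle} and its relative Corollary. Concretely, I would consider the chain of complexes
\begin{align*}
\mathrm{C}_{\bullet}(M;(\Phi,\mathrm{C}_{\bullet}(F_2))) &\to \mathrm{C}_{\bullet}(M;(\Phi,\mathrm{C}_{\bullet}(F_2,F_2\setminus F_1))) \leftarrow \mathrm{C}_{\bullet}(M;(\Phi,\mathrm{C}_{\bullet}(U,U\setminus F_1)))\\
&\to \mathrm{C}_{\bullet-k}(M;(\Phi,\mathrm{C}_{\bullet}(U))) \to \mathrm{C}_{\bullet-k}(M;(\Phi,\mathrm{C}_{\bullet}(F_1))).
\end{align*}
The first, second and last maps are induced by $G$-equivariant maps of pairs: the quotient map, the inclusion $(U,U\setminus F_1)\hookrightarrow(F_2,F_2\setminus F_1)$, and the retraction $r\colon U\to F_1$. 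The retraction is $G$-equivariant because $G$ acts on $P$ by bundle isomorphisms, so it preserves the zero section. By the relative version of Proposition \ref{prop_direct_maps_fiber} (the Remark following it), these three maps model, under the isomorphisms $t$, the corresponding maps on associated bundles.

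The second map is a quasi-isomorphism. The fiber-level inclusion $\mathrm{C}_{\bullet}(U,U\setminus F_1)\to\mathrm{C}_{\bullet}(F_2,F_2\setminus F_1)$ is an equivariant quasi-isomorphism by excision, so Proposition \ref{prop_smaller_complex} (with $f=\mathrm{id}$) shows the induced map on DG Morse complexes is a quasi-isomorphism. Its homology inverse then models the excision isomorphism in \eqref{eq_gysin_map_via_thom2}. Hence a chain model for $j_!$ is the zig-zag above, or a genuine map in homology obtained by inverting this quasi-isomorphism.

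For the third map I would take $\widetilde{\cap}(\alpha\otimes x)=(\iota^*\tau\cap\alpha)\otimes x$, with suitable Koszul signs $(-1)^{k|x|}$ or similar. Because $\iota^*\tau\cap\cdot$ is assumed to be $\mathrm{C}_{\bullet}(G)$-equivariant and to be a chain map ($\tau$ is a cocycle), Proposition \ref{prop_smaller_complex} (or a direct check with the twisting cocycle $m_{x,y}$) shows that $\widetilde{\cap}$ is a chain map of degree $-k$ preserving canonical filtrations. The main obstacle is the following compatibility claim.

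\emph{Claim.} Under the maps $t$, the map $\widetilde{\cap}$ induces capping with the Thom class $\tau$ on $\mathrm{H}_{\bullet}(U\times_G E,\sim F_1\times_G E)$.

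On chains, $t(\alpha\otimes x)=p_*(\alpha\times m_x)$, and one would like $\tau\cap p_*(\alpha\times m_x)=p_*((\iota^*\tau\cap\alpha)\times m_x)$. This does not hold on the nose, since $p^*\tau$ restricted to $U\times \phi_x(\overline W^u(x))$ is only homotopic to $\mathrm{pr}_1^*\iota^*\tau$, using that $\overline W^u(x)$ is contractible and that $\phi_x$ lands in a connected fiber-transported family.

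I would therefore avoid a chain-level identity and argue via spectral sequences. Both sides are filtration-preserving, so it suffices to compare the induced maps on $E^1$ pages and then conclude as in Theorem \ref{theorem_assoc_bundle}. Over a point $\chi_x(\cdot,a)$, the fiber restriction of $\tau$ is the Thom class of the fiber pair, and it agrees with $\iota^*\tau$ up to the identification $\chi_x$, because the Thom class restricts to the Thom class in each fiber (with orientation fixed by $U$ being orientable and by $G$ acting orientation-preservingly, as is implicit in the equivariance of $\iota^*\tau\cap\cdot$). This gives agreement on $E^1$ and hence on $E^\infty$.

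Agreement on $E^\infty$ only determines the map up to filtration. To get agreement on homology itself, I would upgrade this comparison to a chain homotopy. The idea is to use the relative Leray--Serre spectral sequence morphism together with a homotopy $p^*\tau\simeq \mathrm{pr}_1^*\iota^*\tau$ over each $F\times\overline W^u(x)$, built inductively over index using the compatible sections. Field coefficients guarantee the Künneth and cap-product identifications on $E^1$ hold without Tor terms.
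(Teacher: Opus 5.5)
Your decomposition is the paper's: the four arrows of \eqref{eq_gysin_map_via_thom2}, with the first and last realised by the relative version of Proposition \ref{prop_direct_maps_fiber}, and the Thom step realised by $\alpha\otimes x\mapsto(\iota^*\tau\cap\alpha)\otimes x$.

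At the excision step you take a genuinely simpler route. The paper inverts the excision map as a morphism of path modules (Proposition \ref{prop_invertin_quasi-iso}, Example \ref{example_excision_path_module-stuff}) to get an honest chain map $\widetilde{\xi}$; this is where it needs field coefficients. You only use Proposition \ref{prop_smaller_complex} to see that the inclusion-induced map of DG Morse complexes is a quasi-isomorphism, and then invert it in homology. That is correct. You get a zig-zag rather than a single chain map, but the backward arrow is an isomorphism on $E^1$, so you still obtain the map of spectral sequences of Corollary \ref{cor_spectral_sequence}.

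\textbf{The gap is the square \eqref{square_thom}.} As you say yourself, agreement on $E^1$ only controls the associated graded of homology. The ``upgrade to a chain homotopy'' is never constructed, and it cannot be assembled from separate homotopies $p^*\tau\simeq\mathrm{pr}_1^*\iota^*\tau$ over each $U\times\phi_x(\overline{W}^u(x))$.

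\begin{itemize}
\item For a DG Morse cycle $c=\sum_x\alpha_x\otimes x$, the chain $C=\sum_x\alpha_x\times m_x$ is not a relative cycle in $U\times E$.
\item Its boundary is $\partial C=\sum_{x,y}\pm\bigl(\alpha_x\times(m_{x,y}\cdot m_y)-(\alpha_x\cdot m_{x,y})\times m_y\bigr)$. This is killed by $p_*$ but is nonzero.
\item Write $p^*\tau-\mathrm{pr}_1^*\iota^*\tau=\delta\beta$, and let $t_3,t_4$ be the maps of Theorem \ref{theorem_assoc_bundle} for $(U,\sim F_1)$ and $U$. Then $\tau\cap t_3(c)$ and $t_4(\widetilde{\cap}(c))$ differ, modulo boundaries, by the cycle $p_*(\beta\cap\partial C)$.
\item Killing this term requires $\beta$ to be compatible with the two maps $U\times G\times E\to U\times E$, one acting on $U$ and one acting on $E$.
\end{itemize}

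That compatibility is exactly where the $\mathrm{C}_\bullet(G)$-equivariance of $\iota^*\tau\cap\cdot$ must enter, and your sketch never uses the hypothesis at this point.

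The paper disposes of this square in one line. It uses $\tau\cap p_*(\alpha\times m_x)=p_*(p^*\tau\cap(\alpha\times m_x))$ together with $p^*\tau=\iota^*\tau\times 1$ in $\mathrm{H}^k(U\times E,\sim F_1\times E)$. This is the route you set aside. Your objection, that this is only a cohomological identity, is fair: the defect above is what a complete argument has to handle.

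A clean way to handle it is to work downstairs.
\begin{itemize}
\item Suppose $\theta$ is a relative cocycle on $U\times_G E$ with $p^*\theta=\mathrm{pr}_1^*\iota^*\tau$. Then $\theta$ restricts to the same fiber class as $\tau$ in every fiber, so $\tau-\theta=\delta\beta'$.
\item Since $t_3(c)=p_*C$ is a relative cycle, $\tau\cap t_3(c)$ is homologous to $\theta\cap p_*C$.
\item By naturality of the cap product and $(\mathrm{pr}_1^*\iota^*\tau)\cap(\alpha\times m)=\pm(\iota^*\tau\cap\alpha)\times m$, we get $\theta\cap p_*C=p_*\bigl((\mathrm{pr}_1^*\iota^*\tau)\cap C\bigr)=\pm t_4(\widetilde{\cap}(c))$.
\end{itemize}

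When does such a $\theta$ exist?
\begin{itemize}
\item For discrete $G$, the equivariance hypothesis is equivalent to strict invariance $\psi_g^*\iota^*\tau=\iota^*\tau$ (cap a $k$-cube with the $k$-cochain). Then $\mathrm{pr}_1^*\iota^*\tau$ is invariant under the diagonal action and descends along the covering $p$. This covers Proposition \ref{prop_invariant_thom_cc_exists_for_Gamma}.
\item For non-discrete $G$ you need coherent higher homotopies. One option is to extend $(\tau\cap\cdot,\ \iota^*\tau\cap\cdot)$ to a morphism of path modules and apply Proposition \ref{prop_bundle_theorem_and_ainfinity_morphisms}.
\end{itemize}
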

\begin{proof}
    Recall from equation \eqref{eq_gysin_map_via_thom2} that the Gysin map in homology is given by the composition of four maps.
    We will find chain level versions of all four maps.

    The first map is induced by a map of pairs $(F_2\times_G E,\emptyset)\to (F_2\times_G E,\sim F_1\times_G E)$ which itself is induced by a $G$-equivariant map on the fiber pairs $(F_2,\emptyset)\to (F_2,\sim F_1)$.
    We thus get a chain map $\rho_1\colon \mathrm{C}_{\bullet}(M;(\Phi,\mathrm{C}_{\bullet}(F_2)))\to \mathrm{C}_{\bullet}(M;(\Phi,\mathrm{C}_{\bullet}(F_2,\sim F_1)))$ which models this map by Proposition \ref{prop_direct_maps_fiber}, i.e. the square 
    $$
        \begin{tikzcd}
            \mathrm{H}_{\bullet}(M;(\Phi,\mathrm{C}_{\bullet}(F_2))) \arrow[]{r}{t_1} \arrow[]{d}{\rho_1} &  \mathrm{H}_{\bullet}(F_2\times_G E)    \arrow[]{d}{}  \\ 
            \mathrm{H}_{\bullet}(M;(\Phi,\mathrm{C}_{\bullet}(F_2,\sim F_1))) \arrow[]{r}{t_2} & \mathrm{H}_{\bullet}(F_2\times_G E,\sim F_1\times_G E) 
        \end{tikzcd}
    $$
    commutes.
    Similarly, the fourth map is induced by the equivariant retraction on the fibers $U\to F_1$ and Proposition \ref{prop_direct_maps_fiber} thus yields a chain model $\rho_4\colon \mathrm{C}_{\bullet}(M;(\Phi,\mathrm{C}_{\bullet}(U)))\to \mathrm{C}_{\bullet}(M;(\Phi,\mathrm{C}_{\bullet}(F_1)))$ for this map.

    For the second map, note that the map of pairs $\zeta\colon (U\times_G E,\sim F_1\times_G E)\to (F_2\times_G E,\sim F_1\times_G E)$ is induced by the $G$-equivariant inclusion $U\hookrightarrow F_2$.
    Therefore there is an induced chain map between DG Morse complexes $\mathrm{C}_{\bullet}(M;(\Phi,\mathrm{C}_{\bullet}(U,\sim F_1)))\to \mathrm{C}_{\bullet}(M;(\Phi,\mathrm{C}_{\bullet}(F_2,\sim F_1)))$.
    The induced map in homology is an isomorphism by excision.
    Clivio shows in \cite{clivio2025goresky} that in this situation the map of $\mathrm{C}_{\bullet}(G)$-modules $\zeta_*\colon \mathrm{C}_{\bullet}(U,\sim F_1)\to  \mathrm{C}_{\bullet}(F_2,\sim F_1)$ can be inverted up to homotopy as a morphism of $A_{\infty}$-modules.
    We summarize Clivio's construction in Appendix \ref{sec_coherent_chain_homotopies_appenix} and show that it can be applied to our situation.
    Note that we need field coefficients here.
    By Example \ref{example_excision_path_module-stuff} we obtain a chain map $\rho_2 := \widetilde{\xi}\colon \mathrm{C}_{\bullet}(M;(\Phi,\mathrm{C}_{\bullet}(F_2,\sim F_1)))\to \mathrm{C}_{\bullet}(M;(\Phi,\mathrm{C}_{\bullet}(U,\sim F_1)))$  such that the diagram 
    $$
        \begin{tikzcd}
            \mathrm{H}_{\bullet}(M;(\Phi,\mathrm{C}_{\bullet}(F_2,\sim F_1))) \arrow[]{r}{t_2} \arrow[]{d}{\widetilde{\xi}} & \mathrm{H}_{\bullet}(F_2\times_G E,  \sim F_1\times_G E) \arrow[]{d}{\text{excision}} \\
            \mathrm{H}_{\bullet}(M;(\Phi,\mathrm{C}_{\bullet}(U,\sim F_1))) \arrow[]{r}{t_3} & \mathrm{H}_{\bullet}(U\times_G E,\sim F_1\times_G E) 
        \end{tikzcd}
    $$
    commutes.
    For the third map define
    \begin{eqnarray*}
          \rho_3\colon \mathrm{C}_{\bullet}(M;(\Phi,\mathrm{C}_{\bullet}(U,\sim F_1))) & \to & \mathrm{C}_{\bullet} (M;(\Phi,\mathrm{C}_{\bullet}(U))), \\ \alpha\otimes x  & \mapsto  & (\iota^*\tau \cap \alpha) \otimes x .  
    \end{eqnarray*} 
    By assumption, capping with $\iota^*\tau$ is $\mathrm{C}_{\bullet}(G)$-equivariant and therefore $\rho_3$ is a chain map.
    We want to show that the square 
    \begin{equation}\label{square_thom}
        \begin{tikzcd}
            \mathrm{H}_{\bullet}(M;(\Phi,\mathrm{C}_{\bullet}(U,\sim F_1))) \arrow[]{r}{t_3} \arrow[]{d}{\rho_3} & \mathrm{H}_{\bullet}(U\times_G E,\sim F_1\times_G E) \arrow[]{d}{\tau\cap} \\
            \mathrm{H}_{\bullet}(M;(\Phi,\mathrm{C}_{\bullet}(U))) \arrow[]{r}{t_4} & \mathrm{H}_{\bullet}(U\times_G E) 
        \end{tikzcd}
    \end{equation}
    commutes, where $t_3,t_4$ are the maps as in Theorem \ref{theorem_assoc_bundle}.
    Denote by $p\colon U\times E\to U\times_G E$ the canonical projection and note that we have $t_3(\alpha\otimes x) = p_*(\alpha\times m_x)$ for $\alpha\otimes x\in \mathrm{C}_{\bullet}(U,\sim F_1)$ and $x\in\mathrm{Crit}(f)$.
    Here, $m_x = (\phi_x)_*(s_x)$ with $\phi_x\colon \overline{W}_u(x)\to M$ the compatible section, see the proof of Theorem \ref{theorem_assoc_bundle}.
    We have
    $$
        \tau\cap (t_3(\alpha\otimes x)) = \tau\cap (p_*(\alpha\times m_x)) 
        = p_* \big( (p^* \tau)\cap (\alpha\times m_x)\big) .
        $$      
        On the other hand we have
    $$    t_4 (\rho_3(\alpha\otimes x)) = p_* ((\iota^*\tau \cap \alpha)  \times m_x) .    $$
    Now in cohomology we have $p^*\tau  = \iota^*\tau \times 1 \in \mathrm{H}^k(U\times E, \sim F_1\times E)$ and therefore, square \eqref{square_thom} commutes in homology.
    Taking the composition of all four maps yields a chain map $$\rho_4\circ \rho_3\circ \rho_2\circ \rho_1 \colon \mathrm{C}_{\bullet}(M;(\Phi,\mathrm{C}_{\bullet}(F_2)))\to \mathrm{C}_{\bullet}(M;(\Phi,\mathrm{C}_{\bullet}(F_1)))$$ and we have shown that it induces the Gysin map $j_!\colon \mathrm{H}_{\bullet}(F_2\times_G E)\to \mathrm{H}_{\bullet}(F_1\times_G E)$.    
    This completes the proof.
\end{proof}
We obtain an induced map of spectral sequences.
Using our knowledge about the first pages of the spectral sequence, see Section \ref{subsec_prelim_morse_DG}, we can understand this map explicitly on the $E^2$-page.
\begin{cor}\label{cor_spectral_sequence}
    In the situation of Theorem \ref{theorem_gysin_map_chain_map} let $E^r_{p,q}$ be the canonical spectral sequence associated to the DG Morse complex $\mathrm{C}_{\bullet}(M;(\Phi,\mathrm{C}_{\bullet}(F_2)))$ and let $\mathcal{E}^r_{p,q}$ be the canonical spectral sequence associated to the DG Morse complex $\mathrm{C}_{\bullet}(M;(\Phi,\mathrm{C}_{\bullet}(F_1)))$. 
    The chain map $\rho \colon \mathrm{C}_{\bullet}(M;(\Phi,\mathrm{C}_{\bullet}(F_2)))\to \mathrm{C}_{\bullet}(M;(\Phi,\mathrm{C}_{\bullet}(F_1)))$ induces a map of spectral sequences $ E_{p,q}^r \to \mathcal{E}_{p,q-k}^r$ which converges to the Gysin morphism $j_!\colon \mathrm{H}_{\bullet}(F_2\times_G E)\to \mathrm{H}_{\bullet}(F_1\times_G E)$.
    On the $E^2$-page the map of spectral sequences is given by
    $$    \mathrm{H}_p(\widetilde{C}_{\bullet};\mathrm{H}_q(F_2)) \to \mathrm{H}_p(\widetilde{C}_{\bullet}; \mathrm{H}_{q-k}(F_1))        $$
    which is induced by the Gysin map $i_!\colon \mathrm{H}_q(F_2)\to \mathrm{H}_{q-k}(F_1)$ in the fiber.
\end{cor}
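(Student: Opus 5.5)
The plan is to show that each of the four chain maps $\rho_1,\rho_2,\rho_3,\rho_4$ from the proof of Theorem \ref{theorem_gysin_map_chain_map} respects the canonical filtrations $F_p$, possibly after a shift in the $q$-degree. Then I will identify the induced maps on the $E^1$ and $E^2$ pages, and finally invoke convergence.

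\textbf{Filtration.} The maps $\rho_1,\rho_4$, and more generally any map of the form $\alpha\otimes x\mapsto g(\alpha)\otimes x$, are filtration preserving by \cite[Proposition 4.4]{barraud2025morse}. The same holds for $\rho_3$: it has this form with $g=\iota^*\tau\cap\cdot$, which lowers the fiber degree by $k$, so it sends $E^r_{p,q}$ to $E^r_{p,q-k}$. The map $\rho_2=\widetilde{\xi}$ from Clivio's $A_\infty$-inverse in Appendix \ref{sec_coherent_chain_homotopies_appenix} needs more care. It has the form
$$\widetilde{\xi}(\alpha\otimes x)=\sum_{y} \xi_{(\ldots)}(\alpha\otimes m_{x,z_1}\otimes\cdots)\otimes y$$
with $|y|\le |x|$. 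Hence it is still filtration preserving, and its associated graded is just $\xi_1\otimes\mathrm{id}$, where $\xi_1$ is the linear part, a homotopy inverse of the excision map $\zeta_*$.

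\textbf{$E^1$ and $E^2$ pages.} On the associated graded, each $\rho_i$ acts as $(\rho_i)_{\mathrm{fib}}\otimes\mathrm{id}_{R\langle\mathrm{Crit}_p(f)\rangle}$. On $E^1_{p,q}=\mathrm{H}_q(\cdot)\otimes R\langle \mathrm{Crit}_p(f)\rangle$ the composite therefore becomes $(r_*\circ(\iota^*\tau\cap)\circ \zeta_*^{-1}\circ \mathrm{quot})\otimes \mathrm{id}$. Restricted to the fiber $U$ over the base point, $\iota^*\tau$ represents the Thom class of $U\to F_1$, because the Thom class restricts fiberwise. So the fiber composite is exactly the Gysin map $i_!\colon \mathrm{H}_q(F_2)\to\mathrm{H}_{q-k}(F_1)$, defined by the analogue of \eqref{eq_gysin_map_via_thom2} for $F_1\subseteq F_2$.

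By \cite[Lemma 4.3]{barraud2025morse}, $E^2_{p,q}\cong \mathrm{H}_p(\widetilde C_\bullet;\mathrm{H}_q(\cdot))$, and this identification is natural in filtration-preserving maps of the above type. The induced $E^2$ map is therefore $\mathrm{H}_p$ of $i_!\otimes \mathrm{id}$. This requires $i_!$ to be $\mathrm{H}_0(G)$-linear, which holds because each fiber chain map is $\mathrm{C}_\bullet(G)$-equivariant, up to the homotopy in the $A_\infty$ case; on homology this gives linearity.

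\textbf{Convergence.} Both filtrations are bounded, since $0\le p\le \dim M$, so the spectral sequences converge and the morphism of spectral sequences converges to the map induced by $\rho$ on homology. By Theorem \ref{theorem_gysin_map_chain_map} together with the isomorphisms $t$ of Theorem \ref{theorem_assoc_bundle}, that map is $j_!$.

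\textbf{Main obstacle.} The delicate step is the $\rho_2$ argument: one must check that the higher components of the $A_\infty$-inverse only contribute terms with strictly lower critical-point index, and so vanish on the associated graded. I would confirm this directly from the explicit formula for $\widetilde{\xi}$ in the appendix.
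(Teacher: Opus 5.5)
Your proposal is correct and follows the route the paper indicates but leaves implicit. Each of $\rho_1,\dots,\rho_4$ preserves the canonical filtration, and the higher terms of $\widetilde{\xi}$ strictly lower the Morse index, as is immediate from Riegel's formula. Hence the $E^1$ map is the fiber Gysin composite tensored with the identity, the $E^2$ description follows from $E^2_{p,q}\cong \mathrm{H}_p(\widetilde{C}_{\bullet};\mathrm{H}_q(\cdot))$, and convergence follows from boundedness of the filtration. The one point to make explicit is that you need the fiber component $\xi_1^{\mathcal{F}}$, and not just the total-space map $\xi_1$ of Proposition \ref{prop_invertin_quasi-iso}, to invert the fiber excision $\zeta_*$ in homology; the paper's description of Clivio's construction as an $A_\infty$-module inverse of $\zeta_*$ supplies this.
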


It remains to show that there are interesting cases where a cocycle satisfying the condition of Theorem \ref{theorem_gysin_map_chain_map} exists.

\begin{prop}\label{prop_invariant_thom_cc_exists_for_Gamma}
    Let $G= \Gamma$ be a finite abelian group and let $q\colon E\to M$ be a $\Gamma$-principal bundle.
    Let $F_1\subseteq U\subseteq F_2$ be $\Gamma$-spaces as in Theorem \ref{theorem_gysin_map_chain_map} and take rational coefficients.
    Assume that $\Gamma$ acts in an orientation-preserving manner on $U$.
    Then there exists a cocycle $\tau\in \mathrm{C}^k(U\times_{\Gamma} E,\sim F_1\times_{\Gamma} E)$ representing the Thom class such that the chain map $\iota^*\tau \cap \colon \mathrm{C}_{\bullet}(U,\sim F_1)\to \mathrm{C}_{\bullet-k}(F_1)$ is $\mathrm{C}_{\bullet}(\Gamma)$-equivariant.
\end{prop}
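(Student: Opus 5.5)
The plan is to average a Thom cocycle over the finite group $\Gamma$. Write $X = U\times_\Gamma E$ and $X_1 = F_1\times_\Gamma E$. Since $\Gamma$ acts by orientation-preserving vector bundle automorphisms of the oriented bundle $P\to F_1$, the associated bundle $P\times_\Gamma E\to X_1$ is oriented, and $X\cong P\times_\Gamma E$ is a tubular neighborhood of rank $k$. Hence a Thom class $[\tau_0]\in \mathrm{H}^k(X,\sim X_1;\mathbb{Q})$ exists. Pick any representing cocycle $\tau_0$. Because $\mathrm{C}_\bullet(\Gamma)=\mathbb{Q}[\Gamma]$ is concentrated in degree $0$, $\mathrm{C}_{\bullet}(\Gamma)$-equivariance of the chain map $\iota^*\tau\cap\cdot$ just means
\[
\iota^*\tau\cap(\alpha\cdot g) = (\iota^*\tau\cap\alpha)\cdot g
\]
for all $g\in\Gamma$ and all chains $\alpha$. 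By naturality of the cap product, $(\iota^*\tau\cap\alpha)\cdot g = (R_g)_*(\iota^*\tau\cap \alpha) = (R_{g^{-1}})^*\iota^*\tau\cap (R_g)_*\alpha$, where $R_g$ denotes the right action on $U$. So it suffices to arrange that the cochain $\sigma:=\iota^*\tau\in \mathrm{C}^k(U,\sim F_1)$ is strictly $\Gamma$-invariant, i.e. $R_g^*\sigma=\sigma$ for all $g$.

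The difficulty is that $\tau$ must live on $X$, not on $U$. I will handle this through the compatibility of $\iota$ with the action. Note that $\iota(x\cdot g)=[x g, e_0]=[x, g e_0]$. The map $L_g\colon X\to X$, $[x,e]\mapsto [x, g e]$, is well defined because $\Gamma$ is abelian: $[x h^{-1}, h g e]=[x h^{-1}, g h e]$. Moreover $L_g$ is a fiberwise homeomorphism of pairs over $M$ covering the identity, and $\iota\circ R_g = L_g\circ\iota$. Now define
\[
\tau := \tfrac{1}{|\Gamma|}\sum_{g\in\Gamma} L_g^*\tau_0 .
\]
This is a relative cocycle, and by construction $L_h^*\tau=\tau$, so $R_h^*\iota^*\tau = \iota^*L_h^*\tau=\iota^*\tau$, which gives the required invariance.

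It remains to check that $\tau$ still represents the Thom class. This is where rational coefficients enter, through dividing by $|\Gamma|$, and I expect it to be the main point to verify. It suffices that $L_g^*[\tau_0]=[\tau_0]$ for each $g$. The Thom class is characterized by restricting to a generator compatible with the orientation on each fiber pair $(P_p,P_p\setminus 0)$. $L_g$ maps the fiber over $[x,e]$ to the fiber over $[x,ge]$, which lies over the same point of $M$. Since the fibers of $E$ over a point are permuted, $[x,ge]=[x\cdot g, e]$, so on the fiber $P$ the map $L_g$ is the bundle automorphism induced by the action of $g$. This action is orientation preserving by assumption. Hence $L_g^*$ preserves the local orientation generators, so $L_g^*[\tau_0]$ is again a Thom class and equals $[\tau_0]$ by uniqueness. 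Averaging then gives $[\tau]=[\tau_0]$.

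Finally, the target in the statement is $\mathrm{C}_{\bullet-k}(F_1)$ rather than $\mathrm{C}_{\bullet-k}(U)$. Composing with the $\Gamma$-equivariant retraction $U\to F_1$ preserves equivariance, so this case is covered as well.
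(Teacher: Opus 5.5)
Your proposal is correct and is essentially the paper's argument. Your map $L_g[x,e]=[x,ge]$ coincides with the paper's $\chi_g[x,e]=[x\cdot g,e]$ (since $[x,ge]=[x\cdot g,e]$), and, as in the paper, you average a Thom cocycle over $\Gamma$, use $\iota\circ R_g=L_g\circ\iota$ to obtain an invariant cochain on $U$, and deduce equivariance from naturality of the cap product; your check that each $L_g^*$ fixes the Thom class (fiberwise orientation preservation plus uniqueness) just spells out a step the paper only asserts.
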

\begin{proof}
    Recall that the associated bundle $U\times_{\Gamma}E$ is defined by quotienting out the action $$U\times E\times \Gamma \to U\times E , \quad    ((x,e),h) \mapsto (x\cdot h, h^{-1}\cdot  e)   . $$
    We define a map
    $$  \chi\colon   U\times_{\Gamma} E \times \Gamma \to U\times_{\Gamma} E, \quad ([x,e],g)\mapsto [x\cdot  g,e] .        $$
    Since $\Gamma$ is abelian this is well-defined and again a group action.
    Moreover, the embedding $\iota\colon U\hookrightarrow U\times_{\Gamma} E, x\mapsto [x,e_0]$ is an equivariant map.
    Let $\tau\in \mathrm{C}^k(U\times_{\Gamma} E, \sim F_1\times_{\Gamma} E;\mathbb{Q})$ be a cocycle representing the Thom class.
    Define 
    \begin{equation} \label{eq_averaged_cocycle} \tau_{\Gamma} = \frac{1}{|\Gamma|} \sum_{g\in \Gamma} (\chi_g)^* \tau \in \mathrm{C}^k(U\times_{\Gamma} E,\sim F_1\times_{\Gamma} E) .    \end{equation}
    One checks that $\tau_{\Gamma}$ is $\Gamma$-invariant in the sense that 
    $
           \chi_g^* \tau_{\Gamma} = \tau_{\Gamma} $ for all $g\in\Gamma$.
    Since $\Gamma$ acts in an orientation-preserving manner the cocycle $\tau_{\Gamma}$ represents the Thom class.
    We claim that $\iota^*\tau_{\Gamma}\in \mathrm{C}^k(U,\sim F_1)$ is such that capping with $\iota^*\tau$ is $\mathrm{C}_{\bullet}(\Gamma)$-equivariant.
    Recall that $\mathrm{C}_{\bullet}(\Gamma) = \mathrm{C}_0(\Gamma)$.
    Let $g\in \Gamma$ and consider the $0$-chain $g\in\mathrm{C}_0(\Gamma)$.
    Recall that the $\mathrm{C}_{\bullet}(\Gamma)$-action on $U$ is induced by the group action $\psi\colon U\times G\to U$ and denote the map $p\mapsto \psi(p,g)$ for fixed $g\in G$ by $\psi_g\colon U\to U$.
    For $\alpha\in \mathrm{C}_{\bullet}(U,\sim F_1)$ we have
    \begin{eqnarray*}
       &   &     \iota^* \tau_{\Gamma} \cap (\psi_* (\alpha \times g )) = \iota^*\tau_{\Gamma} \cap (\psi_g)_* (\alpha)  =   (\psi_g)_* \big(  (\psi_g)^*\iota^*\tau_{\Gamma}  \cap \alpha    \big)  \\ &  & = (\psi_g)_* \big( \iota^* (\chi_g^*\tau_{\Gamma}) \cap \alpha\big)      =  \psi_* \big( (\iota^* \tau_{\Gamma}\cap \alpha) \times g\big) .
    \end{eqnarray*}
    This shows that the chain map $(\iota^*\tau )\cap \cdot \colon \mathrm{C}_{\bullet}(U,\sim F_1)\to \mathrm{C}_{\bullet}(U)$ is $\mathrm{C}_{\bullet}(\Gamma)$-equivariant as claimed. 
\end{proof}
\begin{remark}
    It is desirable to find larger classes of examples where cocycles satisfying the condition of Theorem \ref{theorem_gysin_map_chain_map} exist.
    The averaging in equation \eqref{eq_averaged_cocycle} with respect to the counting measure on the finite group $\Gamma$ could be done more generally on de Rham-cochains by averaging with respect to the Haar measure on a compact abelian Lie group $G$.
    However, it requires more work to show that capping with $i^*\tau_{\Gamma}$ is $\mathrm{C}_{\bullet}(\Gamma)$-equivariant.
    Moreover, de Rham-cochains do not act directly on cubical chains but only on smooth ones, so some more care is required.
    Clivio \cite{clivio2025goresky} shows how one can use smoothening of chains in order to reconcile these two situations.
\end{remark}

We conclude by sketching how one can treat maps in the base manifold.
The main complication here arises from the challenge of pulling back transport functions along smooth maps $f\colon N\to M$.
In general we do not have a good notion of pullback of transport functions.
Following ideas similar to \cite[Section 9]{barraud2025morse} we shall now sketch how this problem can be circumvented by replacing a smooth map $f\colon N\to M$ with an embedding $f'\colon N\hookrightarrow M\times \mathbb{D}^k$ where $\mathbb{D}^k$ is a Euclidean disc.

Let $U\subseteq M$ be a codimension $0$ submanifold with $\partial U\subseteq M$.
Choose Morse data $(f_U,X_U)$ on the manifold with boundary $U$ by which we mean the following.
The function $f_U\colon U\to \mathbb{R}$ has critical points only in the interior of $U$.
The critical points are non-degenerate and $X_U$ is a pseudo-gradient which is inward-pointing on $\partial U$.
We extend this to Morse data $(f_M,X_M)$ on $M$.
For the function $f_U$ one defines moduli spaces as before and we note that we have $\overline{\mathcal{L}}^U(x,y) = \overline{\mathcal{L}}^M(x,y)$ where $\overline{\mathcal{L}}^U(x,y)$ refers to the moduli spaces defined by $f_U$ and $\overline{\mathcal{L}}^M(x,y)$ means the moduli spaces for $(f_M,X_M)$ as before.
We obtain a flow category $\mathcal{M}_{f_U}$
and we can again construct a space $K$ out of the moduli spaces as in Section \ref{subsec_morse_flow}.
One can show that the space $ K $ admits a topological embedding $\sigma\colon K\hookrightarrow U$ such that $\sigma(K)$ is a strong deformation retract of $U$.
We do note provide a proof but we note that this is done by retracting the flow lines which run inwards from the boundary of $U$ to their endpoint in the interior of $U$.
Let $G$ be a topological group and let $\Phi\colon \mathrm{mor}(\mathcal{M}_{f_M})\to G$ be a transport function.
One can define a map $\Phi'\colon \mathrm{mor}(\mathcal{M}_{f_U}) \to G$ by $$   \Phi'|_{\overline{\mathcal{L}}^U(x,y)} =  \Phi|_{\overline{\mathcal{L}}^M(x,y) }  $$
under the identification $\overline{\mathcal{L}}^M(x,y) = \overline{\mathcal{L}}^U(x,y)$.
If one introduces the obvious notion of a transport function for manifolds with boundary then $\Phi'$ is a transport function.
For a $\mathrm{C}_{\bullet}(G)$-module $\mathcal{F}_{\bullet}$ there is thus also a DG Morse complex and we can define a map $$i_*\colon \mathrm{C}_{\bullet}(U;(\Phi',\mathcal{F}_{\bullet}))\to  \mathrm{C}_{\bullet}(M; (\Phi,\mathcal{F}_{\bullet})) ,\quad   i_*(\alpha\otimes x) = \alpha\otimes x  .  $$
It is easy to see that this is a chain map.
If $N\subseteq M$ is a closed submanifold we can define morphisms through direct maps in the base as follows.
Let $U \subseteq M$ be a tubular neighborhood with boundary $\partial U \subseteq M$.
Choose a Morse function $f\colon N\to\mathbb{R}$ and extend this to a Morse function $F_U \colon U\to \mathbb{R}$ as in \cite[Section 9.2]{barraud2025morse}.
We have $\overline{\mathcal{L}}_{F_U}(x,y) = \overline{\mathcal{L}}_f(x,y)$ for $x,y\in\mathrm{Crit}(f)$.
If $\Phi'\colon \mathrm{mor}(\mathcal{M}_{F_U})\to G$ is a transport function in $U$ as above then we get an induced transport function $\Phi''\colon \mathrm{mor}(\mathcal{M}_f)\to G$ on $N$.
Again one gets an obvious map between the DG Morse complexes $\mathrm{C}_{\bullet}(N;(\Phi'',\mathcal{F}_{\bullet}))\to \mathrm{C}_{\bullet}(U;(\Phi',\mathcal{F}_{\bullet}))$.
Combining the two maps we obtain a map $\mathrm{C}_{\bullet}(N,(\Phi'',\mathcal{F}_{\bullet}))\to \mathrm{C}_{\bullet}(M;(\Phi,\mathcal{F}_{\bullet}))$.
Finally, if $f\colon N\to M$ is an arbitrary map, we can do the following.
Let $\chi\colon N\hookrightarrow \mathbb{D}^k$ be an embedding for some $k\in\mathbb{N}$.
Then the map $(f,\chi)\colon N\to M\times \mathbb{D}^k$ is an embedding as well.
We can thus run our sketched construction for the map $(f,\chi)$ and since $M\times \mathbb{D}^k\simeq M$ we expect this to yield a chain model for the map $f$ under some suitable isomorphisms.

Of course, this is only a rough sketch of the construction of direct maps in the base and many details are left open.
In particular, it would be interesting to treat the connection between transport functions and principal bundles on manifolds with boundary in style of Section \ref{sec_transport_function}.

\appendix

\section{Proof of Theorem \ref{theorem_isotopic_tr_induces_quasi-iso}}\label{sec_proof_isotopy_quasi}

In this section we prove the second part of Theorem \ref{theorem_isotopic_tr_induces_quasi-iso}.
\begin{claim}
     Let $(M,f,X,o,\{s_{x,y}\})$ be enhanced Morse data and let $G$ be a topological group.
    Let $\mathcal{F}_{\bullet}$ be a DG $\mathrm{C}_{\bullet}(G)$-module and let $\Phi,\Psi\colon \mathrm{mor}(\mathcal{M}_f)\to G$ be transport functions.
    Take chain complexes with $\mathbb{Z}_2$-coefficients.
    If $\Phi$ and $\Psi$ are isotopic then the DG Morse complexes $\mathrm{C}_{\bullet}(M;(\Phi,\mathcal{F}_{\bullet}))$ and $\mathrm{C}_{\bullet}(M;(\Psi,\mathcal{F}_{\bullet}))$ are quasi-isomorphic.
\end{claim}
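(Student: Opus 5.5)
The plan is to build an explicit filtered chain map $\mathrm{C}_{\bullet}(M;(\Phi,\mathcal{F}_{\bullet}))\to \mathrm{C}_{\bullet}(M;(\Psi,\mathcal{F}_{\bullet}))$ of the form $\alpha\otimes x\mapsto \alpha\otimes x+\sum_{|y|<|x|}\Psi(\alpha\otimes \nu_{x,y})\otimes y$, where $\Psi$ here denotes the module structure map. It will then be a quasi-isomorphism by a spectral sequence comparison. The correction chains $\nu_{x,y}\in \mathrm{C}_{|x|-|y|}(G)$ should come from the isotopy $\mathcal{H}\colon \mathrm{mor}(\mathcal{M}_f)\times[0,1]\to G$. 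The naive candidate is $\nu_{x,y}=(\mathcal{H}_{x,y})_*(s_{x,y}\times [0,1])$, where $[0,1]$ denotes the standard $1$-cube. This fails because $\mathcal{H}(u_1\circ u_2,t)=\mathcal{H}(u_1,t)\mathcal{H}(u_2,t)$ is multiplicative only for the \emph{same} parameter $t$, so the boundary of $s_{x,z}\times s_{z,y}\times[0,1]$ is not a product of chains. This is the reason for a construction "not used otherwise in this paper".

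To fix it, I would first reparametrize the cylinder. The goal is a system of chains $\sigma_{x,y}\in \mathrm{C}_{|x|-|y|}(\overline{\mathcal{L}}(x,y)\times[0,1])$, together with maps $\Theta_{x,y}\colon \overline{\mathcal{L}}(x,y)\times[0,1]\to G$, satisfying
\begin{equation*}
\partial \nu_{x,y}= m^{\Psi}_{x,y}-m^{\Phi}_{x,y}+\sum_{z}\bigl(\nu_{x,z}\, m^{\Psi}_{z,y}+ m^{\Phi}_{x,z}\,\nu_{z,y}\bigr),
\end{equation*}
where $\nu_{x,y}=(\Theta_{x,y})_*\sigma_{x,y}$. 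This is exactly the identity needed for the map above to be a chain map; the signs are irrelevant over $\mathbb{Z}_2$, which is why we restrict to $\mathbb{Z}_2$. The geometric model is the standard "bimodule/continuation" flow category. For a broken line $u_1\circ u_2$, the cylinder's boundary stratum is parametrized by $\overline{\mathcal{L}}(x,z)\times\overline{\mathcal{L}}(z,y)\times[0,1]$, and $\Theta$ is evaluated there as $\mathcal{H}(u_1,\tau)\,\Psi(u_2)$ or $\Phi(u_1)\,\mathcal{H}(u_2,\tau)$. Concretely, I would define $\Theta_{x,y}(u,t)$ by inducting on $|x|-|y|$. Using the collar $U_{x,y}$ and the separation functions $\rho^j_{x,y}$ from Section \ref{subsec_separation_functions}, $\Theta_{x,y}(u,t)$ acts on each maximally decomposed piece $u_i$ as $\Psi(u_i)$ if $u_i$ lies "below" a moving threshold level $\lambda(t)$ and as $\Phi(u_i)$ above it, with $\mathcal{H}(u_i,\cdot)$ interpolating on the piece crossing the threshold. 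The threshold $\lambda(t)$ sweeps from $|y|$ to $|x|$ as $t$ goes from $0$ to $1$. Then $\Theta(\cdot,0)=\Phi$ and $\Theta(\cdot,1)=\Psi$, and on boundary faces it splits as the two products above. Continuity across strata follows from multiplicativity of $\mathcal{H}(\cdot,t)$ at a common parameter, in the same way as in Lemma \ref{lemma_continuity_compatible_section_on_boundary}.

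Next I would construct $\sigma_{x,y}$ inductively, following the existence proof of representing chain systems \cite[Proposition 5.6]{barraud2025morse}. At each step the prescribed boundary chain $s_{x,y}\times\{1\}-s_{x,y}\times\{0\}+\sum(\sigma_{x,z}\times s_{z,y}+s_{x,z}\times\sigma_{z,y})$ is a cycle relative to the corner stratification; this is checked using the inductive hypothesis and the relation for $\partial s_{x,y}$. Since $\overline{\mathcal{L}}(x,y)\times[0,1]$ is a manifold with boundary whose relative homology is concentrated in the top degree, the prescribed chain bounds. Pushing forward by $\Theta_{x,y}$ gives the displayed identity. This step is the main obstacle. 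The product $\sigma_{x,z}\times s_{z,y}$ lives on $\overline{\mathcal{L}}(x,z)\times[0,1]\times\overline{\mathcal{L}}(z,y)$, so the threshold parametrization must identify this set with the corresponding face of the cylinder compatibly, in a way that makes $\Theta$ restrict to the product of $\mathcal{H}$ and $\Psi$ there. I would first try to arrange this by making $t$ factor through the single coordinate $\lambda$ on the relevant face.

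Finally, the map preserves the canonical filtration, and on $E^1=\mathrm{H}(\mathcal{F})\otimes R\langle\mathrm{Crit}_p(f)\rangle$ it is the identity. By the comparison theorem \cite[Proposition VII.2.6]{brown1982cohomology} it is therefore a quasi-isomorphism.
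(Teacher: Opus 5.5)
Your algebra is right: the displayed identity is exactly what makes $\alpha\otimes x\mapsto\alpha\otimes x+\sum\alpha\cdot\nu_{x,y}\otimes y$ a chain map over $\mathbb{Z}_2$. The filtration argument at the end is also fine. Your threshold picture even handles the bookkeeping of two terms per $z$, since the face over $\overline{\mathcal{L}}(x,z)\times\overline{\mathcal{L}}(z,y)$ splits at $\lambda=|z|$.

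The gap is the map $\Theta_{x,y}$ itself. As you describe it, it is not continuous, and the reason you give for continuity is precisely the property that fails. An unbroken $u$ has only one piece, so your recipe forces $\Theta_{x,y}(u,t)=\mathcal{H}_{x,y}(u,s(t))$ with a single parameter. If $u_n\to u_1\circ u_2$ with $t$ fixed, then $\Theta(u_n,t)\to\mathcal{H}(u_1,s)\,\mathcal{H}(u_2,s)$. At the broken line, however, you prescribe the staggered value $\Phi(u_1)\,\mathcal{H}(u_2,s')$ or $\mathcal{H}(u_1,s'')\,\Psi(u_2)$. These disagree in general already when $|x|-|y|=2$.

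The same jump occurs between boundary strata. If a piece degenerates, $u_2\to v_1\circ v_2$, the limit is synchronized in $v_1,v_2$, but the value at the limit point is staggered. Multiplicativity of $\mathcal{H}(\cdot,t)$ at a common parameter, as in Lemma \ref{lemma_continuity_compatible_section_on_boundary}, only ever yields synchronized products. Collars and separation functions locate levels on a flow line, but they do not let you evaluate $\Phi$, $\Psi$ or $\mathcal{H}$ on a \emph{piece} of an unbroken line. So the obstruction you correctly diagnosed for $(\mathcal{H}_{x,y})_*(s_{x,y}\times[0,1])$ has only been moved into a discontinuity at the corners of the cylinder.

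The missing idea is to give the parameters on the pieces of a broken line room to vary independently. The paper replaces the cylinder by $W_{a,b}\subseteq\overline{\mathcal{L}}(a,b)\times\Delta^n_{\mathrm{st}}$. Over the stratum $\mathcal{L}(a,c_1)\times\cdots\times\mathcal{L}(c_\ell,b)$ it places the simplex $\Delta^{\ell+1}$ of ordered tuples $\tau_1\ge\cdots\ge\tau_{\ell+1}$, and sets $k^{\mathcal{H}}_{a,b}=\mathcal{H}(u_1,\tau_1)\cdots\mathcal{H}(u_{\ell+1},\tau_{\ell+1})$.

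Limits from lower strata land only on the faces $\tau_i=\tau_{i+1}$, in particular on the diagonal. There, continuity does follow from multiplicativity at a common parameter. The faces $\tau_1=1$ and $\tau_{\ell+1}=0$ assemble into two distinct boundary pieces, $\overline{\mathcal{L}}(a,c)\times W_{c,b}$ and $W_{a,c}\times\overline{\mathcal{L}}(c,b)$. On these, $k^{\mathcal{H}}$ restricts to $\Psi\cdot k^{\mathcal{H}}_{c,b}$ and $k^{\mathcal{H}}_{a,c}\cdot\Phi$ respectively. One then proves that $W_{a,b}$ is a manifold with exactly this boundary (Lemma \ref{lemma_w_ab_is_mfld}) and builds the chains $w_{a,b}$ inductively.

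You could salvage your route by reparametrizing a collar so that it sweeps out these simplices, moving from the diagonal tuple to your staggered tuple, coherently over all corners. That amounts to building $W_{x,y}$ together with a face-compatible homeomorphism to the cylinder, which is the real content of the proof and is absent from your proposal.

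A smaller point: the prescribed chain does not bound because relative homology is concentrated in top degree; that is false in general. It bounds because it represents the fundamental class of the boundary.
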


Before we give the details of the proof we outline the general strategy.
We want to come up with a chain map $$\mathrm{C}_{\bullet} (M;(\Phi,\mathcal{F}_{\bullet}))  =  \mathcal{F}_{\bullet}\otimes \mathbb{Z}_2\langle \mathrm{Crit}(f) \rangle \, \xrightarrow[]{\hphantom{ebv}} \, \mathcal{F}_{\bullet}\otimes \mathbb{Z}_2\langle \mathrm{Crit}(f)\rangle  = \mathrm{C}_{\bullet}(M,(\Psi,\mathcal{F}_{\bullet})) .   $$
In \cite[Sections 2 and 4]{barraud2025morse} Barraud, Damian, Humilière and Oancea study certain maps between DG Morse complexes which are suitable for our situation.
Let $m_{x,y}^{\Phi}\in \mathrm{C}_{\bullet}(G)$ denote the twisting cocycle induced by the transport function $\Phi$ and let $m_{x,y}^{\Psi}\in \mathrm{C}_{\bullet}(G)$ denote the twisting cocycle induced by $\Psi$.
Assume that $n_{x,y}\in \mathrm{C}_{|x|-|y|}(G)$ is a collection of chains such that
    \begin{equation}\label{eq_chain_nab}
        \partial n_{x,y} = m^{\Phi}_{x,y}  + m^{\Psi}_{x,y} + \sum_{|y|<|c|<|x|} (n_{x,c}\cdot m_{c,y}^{\Phi} + m^{\Psi}_{x,c}\cdot n_{c,y} ).
    \end{equation}
        
Then the map $\Xi\colon \mathrm{C}_{\bullet}(M;(\Phi,\mathcal{F}_{\bullet})) \to \mathrm{C}_{\bullet}(M;(\Psi,\mathcal{F}_{\bullet}))$ given by
$$    \Xi(\alpha\otimes x) = \alpha\otimes y + \sum_{|y|<|x|} \alpha\cdot n_{x,y}\otimes y    $$
is a chain map and a quasi-isomorphism by \cite[Proposition 4.10]{barraud2025morse}.
In order to come up with the collection of chains $\{n_{x,y}\}$ we want to use the homotopies $\mathcal{H}_{x,y}\colon\overline{\mathcal{L}}(x,y)\times I\to G $ given by the isotopy between $\Phi$ and $\Psi$.
A first approach would have been to define the chains $n_{x,y}$ by pushing forward chains representing the fundamental class of $\overline{\mathcal{L}}(x,y)\times [0,1]$ through $\mathcal{H}_{x,y}$.
However, this does not lead to the right behaviour of the boundary of $n_{x,y}$.
Instead, we will enlarge the space $\overline{\mathcal{L}}(x,y)\times [0,1]$ in a specified way.
Heuristically speaking we attach simplexes to the boundary strata such that the dimension of the simplex increases with the codimension of the stratum.
See Figure \ref{fig_space_wab} for a sketch of the case $|x|-|y| = 2$.

Let $(M,f,X,o,\{s_{x,y}\})$ be enhanced Morse data and let $\mathcal{H}\colon \mathrm{mor}(\mathcal{M}_f)\times I \to G$ be an isotopy between the transport functions $\Phi$ and $\Psi$. 
We shall define spaces $W_{x,y}$ for critical points $x,y\in\mathrm{Crit}(f)$ with $|x| > |y|$ as well as maps $k^{\mathcal{H}}_{x,y}\colon W_{x,y}\to G$.
We want to have an inclusion $\overline{\mathcal{L}}(x,y)\times [0,1]\hookrightarrow W_{x,y}$ and we want $W_{x,y}$ to be topological manifold with boundary
$$    \partial W_{x,y} \cong \overline{\mathcal{L}}(x,y) \times \partial I \cup   \bigcup_{|y|<|z| < |x|} (W_{x,z}\times \overline{\mathcal{L}}(z,y) \cup \overline{\mathcal{L}}(x,z)\times W_{z,y})   .  $$
The restriction of $k^{\mathcal{H}}_{x,y}$ to $\overline{\mathcal{L}}(x,y)\times [0,1]$ should simply by $\mathcal{H}$.
The restriction of the map $k^{\mathcal{H}}_{x,y}$ to the boundary $\partial W_{x,y}$ should satisfy
\begin{eqnarray*}
    k^{\mathcal{H}}_{x,y}|_{\overline{\mathcal{L}}(x,y)\times \{0\}} = \Phi_{x,y} , \,\, & & \quad k^{\mathcal{H}}_{x,y}|_{\overline{\mathcal{L}}(x,y)\times \{1\}} = \Psi_{x,y}, \\
    k^{\mathcal{H}}_{x,y}|_{W_{x,z}\times \overline{\mathcal{L}}(z,y)} = \mu\circ (k^{\mathcal{H}}_{x,z}\times \Phi_{z,y}), \,\, &  & \quad k^{\mathcal{H}}_{x,y}|_{\overline{\mathcal{L}}(x,z)\times W_{z,y}} =\mu \circ ( \Psi_{x,z}\times k^{\mathcal{H}}_{z,y}) 
 \end{eqnarray*}
 where $\mu\colon G\times G\to G$ is the multiplication in $G$.
 The desired chain $n_{x,y}\in \mathrm{C}_{|x|-|y|}(G)$  as in equation \eqref{eq_chain_nab} is then obtained by pushing forward a certain representative of the fundamental class of $W_{x,y}$ via the map $k^{\mathcal{H}}_{x,y}$.

\medskip

We shall now construct the spaces $W_{x,y}$.
For $|x|-|y| = 1$ recall that $\mathcal{L}(x,y) = \overline{\mathcal{L}}(x,y)$ is discrete and we set $W_{x,y} = \overline{\mathcal{L}}(x,y)\times I$.
This is a disjoint union of copies of the interval and we therefore get $\partial W_{x,y} = \overline{\mathcal{L}}(x,y)\times \partial I$.
We set $k^{\mathcal{H}}_{x,y}\colon W_{x,y}\to G$ to be $k^{\mathcal{H}}_{x,y} = \mathcal{H}_{x,y}\colon \mathcal{L}(x,y)\times I\to G$.
Since $\mathcal{H}$ is an isotopy of transport functions we have $k^{\mathcal{H}}_{x,y}|_{\overline{\mathcal{L}}(x,y)\times \{0\}} = \Phi_{x,y}$ and $k^{\mathcal{H}}_{x,y}|_{\overline{\mathcal{L}}(x,y)\times \{1\}} = \Psi_{x,y}$ as desired.

Assume that $W_{x,y}$ has been defined for all pairs of critical points $x,y\in\mathrm{Crit}(f)$ with $ |x|-|y| \leq k$.
Let $a,b\in\mathrm{Crit}(f)$ be critical points with $|a| - |b| = k+1$.
Recall that we have
$$    \overline{\mathcal{L}}(a,b) = \mathcal{L}(a,b) \,\cup \,   \bigcup_{\substack{c_1,\ldots, c_{\ell} \in \mathrm{Crit}(f)\\ |b| < |c_{\ell}|<\ldots < |c_1|  <|a|}}   \mathcal{L}(a,c_1)\times \mathcal{L}(c_1,c_2)\times \ldots \times \mathcal{L}(c_{\ell},b) .   $$
We want to define a space $W_{a,b}\subseteq \overline{\mathcal{L}}(a,b)\times \mathbb{R}^{n}$ where $n = \mathrm{dim}(M)$.
Let $\Delta^n_{\mathrm{st}}\subseteq \mathbb{R}^{n}$ be the standard simplex
$$    \Delta^n_{\mathrm{st}} = \{ (t_1,t_2 ,\ldots ,t_n)\in \mathbb{R}^{n}  \,|\, 1\geq t_1 \geq \ldots \geq t_n \geq 0 \}.   $$
We want to introduce some bookkeeping for the faces of $\Delta^n_{\mathrm{st}}$.
Let $k\in \{1,\ldots, n-1\}$ and let $n\geq i_1 > i_2 > \ldots > i_{k+1} \geq 0$ be integers.
We define the $k$-simplex $\Delta^k_{i_1,\ldots, i_{k+1}}\subseteq \Delta^n$ by
$$   \Delta^k_{i_1,\ldots, i_{k+1}} = \{ (1,\ldots, 1, \tau_1,\ldots, \tau_1,\tau_2,\ldots, \tau_2,\ldots, \tau_k,\ldots ,\tau_k,0,\ldots, 0 ) \in \Delta^n_{\mathrm{st}} \}      $$
where the first $\tau_1$ appears at position $(n+1)-i_1$, the first $\tau_2$ at position $(n+1)-i_2$ and so on.
The first $0$ appears at position $(n+1)-i_{k+1}$.
There is an obvious identification $\Delta^k_{i_1,\ldots, i_{k+1}}\cong \Delta_{\mathrm{st}}^k$ given by
\begin{equation}\label{eq_identification_simplices}
        (1,\ldots, 1,   \tau_1,\ldots, \tau_1,\tau_2,\ldots, \tau_2,\ldots, \tau_k,\ldots ,\tau_k,0,\ldots, 0 ) \mapsto (\tau_1,\tau_2,\ldots, \tau_k) .     
\end{equation}
As an example if $n = 3$, we have
$   \Delta^3_{3,2,1,0 } = \Delta^3_{\mathrm{st}}  \quad \text{and}\quad \Delta^2_{3,2,0}= \{ (t,s,s)\in \Delta^3_{\mathrm{st}} \} \cong \Delta^2_{\mathrm{st}}  $.
Moreover, we have
$    \Delta^n_{n,0 } = \{(\tau,\tau,\ldots, \tau) \in \mathbb{R}^n \,|\, 1\geq \tau \geq 0 \} \cong [0,1] .      $
Now define $W_{a,b}\subseteq \overline{\mathcal{L}}(a,b)\times \Delta^n_{\mathrm{st}}$ by
$$     W_{a,b} = {\mathcal{L}}(a,b) \times \Delta^1_{|a|,|b|} \cup    \bigcup_{\substack{c_1,\ldots, c_{\ell} \in \mathrm{Crit}(f)\\ |b| < |c_{\ell}|<\ldots < |c_1|  <|a|}}   \mathcal{L}(a,c_1)\times \mathcal{L}(c_1,c_2)\times \ldots \times \mathcal{L}(c_{\ell},b) \times   \Delta^{l+1}_{|a|,|c_1|,\ldots, |c_{\ell}|,|b|} . $$
We equip $W_{a,b}$ with the subspace topology of $\overline{\mathcal{L}}(a,b)\times \Delta^n_{\mathrm{st}}$.
For a sketch of the space $W_{a,b}$ in case that $|a|-|b| = 2$ we refer to Figure \ref{fig_space_wab}.
\begin{figure}[t]
\centering
\includegraphics[scale=0.6]{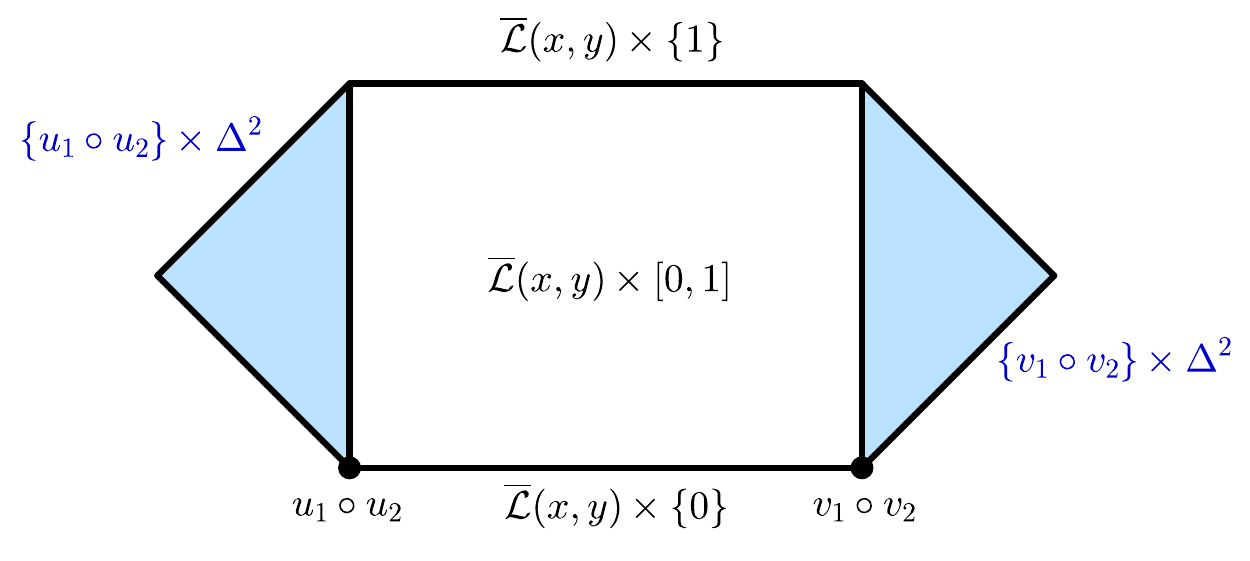}
\caption{The space $W_{x,y}$ for $x,y\in\mathrm{Crit}(f)$ with $|x|-|y| = 2$ and with $\overline{\mathcal{L}}(x,y)\cong [0,1]$. The $2$-simplexes are attached to $\overline{\mathcal{L}}(x,y)\times [0,1]$ such that one of its boundary $1$-simplexes glues to $\{u_1\circ u_2\}\times [0,1]$, resp. to $\{v_1\circ v_2\}\times [0,1]$.}
\label{fig_space_wab}
\end{figure}
We define $k^{\mathcal{H}}_{a,b}\colon W_{a,b}\to G$ by 
\begin{eqnarray}\label{eq_k_ab_h}
            \mathcal{L}(a,c_1)\times \mathcal{L}(c_1,c_2)\times \ldots \times \mathcal{L}(c_{\ell},b) \times   \Delta^{l+1}_{|a|,|c_1|,\ldots, |c_{\ell}|,|b|} & \to &  G, \\
             ((u_1,\ldots, u_{\ell+1}), (\tau_1,\ldots, \tau_{\ell+1}) )& \mapsto & \mathcal{H}_{a,c_1}(u_1,\tau_1)\cdot \ldots \cdot \mathcal{H}_{c_{\ell},b}(u_{\ell+1},\tau_{\ell+1})  \nonumber .
\end{eqnarray}
In particular for $\ell = 0$ we therefore have $k^{\mathcal{H}}_{a,b}|_{\mathcal{L}(a,b)\times \Delta^1_{|a|,|b|}} = \mathcal{H}_{a,b}\colon \mathcal{L}(a,b)\times [0,1]\to G$ under the identification $\Delta^1_{|a|,|b|}\cong \Delta^1_{\mathrm{st}} = [0,1]$.
\begin{lemma}
The map $k^{\mathcal{H}}_{a,b}\colon W_{a,b}\to G$ is continuous.
\end{lemma}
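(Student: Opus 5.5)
The map $k^{\mathcal{H}}_{a,b}$ is defined stratum by stratum, so the plan is sequential. I will show continuity on each stratum, express the map near each boundary point by a single formula on a neighbourhood, and then prove continuity under limits that cross strata.

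Continuity on each open stratum $\mathcal{L}(a,c_1)\times\ldots\times\mathcal{L}(c_\ell,b)\times\Delta^{\ell+1}_{|a|,|c_1|,\ldots,|b|}$ is immediate. There the map is a product of the continuous maps $\mathcal{H}_{c_{i-1},c_i}$, composed with multiplication in $G$. Under \eqref{eq_identification_simplices} the simplex coordinates $(\tau_1,\ldots,\tau_{\ell+1})$ are continuous functions of the ambient point of $\Delta^n_{\mathrm{st}}$, namely coordinates read off at the fixed positions $(n+1)-|c_{i-1}|$.

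The real issue is sequences that move between strata. Since $W_{a,b}$ carries the subspace topology of $\overline{\mathcal{L}}(a,b)\times\Delta^n_{\mathrm{st}}$, which is metrizable, it suffices to treat sequences. Let $(u^m,t^m)\to(u,t)$ with $u^m$ in a fixed stratum (pass to subsequences, as there are finitely many strata). Write the maximal decomposition $u^m=u^m_1\circ\ldots\circ u^m_{\ell+1}$ through the critical points $c_1,\ldots,c_\ell$. The limit $u$ lies in a deeper stratum, with decomposition through a refinement $c_1,\ldots,c_\ell$ together with additional points $d_j$. Closedness of the simplices in $\Delta^n_{\mathrm{st}}$ gives $t\in\Delta^{\ell+1}_{|a|,|c_1|,\ldots,|b|}$.

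This means $t$ has equal coordinates across the blocks belonging to each new $d_j$. Concretely, if $u^m_i\to v_1\circ\ldots\circ v_r$ with $v_s\in\mathcal{L}(e_{s-1},e_s)$, where $e_0=c_{i-1}$ and $e_r=c_i$, then all the limit simplex parameters attached to $v_1,\ldots,v_r$ equal the single limit parameter $\tau_i$ of $t$.

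The key step, and the main obstacle, is the resulting identity
\[
\lim_{m\to\infty}\mathcal{H}_{c_{i-1},c_i}(u^m_i,\tau^m_i)=\mathcal{H}_{c_{i-1},c_i}(v_1\circ\ldots\circ v_r,\tau_i)=\mathcal{H}_{e_0,e_1}(v_1,\tau_i)\cdot\ldots\cdot\mathcal{H}_{e_{r-1},e_r}(v_r,\tau_i).
\]
The first equality follows from continuity of $\mathcal{H}_{c_{i-1},c_i}$ on $\overline{\mathcal{L}}(c_{i-1},c_i)\times I$. The second holds because $\mathcal{H}(\cdot,\tau)$ is a transport function for every fixed $\tau$. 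The right-hand side is exactly the factor of $k^{\mathcal{H}}_{a,b}(u,t)$ contributed by the $v_s$. Taking the product over $i$ and using continuity of $\mu$ then gives $k^{\mathcal{H}}_{a,b}(u^m,t^m)\to k^{\mathcal{H}}_{a,b}(u,t)$.

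The delicate bookkeeping is checking that the coordinates of $t$ at positions $(n+1)-|e_s|$ coincide for all $s$ within one block. This follows because $t^m$ is constant on the whole index range $(|c_i|,|c_{i-1}|]$, and constancy is preserved in the limit. Finally, because every point is reached by a sequence from some fixed stratum, or lies in it, the sequential argument gives continuity on all of $W_{a,b}$.
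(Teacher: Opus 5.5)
Your proposal is correct and follows essentially the same approach as the paper: continuity across strata comes from joint continuity of $\mathcal{H}$ on $\overline{\mathcal{L}}\times I$ combined with the transport-function identity for each fixed $\tau$. The only difference is in coverage. The paper writes out just the case of a sequence in the top stratum $\mathcal{L}(a,b)\times\Delta^1_{|a|,|b|}$ and says the remaining cases are similar, whereas you treat a sequence in an arbitrary stratum block by block.
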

\begin{proof}
Let $u_n\in \mathcal{L}(a,b)$ be a sequence with $u_n\to u_* \in \overline{\mathcal{L}}(a,b)$ a broken flow line with maximal decomposition $u_* = v_1\circ \ldots v_{\ell+1}$ with $v_1\in \mathcal{L}(a,c_1),v_2\in \mathcal{L}(c_1,c_2),\ldots, v_{\ell}\in \mathcal{L}(c_{\ell-1},c_{\ell}) , v_{\ell+1}\in \mathcal{L}(c_{\ell},b)$.
Furthermore, let $(\tau_n)\in \Delta^1_{|a|,|b|}$ be a converging sequence with $$\tau_n = (1,\ldots, 1, t_n,\ldots, t_n,0,\ldots, 0)\to \tau_* = (1,\ldots, 1 , t_*,\ldots ,t_*,0,\ldots ,0)\in  \Delta^1_{|a|,|b|}\subseteq \Delta^{\ell+1}_{|a|,|c_1|,\ldots, |b|} .  $$ 
Then 
$$      k^{\mathcal{H}}_{a,b}(u_n,\tau_n) = \mathcal{H}_{a,b}(u_n,t_n) \to \mathcal{H}_{a,b}(u_*,t_*) = \mathcal{H}_{a,c_1}(v_1,t_*)\cdot \ldots \cdot \mathcal{H}_{c_{\ell},b}(v_{\ell+1},t_*) = k^{\mathcal{H}}_{a,b}(u_*,\tau_*)         $$
since $\mathcal{H}$ is an isotopy of transport functions.
The other cases one has to consider are similar.
\end{proof}
Next, we want to show that the space $W_{a,b}$ is indeed a manifold.

\begin{lemma}\label{lemma_w_ab_is_mfld}
    The space $W_{a,b}$ is a topological manifold with boundary.
    There are embeddings $\iota_{a,c;b}\colon W_{a,c}\times \overline{\mathcal{L}}(c,b) \hookrightarrow W_{a,b}$ and $j_{a;c,b}\colon \overline{\mathcal{L}}(a,c)\times W_{c,b}\hookrightarrow W_{a,b}$ such that the boundary of $W_{a,b}$ satisfies
    $$    \partial W_{a,b} = \overline{\mathcal{L}}(a,b) \times \partial I \cup   \bigcup_{|b|<|z| < |a|} \iota_{a,z;b}(W_{a,z}\times \overline{\mathcal{L}}(z,b)) \cup  j_{a;z,b}(\overline{\mathcal{L}}(a,z)\times W_{z,b})   .  $$
\end{lemma}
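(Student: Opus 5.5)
The plan is to prove that $W_{a,b}$ is a manifold with boundary locally, one stratum of $\overline{\mathcal{L}}(a,b)$ at a time, and to build the two embeddings from the face inclusions of the standard simplex.

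\textbf{Local charts.} By the manifold-with-corners structure from \cite{qin2010moduli}, a point $u_*$ in the stratum $\mathcal{L}(a,c_1)\times\ldots\times\mathcal{L}(c_\ell,b)$ has a neighborhood in $\overline{\mathcal{L}}(a,b)$ of the form
$$\mathcal{L}(a,c_1)\times\ldots\times\mathcal{L}(c_\ell,b)\times[0,\delta)^\ell .$$
Here the $j$-th gluing parameter $\lambda_j$ vanishes exactly when the flow line breaks at $c_j$. Over such a neighborhood, $W_{a,b}$ consists of pairs $(u,\tau)$ with $\tau$ lying in the face $\Delta^{m+1}_{|a|,|c_{j_1}|,\ldots,|b|}$, where $\{j_1,\ldots,j_m\}=\{j:\lambda_j=0\}$. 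In coordinates $\tau=(\tau_1\geq\ldots\geq\tau_{\ell+1})$ on $\Delta^{\ell+1}_{|a|,|c_1|,\ldots,|b|}\cong\Delta^{\ell+1}_{\mathrm{st}}$, the condition reads: $\tau_j=\tau_{j+1}$ whenever $\lambda_j>0$.

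So the local model is
$$Q_\ell=\{(\lambda,\tau)\in[0,\delta)^\ell\times\Delta^{\ell+1}_{\mathrm{st}} \,:\, \lambda_j(\tau_j-\tau_{j+1})=0 \ \text{for all } j\},$$
multiplied by the manifold $\prod\mathcal{L}(c_{j-1},c_j)$.

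\textbf{Key step: the model space.} I would show $Q_\ell$ is homeomorphic to a half-space of dimension $\ell+1$. The factors decouple: with $\sigma_j=\tau_j-\tau_{j+1}\geq 0$, each pair $(\lambda_j,\sigma_j)$ ranges over the "L-shaped" set $\{\lambda_j\sigma_j=0\}\subset[0,\infty)^2$. That set is homeomorphic to $[0,\infty)$, or to $\mathbb{R}$ near the corner point. The remaining coordinates, $\tau_{\ell+1}\geq 0$ together with $\tau_1\leq 1$, contribute an interval with boundary.

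Hence near an interior corner point $Q_\ell$ looks like $\mathbb{R}^\ell\times[0,\epsilon)$. The only boundary comes from three sources: $\tau_1=1$, $\tau_{\ell+1}=0$, and those $\sigma_j$ or $\lambda_j$ that reach the bounds of $[0,\delta)$ away from the corner. I expect this step to be the main obstacle. One must check that these homeomorphisms are compatible across strata, and that the boundary of $[0,\delta)$ in the collar is not a genuine boundary. The point is that the far side of $\lambda_j$ is an interior point of $\overline{\mathcal{L}}$, so I would take $\delta$ small and use open charts.

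\textbf{Boundary identification and embeddings.} The boundary points found above are those with $\tau_1=1$ or $\tau_{\ell+1}=0$, together with the top and bottom faces $\tau\equiv 0,1$ on the open stratum, which give $\overline{\mathcal{L}}(a,b)\times\partial I$.

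For each $z=c_j$, define $\iota_{a,z;b}$ on $W_{a,z}\times\overline{\mathcal{L}}(z,b)$ by
$$\big((v,\tau'),w\big)\mapsto\big(v\circ w,(\tau',0,\ldots,0)\big),$$
padding with zeros in the positions corresponding to indices $\leq|z|$. Define $j_{a;z,b}$ analogously by padding with ones.

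These maps are well defined because $\Delta^{k}_{|a|,\ldots,|z|}\times\{0\}$ is exactly the face $\tau_{j+1}=\ldots=0$ of $\Delta^{\ell+1}_{|a|,\ldots,|b|}$. They are injective and continuous, and since $W_{a,z}\times\overline{\mathcal{L}}(z,b)$ is compact they are embeddings. Their images cover the boundary points with some $\tau_{j'}=0$ or some $\tau_{j'}=1$ at a break. Together with $\overline{\mathcal{L}}(a,b)\times\partial I$, they exhaust the boundary found in the chart analysis.

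Compatibility with $k^{\mathcal{H}}$ follows because $\mathcal{H}(\cdot,0)=\Phi$ and $\mathcal{H}(\cdot,1)=\Psi$, but that is not needed for this lemma.
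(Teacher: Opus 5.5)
Your proposal is correct and follows the paper's proof, in a slightly more uniform form. The paper's charts $\psi^{k+1}_{j_1,\ldots,j_k}$ into $\widetilde{V}_{a,b}\times\mathbb{R}^{\ell}_{j_1,\ldots,j_k}\times(-\delta,\delta)$ are exactly your unfolding of each pair (gluing parameter $\lambda_j$, simplex direction $\tau_j-\tau_{j+1}$) into one line, and the paper likewise identifies the boundary as the faces $\tau_1=1$ and $\tau_{\ell+1}=0$, with $\iota_{a,c;b}$, $j_{a;c,b}$ the same face inclusions (the padding is automatic in the global coordinates of $\Delta^n_{\mathrm{st}}$).

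One small correction: the factors do not literally decouple, because $\tau_1=\tau_{\ell+1}+\sum_j\sigma_j\le 1$ couples $\tau_{\ell+1}$ to the $\sigma_j$. With $s_j=\sigma_j-\lambda_j$ and $t=\tau_{\ell+1}$, however, $Q_\ell$ is the closed convex set $\{t\ge 0,\ t+\sum_j\max(s_j,0)\le 1\}$, which has nonempty interior, intersected with the open set $\{s_j>-\delta\}$. It is therefore a manifold with boundary $\{\tau_{\ell+1}=0\}\cup\{\tau_1=1\}$, so your ``third source'' of boundary never occurs. No compatibility across strata needs checking either, since $Q_\ell$ already contains every stratum near $u_*$.
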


\begin{proof}
We need to show that for each point $(u,\tau)\in W_{a,b}$ there is an open neighborhood with is either homeomorphic to $\mathbb{R}^{|a|-|b|}$ or to the half-space $\mathbb{R}^{|a|-|b|-1}\times [0,\infty)$.
First, for $(u_*,\tau)\in \mathcal{L}(a,b)\times (0,1)$ it is clear that there is an open neighborhood which is homeomorphic to $\mathbb{R}^{|a|-|b|}$.
Furthermore, the points $(u_*,\tau)\in \mathcal{L}(a,b)\times \{0,1\}$ are boundary points.

Next, we consider the case $u\in \partial\overline{\mathcal{L}}(a,b)$ and $\tau = (1,\ldots, 1 ,t_0,\ldots, t_0,0,\ldots, 0) \in \mathring{\Delta}^1_{|a|,|b|}$.
We want to show that there is an open neighborhood inside $W_{a,b}$ which is homeomorphic to $\mathbb{R}^n$.
Let $u\in \mathcal{L}(a,c_1)\times \ldots \times \mathcal{L}(c_{\ell},b)$ be the maximal decomposition.
The point $u_*\in \overline{\mathcal{L}}(a,b)$ has a neighborhood of the form $ \widetilde{V}^1_{a,b}\times [0,\delta)^{\ell} $ inside $\overline{\mathcal{L}}(a,b)$ with $\widetilde{V}^1_{a,b}\subseteq \mathbb{R}^{|a|-|b|-({\ell+1})}$ open since $\overline{\mathcal{L}}(a,b)$ is a manifold with corners.
There is hence an open neighborhood $V^1_{a,b}$ of $(u_*,\tau)\in \overline{\mathcal{L}}(a,b)\times \Delta^1_{|a|,|b|}$ inside $\overline{\mathcal{L}}(a,b)\times \Delta^1_{|a|,|b|}$ of the form
$$    V^1_{a,b} \cong  \widetilde{V}^1_{a,b}\times [0,\delta)^{\ell} \times (t_0-\delta,t_0 + \delta)        $$
Define a map 
\begin{eqnarray*}
    \psi^1\colon V^1_{a,b} & \to &   \widetilde{V}_{a,b}\times [0,\delta)^{\ell}\times (-\delta,\delta) \\
    (p,(s_1,\ldots, s_{\ell}) , t)  & \mapsto & (p, (s_1,\ldots, s_{\ell}) , t-t_0) .
\end{eqnarray*}
Note that the points
$$   (x, (s_1,\ldots, s_{\ell+1})) \in \widetilde{V}^1_{a,b}\times [0,\delta)^{\ell}      $$
lie in $\overline{\mathcal{L}}(a,c_{j_1})\times \ldots \times \overline{\mathcal{L}}(c_{j_k},b)$ for $1 \leq j_1 < j_2 < \ldots < j_k\leq \ell$ if and only if $s_{j_1} = \ldots = s_{j_k} =0$.
Hence, inside $\overline{\mathcal{L}}(a,c_{j_1})\times \ldots \times \overline{\mathcal{L}}(c_{j_k},b)$ there is an open neighborhood of $u_*$ of the form
$$   V^{k+1}_{a,c_{j_1},\ldots, c_{j_k},b} =  \widetilde{V}_{a,b} \times [0,\delta)^{j_1-1}\times \{0\} \times [0,\delta)^{j_2-j_1 -1}\times \{0\}\times \ldots \{0\}\times [0,\delta)^{\ell-j_k -1} \cong \widetilde{V}_{a,b}\times [0,\delta)^{\ell-k} .      $$
Now, consider $\tau \in \Delta^1_{|a|,|b|}\subseteq \Delta^{k+1}_{|a|,|c_{j_1}|,\ldots, |c_{j_k}|,|b|}$.
Since the $(k+1)$-simplex $\Delta^{k+1}_{|a|,|c_{j_1}|,\ldots, |c_{j_k}|,|b|}$ is a manifold with corners there is a neighborhood $U_{a,c_{j_1},\ldots, c_{j_k},b}$ around $\tau$ in $\Delta^{k+1}_{|a|,|c_{j_1}|,\ldots,|b|}$ of the form $(t_0-\delta,t_0+\delta)\times [0,\delta)^{k}$.
This neighborhood can be chosen such that the following holds.
If $m\in \{1,\ldots, \ell\}$, then $U_{a,c_{j_1},\ldots, c_{j_k},b}\cap \Delta^{k+1}_{|a|,|c_{j_1}|,\ldots, c_{j_{m-1}},c_{j_{m+1}},\ldots, |c_{\ell}|,b}$ corresponds to the set
$$    \{ (s, \rho_1,\ldots, \rho_k)\in (t-\delta,t+\delta)\times [0,\delta)^k \,|\,  \rho_m = 0\} .       $$
We introduce the following spaces.
Write $\mathbb{R}^N_+ = [0,\infty)^N$ for $N\in\mathbb{N}$.
Next, let $1 \leq i_1 < \ldots < i_k \leq N$ and define 
$$    \mathbb{R}^N_{i_1,\ldots ,i_k} = \{ (x_1,\ldots, x_N)\in \mathbb{R}^N \,|\,  x_j \geq 0 \text{ for }j\not\in \{i_1,\ldots, i_k\} , \,\, x_j \leq 0\text{ for }j\in \{i_1,\ldots, i_k\}\} .    $$
Define a map 
\begin{eqnarray*}
    \psi^{k+1}_{j_1,\ldots, j_k} \colon V^{k+1}_{a,c_{j_1},\ldots, c_{j_k},b}\times U_{a,c_{j_1},\ldots, c_{j_k},b} & \to & \widetilde{V}_{a,b} \times   \mathbb{R}^{\ell}_{j_1,\ldots, j_k}  \times (-\delta,\delta) \\ 
    (p,(s_1,\ldots, s_{\ell-k}), t, (u_1,\ldots, u_k)) & \mapsto&  (p, s_1,\ldots, s_{j_1-1}, -u_1, s_{j_1},\ldots, s_{j_2-2}, -u_2, \ldots, \\ &  & \hphantom{eoergaerggij } s_{j_k-k+1},\ldots, s_{\ell-k}, t-t_0)) . 
\end{eqnarray*}
The maps $\psi^i_{j_1,\ldots, j_i}$ glue together to define a local chart around $(u_*,\tau)$.
More precisely, let 
$$   V =  V^1_{a,b} \cup_{k=1}^{\ell} \bigcup_{1\leq j_1< \ldots < j_k\leq \ell} V^{k+1}_{a,c_{j_1},\ldots, c_{j_k},b} \times U_{a,c_{j_1},\ldots, c_{j_k},b} \subseteq W_{a,b}      $$
and define $\psi\colon V\to \mathbb{R}^{|a|-|b|}$ by defining $\psi|_{V^{k+1}_{a,c_{j_1},\ldots, c_{j_k},b}\times U_{a,c_{j_1},\ldots, c_{j_k},b}}$ to be $\psi_{j_1,\ldots, j_k}^{k+1}$.
By potentially shrinking $V$ we obtain a homeomorphism $\psi\colon V\xrightarrow[]{\cong} \widetilde{V}_{a,b}\times (-\delta,\delta)^{\ell+1}\subseteq \mathbb{R}^{|a|-|b|}$ with $V$ open.

More generally, assume that we are given a point $u_*\in \mathcal{L}(a,c_1)\times \ldots \times \mathcal{L}(c_{\ell},b)$ as well as $\tau\in \Delta^{\ell+1}_{|a|,|c_1|,\ldots, |c_{\ell}|,|b|}$.
If $\tau$ is in the interior of the simplex then we obtain a Euclidean neighborhood around $(u_*,\tau)$.
If $\tau\in \partial\Delta^{\ell+1}_{|a|,|c_1|,\ldots, |c_{\ell}|,|b|}$ we can proceed similarly to before as long as
$  \tau\in \Delta^{\ell}_{|a|,|c_1|,\ldots, |\check{c_m}|,\ldots, |b|}     $ for some $m\in \{1,\ldots, \ell\}$.
In this case there are other strata such that $(u_*,\tau)$ lies in the closure of these strata and we can proceed as above.

In case that $\tau\in \Delta^{\ell}_{|c_1|,|c_2|,\ldots, |c_{\ell}|,|b|} \subseteq \Delta^{\ell+1}_{|a|,|c_1|,\ldots ,|b|}$ we obtain a boundary point.
In fact, for $\tau\in \mathring{\Delta}^{\ell}_{|c_1|,|c_2|,\ldots, |c_{\ell}|,|b|} \subseteq \Delta^{\ell+1}_{|a|,|c_1|,\ldots ,|b|}$ there is an open neighborhood $U\subseteq \Delta^{\ell+1}_{|a|,|c_1|,\ldots, |b|}$ of the form $U\cong (-\delta,\delta)^{\ell}\times [0,\delta)$ such that $U\cap \Delta^{\ell}_{|c_1|,\ldots,|b|}$ corresponds to $ (-\delta,\delta)^{\ell}\times \{0\}$.
As $(u_*,\tau)$ does not lie in the closure of any other stratum, we obtain a boundary point.
An analogous argument holds in the case that $\tau\in \Delta^{\ell}_{|a|,|c_1|,\ldots,|c_{\ell}|}$.

Overall we see that $W_{a,b}$ is a manifold with boundary and the boundary is the subspace
\begin{eqnarray}\nonumber
     \partial W_{a,b} &=&  {\mathcal{L}}(a,b) \times \partial \Delta^1_{|a|,|b|} \cup    \bigcup_{\substack{c_1,\ldots, c_{\ell} \in \mathrm{Crit}(f)\\ |b| < |c_{\ell}|<\ldots < |c_1|  <|a|}}   \mathcal{L}(a,c_1)\times \mathcal{L}(c_1,c_2)\times \ldots \times   \\  &   &   \hphantom{aretfrggbgfehaehbdfbdfbhthh}   \mathcal{L}(c_{\ell},b)  \times(\Delta^{\ell}_{|c_1|,\ldots,|c_{\ell}|,|b|} \cup \Delta^{\ell}_{|a|,|c_1|,\ldots,|c_{\ell}|}) . \label{eq_blablup} 
\end{eqnarray} 

Let $c\in \mathrm{Crit}(f)$ such that $|a|> |c| > |b|$ and let $\rho\colon \overline{\mathcal{L}}(a,c)\times \overline{\mathcal{L}}(b,c)\hookrightarrow \overline{\mathcal{L}}(a,b)$ be the inclusion.
We claim that the map $ \iota_{a,c;b}\colon W_{a,c}\times \overline{\mathcal{L}}(c,b)\to W_{a,b}$ given by
$$     \iota_{a,c;b}( (u,\tau) , v) = (\rho(u,v),\tau) \quad \text{for}\,\,\, (u,\tau)\in W_{a,c},\,\, v\in \overline{\mathcal{L}}(c,b)      $$
has image in $\partial W_{a,b}$.
We can write 
\begin{eqnarray*}
    W_{a,c}\times \mathcal{L}(c,b ) &= &  \mathcal{L}(a,c)\times \Delta^1_{|a|,|c|} \times \mathcal{L}(c,b)  \cup \bigcup_{|c| < |d_k| < \ldots < |d_1|< |a|} \mathcal{L}(a,d_1)\times \ldots   \\  &  & \hphantom{eoarigargaergjaeaergaergearoigjaeorigjae}    \times \mathcal{L}(d_k,c)\times \Delta^{k+1}_{|a|,\ldots,|c|}\times \mathcal{L}(c,b) .
\end{eqnarray*}
By \eqref{eq_blablup} we see that the image of $W_{a,c}\times \mathcal{L}(c,b)$ under $\iota_{a,c;b}$ lands in $\partial W_{a,b}$.
Similarly, one can check for the strata of $\partial\mathcal{L}(c,b)$ that the image of $W_{a,c}\times \partial\overline{\mathcal{L}}(c,b)$ lies in $\partial W_{a,b}$.
Analogously one defines an embedding $j_{a;c,b}\colon \overline{\mathcal{L}}(a,c)\times W_{c,b}\to W_{a,b}$ with image in $\partial W_{a,b}$.
Then one checks that the boundary $\partial W_{a,b}$ is given by the union
$$    \overline{\mathcal{L}}(a,b) \times \partial I \cup   \bigcup_{|b|<|z| < |a|} \iota_{a,z;b}(W_{a,z}\times \overline{\mathcal{L}}(z,b)) \cup  j_{a;z,b}(\overline{\mathcal{L}}(a,z)\times W_{z,b})  $$
as claimed.
This completes the proof.
\end{proof}

We introduce the following notation.
For the $1$-simplex $\Delta^1_{j,k}$ define $1_{j,k}\in \Delta^1_{j,k}$ to be the element corresponding to $1\in [0,1]$ under identification \eqref{eq_identification_simplices}.
Further define $0_{j,k}$ to be the element corresponding to $0$.
Note that for $i\in \{k+1,\ldots, j-1\}$ we have $1_{i,k} = 0_{k,j}$ as points in the surrounding space $\Delta^n_{\mathrm{st}}$.
For the rest of this section we take chain complexes with $\mathbb{Z}_2$-coefficients.
\begin{lemma}
    There is a chain $w_{a,b}\in \mathrm{C}_{|a|-|b|}(W_{a,b})$ which is a cycle relative to the boundary such that
    \begin{equation}\label{eq_boundary_wab_chain}
             \partial w_{a,b} =  s_{a,b}\times ( 1_{|a|,|b|} + 0_{|a|,|b|})  + \sum_{|b|<|c|<|a|}  (\iota_{a,c;b})_* (w_{a,c}\times s_{c,b} )  + (j_{a;c,b})_*(s_{a,c} \times w_{c,b}) . 
    \end{equation}
\end{lemma}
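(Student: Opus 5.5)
We construct the chains $w_{a,b}$ by induction on $|a|-|b|$, in the same way Barraud--Damian--Humilière--Oancea construct representing chain systems and compatible representing chain systems (\cite[Proposition 5.6, Lemma 7.6]{barraud2025morse}). Working over $\mathbb{Z}_2$ removes all sign and orientation issues. For $|a|-|b|=1$ we have $W_{a,b}=\overline{\mathcal{L}}(a,b)\times I$, a finite disjoint union of intervals. We take $w_{a,b}=s_{a,b}\times [I]$, where $[I]\in \mathrm{C}_1(I)$ is the identity $1$-cube. Its boundary is $s_{a,b}\times(1+0)$, since $s_{a,b}$ is a $0$-chain and hence a cycle, and the sum in \eqref{eq_boundary_wab_chain} is empty.

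For the inductive step, suppose the chains $w_{x,y}$ exist for all pairs with $|x|-|y|\leq k$, and let $|a|-|b|=k+1$. We set
\[
\beta_{a,b}:= s_{a,b}\times(1_{|a|,|b|}+0_{|a|,|b|})+\sum_{|b|<|c|<|a|}(\iota_{a,c;b})_*(w_{a,c}\times s_{c,b})+(j_{a;c,b})_*(s_{a,c}\times w_{c,b}),
\]
which by Lemma \ref{lemma_w_ab_is_mfld} is a chain in $\mathrm{C}_{|a|-|b|-1}(\partial W_{a,b})$. The first step is to verify that $\partial\beta_{a,b}=0$. We compute $\partial\beta_{a,b}$ with the Leibniz rule, the defining relation of the representing chain system $\partial s_{x,y}=\sum s_{x,z}\times s_{z,y}$, and the inductive formula for $\partial w_{a,c}$ and $\partial w_{c,b}$. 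The terms fall into four types:
\begin{itemize}
\item products $s_{a,c}\times s_{c,b}\times(1+0)$;
\item triple terms $w_{a,d}\times s_{d,c}\times s_{c,b}$;
\item triple terms $s_{a,c}\times s_{c,d}\times w_{d,b}$;
\item mixed terms $s_{a,c}\times w_{c,d}\times s_{d,b}$.
\end{itemize}
Each type occurs exactly twice, provided the embeddings are compatible: $\iota_{a,c;b}\circ(\iota_{a,d;c}\times\mathrm{id})=\iota_{a,d;b}\circ(\mathrm{id}\times\rho)$, the analogous identity for $j$, and $\iota\circ(j\times \mathrm{id})=j\circ(\mathrm{id}\times\iota)$. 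We also need the identifications $1_{|a|,|b|}$ restricted to a broken stratum equals the point $1_{|a|,|c|}$ (the image of the endpoint of $W_{a,c}$). These compatibilities follow directly from the explicit description of $W_{a,b}\subseteq\overline{\mathcal{L}}(a,b)\times\Delta^n_{\mathrm{st}}$ and of $\iota$ and $j$, which are given by concatenating flow lines and keeping the simplex coordinate. Hence everything cancels mod $2$, and $\beta_{a,b}$ is a cycle.

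The second step shows that $\beta_{a,b}$ represents the fundamental class $[\partial W_{a,b}]\in\mathrm{H}_{|a|-|b|-1}(\partial W_{a,b};\mathbb{Z}_2)$. The boundary $\partial W_{a,b}$ is a closed topological manifold of dimension $|a|-|b|-1$. The pieces in the decomposition of Lemma \ref{lemma_w_ab_is_mfld} are glued along codimension-one faces, and each piece carries a relative fundamental chain by induction. Here we use that $s_{x,y}$ represents the relative fundamental class and that $w_{x,y}$ is a relative cycle representing $[W_{x,y},\partial W_{x,y}]$; this relative-cycle property must be carried along in the induction hypothesis. Restricting $\beta_{a,b}$ to a small ball around a point in the interior of a top stratum therefore gives a local generator. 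Since over $\mathbb{Z}_2$ a cycle that is a local generator at one point in each component is the fundamental class, $\beta_{a,b}$ represents $[\partial W_{a,b}]$.

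Finally, $W_{a,b}$ is a compact manifold with boundary. It is homeomorphic to a collar-type thickening and in particular admits a relative fundamental class, whose image under the connecting map $\mathrm{H}_{|a|-|b|}(W_{a,b},\partial W_{a,b})\to\mathrm{H}_{|a|-|b|-1}(\partial W_{a,b})$ is $[\partial W_{a,b}]$. Consequently $\beta_{a,b}$ is a boundary in $W_{a,b}$: there is a chain $w_{a,b}$ with $\partial w_{a,b}=\beta_{a,b}$. The chain $w_{a,b}$ is automatically a relative cycle, and it represents the fundamental class because its boundary does. This gives \eqref{eq_boundary_wab_chain} and completes the induction.

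The main obstacle is the first step: the bookkeeping showing that $\partial\beta_{a,b}$ cancels, and in particular checking the compatibility identities for the embeddings $\iota$ and $j$ on the simplex coordinates (where $1_{i,k}=0_{k,j}$ enters). The second step also needs care, namely verifying that each $W_{a,b}$ is connected enough, or handling it component by component, for the local-generator argument to apply.
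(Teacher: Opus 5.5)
Your proposal is correct and follows essentially the paper's proof: induction on $|a|-|b|$, checking the candidate boundary is a cycle via the representing-chain relation, the inductive formula and the compatibility of $\iota$ and $j$, showing it represents $[\partial W_{a,b}]$, and filling it in $W_{a,b}$. The paper fills by correcting a relative fundamental cycle $w'_{a,b}$ by a chain $p_{a,b}$ supported in $\partial W_{a,b}$, which sidesteps your tacit use of injectivity of $\mathrm{H}_{|a|-|b|}(W_{a,b},\partial W_{a,b})\to \mathrm{H}_{|a|-|b|-1}(\partial W_{a,b})$ (true here, since every component of $W_{a,b}$ meets the boundary, e.g.\ via the segment from $(u,\tau)$ to $(u,0_{|a|,|b|})$); also, the endpoint identities actually needed are $1_{|a|,|b|}=1_{|c|,|b|}$, $0_{|a|,|b|}=0_{|a|,|c|}$ and $1_{|a|,|c|}=0_{|c|,|b|}$, so $(\partial s_{a,b})\times 1_{|a|,|b|}$ cancels against the endpoint term of $(j_{a;c,b})_*(s_{a,c}\times\partial w_{c,b})$, whereas $1_{|a|,|b|}\neq 1_{|a|,|c|}$ as you wrote.
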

\begin{proof}
    We show the claim by induction over the index difference.
    First consider critical points $a,b\in\mathrm{Crit}(f)$ with $|a|-|b| = 1$ and recall that $W_{a,b} = \mathcal{L}(a,b)\times \Delta^1_{|a|,|b|}$.
    Let $\delta^1_{|a|,|b|}\colon [0,1]\to \Delta^1_{|a|,|b|}$ be a linear parametrization of the $1$-simplex $\Delta^1_{|a|,|b|}$.
    We set $w_{a,b} = s_{a,b}\times \delta^1_{|a|,|b|}\in \mathrm{C}_1(W_{a,b})$ which satisfies $\partial w_{a,b} = s_{a,b}\times (1_{|a|,|b|} + 0_{|a|,|b|})$.
    
    Now, assume that the chains $w_{a,c}\in \mathrm{C}_{\bullet}(W_{a,c})$ have been defined for all critical point of index difference less than or equal to $ k$.
    Let $a,b\in\mathrm{Crit}(f)$ with $|a|-|b| = k+1$.
    We consider the chain $v_{a,b}\in \mathrm{C}_{k}(\partial W_{a,b})$ defined by
    $$
        v_{a,b} =  s_{a,b}\times ( 1_{|a|,|b|} + 0_{|a|,|b|})  + \sum_{|b|<|c|<|a|}  (\iota_{a,c;b})_* (w_{a,c}\times s_{c,b} )  + (j_{a;c,b})_*(s_{a,c} \times w_{c,b}) . $$
        We claim that $v_{a,b}$ is a cycle.
        We compute
        \footnotesize
        \begin{eqnarray*}
            \partial v_{a,b} &=& \sum_{|b|<|z|<|a|} s_{a,z}\times s_{z,b}\times (1_{|a|,|b|} +0_{|a|,|b|}) + \sum_{|b|<|c|<|a|} (\iota_{a,c;b})_* \Big[ s_{a,c}\times (1_{|a|,|c|} + 0_{|a|,|c|} )\times s_{c,b} + \\  &   & \sum_{|c|<|d|<|a|} \big( (\iota_{a,d;c})_*(w_{a,d}\times s_{d,c})\times s_{c,b} + (j_{a;c,d})_*(s_{a,d}\times w_{d,c})\times s_{c,b} \big) + \sum_{|b|<|d|<|c|} w_{a,c}\times s_{c,d}\times s_{d,b}   \Big] + \\
            &   & \sum_{|b|<|c|<|a|}(j_{a,c;b})_* \Big[  \sum_{|c|<|d|<|a|}s_{a,d}\times s_{d,c}\times w_{c,b} \, + \, s_{a,c}\times s_{c,b}\times (1_{|c|,|b|} + 0_{|c|,|b|}) +  \\ &   &   \sum_{|b|<|d|<|c|} \big(     s_{a,c}\times (\iota_{c,d;b})_*(w_{c,d}\times s_{d,b}) + s_{a,c}\times (j_{c;d,b})_*(s_{c,d}\times w_{d,b})     \big)    \Big] .
        \end{eqnarray*}
        \normalsize
        As remarked in the paragraph before the lemma we have $1_{|a|,|b|} = 1_{|c|,|b|}$ as well as $0_{|a|,|b|} = 0_{|a|,|c|}$ and $0_{|c|,|b|} = 1_{|a|,|c|}$.
        Furthermore, we have that the maps $$\iota_{a,c;b}\circ (j_{a;c,d}\times \mathrm{id})  \colon \overline{\mathcal{L}}(a,c)\times W_{c,d}\times \overline{\mathcal{L}}(d,b) \to W_{a,b}  $$
        and
        $$   j_{a,c;b}\circ (\mathrm{id}\times \iota_{c,d;b})      \colon \overline{\mathcal{L}}(a,c)\times W_{c,d}\times \overline{\mathcal{L}}(d,b) \to W_{a,b}    $$
        agree for critical points $a,c,d,b$ with $|a|> |c| > |d| > |b|$.
       Using these identities one verifies that indeed $\partial v_{a,b} = 0$.
        By Lemma \ref{lemma_w_ab_is_mfld} and the induction hypothesis we conclude that $v_{a,b}$ represents the fundamental class of the boundary $\partial W_{a,b}$.
        Let $w_{a,b}'\in \mathrm{C}_{|a|-|b|}(W_{a,b})$ be a cycle relative to the boundary representing the fundamental class of $W_{a,b}$.
        Consequently, $\partial w_{a,b}'\in \mathrm{C}_{|a|-|b|-1}(\partial W_{a,b})$ represents the fundamental class of $\partial W_{a,b}$.
        Therefore, $\partial w_{a,b}'$ is homologous to $v_{a,b}$ ,i.e. there is a $p_{a,b}\in \mathrm{C}_{|a|-|b|}(\partial W_{a,b})$ such that $\partial w_{a,b}' + v_{a,b} = \partial p_{a,b}$.
        We define $w_{a,b} = w_{a,b}' + p_{a,b}\in \mathrm{C}_{|a|-|b|}(W_{a,b})$.
        Since $p_{a,b}$ is supported in $\partial W_{a,b}$, the chain $w_{a,b}$ still represents the fundamental class of $W_{a,b}$ and it satisfies \eqref{eq_boundary_wab_chain} by construction.
\end{proof}

Recall from \eqref{eq_k_ab_h} the map $k_{a,b}^{\mathcal{H}}\colon W_{a,b}\to G$.
For $x,y\in \mathrm{Crit}(f)$ with $|x|> |y|$ we define $n_{x,y} = (k^{\mathcal{H}}_{x,y})_*(w_{x,y}) \in \mathrm{C}_{|x|-|y|}(G)$.
Furthermore, we define $n_{x,x} = e\in \mathrm{C}_0(G)$ to be the neutral element and $n_{x,y} = 0$ for $|x| = |y|$ with $x\neq y$.
\begin{lemma}
    The boundary of $n_{a,b}$ satisfies
    \begin{eqnarray*}
        \partial n_{a,b} &=& m^{\Phi}_{a,b}  + m^{\Psi}_{a,b} + \sum_{|b|<|c|<|a|} (n_{a,c}\cdot m_{a,b}^{\Phi} + m^{\Psi}_{a,c}\cdot n_{c,b} ) \\ 
        &=&  \sum_{|b|\leq |c|\leq |a|} (n_{a,c}\cdot m^{\Phi}_{c,b} + m^{\Psi}_{a,c}\cdot n_{c,b}) .
    \end{eqnarray*}
\end{lemma}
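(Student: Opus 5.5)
The plan is to push forward the boundary formula \eqref{eq_boundary_wab_chain} for $w_{a,b}$ along $k^{\mathcal{H}}_{a,b}$. Since $(k^{\mathcal{H}}_{a,b})_*$ is a chain map, we get $\partial n_{a,b} = (k^{\mathcal{H}}_{a,b})_*(\partial w_{a,b})$. It then suffices to identify the pushforward of each of the three types of terms in \eqref{eq_boundary_wab_chain}. We work with $\mathbb{Z}_2$-coefficients throughout, so no signs need to be tracked.

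First term. On $\overline{\mathcal{L}}(a,b)\times\{0_{|a|,|b|}\}$ and $\overline{\mathcal{L}}(a,b)\times\{1_{|a|,|b|}\}$ the map $k^{\mathcal{H}}_{a,b}$ restricts to $\mathcal{H}_{a,b}(\cdot,0)=\Phi_{a,b}$ and $\mathcal{H}_{a,b}(\cdot,1)=\Psi_{a,b}$. On broken strata this uses formula \eqref{eq_k_ab_h} with all $\tau_i$ equal to $0$, respectively $1$, together with the transport property of $\Phi$ and $\Psi$. Hence
\[
(k^{\mathcal{H}}_{a,b})_*\bigl(s_{a,b}\times(1_{|a|,|b|}+0_{|a|,|b|})\bigr)=m^{\Psi}_{a,b}+m^{\Phi}_{a,b}.
\]

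Second term. Here we need the compatibility
\[
k^{\mathcal{H}}_{a,b}\circ\iota_{a,c;b}=\mu\circ(k^{\mathcal{H}}_{a,c}\times\Phi_{c,b}).
\]
To check it on a stratum, take a point $((u_1,\dots,u_r),\tau)\in W_{a,c}$ with $\tau\in\Delta^{r}_{|a|,\dots,|c|}$, and a point $v=v_1\circ\dots\circ v_s\in\overline{\mathcal{L}}(c,b)$. In the ambient simplex $\Delta^n_{\mathrm{st}}$, the coordinates of $\tau$ at positions corresponding to indices below $|c|$ are $0$. Therefore, viewed in the larger simplex $\Delta^{r+s}_{|a|,\dots,|c|,\dots,|b|}$, the parameters attached to $v_1,\dots,v_s$ all vanish. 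Formula \eqref{eq_k_ab_h} then gives
\[
\mathcal{H}(u_1,\tau_1)\cdots\mathcal{H}(u_r,\tau_r)\cdot\mathcal{H}(v_1,0)\cdots\mathcal{H}(v_s,0)=k^{\mathcal{H}}_{a,c}(u,\tau)\cdot\Phi_{c,b}(v).
\]
Consequently
\[
(k^{\mathcal{H}}_{a,b})_*(\iota_{a,c;b})_*(w_{a,c}\times s_{c,b})=\mu_*\bigl(n_{a,c}\times m^{\Phi}_{c,b}\bigr)=n_{a,c}\cdot m^{\Phi}_{c,b}.
\]

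Third term. Symmetrically, on the image of $j_{a;c,b}$ the coordinates at positions above $|c|$ equal $1$. This gives
\[
k^{\mathcal{H}}_{a,b}\circ j_{a;c,b}=\mu\circ(\Psi_{a,c}\times k^{\mathcal{H}}_{c,b}),
\]
so this term pushes forward to $m^{\Psi}_{a,c}\cdot n_{c,b}$.

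The main point to check carefully is exactly this bookkeeping: that the embeddings $\iota$ and $j$ place the simplex coordinates of the factor $\overline{\mathcal{L}}(c,b)$ (respectively $\overline{\mathcal{L}}(a,c)$) at the constant values $0$ (respectively $1$). This has to hold on every stratum, including the broken strata of $\overline{\mathcal{L}}(c,b)$ and $\overline{\mathcal{L}}(a,c)$. It follows from the convention that $\Delta^k_{i_1,\dots,i_{k+1}}$ has entries $1$ before position $(n+1)-i_1$ and $0$ from position $(n+1)-i_{k+1}$ on, and from the identifications $1_{i,k}=0_{k,j}$ noted before the previous lemma.

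Summing the three contributions gives the first displayed equality. The second equality is a reindexing. With the conventions $n_{c,c}=e$ and $n_{c,c'}=0$ for $|c|=|c'|$, $c\neq c'$, the summands with $c=a$ and $c=b$ yield $m^{\Phi}_{a,b}$ and $m^{\Psi}_{a,b}$ respectively, and all other equal-index terms vanish. Since $\mathrm{C}_\bullet(G)$ is unital with unit $e$, these are exactly the extra terms in the first line.
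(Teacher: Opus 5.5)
Your proposal is correct and follows the paper's proof. The paper likewise pushes \eqref{eq_boundary_wab_chain} forward along $k^{\mathcal{H}}_{a,b}$ and relies on the two compatibilities $k^{\mathcal{H}}_{a,b}\circ\iota_{a,c;b}=\mu\circ(k^{\mathcal{H}}_{a,c}\times\Phi_{c,b})$ and $k^{\mathcal{H}}_{a,b}\circ j_{a;c,b}=\mu\circ(\Psi_{a,c}\times k^{\mathcal{H}}_{c,b})$, which you verify in more detail through the simplex coordinates. (For the reindexed second line you also implicitly need the convention $m_{c,c'}=0$ whenever $|c|=|c'|$, including $c=c'$, which the paper likewise leaves unstated.)
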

\begin{proof}
    This follows from equation \eqref{eq_boundary_wab_chain} once one has verified that for $((u,\tau),v)\in W_{a,c}\times \overline{\mathcal{L}}(c,b)$ we have 
    $$    k^{\mathcal{H}}_{a,b}\circ \iota_{a,c;b}((u,\tau),v) =  k^{\mathcal{H}}_{a,c}(u,\tau) \cdot \Phi(v)    $$
    and for $(u,(v,\tau))\in \overline{\mathcal{L}}(a,c)\times W_{c,b}$ it holds that
    $$   k^{\mathcal{H}}_{a,b}\circ j_{a;c,b}( u,(v,\tau)) = \Psi(u) \cdot \mathcal{H}_{c,b}(v,\tau) .     $$
\end{proof}

By \cite[Proposition 2.3]{barraud2025morse} it follows that the map $\Xi\colon \mathrm{C}_{\bullet}(M;(\Phi,\mathcal{F}_{\bullet})) \to \mathrm{C}_{\bullet}(M;(\Psi,\mathcal{F}_{\bullet}))$ defined by $$\Xi(\alpha\otimes x) = \sum_{|y|\leq |x|} \alpha\cdot n_{x,y}\otimes y   $$
is a chain map.
Using \cite[Proposition 4.10]{barraud2025morse} and the fact that $[n_{x,x}] = [e]\in \mathrm{H}_0(G)$ and $[n_{x,y}] = 0\in \mathrm{H}_0(G)  $ for $|x| = |y|$ with $x\neq y$, we conclude that $\Psi$ is a quasi-isomorphism.
This completes the proof of Theorem \ref{theorem_isotopic_tr_induces_quasi-iso}.

\section{Coherent chain homotopies} \label{sec_coherent_chain_homotopies_appenix}

In this section we define path modules and morphisms of path modules following the exposition in \cite[Section 5]{clivio2025goresky}.
This notion is used in the proof of Theorem \ref{theorem_gysin_map_chain_map} and we explain the specific construction used in that proof in Example \ref{example_excision_path_module-stuff}.

\begin{definition}[Definition 5.5 in \cite{clivio2025goresky}]
    Let $(\mathbf{R}_{\bullet},d_{\mathbf{R}},\mu)$ be a differential graded algebra and $(\mathbf{E}_{\bullet}, d_{\mathbf{E}},\mu)$ a left $\mathbf{R}_{\bullet}$-module such that $\mathbf{R}_{\bullet}\subseteq \mathbf{E}_{\bullet}$.
    A \emph{path module} over $(\mathbf{R}_{\bullet},\mathbf{E}_{\bullet})$ is given by a chain complex $(\mathbf{X}_{\bullet},d_{\mathbf{X}})$, a subcomplex $\mathcal{F}_{\bullet}\subseteq \mathbf{X}_{\bullet}$ and a collection of maps 
    $$    m_k \colon \mathcal{F}_{\bullet}\otimes \mathbf{R}_{\bullet}^{\otimes k-2} \otimes \mathbf{E}_{\bullet} \to \mathbf{X}_{\bullet +k-2}    $$
    such that $m_k$ restricts to a map
    $$  m^{\mathcal{F}}_k  \colon \mathcal{F}_{\bullet}\otimes \mathbf{R}^{\otimes k-1} \to \mathcal{F}_{\bullet + k-2}   $$
    Furthermore, we demand that $m_2$ is a chain map and that for $N\geq 3$ it holds that
    \begin{eqnarray*}
        &  & (-1)^{N+1} m_N \circ d + d \circ m_N = \sum_{r = 0}^{N-3} (-1)^r m_{N-1} \circ (\mathrm{id}^{\otimes r+1}\otimes \mu\otimes    \mathrm{id}^{\otimes N-r-3} )    \\ &  &  \hphantom{aerogijaaergaergaeeoirjg}  - \sum_{s = 2}^{N-1} (-1)^{s(N-s)} m_{N-s+1} \circ (m_s^{\mathcal{F}} \otimes \mathrm{id}^{\otimes N-s}) .
    \end{eqnarray*}
    A path-module is called \emph{strict} if $m_k = 0$ for $k\geq 3$.
\end{definition}


\begin{definition}[Definition 5.7 in \cite{clivio2025goresky}] \label{def_morphism_of_path_modules}
    Let $(\mathbf{X}_{\bullet}^i, \mathcal{F}_{\bullet}^i,m_k^i)$ be path-modules over $(\mathbf{R}_{\bullet},\mathbf{E}_{\bullet})$ for $i = 1,2$.
    A \emph{morphism of path-modules} $\eta\colon \mathbf{X}_{\bullet}^1\to \mathbf{X}_{\bullet+m}^2$ of degree $m\in\mathbb{Z}$ is given by a chain map $\eta_1\colon \mathbf{X}_{\bullet}^1\to \mathbf{X}_{\bullet+ m}^2$ and maps $\eta_k\colon \mathcal{F}_{\bullet}^1 \otimes \mathbf{R}_{\bullet}^{k-2}\otimes \mathbf{E}_{\bullet} \to \mathbf{X}_{\bullet  + k-1+m}^2$ such that for $N\geq 1$ the $A_{\infty}$-equation
    \begin{eqnarray*}
       &  &  \eta_{N+1}\circ d + (-1)^{N+1+m} d\circ \eta_{N+1} = \sum_{s=1}^N (-1)^{s(N-s)} \eta_{N-s+1}\circ (m_{s+1}^1 \otimes \mathrm{id}^{N-s}) + \\
       &  & (-1)^N \sum_{k=1}^N (-1)^{k(N-k)} m_{k+1}^2 \circ (\eta_{N-k}\otimes \mathrm{id}^{\otimes k}) + \sum_{r=1}^{N-1} (-1)^{N+1+r} \eta_N\circ (\mathrm{id}^{\otimes r}\otimes \mu \otimes  \mathrm{id}^{\otimes N-1-r})        
    \end{eqnarray*}
    holds and such that $\eta_k$ restricts to a map $\eta_k^{\mathcal{F}} \colon \mathcal{F}_{\bullet}^1\otimes \mathbf{R}_{\bullet}^{k-1}\to \mathcal{F}^2_{\bullet + k-1+m}$.
\end{definition}
\begin{remark}\label{remark_a_infinity}
    Let $(\mathbf{X}_{\bullet}^i, \mathcal{F}_{\bullet}^i,m_k^i)$ be path-modules over $(\mathbf{R}_{\bullet},\mathbf{E}_{\bullet})$ for $i = 1,2$.
    One checks that the restrictions $m_k^{\mathcal{F}}\colon \mathcal{F}_{\bullet}\otimes \mathbf{R}^{k-1}\to \mathcal{F}_{\bullet+ k-2}$ yield the data of an $A_{\infty}$-modules where we consider the DGA $\mathbf{R}_{\bullet}$ as an $A_{\infty}$-algebra with trivial higher multiplications.
    The restrictions $\eta_k^{\mathcal{F}}\colon \mathcal{F}_{\bullet}^1\otimes \mathbf{R}_{\bullet}^{k-1}\to \mathcal{F}^2_{\bullet + k-1+m}$ of Definition \ref{def_morphism_of_path_modules} can then be understood as a morphism of $A_{\infty}$-modules. 
\end{remark}

\begin{example}\label{example_associated_bundle_strict_path_module}
    Let $q\colon E\to M$ be a $G$-principal bundle and let $F$ be a $G$-space.
    Denote the associated bundle by $X = F\times_G E\to M$ and let $p\colon F\times E\to X$ be the canonical map.
    Choose a basepoint $e_0\in E$ and let $x_0 = q(e_0)\in M$.
    We set $\mathbf{R}_{\bullet} = \mathrm{C}_{\bullet}(G)$, $\mathbf{E}_{\bullet} = \mathrm{C}_{\bullet}(E)$, $\mathbf{X}_{\bullet} = \mathrm{C}_{\bullet}(X)$ and $\mathcal{F}_{\bullet} = \mathrm{C}_{\bullet}(F)$ where we consider $F\subseteq X$ via the identification $F\xrightarrow[]{\cong} X_{x_0}, f\mapsto [f,e_0]$ and $G\subseteq E$ via the identification $G\xrightarrow[]{\cong } E_{x_0}, g\mapsto g\cdot e_0$.
    Setting $m_2 = p_*\colon \mathcal{F}_{\bullet}\otimes \mathbf{E}_{\bullet}\to \mathbf{X}_{\bullet}$ yields a strict path-module structure.
    Analogously if $F'\subseteq F$ is a $G$-invariant subspace we obtain a strict path-module structure with $\mathbf{X}_{\bullet} = \mathrm{C}_{\bullet}(F\times_G E,F'\times_G E)$ and $\mathcal{F}_{\bullet} = \mathrm{C}_{\bullet}(F,F')$.

    Now, let $K_1$ and $K_2$ be $G$-spaces and assume that $\rho\colon K_1\to K_2$ is a $G$-equivariant map.
    If $q\colon E\to M$ is a $G$-principal bundle then the induced map $\overline{\rho}\colon K_1\times_G E\to K_2\times_G E$ induces a morphism of path-modules.
    In fact, the chain map $\eta_1 = \overline{\rho}_*\colon \mathrm{C}_{\bullet}(K_1\times_G E)\to \mathrm{C}_{\bullet}(K_2\times_G E)$ together with $\eta_k = 0$ for $k\geq 2$ defines a morphism of path-modules between the strict path modules $(\mathrm{C}_{\bullet}(K_1\times_G E), \mathrm{C}_{\bullet}(K_1),m_k^1)$ and $(\mathrm{C}_{\bullet}(K_2\times_G E),\mathrm{C}_{\bullet}(K_2),m_k^2)$.
\end{example}

The following proposition is an adaptation of \cite[Proposition 5.2]{clivio2025goresky}.
\begin{prop}\label{prop_bundle_theorem_and_ainfinity_morphisms}
    Let $q\colon E\to M$ be a $G$-principal bundle and let $F_1$ and $F_2$ be right $G$-spaces.
    Let $\Phi\colon \mathrm{mor}(\mathcal{M}_f)\to G$ be a transport function induced by $q$ and assume that all compatible sections for $\Phi$ exist.
    Consider the strict path modules $(\mathrm{C}_{\bullet}(F_i\times_G E),\mathrm{C}_{\bullet}(F_i))$ over $(\mathrm{C}_{\bullet}(G),\mathrm{C}_{\bullet}(E))$ for $i = 1,2$ as in Example \ref{example_associated_bundle_strict_path_module}.
    If $\eta\colon \mathrm{C}_{\bullet}(F_1\times_G E)\to \mathrm{C}_{\bullet+m}(F_2\times_G E)$ is a morphism of path-modules for a given chain map $\eta_1\colon \mathrm{C}_{\bullet}(F_1\times_G E)\to \mathrm{C}_{\bullet}(F_2\times_G E)$, then there is an induced chain map $\widetilde{\eta}\colon \mathrm{C}_{\bullet}(M;(\Phi,\mathrm{C}_{\bullet}(F_1))) \to \mathrm{C}_{\bullet + m}(M;\Phi,\mathrm{C}_{\bullet}(F_2)))$ and the diagram
    $$
        \begin{tikzcd}
            \mathrm{H}_{\bullet}(M;(\Phi,\mathrm{C}_{\bullet}(F_1))) \arrow[]{r}{t_1} \arrow[]{d}{\widetilde{\eta}} & \mathrm{H}_{\bullet}(F_1\times_G E) \arrow[]{d}{(\eta_1)_*} \\
            \mathrm{H}_{\bullet}(M;(\Phi,\mathrm{C}_{\bullet}(F_2))) \arrow[]{r}{t_2} & \mathrm{H}_{\bullet}(F_2\times_G E) 
        \end{tikzcd}
    $$
    commutes where $t_1,t_2$ are the isomorphisms as in Theorem \ref{theorem_assoc_bundle}.
\end{prop}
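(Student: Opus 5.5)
We follow the strategy of \cite[Proposition 5.2]{clivio2025goresky}, replacing the $\Omega M$-action by the $G$-action and the lifting function by the projections $p_i\colon F_i\times E\to F_i\times_G E$. The chain map is defined by the formula
$$ \widetilde{\eta}(\alpha\otimes x) = \sum_{|y|\le |x|} \eta_{|x|-|y|+1}\text{-type term applied to } \alpha\otimes n_{x,y}\otimes y, $$
where the coefficients $n_{x,y}$ come from the twisting cocycle. Concretely, we set
$$ \widetilde{\eta}(\alpha\otimes x) = \eta_1^{\mathcal F}(\alpha)\otimes x + \sum_{k\ge 1}\sum_{x=z_0,z_1,\dots,z_k} \pm\, \eta^{\mathcal F}_{k+1}(\alpha\otimes m_{z_0,z_1}\otimes\cdots\otimes m_{z_{k-1},z_k})\otimes z_k. $$
Here $\eta^{\mathcal F}_{k+1}$ is the restriction $\mathcal F^1\otimes \mathbf R^{\otimes k}\to\mathcal F^2$ from Remark \ref{remark_a_infinity}, and $\eta_1^{\mathcal F}$ is the restriction of $\eta_1$ to $\mathrm{C}_\bullet(F_1)\to \mathrm{C}_\bullet(F_2)$ on the fiber over $x_0$. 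This is the standard construction of the map induced on twisted tensor products by an $A_\infty$-module morphism over a strict DGA.

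\emph{Chain map.} We first check that $\widetilde\eta$ is a chain map. Expand $\partial\widetilde\eta - \pm\widetilde\eta\partial$ and group the terms by the string of critical points. The $A_\infty$-equations of Definition \ref{def_morphism_of_path_modules}, restricted to $\mathcal F$, produce three kinds of terms.
\begin{itemize}
\item[(a)] Terms where $d$ hits an $m_{z_i,z_{i+1}}$. These are rewritten with the twisting cocycle equation \eqref{eq_twisting_cocycle} as products $m_{z_i,c}m_{c,z_{i+1}}$, and they cancel against the terms $\eta_N\circ(\mathrm{id}\otimes\mu\otimes\mathrm{id})$.
\item[(b)] Terms with $m^1_{s+1}$. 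Since the path modules are strict, these reduce to the right action on $\alpha$, and they match the twisted differential of the source complex.
\item[(c)] Terms with $m^2_{k+1}$. These match the twisted differential of the target complex.
\end{itemize}
This step is bookkeeping with signs. Since we only need the result over the coefficients used later (field coefficients), we expect to be able to follow Clivio's sign conventions verbatim.

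\emph{Commutativity in homology.} This is the main point. Write $t_i(\alpha\otimes x)=(p_i)_*(\alpha\times m_x)$ with $m_x=(\phi_x)_*s_x\in\mathrm{C}_\bullet(E)$, as in Theorem \ref{theorem_assoc_bundle}. We need a chain homotopy
$$ h\colon \mathrm{C}_\bullet(M;(\Phi,\mathrm{C}_\bullet(F_1)))\to \mathrm{C}_{\bullet+m+1}(F_2\times_G E) $$
with $\eta_1\circ t_1 - t_2\circ\widetilde\eta = \partial h\pm h\partial$. The candidate is
$$ h(\alpha\otimes x)=\sum \pm\,\eta_{k+2}(\alpha\otimes m_{z_0,z_1}\otimes\cdots\otimes m_{z_{k-1},z_k}\otimes m_{z_k}), $$
using the full maps $\eta_N$ with last entry in $\mathbf E_\bullet=\mathrm{C}_\bullet(E)$. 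The verification again uses the $A_\infty$-relation, now together with the identity $\partial m_x=\sum_y m_{x,y}\cdot m_y$ from the proof of Theorem \ref{theorem_assoc_bundle}. The term $m^2_2\circ(\eta^{\mathcal F}\otimes\mathrm{id})$ reproduces $t_2\circ\widetilde\eta$, because $m^2_2=(p_2)_*$. The term with $\eta_1$ applied after $m^1_2$ reproduces $\eta_1\circ t_1$. The remaining terms telescope into $\partial h\pm h\partial$.

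\emph{Alternative argument.} If the homotopy bookkeeping becomes unwieldy, we fall back on a filtration argument. Both composites preserve the skeletal filtrations, so it suffices to compare them on the $E^1$-pages of the canonical and Leray–Serre spectral sequences. There only $\eta_1$ contributes, and the comparison reduces to the $\chi_x$-identification from Theorem \ref{theorem_assoc_bundle}. However, this only gives agreement on the associated graded, so we would still need the homotopy $h$ to conclude. We therefore regard the construction of $h$ and its sign check as the main obstacle.
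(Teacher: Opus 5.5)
Your proposal is correct and is essentially the paper's proof. Your $\widetilde\eta$ is Riegel's map $\sum_{k\ge 0}(\eta^{\mathcal F}_{k+1}\otimes\mathrm{id})\circ(\mathrm{id}\otimes(\mathbf m')^k)$, and your homotopy $h$ is the paper's map $v$: the full $\eta$'s evaluated on iterated twisting-cocycle terms, with the last slot filled by $m_{z_k}=(\phi_{z_k})_*s_{z_k}$ via $\widetilde{\mathbf l}$. The paper does not carry out the verifications you sketch; it cites Riegel's Proposition 5.1 for the chain-map property and Clivio's Lemma 5.3 for $v\circ d+(-1)^m d\circ v=\eta_1\circ t_1-t_2\circ\widetilde\eta$, and you are right not to rely on the filtration fallback.
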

\begin{proof}
    Recall that for $i \in \{1,2\}$ the map $t_i\colon \mathrm{C}_{\bullet}(M;(\Phi,\mathrm{C}_{\bullet}(F_i)))\to \mathrm{C}_{\bullet}(F_i\times_G E)$ is given by
    $$    t_i(\alpha\otimes x) = p_* (\alpha\times m_x)    $$
    for $m_x\in \mathrm{C}_{|x|}(E)$ the chain defined by using the compatible section for $x\in\mathrm{Crit}(f)$.
    Suppose that $\eta\colon \mathrm{C}_{\bullet}(F_1\times_G E)\to \mathrm{C}_{\bullet+m}(F_2\times_G E)$ is a morphism of path-modules with a given chain map $\eta_1\colon \mathrm{C}_{\bullet}(F_1\times_G E)\to \mathrm{C}_{\bullet}(F_2\times_G E)$.
    We will define a chain map $\widetilde{\eta}\colon \mathrm{C}_{\bullet}(M;(\Phi,\mathrm{C}_{\bullet}(F_1))) \to \mathrm{C}_{\bullet + m}(M;\Phi,\mathrm{C}_{\bullet}(F_2)))$ using the fact that the restrictions $\eta_k^{\mathcal{F}}$ yield a morphsim of $A_{\infty}$-modules, see Remark \ref{remark_a_infinity}.
    Indeed, Riegel \cite{riegel2024chain} shows that in this case one can obtain a chain map for the DG Morse complexes as follows.
    We write $\mathbf{R}_{\bullet} = \mathrm{C}_{\bullet}(G)$ we denote by $T(\mathbf{R}_{\bullet}) = \oplus_{k\geq 0}\mathbf{R}_{\bullet}^{\otimes k}$ the tensor algebra of $\mathbf{R}_{\bullet}$.
    Let $${\mathbf{m}}'\in \mathrm{Hom}_{-1}(T(\mathbf{R}_{\bullet})\otimes  R\langle \mathrm{Crit}(f)\rangle  ,  T(\mathbf{R}_{\bullet})\otimes R\langle \mathrm{Crit}(f)\rangle) $$
    be the morphism which acts as
    $$   {\mathbf{m}}'( r_1\otimes \ldots \otimes r_k\otimes x) = \sum_{|y| < |x|} r_1\otimes \ldots \otimes r_k\otimes m_{x,y}\otimes y  $$
    where $m_{x,y}\in\mathbf{R}_{\bullet}$ are the chains which constitute the twisting cocycle.
    Riegel shows in \cite[Proposition 5.1]{riegel2024chain} that the map $\widetilde{\eta}\colon \mathrm{C}_{\bullet}(M;(\Phi,\mathrm{C}_{\bullet}(F_1))\to \mathrm{C}_{\bullet}(M;(\Phi,\mathrm{C}_{\bullet}(F_2))$ defined by
    $$    \widetilde{\eta} = \sum_{k\geq 0} (\eta_{k+1}^{\mathcal{F}}\otimes \mathrm{id})\circ (\mathrm{id}\otimes ({\mathbf{m}}')^{k})    $$
    is a chain map where $({\mathbf{m}}')^k$ means applying the morphism $\mathbf{m}'$ iteratively.
    Following \cite{clivio2025goresky} we extend the map $\mathbf{m}'$ to a map $$\widetilde{\mathbf{m}}\colon  \mathrm{C}_{\bullet}(F_1)\otimes T(\mathbf{R}_{\bullet})\otimes R\langle \mathrm{Crit}(f)\rangle \to \mathrm{C}_{\bullet}(F_1)\otimes T(\mathbf{R}_{\bullet})\otimes R\langle \mathrm{Crit}(f)\rangle  $$
    by requiring that $\widetilde{\mathbf{m}}$ act as $\mathrm{id}\otimes \mathrm{id}\otimes \mathbf{m}'$.
    Further define 
    \begin{eqnarray*}
        \widetilde{\mathbf{l}}  \colon \mathrm{C}_{\bullet}(F_1) \otimes T(\mathbf{R}_{\bullet})\otimes R\langle \mathrm{Crit}(f)\rangle  & \to &  \mathrm{C}_{\bullet}(F_1)\otimes T(\mathbf{R}_{\bullet})\otimes \mathrm{C}_{\bullet}(E)    \\
    \alpha \otimes r_1\otimes \ldots \otimes r_k\otimes x & \mapsto &  \alpha\otimes r_1\otimes \ldots \otimes r_k\otimes m_x .   
    \end{eqnarray*}
    We define the map
    $$  v= \sum_{N\geq 1} (-1)^{N+1} \eta_{N+1}\circ \widetilde{\mathbf{l}}\circ \widetilde{\mathbf{m}}^{N+1}  \colon \mathrm{C}_{\bullet}(M;(\Phi,\mathrm{C}_{\bullet}(F_1)))\to \mathrm{C}_{\bullet + 1+m}(F_2\times_G E) .    $$
    The computation of maps on chain level in \cite[Proof of Lemma 5.3]{clivio2025goresky} can be replicated in our situation and we obtain
    $$     v\circ d + (-1)^{m} d\circ v = \eta_1\circ t_1 - t_2 \circ \widetilde{\eta}   .  $$
    It follows that the diagram in the statement of the proposition commutes.
\end{proof}

Using a version of the homotopy transfer theorem, Clivio shows how one can invert morphism of path modules, see \cite[Proposition 5.14]{clivio2025goresky}.
An application to our situation yields the following. 
\begin{prop}\label{prop_invertin_quasi-iso}
    Let $q\colon E\to M$ be a $G$-principal bundle and let $(F_1,F_1'),(F_2,F_2')$ be pairs of $G$-spaces.
    Take homology with field coefficients.
    If there exists a morphism of path modules $\eta_{\bullet}\colon \mathrm{C}_{\bullet}(F_1\times_G E,F_1'\times_G E)\to \mathrm{C}_{\bullet}(F_2\times_G E,F_2'\times_G E)$ such that the chain map $\eta_1\colon \mathrm{C}_{\bullet}(F_1\times_G E,F_1'\times_G E)\to \mathrm{C}_{\bullet}(F_2\times_G E,F_2'\times_G E)$ is a quasi-isomorphism, then there is a morphism of path modules $\xi_{\bullet}\colon \mathrm{C}_{\bullet}(F_2\times_G E)\to \mathrm{C}_{\bullet}(F_1\times_G E)$ such that in homology $(\xi_1)_* = (\eta_1)_*^{-1}$.
\end{prop}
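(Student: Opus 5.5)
The plan is to reduce Proposition \ref{prop_invertin_quasi-iso} to Clivio's inversion result \cite[Proposition 5.14]{clivio2025goresky}, which is stated abstractly for path modules over a pair $(\mathbf{R}_{\bullet},\mathbf{E}_{\bullet})$. Concretely, we set $\mathbf{R}_{\bullet}=\mathrm{C}_{\bullet}(G)$ and $\mathbf{E}_{\bullet}=\mathrm{C}_{\bullet}(E)$. The inclusion $\mathbf{R}_{\bullet}\subseteq\mathbf{E}_{\bullet}$ is induced by $g\mapsto g\cdot e_0$, and the strict path modules are $(\mathrm{C}_{\bullet}(F_i\times_G E,F_i'\times_G E),\mathrm{C}_{\bullet}(F_i,F_i'),m_2=p_*)$ as in Example \ref{example_associated_bundle_strict_path_module}. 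We first verify the hypotheses of the abstract statement, namely that these really are path modules in the sense of Definition 5.5 of \cite{clivio2025goresky}. For strict modules only two identities are needed. First, $m_2$ must be a chain map; this holds because $p_*$ composed with the cross product is one. Second, the $N=3$ relation must hold; it reads $m_2\circ(\mathrm{id}\otimes\mu)=m_2\circ(m_2^{\mathcal{F}}\otimes\mathrm{id})$, and this follows from $p(z,g\cdot e)=p(z\cdot g,e)$ together with associativity of the cross product of cubes.

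Next comes the homotopy transfer step, which is where field coefficients enter. Over a field we can choose a deformation retraction datum for $\mathrm{C}_{\bullet}(F_i,F_i')$: inclusion and projection maps $\iota_i$ and $\pi_i$ to the homology, together with a homotopy $h_i$. Since $\eta_1$ is a quasi-isomorphism of the total complexes, we must first check that its restriction $\eta_1^{\mathcal{F}}$ is a quasi-isomorphism on the fibre subcomplexes, because Clivio's construction inverts the $A_\infty$-part on $\mathcal{F}$. For this we compare the fibre inclusion over $x_0$ with the Leray--Serre spectral sequence (as in the proof of Theorem \ref{theorem_assoc_bundle}). In the application in Theorem \ref{theorem_gysin_map_chain_map}, $\eta$ is induced by the fibrewise excision $U\hookrightarrow F_2$, so $\eta_1^{\mathcal{F}}=\zeta_*$ is itself a quasi-isomorphism by excision in the fibre. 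If the general statement turns out to require it, I would add this as an explicit hypothesis. We then invert $\eta_1^{\mathcal{F}}$ as an $A_\infty$-module morphism using the standard recursive formula $\xi_k^{\mathcal{F}}=\sum \pm (\eta_1^{\mathcal{F}})^{-1}_{\mathrm{htpy}}\circ(\cdots)$, where $(\eta_1^{\mathcal{F}})^{-1}_{\mathrm{htpy}}$ is a homotopy inverse built from $h_i$.

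Finally, we extend the components $\xi_k$ from $\mathcal{F}\otimes\mathbf{R}^{\otimes k-1}$ to $\mathcal{F}\otimes\mathbf{R}^{\otimes k-2}\otimes\mathbf{E}_{\bullet}$ with values in $\mathbf{X}$, so that the path-module equations of Definition \ref{def_morphism_of_path_modules} hold. Clivio does this by the same recursion, applied with $m_2^i=p_*$ in the last slot and a homotopy inverse $\xi_1$ of $\eta_1$ on the total complexes. The recursion is solvable at each step because the obstruction is a cycle in a Hom-complex that is acyclic, since $\eta_1$ is a quasi-isomorphism over a field. By construction $(\xi_1)_*=(\eta_1)_*^{-1}$. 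The main obstacle I expect is the compatibility requirement that the $\xi_k$ restrict to $\mathcal{F}$-valued maps while also solving the equations with $\mathbf{E}$-inputs. To handle this, I would first choose the homotopy data on the total complexes to be compatible with the subcomplex filtration $\mathcal{F}\subseteq\mathbf{X}$, using splittings over the field, so that the homotopies preserve $\mathcal{F}$. I would then run Clivio's recursion verbatim, checking that each term it produces involving $\mu$ or $m_2^{\mathcal{F}}$ stays in $\mathcal{F}$.
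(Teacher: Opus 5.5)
Your overall route is the paper's. The paper gives no independent argument: it applies \cite[Proposition 5.14]{clivio2025goresky} to the strict path modules $(\mathrm{C}_{\bullet}(F_i\times_G E,F_i'\times_G E),\mathrm{C}_{\bullet}(F_i,F_i'),m_2=p_*)$ of Example~\ref{example_associated_bundle_strict_path_module}. Your check of the strict axioms is correct: the $N=3$ identity follows from $p(z,g\cdot e)=p(z\cdot g,e)$, and all higher identities vanish.

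The step you add that does not work is deducing that the fibre restriction $\eta_1^{\mathcal{F}}$ is a quasi-isomorphism from $\eta_1$ being one, by comparison with the Leray--Serre spectral sequence. The comparison theorem goes from an isomorphism on $E^1$ (or $E^2$) to the abutment, not back. With non-trivial local coefficients, an isomorphism on total spaces says nothing about fibres. Here is a counterexample inside the paper's own setting, with $\mathbb{Q}$-coefficients:
\begin{itemize}
\item take the $\mathbb{Z}_2$-bundle $S^1\to S^1$, $z\mapsto z^2$;
\item let $F_2=S^1$ with the reflection action and $F_1=\{(1,0)\}$ its fixed point;
\item let $\eta$ be induced by the equivariant inclusion $\rho\colon F_1\hookrightarrow F_2$ as in Example~\ref{example_associated_bundle_strict_path_module}.
\end{itemize}
Then $F_1\times_{\mathbb{Z}_2}E\cong S^1$ maps onto a section of the Klein bottle $F_2\times_{\mathbb{Z}_2}E$, since composing with the projection gives a homeomorphism onto the base. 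Because the Klein bottle and $S^1$ have the same rational homology, $\eta_1$ is a quasi-isomorphism. But $\rho_*\colon \mathrm{C}_{\bullet}(\{(1,0)\})\to\mathrm{C}_{\bullet}(S^1)$ is not.

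This is not a side issue for your construction. The $\mathcal{F}$-preserving homotopy data you want to choose (a homotopy inverse of $\eta_1$ and homotopies, all preserving $\mathcal{F}$) restricts to a homotopy inverse of $\eta_1^{\mathcal{F}}$. So it exists only if $\eta_1^{\mathcal{F}}$ is a quasi-isomorphism. Conversely, over a field, if both $\eta_1$ and $\eta_1^{\mathcal{F}}$ are quasi-isomorphisms, the five lemma gives a quasi-isomorphism on $\mathbf{X}_{\bullet}/\mathcal{F}_{\bullet}$, and splitting the two-step filtration produces such data.

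Likewise, the ``acyclic Hom-complex'' in your recursion must also handle the $\mathcal{F}$-valued components, i.e.\ Hom into the cone of $\eta_1^{\mathcal{F}}$. That is again the fibrewise hypothesis. So your argument proves the proposition under the extra assumption that $\eta_1^{\mathcal{F}}$ is a quasi-isomorphism, which is exactly your stated fallback.

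That assumption costs nothing where the paper uses the result, in Example~\ref{example_excision_path_module-stuff} and Theorem~\ref{theorem_gysin_map_chain_map}. There the fibre map $\mathrm{C}_{\bullet}(U,\sim F_1)\to\mathrm{C}_{\bullet}(F_2,\sim F_1)$ is itself an excision quasi-isomorphism. In the counterexample above, the proposition's conclusion still holds, via the equivariant constant map $F_2\to F_1$. So it is your homotopy-transfer method, not necessarily the statement, that needs the extra input. The paper's statement assumes only that $\eta_1$ is a quasi-isomorphism and defers all details to the citation, so making the fibrewise hypothesis explicit is a reasonable clarification.
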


\begin{example}\label{example_excision_path_module-stuff}
    In the specific situation of Theorem \ref{theorem_gysin_map_chain_map} we consider a $G$-space $F_2$ with an invariant subspace $F_1$ that admits a $G$-invariant tubular neighborhood $U$ of rank $k$.
    The inclusion $ U\hookrightarrow F_2$ induces a chain map $\eta_1\colon \mathrm{C}_{\bullet}(U\times_G E, \sim F_1\times_G E)\to \mathrm{C}_{\bullet}(F_2\times_G E,\sim F_1\times_G E)$ which is a quasi-isomorphism by excision.
    Moreover, the map $\eta_1$ is part of a morphism of path modules as in Example \ref{example_associated_bundle_strict_path_module} where the higher maps are trivial.
    By Proposition \ref{prop_invertin_quasi-iso} we therefore obtain a morphism of path modules $\xi_{\bullet}\colon \mathrm{C}_{\bullet}(F_2\times_G E,\sim F_1\times_G E)\to \mathrm{C}_{\bullet}(U\times_G E,\sim F_1\times_G E)$ which in turn induces a chain map $ \widetilde{\xi}\colon \mathrm{C}_{\bullet}(M;(\Phi,\mathrm{C}_{\bullet}(F_2,\sim F_1)))\to \mathrm{C}_{\bullet}(M;(\Phi,\mathrm{C}_{\bullet}(U,\sim F_1)))$ such that in homology $(\xi_1)_* = (\eta_1)_*^{-1}$. 
    By Proposition \ref{prop_bundle_theorem_and_ainfinity_morphisms} it follows that the diagram
    $$
        \begin{tikzcd}
            \mathrm{H}_{\bullet}(M;(\Phi,\mathrm{C}_{\bullet}(F_2,\sim F_1))) \arrow[]{r}{t_2} \arrow[]{d}{\widetilde{\xi}} & \mathrm{H}_{\bullet}(F_2\times_G E,  \sim F_1\times_G E) \arrow[]{d}{\text{excision}} \\
            \mathrm{H}_{\bullet}(M;(\Phi,\mathrm{C}_{\bullet}(U,\sim F_1))) \arrow[]{r}{t_3} & \mathrm{H}_{\bullet}(U\times_G E,\sim F_1\times_G E) 
        \end{tikzcd}
    $$
    commutes.
    We use the map $\widetilde{\xi}$ and the commutativity of the above diagram in the proof of Theorem \ref{theorem_gysin_map_chain_map}.
\end{example}

\bibliography{lit}

\providecommand{\bysame}{\leavevmode\hbox to3em{\hrulefill}\thinspace}
\providecommand{\MR}{\relax\ifhmode\unskip\space\fi MR }
\providecommand{\MRhref}[2]{%
  \href{http://www.ams.org/mathscinet-getitem?mr=#1}{#2}
}
\providecommand{\href}[2]{#2}
\begin{thebibliography}{BDHO25}

\bibitem[AB21]{abouzaid2021arnold}
Mohammed Abouzaid and Andrew~J Blumberg, \emph{Arnold conjecture and {M}orava
  {K}-theory}, arXiv preprint arXiv:2103.01507 (2021).

\bibitem[AB24]{abouzaid2024foundation}
\bysame, \emph{Foundation of {F}loer homotopy theory {I}: Flow categories},
  arXiv preprint arXiv:2404.03193 (2024).

\bibitem[ADE14]{audin2014morse}
Michele Audin, Mihai Damian, and Reinie Ern{\'e}, \emph{Morse theory and
  {F}loer homology}, vol.~2, Springer, 2014.

\bibitem[ADP24]{asplund2024spectral}
Johan Asplund, Yash Deshmukh, and Alex Pieloch, \emph{Spectral equivalence of
  nearby {L}agrangians}, arXiv preprint arXiv:2411.08841 (2024).

\bibitem[BC07]{barraud2007lagrangian}
Jean-Fran{\c{c}}ois Barraud and Octav Cornea, \emph{Lagrangian intersections
  and the {S}erre spectral sequence}, Annals of Mathematics (2007), 657--722.

\bibitem[BDHO24]{barraud2024floer}
Jean-Fran{\c{c}}ois Barraud, Mihai Damian, Vincent Humiliere, and Alexandru
  Oancea, \emph{Floer homology with {D}{G} coefficients. {A}pplications to
  cotangent bundles}, arXiv preprint arXiv:2404.07953 (2024).

\bibitem[BDHO25]{barraud2025morse}
Jean-Fran{\c{c}}ois Barraud, Mihai Damian, Vincent Humili{\`e}re, and Alexandru
  Oancea, \emph{Morse homology with differential graded coefficients},
  Springer, 2025.

\bibitem[Bre72]{bredon1972introduction}
Glen~E Bredon, \emph{Introduction to compact transformation groups}, vol.~46,
  Academic press, 1972.

\bibitem[Bre13]{bredon:2013}
\bysame, \emph{Topology and geometry}, Graduate Texts in Mathematics, vol. 139,
  Springer, 2013.

\bibitem[Bro82]{brown1982cohomology}
Kenneth~S Brown, \emph{Cohomology of groups}, Springer, New York (1982).

\bibitem[CHV06]{cohen2006string}
Ralph~L Cohen, Kathryn Hess, and Alexander~A Voronov, \emph{String topology and
  cyclic homology}, Springer Science \& Business Media, 2006.

\bibitem[CJS95]{cohen1995morse}
Ralph~L Cohen, John~DS Jones, and Graeme~B Segal, \emph{Morse theory and
  classifying spaces}, {P}reprint (1995).

\bibitem[CL26]{calle2026classifying}
Maxine~E Calle and Fangji Liu, \emph{On the classifying space of a {M}orse flow
  category}, arXiv preprint arXiv:2603.23695 (2026).

\bibitem[Cli25]{clivio2025goresky}
Jonathan Clivio, \emph{The {G}oresky-{H}ingston coproduct in {M}orse homology
  with {D}{G} coefficients}, arXiv preprint arXiv:2510.23813 (2025).

\bibitem[CO15]{chataur2015basics}
David Chataur and Alexandru Oancea, \emph{Basics on free loop spaces}, Free
  loop spaces in geometry and topology, European Mathematical
  Society-EMS-Publishing House GmbH, 2015, pp.~21--65.

\bibitem[DK69]{dyer1969some}
Eldon Dyer and Daniel~S Kahn, \emph{Some spectral sequences associated with
  fibrations}, Transactions of the American Mathematical Society \textbf{145}
  (1969), 397--437.

\bibitem[Fou26]{fourel2026morse}
Colin Fourel, \emph{Morse flow categories as exit path categories}, arXiv
  preprint arXiv:2605.27112 (2026).

\bibitem[Hus66]{husemoller1966fibre}
Dale Husemöller, \emph{Fibre bundles}, Springer, New York, 1966.

\bibitem[KN63]{kobayashi:1963}
Shoshichi Kobayashi and Katsumi Nomizu, \emph{Foundations of differential
  geometry. {V}ol {I}}, Interscience Publishers, a division of John Wiley \&
  Sons, New York-London, 1963.

\bibitem[Lat94]{latour1994existence}
Fran{\c{c}}ois Latour, \emph{Existence de l-formes ferm{\'e}es non singulieres
  dans une classe de cohomologie de de {R}ham}, Publications Math{\'e}matiques
  de l'IH{\'E}S \textbf{80} (1994), 135--194.

\bibitem[Lee13]{lee:2013}
John~M. Lee, \emph{Introduction to smooth manifolds}, second ed., Graduate
  Texts in Mathematics, vol. 218, Springer, New York, 2013.

\bibitem[LS14]{lipshitz2014khovanov}
Robert Lipshitz and Sucharit Sarkar, \emph{A {K}hovanov stable homotopy type},
  Journal of the American Mathematical Society \textbf{27} (2014), no.~4,
  983--1042.

\bibitem[MW09]{muller2009equivalences}
Christoph M{\"u}ller and Christoph Wockel, \emph{Equivalences of smooth and
  continuous principal bundles with infinite-dimensional structure group.},
  Advances in Geometry \textbf{9} (2009), no.~4, 605--626.

\bibitem[Nic07]{nicolaescu2007invitation}
Liviu Nicolaescu, \emph{An invitation to {M}orse theory}, Springer, 2007.

\bibitem[Qin10]{qin2010moduli}
Lizhen Qin, \emph{On moduli spaces and {C}{W} structures arising from {M}orse
  theory on {H}ilbert manifolds}, Journal of Topology and Analysis \textbf{2}
  (2010), no.~04, 469--526.

\bibitem[Qin18]{qin2018associativity}
\bysame, \emph{On the associativity of gluing}, Journal of Topology and
  Analysis \textbf{10} (2018), no.~03, 585--604.

\bibitem[Rie24]{riegel2024chain}
Robin Riegel, \emph{A chain-level model for {C}has-{S}ullivan products in
  {M}orse homology with differential graded coefficients}, arXiv preprint
  arXiv:2412.18264 (2024).

\bibitem[Rie25]{riegel2025path}
\bysame, \emph{The path-product in {M}orse homology with differential graded
  coefficients}, arXiv preprint arXiv:2508.09583 (2025).

\bibitem[Ste99]{steenrod1999topology}
Norman~Earl Steenrod, \emph{The topology of fibre bundles}, vol.~14, Princeton
  university press, 1999.

\bibitem[Voi14]{voigt2014diss}
Roland Voigt, \emph{Transport functions and {M}orse {K}-theory}, Leipzig
  University, 2014, PhD Thesis.

\bibitem[Weh12]{wehrheim2012smooth}
Katrin Wehrheim, \emph{Smooth structures on {M}orse trajectory spaces,
  featuring finite ends and associative gluing}, Proceedings of the Freedman
  Fest \textbf{18} (2012), 369--450.

\end{thebibliography}
 \bibliographystyle{amsalpha}
 
\end{document}